\renewcommand{\a}{\alpha}
\renewcommand{\b}{\beta}
\newcommand{\e}{\epsilon}
\renewcommand{\l}{\lambda} \renewcommand{\O}{\Omega} 
 \renewcommand{\to}{\rightarrow}
 \newcommand{\s}{\sigma}
\newcommand{\G}{\bar{G}}
\newcommand{\la}{\langle}
\newcommand{\ra}{\rangle}
\newcommand{\leqs}{\leqslant}
\newcommand{\geqs}{\geqslant}
 \newcommand{\vs}{\vspace{3mm}}
\newcommand{\imod}[1]{\allowbreak\mkern4mu({\operator@font mod}\,\,#1)}
\newtheorem{theorem}{Theorem} 
\newtheorem*{conj*}{Conjecture}
\newtheorem{thm}{Theorem}[section] 
\newtheorem{prop}[thm]{Proposition} 
\newtheorem{lem}[thm]{Lemma}
\newtheorem{cor}[thm]{Corollary}
\theoremstyle{definition}
\newtheorem{rem}[thm]{Remark}
\newtheorem{ex}[thm]{Example}
\newtheorem{remk}{Remark}
\newtheorem*{def-non}{Definition}
\begin{document}

\author{Timothy C. Burness}
 \address{T.C. Burness, School of Mathematics, University of Bristol, Bristol BS8 1TW, UK}
 \email{t.burness@bristol.ac.uk}
 
 \author{Adam R. Thomas}
 \address{A.R. Thomas, School of Mathematics, University of Bristol, Bristol, BS8 1TW, UK, and The Heilbronn Institute for Mathematical Research, Bristol, UK.}
 \email{adam.thomas@bristol.ac.uk}
 
\title[The involution fixity of exceptional groups]{On the involution fixity of \\ exceptional groups of Lie type}

\begin{abstract}
The involution fixity ${\rm ifix}(G)$ of a permutation group $G$ of degree $n$ is the maximum number of fixed points of an involution. In this paper we study the involution fixity of primitive almost simple exceptional groups of Lie type. We show that if $T$ is the socle of such a group, then either ${\rm ifix}(T) > n^{1/3}$, or ${\rm ifix}(T) = 1$ and $T = {}^2B_2(q)$ is a Suzuki group in its natural $2$-transitive action of degree $n=q^2+1$. This bound is best possible and we present more detailed results for each family of exceptional groups, which allows us to determine the groups with ${\rm ifix}(T) \leqs n^{4/9}$. This extends recent work of Liebeck and Shalev, who established the bound ${\rm ifix}(T) > n^{1/6}$ for every almost simple primitive group of degree $n$ with socle $T$ (with a prescribed list of exceptions). Finally, by combining our results with the Lang-Weil estimates from algebraic geometry, we determine bounds on a natural analogue of involution fixity for primitive actions of exceptional algebraic groups over algebraically closed fields.
\end{abstract}

\subjclass[2010]{Primary 20G41, 20B15; Secondary 20E28, 20E45} 
\keywords{Finite exceptional groups; primitive actions; involution fixity}
\date{\today}
\maketitle

\section{Introduction}\label{s:intro}

Let $G \leqs {\rm Sym}(\O)$ be a permutation group on a finite set $\O$ of size $n$ and recall that the \emph{minimal degree} $\mu(G)$ of $G$ is defined to be the minimal number of points moved by a non-identity element of $G$. This notion has been extensively studied since the nineteenth century, with many classical results in the context of lower bounds for primitive groups (see Bochert \cite{Boch} and Jordan \cite{Jo}, for example). We refer the reader to \cite{Babai, GM, LS91} for more recent results and applications. 

The related notion of \emph{fixity} was first studied by Ronse in \cite{Ronse}. The fixity $f(G)$ of $G$ is the maximal number of points fixed by a non-identity element, that is, $f(G) = n - \mu(G)$. In \cite{SS}, Saxl and Shalev study primitive groups of bounded fixity. One of their main results, \cite[Theorem 1.3]{SS}, shows that if $G$ is primitive with fixity $f$, then either $G$ has a solvable subgroup whose index is bounded by a function of $f$, or $G$ is almost simple with socle ${\rm L}_{2}(q)$ or ${}^2B_2(q)$ in their $2$-transitive actions of degree $n=q + 1$ or $q^2 + 1$, respectively. More recently, this study has been extended by Liebeck and Shalev in \cite{LSh}. For example, \cite[Theorem 1]{LSh} provides detailed information on the structure of a primitive permutation group $G$ of degree $n$ with $f(G) < n^{1/6}$. To do this, they first  use the O'Nan-Scott theorem for primitive groups to establish a reduction to affine and almost simple groups, and they handle the affine groups by applying earlier work on the fixity of linear groups \cite{LSh11}. This leaves the almost simple groups. Here the main result is \cite[Theorem 4]{LSh}, which shows that the socle of an almost simple primitive group contains an \emph{involution} fixing at least $n^{1/6}$ points (with specified exceptions). This leads us naturally to the following notion.

\begin{def-non}
Let $G \leqs {\rm Sym}(\O)$ be a permutation group on a finite set $\O$.  The \emph{involution fixity} ${\rm ifix}(G)$ of $G$ is the maximal number of points fixed by an involution in $G$.
\end{def-non}

As noted in \cite{LSh}, the involution fixity of permutation groups has been studied in several papers, going back to work of Bender \cite{Bender} in the early 1970s, who classified the transitive groups $G$ with ${\rm ifix}(G) = 1$ (these are the groups for which a point stabilizer is a so-called \emph{strongly embedded} subgroup). In terms of applications, bounds on the involution fixity of almost simple primitive groups have recently been used in the context of finite geometry to study point-primitive generalized quadrangles (see \cite{BPP}).

In this paper, we investigate the involution fixity of almost simple primitive permutation groups with socle an exceptional group of Lie type, with the aim of establishing accurate lower bounds. This complements recent work of Covato \cite{Cov}, who has studied the analogous problem for almost simple groups with socle a sporadic, alternating or classical group. A simplified version of our main result is the following.

\begin{theorem}\label{t:main}
Let $G$ be an almost simple primitive permutation group of degree $n$ with socle $T$, an exceptional group of Lie type over $\mathbb{F}_{q}$. Then either
\begin{itemize}\addtolength{\itemsep}{0.2\baselineskip}
\item[{\rm (i)}] ${\rm ifix}(T) > n^{1/3}$; or
\item[{\rm (ii)}] $(T,n)=({}^2B_2(q),q^2+1)$ or $({}^2G_2(3)',9)$, and ${\rm ifix}(T)=1$.
\end{itemize}
\end{theorem}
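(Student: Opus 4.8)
The plan is to translate everything into fixed-point ratios. Writing $\Omega = G/H$ with $H = G_\a$ a maximal subgroup, $n = |G:H| = |T:T\cap H|$, the number of points fixed by an involution $x \in T$ is
\[
{\rm fix}(x,\Omega) = \frac{|x^{G}\cap H|\,|C_{G}(x)|}{|H|} = n\cdot\frac{|x^{G}\cap H|}{|x^{G}|},
\]
so ${\rm ifix}(T)$ is governed by the fixed-point ratio $\fpr = |x^{G}\cap H|/|x^{G}|$ of a well-chosen involution. Since the aim is a \emph{lower} bound, the cleanest tool is the elementary estimate $|x^{G}\cap H| \geqs |x^{H}|$, valid whenever $x \in H$, which gives
\[
{\rm fix}(x,\Omega) \geqs |C_{G}(x):C_{G}(x)\cap H|.
\]
It therefore suffices, for each possibility for $H$, to exhibit an involution $x \in T\cap H$ for which this centralizer index exceeds $n^{1/3}$, the two exceptional actions being exactly those for which no such involution exists.

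First I would invoke the classification of the maximal subgroups of the almost simple exceptional groups (Liebeck--Seitz and the associated literature), sorting $H$ into finitely many families: maximal parabolic subgroups, reductive subgroups of maximal rank (subsystem subgroups and their normalizers), and the remaining \emph{small} subgroups (the collection $\ms$ of almost simple subgroups together with the exotic local and other subgroups of order bounded by a low-degree polynomial in $q$). In the parabolic and maximal-rank cases I would set up the ambient simple algebraic group $\G$ with its Frobenius endomorphism and the corresponding positive-dimensional subgroup $\H \leqs \G$, and use the approximations $|C_{T}(x)| \approx q^{\dim C_{\G}(x)}$, $|T\cap H| \approx q^{\dim \H}$ and $n \approx q^{\dim \G - \dim \H}$.

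For the positive-dimensional families the strategy is a dimension count. I would run over the $\G$-classes of involutions, choosing $x$ with $\dim C_{\G}(x)$ as large as possible subject to $x \in \H$ and $\dim C_{\H}(x)$ comparatively small; in odd characteristic these are the semisimple involutions whose centralizers are the maximal-rank subgroups furnished by Borel--de Siebenthal, and in characteristic $2$ the long-root (type $A_{1}$) unipotent involutions. The target inequality is then
\[
\dim C_{\G}(x) - \dim C_{\H}(x) > \tfrac{1}{3}\left(\dim \G - \dim \H\right),
\]
which I would verify case by case across the exceptional types and their parabolic and subsystem subgroups. Since the involution centralizers in $\G$ occupy a large proportion of $\dim \G$ (for instance $A_{1}E_{7}$ of dimension $136$ inside $E_{8}$ of dimension $248$), this holds with room to spare in the generic range, and I would record the finitely many borderline configurations for separate treatment; parabolic cases in characteristic $2$ are typically the easiest, since the unipotent radical supplies an abundance of involutions.

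When $H$ is small, $n$ is close to $|T|$ and $n^{1/3} \approx q^{(\dim\G)/3}$; choosing any involution $x \in T\cap H$ in a class with large centralizer gives ${\rm fix}(x,\Omega) \geqs |C_{T}(x)|/|T\cap H| \gg q^{(\dim\G)/3}$ once $q$ is large, because the largest involution centralizers have dimension well above $(\dim \G)/3$ in every exceptional type. The residue is a finite list of small $q$ together with the low-rank groups ${}^2B_2(q)$, ${}^2G_2(q)$, $G_2(q)$, ${}^3D_4(q)$, where the $q^{d}$ estimates are too crude and I would instead appeal to exact class and centralizer data, completing these by direct computation. I expect the main obstacle to be precisely this boundary analysis: showing that the inequality fails for \emph{only} the two listed actions. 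The delicate point is the Suzuki group in its natural action, where the point stabilizer is strongly embedded so that every involution has a unique fixed point, forcing ${\rm ifix}(T)=1$; one must then check that the same collapse occurs for ${}^2G_2(3)'\cong {\rm L}_{2}(8)$ of degree $9$ but for no other group or action, which is where the careful analysis of fixed-point ratios for the low-rank groups and small $q$ becomes unavoidable.
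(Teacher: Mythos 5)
Your proposal follows essentially the same route as the paper: the same fixed-point formula with the lower bound $|x^{G}\cap H|\geqs |x^{H}|$, the same reduction via the Liebeck--Seitz classification of maximal subgroups into parabolic, maximal rank, and ``small'' families, the choice of a well-placed involution whose $T$-class is identified by dimension/Jordan-form computations in the ambient algebraic group, and the observation that the Suzuki Borel action collapses because the point stabilizer is strongly embedded. The one genuine methodological difference is the parabolic case: the paper computes ${\rm fix}(t)$ \emph{exactly} by decomposing the permutation character into almost characters and evaluating Green functions (following Lawther--Liebeck--Seitz), whereas you propose the crude centralizer-index bound plus dimension counting. For the $n^{1/3}$ bound this is workable, but be aware that in precisely the critical cases --- ${}^2G_2(q)$ and $G_2(q)$ ($p=3$) on cosets of a Borel subgroup --- your target inequality $\dim C_{\bar{G}}(x)-\dim C_{\bar{H}}(x)>\tfrac{1}{3}(\dim\bar{G}-\dim\bar{H})$ degenerates to an equality of dimensions ($2=2$ for $G_2$), so the strict bound ${\rm ifix}(T)>n^{1/3}$ there rests entirely on the exact values (e.g.\ ${\rm fix}(t)=q+1$ against $n=q^3+1$); your plan correctly flags these borderline configurations for separate exact treatment, so this is a matter of where the work lies rather than a gap.
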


Note that ${}^2G_2(3)' \cong {\rm L}_{2}(8)$, so the second example in part (ii) can be viewed as a classical group. Also note that ${\rm ifix}(G) \geqs {\rm ifix}(T)$, so the theorem gives a lower bound on the involution fixity of $G$. 

\begin{remk}\label{r:main}
The lower bound in part (i) is best possible in the sense that for any $\e>0$, there are infinitely many groups with $n^{1/3} < {\rm ifix}(T)< n^{1/3+\e}$. For instance, if we consider the action of $G = {}^2G_2(q)'$ on the set of cosets of a Borel subgroup, then Theorem \ref{t:parab} states that ${\rm ifix}(T)>n^{1/3}$ and 
\[
\liminf_{q \to \infty}\frac{\log {\rm ifix}(T)}{\log n} = \frac{1}{3}.
\]
The same conclusion holds when $T=G_2(q)$, $p=3$ and $G$ contains a graph or graph-field automorphism (again, with respect to the action of $G$ on the cosets of a Borel subgroup). 
\end{remk}

With a view towards applications, it is desirable to strengthen the bound in part (i) of Theorem \ref{t:main}, at the possible expense of some additional (known) exceptions. In \cite{Cov}, Covato determines the almost simple primitive groups with socle $T$ (a sporadic, alternating or classical group) such that ${\rm ifix}(T) \leqs n^{4/9}$. Our next theorem establishes the analogous result for exceptional groups of Lie type.

\begin{theorem}\label{t:main2}
Let $G$ be an almost simple primitive permutation group of degree $n$ with socle $T$, an exceptional group of Lie type over $\mathbb{F}_{q}$, and point stabilizer $H$. Set $H_0 = H \cap T$ and $q=p^f$ with $p$ a prime. Then ${\rm ifix}(T) \leqs n^{4/9}$ if and only if $(T,H_0)$ appears in Table $\ref{tab:main}$. In each of these cases, ${\rm ifix}(T) \geqs n^{\a}$ for the given constant $\a$.
\end{theorem}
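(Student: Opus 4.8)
The plan is to reduce everything to the standard fixed-point count for a transitive action. Since $H$ is maximal and $T \not\leqs H$, we have $G = TH$, so $\O = G/H$ is isomorphic to $T/H_0$ as a $T$-set, and for an involution $t \in T$ the number of fixed points is
\[
{\rm fix}(t) = \frac{|C_T(t)|}{|H_0|}\,|t^T \cap H_0| = n\cdot\frac{|t^T \cap H_0|}{|t^T|}.
\]
Thus ${\rm ifix}(T) = \max_t {\rm fix}(t)$, the maximum taken over representatives of the $T$-classes of involutions, and the whole problem becomes one of estimating the class size $|t^T|$ and the fusion number $|t^T \cap H_0|$ for each involution class and each possibility for $H$. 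The class sizes are read off from the classification of involutions and their centralizers in finite exceptional groups; passing to the ambient simple algebraic group $\bar G$, one has $|t^T| \approx q^{\dim \bar G - \dim C_{\bar G}(t)}$ and $n \approx q^{\dim \bar G - \dim \bar H}$ when $\bar H$ has positive dimension. The real content is therefore the control of how the relevant involution classes fuse into each type of point stabilizer.

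First I would invoke the Liebeck--Seitz classification of the maximal subgroups $H$, which sorts the possibilities into a short list of types: maximal parabolic subgroups, reductive subgroups of maximal rank, a handful of torus normalizers, subfield subgroups, the exotic local subgroups, and the remaining almost simple (type $\mathcal{S}$) subgroups. I would treat these in turn. The parabolic case is governed by the companion result Theorem~\ref{t:parab}, which already exhibits the extremal behaviour recorded in Remark~\ref{r:main}; here one selects an involution whose centralizer meets a Levi factor in a subgroup of large dimension, forcing ${\rm fix}(t)$ to be large. For reductive maximal-rank subgroups one locates an involution inside a subsystem subgroup and estimates $|t^T \cap H_0|$ from the involution classes of the reductive part, which typically gives ${\rm fix}(t) > n^{4/9}$ with room to spare. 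For the small subgroups --- subfield, exotic local, and type $\mathcal{S}$ --- the degree $n$ is close to $|T|$, and here the key observation is that once a single conjugate of a suitable involution $t$ lies in $H_0$ one has ${\rm fix}(t) \geqs n/|t^T| \approx q^{\dim C_{\bar G}(t)}$; choosing $t$ with $\dim C_{\bar G}(t) > \tfrac{4}{9}\dim \bar G$ then already exceeds $n^{4/9}$, so the task is simply to show that $H_0$ contains an involution fusing to such a class.

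The \emph{only if} direction then amounts to verifying, family by family ($G_2$, ${}^3D_4$, $F_4$, ${}^2F_4$, $E_6^{\varepsilon}$, $E_7$, $E_8$, together with the Suzuki and Ree groups), that every pair $(T,H_0)$ not listed in Table~\ref{tab:main} admits an involution $t$ with ${\rm fix}(t) > n^{4/9}$; this is cleanest to extract from the more detailed per-family refinements of Theorem~\ref{t:main}. For the \emph{if} direction I would compute ${\rm ifix}(T)$ precisely for each pair in the table, recording the exact exponent $\a$ with $n^{\a} \leqs {\rm ifix}(T) \leqs n^{4/9}$. The degenerate entries $({}^2B_2(q),q^2+1)$ and $({}^2G_2(3)',9)$ have strongly embedded point stabilizers, so every involution fixes exactly one point, giving ${\rm ifix}(T)=1$ and $\a = 0$.

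I expect the main obstacle to be the accurate determination of the fusion numbers $|t^T \cap H_0|$, since this requires pinning down the $H_0$-classes of involutions together with their fusion in $T$ --- a delicate matter when $p=2$, where one must disentangle unipotent and semisimple involutions, and whenever $H_0$ has an intricate involution structure. A secondary difficulty is that the dimension-based asymptotics degrade for small $q$, so the borderline pairs sitting near the $n^{4/9}$ threshold, and the low-rank groups over small fields, will likely require exact class-size data or direct computation in \textsc{Magma} to decide on which side of the bound they fall.
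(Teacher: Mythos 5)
Your proposal follows essentially the same route as the paper: the fixed-point formula ${\rm fix}(t) = \frac{|t^T\cap H_0|}{|t^T|}\,n$, the reduction via the classification of maximal subgroups into parabolic, maximal-rank, subfield, exotic local and almost simple types, and a case-by-case choice of involution whose $T$-class is controlled well enough to compare ${\rm fix}(t)$ with $n^{4/9}$, falling back on exact class-size data and \textsc{Magma} for small $q$ and borderline pairs. The only caution is that your single-conjugate bound ${\rm fix}(t)\geqs n/|t^T|$ is too weak for $G_2(q)$ (where $\dim C_{\bar G}(t)=6<\tfrac{4}{9}\cdot 14$, which is precisely why most exceptions in Table \ref{tab:main} occur there); the paper instead uses $|t^T\cap H_0|\geqs |t^{H_0}|$, or the full involution count $i_2(H_0)$ when $T$ has a unique involution class, exactly as your hedge about exact fusion data anticipates.
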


{\small 
\renewcommand{\arraystretch}{1.2}
\begin{table}
\begin{center}
$$\begin{array}{llcl}\hline
T & H_0 & \a & \mbox{Conditions} \\ \hline
F_4(q) & (q^2+q+1)^2.(3 \times {\rm SL}_2(3)) & 0.427 & q = 2,\, G = {\rm Aut}(T) \\
G_2(q)' & P_1 & 2/5 & \mbox{$q \geqs 7$ odd} \\
& P_2 & 2/5 & q \geqs 7 \\
& P_{1,2} & 1/3 & \mbox{$p = 3$, $q \geqs 9$} \\
& G_2(q_0) & 3/7 & \mbox{$q=q_0^k$ odd, $k$ prime} \\
& {}^2G_2(q) & 2/5 & \mbox{$q=3^{2m+1}$, $m \geqs 0$} \\
& 2^3.{\rm L}_{3}(2) & 3/7 & q=p \geqs 11 \\
& {\rm U}_{3}(3){:}2 & 3/7 & q=p \geqs 11 \\
& {\rm L}_{2}(13) & 3/7 & \mbox{$q \geqs 17$, $p \ne 13$, $\mathbb{F}_{q} = \mathbb{F}_{p}[\sqrt{13}]$} \\
& {\rm L}_{2}(8) & 3/7 & \mbox{$q \geqs 23$, $p \geqs 5$, $\mathbb{F}_{q} = \mathbb{F}_{p}[\omega]$, $\omega^3-3\omega+1 = 0$} \\
& 3^{1+2}.8 & 0.416 & q=2 \\
{}^3D_4(q) & P_{1,3,4} & 4/11 & \mbox{$q$ odd} \\
& {}^3D_4(q_0) & 3/7 & \mbox{$q=q_0^k$ odd, $k$ prime}  \\
& G_2(q) & 3/7 & \mbox{$q$ odd} \\
& {\rm PGL}_{3}(q) & 2/5 & \mbox{$q$ odd} \\
& {\rm PGU}_{3}(q) & 0.387 & \mbox{$q$ odd} \\
& (q^4-q^2+1).4 & 0.436 & q = 2 \\
& (q^2+q+1)^2.{\rm SL}_{2}(3) & 0.401 & q = 2,3  \\
& (q^2-q+1)^2.{\rm SL}_{2}(3) & 0.351 & q = 2, 3, 4 \\
{}^2G_2(q)' & q^{1+1+1}{:}(q-1) & 1/3 & \\
& {}^2G_2(q_0) & 0.426 & \mbox{$q=q_0^k$, $k$ prime}  \\
& (q-\sqrt{3q}+1){:}6 & 0.442 & q=27 \\
& D_{18} & 0.416 & q=3  \\
& D_{14} & 0.386 & q=3  \\
& 2^3{:}7 & 0 & q=3  \\
{}^2B_2(q) & q^{1+1}{:}(q-1) & 0 & \\ 
 & {}^2B_2(q_0) & 0.397 & \mbox{$q=q_0^k$, $k$ prime}  \\
 & (q+\sqrt{2q}+1){:}4 & 0.438 & q=8 \\ 
 & (q-\sqrt{2q}+1){:}4 & 0.380 & q=8,32 \\ \hline
\end{array}$$
\caption{The cases with $n^{\a} < {\rm ifix}(T) \leqs n^{4/9}$ in Theorem \ref{t:main2}}
\label{tab:main}
\end{center}
\end{table}
\renewcommand{\arraystretch}{1}}

\begin{remk}
Let us make some comments on the statement of Theorem \ref{t:main2}.
\begin{itemize}\addtolength{\itemsep}{0.2\baselineskip}
\item[{\rm (a)}] The focus on the bound ${\rm ifix}(T) \leqs n^{4/9}$ is somewhat arbitrary, although there are infinitely many examples where it is essentially best possible. For instance, if $G = {}^3D_4(q)$ and $H=P_2$ is a maximal parabolic subgroup, 
then ${\rm ifix}(T)>n^{4/9}$ and 
\[
\liminf_{q \to \infty} \frac{\log {\rm ifix}(T)}{\log n} = \frac{4}{9}
\]
(see Theorem \ref{t:parab}). 
\item[{\rm (b)}] More generally, there are many examples with ${\rm ifix}(T)<n^{1/2}$. In the special cases where $H$ is a maximal parabolic, subfield or exotic local subgroup, the relevant groups are determined in Theorems \ref{t:parab}, \ref{t:subfield} and \ref{t:exotic}, respectively. 
\item[{\rm (c)}] For the cases in Table \ref{tab:main}, the lower bound ${\rm ifix}(T) \geqs n^{\a}$ is best possible in the sense that the given constant $\a$ is either accurate to $3$ decimal places (as a lower bound which is valid for each value of $q$ recorded in the final column of the table), or 
\[
\liminf_{q \to \infty} \frac{\log {\rm ifix}(T)}{\log n} = \a.
\]
As noted in Section \ref{s:small}, if $T = {}^2G_2(q)$ and $H_0 = {}^2G_2(q_0)$ is a subfield subgroup then 
\begin{equation}\label{e:eqqq}
\liminf_{q \to \infty} \frac{\log {\rm ifix}(T)}{\log n} = \b
\end{equation}
with $\b=3/7$. Similarly, \eqref{e:eqqq} holds with $\b=2/5$ if $T = {}^3D_4(q)$ and $H_0 = {\rm PGU}_{3}(q)$ (with $q$ odd), and also if $T = {}^2B_2(q)$ and $H_0 = {}^2B_2(q_0)$.
\end{itemize}
\end{remk}

Finally, we turn to asymptotic results. Let $G$ be an almost simple exceptional group of Lie type over $\mathbb{F}_{q}$ with socle $T = (\bar{G}_{\s})'$, where $\bar{G}$ is a simple exceptional algebraic group (of type $E_8$, $E_7$, $E_6$, $F_4$ or $G_2$) over the algebraic closure of $\mathbb{F}_{q}$ and $\s$ is an appropriate Steinberg endomorphism of $\bar{G}$. For the purposes of Theorem \ref{t:main3} below, let us say that a maximal subgroup $H$ of $G$ is \emph{algebraic} if it is of the form 
$H = N_G(\bar{H}_{\s})$, where $\bar{H}$ is a $\s$-stable closed positive dimensional
subgroup of $\bar{G}$ (equivalently, $H$ is algebraic if it is either a maximal parabolic subgroup, or a maximal rank subgroup as in \cite{LSS}, or if it is a non-maximal rank subgroup as in part (I) of Theorem \ref{t:types}). In the second column of Table \ref{tab:main2} we refer to the \emph{type} of $H_0$, which is simply the socle of $H_0$ for the non-parabolic subgroups in the table.

\begin{theorem}\label{t:main3}
Let $G$ be an almost simple primitive permutation group of degree $n$ with socle $T$, an exceptional group of Lie type over $\mathbb{F}_{q}$, and point stabilizer $H$, an algebraic subgroup of $G$. Set $H_0 = H \cap T$ and $q=p^f$ with $p$ a prime. Then 
\[
\liminf_{q \to \infty}\frac{\log {\rm ifix}(T)}{\log n} = \b 
\]
and either $\b \geqs 1/2$, or $(T,H_0,\b)$ is one of the cases listed in Table 
$\ref{tab:main2}$. 
\end{theorem}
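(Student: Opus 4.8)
The plan is to convert the limiting exponent into data about centralizer dimensions in the ambient algebraic group, and then to evaluate this over the finitely many classes of algebraic subgroups. First I would fix an involution $x \in T$ and use the permutation-character identity
\[
{\rm fix}(x,\Omega) = \frac{|x^G \cap H_0|}{|x^G|}\,n,
\]
noting that $x^G \subseteq T$ forces $x^G \cap H = x^G \cap H_0$. Since $H = N_G(\bar{H}_\sigma)$ with $\bar{H}$ a proper $\sigma$-stable closed positive-dimensional subgroup of $\bar{G}$, each ingredient is controlled by a point count: one has $n \sim q^{\dim\bar{G}-\dim\bar{H}}$ from the order formulas, while $|x^G|$ and $|x^G\cap H_0|$ are the numbers of $\mathbb{F}_q$-points of the irreducible conjugacy-class variety $x^{\bar{G}}$ (of dimension $\dim\bar{G}-\dim C_{\bar{G}}(x)$) and of the variety of involutions of $\bar{H}$ fusing into $x^{\bar{G}}$, respectively. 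Applying the Lang--Weil estimates to each variety, the leading constants and error terms cancel in the ratio $\log(\cdot)/\log n$, so for each fixed family of involutions the quantity $\log{\rm fix}(x,\Omega)/\log n$ tends to $1 + (\dim y^{\bar{H}} - \dim y^{\bar{G}})/(\dim\bar{G}-\dim\bar{H})$, where $y$ is an involution of $\bar{H}$ in the dominant fusing class.

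Maximizing over the involution classes then collapses the problem to the single formula
\[
\beta = \frac{\max_{y}\big(\dim C_{\bar{G}}(y) - \dim C_{\bar{H}}(y)\big)}{\dim\bar{G} - \dim\bar{H}},
\]
where $y$ ranges over the involutions of $\bar{H}$ and $C_{\bar{H}}(y) = C_{\bar{G}}(y)\cap\bar{H}$. The dimension bound $\dim(C_{\bar{G}}(y)\cap\bar{H}) \geqs \dim C_{\bar{G}}(y) + \dim\bar{H} - \dim\bar{G}$ gives $0 \leqs \beta \leqs 1$ at once, and $\beta \geqs 1/2$ is equivalent to the existence of an involution $y \in \bar{H}$ with $\dim C_{\bar{G}}(y) - \dim C_{\bar{H}}(y) \geqs \tfrac12(\dim\bar{G} - \dim\bar{H})$. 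The liminf, rather than a genuine limit, is forced by the fact that the optimal $\bar{H}$-class need not be realized over $\mathbb{F}_q$ for every $q$: in characteristic $2$ the involutions are unipotent rather than semisimple, and in odd characteristic a given class may be defined over $\mathbb{F}_q$ only for $q$ in a fixed congruence class. Thus $\beta$ is the exponent of the dominant class that survives for the worst infinite family of admissible $q$, and in practice is obtained by computing the maximum separately in each characteristic and taking the smaller value.

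With the formula established, the remainder is a finite computation. I would run through the algebraic subgroups $\bar{H}$ permitted by the hypothesis --- the maximal parabolics, the maximal-rank subgroups of \cite{LSS}, and the non-maximal-rank subgroups in part (I) of Theorem \ref{t:types} --- for each socle $T$ of type $E_8$, $E_7$, $E_6$, $F_4$ or $G_2$. For each pair $(\bar{G},\bar{H})$ I would tabulate the involution classes of $\bar{H}$ with their centralizer dimensions in both $\bar{H}$ and $\bar{G}$, determine the $\bar{G}$-fusion of each class, and read off the maximizing $y$. The parabolic cases can be cross-checked against Theorem \ref{t:parab}, and for the reductive subgroups the required centralizer data is available from the known structure of involution centralizers in exceptional groups. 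Comparing each resulting $\beta$ with $1/2$ isolates precisely the pairs $(T,H_0)$ that must be listed in Table \ref{tab:main2}.

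The main obstacle will be the characteristic-$2$ analysis together with the fusion bookkeeping. Determining, for each unipotent involution of $\bar{H}$, its $\bar{G}$-class and the exact centralizer dimension requires the full classification of unipotent involution classes and their centralizers in the exceptional groups, which is considerably more delicate than the semisimple (odd characteristic) case. One must also confirm, for the class attaining the maximum, that it is realized over $\mathbb{F}_q$ for infinitely many $q$ so that it genuinely contributes to the liminf, and that the corresponding Lang--Weil main term is nonzero --- that is, that the variety of fusing involutions has a top-dimensional component defined over $\mathbb{F}_q$ --- so that the predicted exponent is attained and not merely an upper bound.
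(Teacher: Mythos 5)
Your strategy is sound but genuinely different from the paper's. The paper proves this theorem simply by combining the asymptotic statements of Theorems \ref{t:parab}, \ref{t:mr} and \ref{t:nonmr}, each of which is established by direct finite-group computations: for parabolic subgroups the permutation character is decomposed into almost characters and evaluated on involutions via Green functions (or via \cite[Corollary 3.2]{LLS2} in odd characteristic), and for the reductive subgroups an explicit involution $t \in H_0$ is chosen, $|C_{H_0}(t)|$ is bounded, and the $T$-class of $t$ is pinned down by Jordan-form calculations on $\mathcal{L}(\bar{G})$ or a minimal module. Lang--Weil enters the paper only afterwards, to deduce the algebraic-group statement (Theorem \ref{t:main4}) from Theorem \ref{t:main3}. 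You run this in reverse: establish the dimension formula $\b = \max_y \bigl(\dim C_{\bar{G}}(y) - \dim C_{\bar{H}}(y)\bigr)/(\dim\bar{G}-\dim\bar{H})$ at the level of algebraic groups (this is exactly the content of \cite[Proposition 1.14]{LLS}, which the paper uses for Theorem \ref{t:main4}) and descend to the finite groups via Lang--Weil. Your approach is more uniform and handles parabolic and reductive subgroups on the same footing, at the cost of losing the exact fixed-point counts that the paper needs for Theorems \ref{t:main} and \ref{t:main2}.

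Two caveats keep your sketch from being a proof. First, the descent is asymmetric: Lang--Weil gives $|t^T \cap H_0| = O\bigl(q^{\dim(t^{\bar{G}}\cap\bar{H})}\bigr)$ unconditionally, so your formula yields the upper bound $\limsup_q \log{\rm ifix}(T)/\log n \leqs \b$ for free, but the matching lower bound on the $\liminf$ requires that for \emph{every} sufficiently large admissible $q$ some class attaining the exponent has a $\sigma$-stable top-dimensional component whose $\mathbb{F}_q$-points actually lie in $H_0$ (and not merely in $\bar{H}_{\s}\setminus H_0$, an issue that is real for $E_7$ in odd characteristic). You flag this but do not resolve it; it is precisely what the paper's explicit constructions (e.g.\ Corollary \ref{c:new}, Remark \ref{r:nmr}, and the choice of rational representatives throughout Sections \ref{s:mr} and \ref{s:algebraic}) are for. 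Second, the ``finite computation'' of the fusion data --- determining $\dim C_{\bar{G}}(y)$ for each involution class of each $\bar{H}$, separately in characteristic $2$ --- is not lighter in your formulation than in the paper's; it is the same Jordan-form and centralizer bookkeeping, so the reformulation changes the packaging of the argument more than its substance.
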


{\small 
\renewcommand{\arraystretch}{1.2}
\begin{table}
\begin{center}
$$\begin{array}{llcl}\hline
T & \mbox{Type of $H_0$} & \b & \mbox{Conditions} \\ \hline
E_8(q) & \Omega_5(q) & 58/119 & p \geqs 5 \\
& {\rm L}_{2}(q) & 17/35 &  \mbox{$3$ classes; $p \geqs 23, 29, 31$} \\
E_7(q) & {\rm L}_{2}(q) & 31/65 & \mbox{$2$ classes; $p \geqs 17, 19$} \\
E_6^{\e}(q) & {\rm L}_{3}^{\e}(q^3) & 13/27 & \\
F_4(q) & G_2(q) & 9/19 & p = 7 \\
& {\rm L}_{2}(q) & 23/49 & p \geqs 13 \\
G_2(q) & P_1 &  2/5 & p \geqs 3 \\ 
 & P_2 &  2/5 & \\
 & P_{1,2} &  1/3 & p = 3 \\
 & {\rm L}_2(q) & 5/11 & p \geqs 7 \\
 {}^2F_4(q) & P_{2,3} & 5/11 & \\
{}^2G_2(q)' & q^{1+1+1}{:}(q-1)  & 1/3 & \\ \hline
\end{array}$$
\caption{The cases with $\b<1/2$ in Theorem \ref{t:main3}}
\label{tab:main2}
\end{center}
\end{table}
\renewcommand{\arraystretch}{1}}

The proof of Theorem \ref{t:main3} follows by combining the asymptotic statements in Theorems \ref{t:parab}, \ref{t:mr} and \ref{t:nonmr} on parabolic, maximal rank and lower rank algebraic subgroups, respectively (see Remark \ref{r:thm3}).

We can interpret Theorem \ref{t:main3} at the level of algebraic groups. Let $\bar{G}$ be a simple exceptional algebraic group over an algebraically closed field, let $\bar{H}$ be a closed subgroup of $\bar{G}$ and consider the natural action of $\bar{G}$ on the coset variety $\O = \bar{G}/\bar{H}$. For each $t \in \bar{G}$, the set of fixed points $C_{\O}(t)$ is a subvariety and  
\[
{\rm ifix}(\bar{G}) = \max\{\dim C_{\O}(t) \,:\, \mbox{$t \in \bar{G}$ is an involution}\}
\]
is a natural definition for the involution fixity of $\bar{G}$ (note that $\bar{G}$ has only finitely many conjugacy classes of involutions, so this is well-defined). The algebraic group analogue of Theorem \ref{t:main3} is the following (in Table \ref{tab:main3}, it is convenient to set $p = \infty$ when the underlying field has characteristic zero).

\begin{theorem}\label{t:main4}
Let $\bar{G}$ be a simple exceptional algebraic group over an algebraically closed field $K$ of characteristic $p \geqs 0$, let $\bar{H}$ be a maximal positive dimensional closed subgroup of $\bar{G}$ and consider the action of $\bar{G}$ on $\O = \bar{G}/\bar{H}$. Then either 
\begin{itemize}\addtolength{\itemsep}{0.2\baselineskip}
\item[{\rm (i)}] ${\rm ifix}(\bar{G}) \geqs \frac{1}{2}\dim \O$; or
\item[{\rm (ii)}] ${\rm ifix}(\bar{G}) = \gamma\dim \O$ and $(\bar{G},\bar{H},\gamma)$ is one of the cases listed in Table $\ref{tab:main3}$.
\end{itemize}
\end{theorem}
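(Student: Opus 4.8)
The plan is to compute ${\rm ifix}(\bar{G})$ at the level of varieties and then identify the resulting proportion with the finite asymptotic exponent of Theorem \ref{t:main3}. First I would record the dimension formula for fixed-point sets. Writing $t^g = g^{-1}tg$, the fixed-point variety of an involution $t$ on $\Omega = \bar{G}/\bar{H}$ is $C_{\Omega}(t) = \{g\bar{H} : t^g \in \bar{H}\}$, which is a disjoint union indexed by the $\bar{H}$-classes of involutions meeting $t^{\bar{G}} \cap \bar{H}$. For a fixed such class $s^{\bar{H}}$, the set $\{g : t^g \in s^{\bar{H}}\}$ is right $\bar{H}$-stable and fibres over $s^{\bar{H}}$ with fibres equal to cosets of $C_{\bar{G}}(t)$; hence the corresponding component of $C_{\Omega}(t)$ has dimension $\dim C_{\bar{G}}(t) - \dim C_{\bar{H}}(s)$. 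Maximising over $t$ and over the fusing classes $s$, and using $\dim C_{\bar{G}}(t) = \dim C_{\bar{G}}(s)$ when $s \in t^{\bar{G}}$, I obtain
\[
{\rm ifix}(\bar{G}) = \max_{s}\bigl(\dim C_{\bar{G}}(s) - \dim C_{\bar{H}}(s)\bigr),
\]
the maximum ranging over representatives $s$ of the $\bar{H}$-classes of involutions in $\bar{H}$. Equivalently $C_{\Omega}(t)$ has codimension $\dim s^{\bar{G}} - \dim s^{\bar{H}}$ in $\Omega$ for the extremal class, so the whole problem reduces to comparing involution-class dimensions in $\bar{G}$ and in $\bar{H}$.

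Next I would appeal to the classification of the maximal positive-dimensional closed subgroups $\bar{H}$ of $\bar{G}$, which partitions them into the maximal parabolic subgroups, the reductive maximal-rank subgroups of \cite{LSS}, and the finitely many non-maximal-rank subgroups listed in part (I) of Theorem \ref{t:types}. These are precisely the three families treated in the finite setting by Theorems \ref{t:parab}, \ref{t:mr} and \ref{t:nonmr}. For each family the required input is the set of involution classes of $\bar{H}$, their fusion in $\bar{G}$, and the two centralizer dimensions appearing above; in odd characteristic these involutions are semisimple and the data is standard, while the unipotent involutions in characteristic two are handled separately. Performing the maximisation then yields ${\rm ifix}(\bar{G}) = \gamma \dim \Omega$ for an explicit rational number $\gamma$ depending on $(\bar{G}, \bar{H})$ and on $p$.

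The point of contact with Theorem \ref{t:main3} is the Lang--Weil estimate. For a $\sigma$-stable $\bar{H}$ with $T = (\bar{G}_{\sigma})'$ and $H_0 = \bar{H}_{\sigma} \cap T$, the degree satisfies $n = |T : H_0| = q^{\dim \Omega}(1+o(1))$, while for a rational extremal involution $t$ the number of its fixed points on $T/H_0$ equals $|C_{\Omega}(t)(\mathbb{F}_q)| = c\,q^{\dim C_{\Omega}(t)}(1+o(1))$, where $c$ counts the top-dimensional components defined over $\mathbb{F}_q$. Thus for fixed $p$ and $q = p^f \to \infty$ one has $\log {\rm ifix}(T)/\log n \to \dim C_{\Omega}(t)/\dim \Omega = \gamma$, and taking the $\liminf$ over all admissible $q$ returns the exponent $\beta$ of Theorem \ref{t:main3}; the characteristic-zero case follows from the large-$q$ behaviour, since the relevant centralizer dimensions are independent of $p$ outside a finite set of primes. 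Therefore $\gamma = \beta$, and the dichotomy ``${\rm ifix}(\bar{G}) \geqs \frac{1}{2}\dim\Omega$ or $(\bar{G},\bar{H},\gamma)$ lies in Table \ref{tab:main3}'' is exactly the translation of ``$\beta \geqs 1/2$ or $(T,H_0,\beta)$ lies in Table \ref{tab:main2}''.

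The main obstacle is the rationality bookkeeping behind this Lang--Weil step. One must verify that the geometric involution class $s$ achieving the maximum has $\mathbb{F}_q$-rational representatives in $\bar{H}_{\sigma}$ for infinitely many $q$, so that the $\liminf$ genuinely attains $\gamma$ rather than the exponent of a smaller rational class, and that the top-dimensional components of $C_{\Omega}(t)$ are defined over $\mathbb{F}_q$ so that the leading constant $c$ is nonzero. A single geometric class can split into several $T$-classes with distinct fixed-point counts, and certain extremal classes exist only in a prescribed characteristic; these phenomena are precisely what the ``Conditions'' columns of Tables \ref{tab:main2} and \ref{tab:main3} encode, and they must be tracked case by case. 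The genuinely laborious computation is that of the centralizer dimensions for the non-maximal-rank subgroups, where the involution fusion is most delicate; but this is already carried out in the proof of Theorem \ref{t:nonmr}, so for Theorem \ref{t:main4} it suffices to read off the value of $\gamma$ it produces.
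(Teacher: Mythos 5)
Your proposal is correct and rests on exactly the same two ingredients as the paper's proof: the Lang--Weil estimate (invoked via \cite[Lemma 7.1]{LSh05}) to convert the asymptotic exponent $\beta$ of Theorem \ref{t:main3} into the ratio $\dim C_{\O}(t)/\dim \O$, together with the fact that these dimensions are independent of the algebraically closed field in a fixed characteristic and agree with characteristic zero for all large primes. The only divergence is that you additionally propose a direct geometric computation of ${\rm ifix}(\bar{G})$ from a fixed-point dimension formula (which the paper simply quotes from \cite[Proposition 1.14]{LLS} rather than re-deriving); this extra step is logically redundant, since once the Lang--Weil transfer is in place the value $\gamma=\beta$ is read off from Theorem \ref{t:main3} with no further case analysis.
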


{\small 
\renewcommand{\arraystretch}{1.2}
\begin{table}
\begin{center}
$$\begin{array}{llcl}\hline
\bar{G} & \bar{H} & \gamma & \mbox{Conditions} \\ \hline
E_8 & B_2 & 58/119 & p \geqs 5 \\
& A_1 & 17/35 &  \mbox{$3$ classes; $p \geqs 23, 29, 31$} \\
E_7 & A_1 & 31/65 & \mbox{$2$ classes; $p \geqs 17, 19$} \\
F_4 & G_2 & 9/19 & p = 7 \\
& A_1 & 23/49 & p \geqs 13 \\
G_2 & P_1 &  2/5 & p \ne 2 \\ 
 & P_2 &  2/5 & \\ 
 & A_1 & 5/11 & p \geqs 7 \\ \hline
\end{array}$$
\caption{The constant $\gamma$ in Theorem \ref{t:main4}(ii)}
\label{tab:main3}
\end{center}
\end{table}
\renewcommand{\arraystretch}{1}}

The short proof combines Theorem \ref{t:main3} with the well known Lang-Weil estimate \cite{LW} from algebraic geometry (see Section \ref{s:alg} for the details). We refer the reader to \cite{Bur1} for some related results for simple algebraic groups.

\vs

Let us outline the main steps in the proof of Theorems \ref{t:main}, \ref{t:main2} and \ref{t:main3}. Let $G$ be an almost simple primitive permutation group with socle $T = (\bar{G}_{\s})'$, an exceptional group of Lie type over $\mathbb{F}_{q}$ (where $q=p^f$ with $p$ a prime), and point stabilizer $H$. By primitivity, $H$ is a maximal subgroup of $G$ with $G = HT$.
The subgroup structure of finite groups of Lie type has been intensively studied in recent years, and through the work of many authors, the maximal subgroups of $G$ can be partitioned into several naturally defined subgroup collections (in some sense, this is analogous to Aschbacher's celebrated subgroup structure theorem for classical groups \cite{Asch}). This powerful reduction theorem provides the framework for the proofs of our main results, which are organised according to the type of $H$. Roughly speaking, one of the following holds (see Theorem \ref{t:types}): 
\begin{itemize}\addtolength{\itemsep}{0.2\baselineskip}
\item[{\rm (a)}] $H = N_G(\bar{H}_{\s})$, where $\bar{H}$ is a $\s$-stable closed positive dimensional subgroup of $\bar{G}$;
\item[{\rm (b)}] $H$ is almost simple.
\end{itemize}

First we focus on the subgroups arising in (a), which we divide into three further subcollections, according to the structure of $\bar{H}$: 
\begin{itemize}\addtolength{\itemsep}{0.2\baselineskip}
\item[{\rm (i)}] Parabolic subgroups;
\item[{\rm (ii)}] Reductive subgroups of maximal rank;
\item[{\rm (iii)}] Reductive subgroups of smaller rank. 
\end{itemize}
In Section \ref{s:parab}, $H$ is a maximal parabolic subgroup and we use character-theoretic methods (following \cite{LLS2}) to compute ${\rm fix}(t)$ for each involution $t \in T$ (the specific details depend heavily on the parity of the underlying characteristic $p$). Next, in Sections \ref{s:mr} and \ref{s:algebraic}, we handle the reductive subgroups arising in (ii) and (iii) above. Typically, we identify a specific involution $t \in H_0$ (the possibilities for $H_0 = H \cap T$ are known) and then determine the $T$-class of $t$, which yields a lower bound
$${\rm ifix}(T) \geqs {\rm fix}(t) = \frac{|t^T \cap H_0|}{|t^T|}\cdot n  \geqs \frac{|t^{H_0}|}{|t^T|}\cdot n$$
(see Section \ref{s:inv}).
We can often identify the $T$-class of $t$ by computing the Jordan form of $t$ on a suitable $\bar{G}$-module $V$ (to do this, we work with the restriction 
$V{\downarrow}\bar{H}^{\circ}$). When $p=2$, earlier work of Lawther \cite{Lawunip} on unipotent classes is also very useful. This approach relies heavily on the extensive literature concerning conjugacy classes of involutions in finite groups of Lie type, some of which is briefly summarised in Section \ref{s:inv}.

Next we handle subfield subgroups and exotic local subgroups (see \cite{CLSS}) in Sections \ref{s:subfield} and \ref{s:exotic}, respectively (both cases are straightforward). Finally, to complete the proof, we turn to the remaining almost simple maximal subgroups $H$ of $G$ in Section \ref{s:as}. Let $S$ be the socle of $H$ and let ${\rm Lie}(p)$ be the set of finite simple groups of Lie type in characteristic $p$. Here the analysis naturally falls into two cases, according to whether or not $S \in {\rm Lie}(p)$. Although a complete list of maximal almost simple subgroups is not available, in general, there are strong restrictions on the possibilities that can arise (see Theorem \ref{t:simples}). For each possible socle $S$, we choose an involution $t \in S$ so that $|t^S|$ is maximal. By assuming $t$ is contained in the largest class of involutions in $T$, we obtain an upper bound on $|t^T|$ and this yields a lower bound on ${\rm ifix}(T)$ as above. Note that we may have very little information on the embedding of $S$ in $T$, so it is difficult to determine the precise $T$-class of $t$, in general. 

Finally, in Section \ref{s:small} we handle a few extra cases that arise when $T$ is a Suzuki, Ree or Steinberg triality group, and we give a proof of Theorem \ref{t:main4} in Section \ref{s:alg}.

\vs

Let us say a few words on our notation. Let $A$ and $B$ be groups and let $n$ and $p$ be positive integers, with $p$ a prime. We denote a cyclic group of order $n$ by $Z_n$ (or just $n$) and $A^n$ is the direct product of $n$ copies of $A$. In particular, $p^n$ denotes an elementary abelian $p$-group of order $p^n$. We write $[n]$ for an unspecified solvable group of order $n$. A split extension of $A$ by $B$ will be denoted by writing $A{:}B$, while $A.B$ is an arbitrary extension. We write $A \circ B$ for a central product of $A$ and $B$. If $A$ is finite, then ${\rm soc}(A)$ denotes the socle of $H$ (that is, the subgroup of $A$ generated by its minimal normal subgroups) and $i_2(A)$ is the number of involutions in $A$.
For matrices, it will be convenient to write $[A_1^{n_1}, \ldots, A_k^{n_k}]$ for a block-diagonal matrix with a block $A_i$ occurring with multiplicity $n_i$. In addition, $J_i$ will denote a standard unipotent Jordan block of size $i$. 

For finite simple groups, we adopt the notation in \cite{KL}. For example, we will write ${\rm L}_{n}^{+}(q) = {\rm L}_{n}(q)$ and ${\rm L}_{n}^{-}(q) = {\rm U}_{n}(q)$ for ${\rm PSL}_{n}(q)$ and ${\rm PSU}_{n}(q)$, respectively, and similarly $E_6^{+}(q) = {}E_6(q)$ and $E_6^{-}(q) = {}^2E_6(q)$, etc. For a semisimple algebraic group $\bar{G}$ of rank $r$ we write $\Phi(\bar{G})$, $\Phi^{+}(\bar{G})$ and $\Pi(\bar{G}) = \{\a_1, \ldots, \a_r\}$ for the set of roots, positive roots and simple roots of $\bar{G}$, with respect to a fixed Borel subgroup. We will often associate $\bar{G}$ with $\Phi(\bar{G})$ by writing $\bar{G}=A_r$, etc., and we will use $W(\bar{G})$ and $\mathcal{L}(\bar{G})$ for the Weyl group and Lie algebra of $\bar{G}$, respectively. We adopt the standard Bourbaki \cite{Bou} labelling of  simple roots and we will write $\{\l_1, \ldots, \l_r\}$ for the corresponding fundamental dominant weights of $\bar{G}$. If $\lambda$ is a dominant weight, then $V_{\bar{G}}(\lambda)$ (or simply $V(\lambda)$) is the rational irreducible $\bar{G}$-module with highest weight $\lambda$ (the trivial module will be denoted by $0$ and for $\bar{G}=A_1$ we will write $V(n)$ rather than $V(n\l_1)$). Similarly, $W(\lambda)$ will denote the corresponding Weyl module for $\bar{G}$. Finally, if $V$ is a $\bar{G}$-module then $V^n$ will denote the direct sum of $n$ copies of $V$. We will also use $V{\downarrow}\bar{H}$ for the restriction of $V$ to a subgroup $\bar{H}$ of $\bar{G}$. 

\vs

\noindent \textbf{Acknowledgments.} We thank David Craven, Martin Liebeck and Alastair Litterick for helpful discussions. We also thank an anonymous referee for several useful comments which have improved the clarity of the paper.

\section{Involutions}\label{s:inv}

We start by recalling some preliminary results which will be needed for the proofs of our main theorems. In particular, it will be convenient to record some of the information we need on involutions in simple exceptional groups of Lie type (in terms of conjugacy classes and centralizers); our main references are \cite[Section 4.5]{GLS} (in odd characteristic) and \cite{AS} (for even characteristic).

Let $G$ be an almost simple primitive permutation group of degree $n$ with socle $T$ and point stabilizer $H$ (by primitivity, $H$ is a maximal subgroup of $G$ and $G = HT$). Set $H_0 = H \cap T$. We define 
$${\rm ifix}(G) = \max\{{\rm fix}(t) \,:\, \mbox{$t \in G$ is an involution}\}$$
to be the \emph{involution fixity} of $G$, where ${\rm fix}(t)$ is the number of fixed points of $t$ on $\O$. It is easy to see that $n = |T:H_0|$ and 
\begin{equation}\label{e:fix}
{\rm fix}(t) = \frac{|t^T \cap H_0|}{|t^T|} \cdot n
\end{equation}
for all $t \in T$ (see \cite[Lemma 2.5]{LS91}, for example). We will repeatedly make use of this  observation throughout the paper, without further comment.

Let $T$ be an exceptional group of Lie type over $\mathbb{F}_{q}$, where $q=p^f$ with $p$ a prime. A convenient list of the relevant groups and their orders is given in \cite[Table 5.1.B]{KL}. Before going any further, it will be convenient to establish Theorem \ref{t:main2} in the special cases where
\begin{equation}\label{e:small}
T \in  \{G_2(2)', {}^2G_2(3)', {}^2F_4(2)'\}.
\end{equation}
Note that $G_2(2)' \cong {\rm U}_{3}(3)$ and ${}^2G_2(3)'  \cong {\rm L}_{2}(8)$.

\begin{prop}\label{p:small}
Let $G$ be an almost simple primitive permutation group of degree $n$, with socle $T$ and point stabilizer $H$. If $T \in \{G_2(2)', {}^2G_2(3)', {}^2F_4(2)'\}$ then either ${\rm ifix}(T)>n^{4/9}$, or $(T,H_0)$ is one of the cases in Table $\ref{tab:small}$ and ${\rm ifix}(T) \geqs n^{\a}$ for the given constant $\a$.
\end{prop}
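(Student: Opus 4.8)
Proposition~\ref{p:small} concerns only three specific small socles, $T \in \{G_2(2)', {}^2G_2(3)', {}^2F_4(2)'\}$, each of which is a fixed simple group of small order. Since these groups are isomorphic to groups already present in standard references (indeed $G_2(2)' \cong {\rm U}_3(3)$ and ${}^2G_2(3)' \cong {\rm L}_2(8)$, while ${}^2F_4(2)'$ is the Tits group), the natural strategy is an essentially finite, case-by-case verification using the character-theoretic formula \eqref{e:fix}. For each of the three socles I would first enumerate the maximal subgroups $H_0 = H \cap T$ up to conjugacy --- this data is available in the \textsc{Atlas} of finite groups and, for the almost simple extensions, from the associated tables of maximal subgroups. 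This fixes the finite list of candidate point stabilizers, and hence the finitely many values $n = |T:H_0|$ to consider.

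The core computation proceeds as follows. For each socle $T$ and each maximal $H_0$, I would identify the conjugacy classes of involutions in $T$, and for each such class $t^T$ compute the intersection number $|t^T \cap H_0|$. Using \eqref{e:fix} this gives ${\rm fix}(t) = |t^T \cap H_0|\cdot n / |t^T|$ for every involution class, and taking the maximum over classes yields ${\rm ifix}(T)$ exactly. Comparing the resulting integer with $n^{4/9}$ then determines whether the case contributes to Table~\ref{tab:small}; when it does, I record the sharp exponent $\a$ via $\a = \log {\rm ifix}(T) / \log n$ (rounded down to the stated precision). The class-intersection numbers $|t^T \cap H_0|$ are exactly the kind of data encoded in the \textsc{Atlas} class-fusion information, or can be recovered directly from the permutation character ${\rm Ind}_{H_0}^{T}(1)$ evaluated on involution classes, since ${\rm fix}(t)$ is precisely the value of this permutation character at $t$.

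Because everything here is a computation with explicit finite groups, I expect no conceptual obstacle; the work is bookkeeping. The main practical difficulty will be handling the almost simple groups $G$ lying strictly between $T$ and ${\rm Aut}(T)$: one must be careful that the statement asserts ${\rm ifix}(T)$ (an invariant of the socle acting on $\O = T/H_0$), yet primitivity of $G$ constrains which $H_0$ can occur, and different overgroups $G$ may fuse or split $T$-classes of subgroups. I would therefore first settle, for each $T$, exactly which pairs $(G, H)$ are primitive and extract the induced $H_0 = H \cap T$, then compute ${\rm ifix}(T)$ purely in terms of the $T$-action. A secondary point requiring care is the appearance of fused involution classes or outer involutions: since ${\rm ifix}(T)$ only counts involutions inside $T$, I must restrict attention to the inner classes when maximising, even though $G$ may contain additional involutions. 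With these two caveats addressed, the proposition reduces to tabulating the finitely many computed values and reading off those with ${\rm ifix}(T) \leqs n^{4/9}$, which populate Table~\ref{tab:small}.
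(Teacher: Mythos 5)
Your proposal is correct and follows essentially the same route as the paper: the paper's proof is a direct {\sc Magma} computation of ${\rm ifix}(T)$ for each primitive action, using permutation representations from the Web Atlas, which is exactly the finite case-by-case evaluation of the permutation character on involution classes that you describe. Your two caveats (determining which $H_0$ actually arise from primitive almost simple $G$, and restricting the maximum to involutions inside $T$) are the right points to watch and are handled implicitly in the paper's computation.
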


\begin{proof}
This is a straightforward {\sc Magma} \cite{Magma} calculation, using a suitable permutation representation of $G$ given in the Web Atlas \cite{WebAt} (indeed, it is easy to compute the exact value of ${\rm ifix}(T)$ in every case). 
\end{proof}

{\small 
\renewcommand{\arraystretch}{1.2}
\begin{table}
\begin{center}
$$\begin{array}{llccc}\hline
T & H_0 & n & {\rm ifix}(T) & \a  \\ \hline
G_2(2)' & 3^{1+2}.8 & 28 & 4  & 0.416 \\
{}^2G_2(3)' & D_{14} & 36 & 4  & 0.386 \\
& D_{18} & 28 & 4 & 0.416 \\
& 2^3{:}7 & 9 & 1 & 0 \\ \hline
\end{array}$$
\caption{The constant $\a$ in Proposition \ref{p:small}}
\label{tab:small}
\end{center}
\end{table}
\renewcommand{\arraystretch}{1}}

For the remainder of the paper, we may assume that $T$  is not one of the groups in \eqref{e:small}. Write $T = (\bar{G}_{\sigma})'$, where $\bar{G}$ is a simple adjoint algebraic group over the algebraic closure $K = \bar{\mathbb{F}}_{q}$ and $\sigma:\bar{G} \to \bar{G}$ is an appropriate Steinberg endomorphism. Note that $\bar{G}_{\s} = {\rm Inndiag}(T)$ is the group of inner-diagonal automorphisms of $T$ (see \cite[Definition 2.5.10]{GLS}). Given the formula in \eqref{e:fix}, we require information on the conjugacy classes of involutions in $T$ and we will need to understand the fusion of $H_0$-classes in $T$. This analysis falls naturally into two cases, according to the parity of $p$ (indeed, involutions are unipotent when $p=2$, and semisimple when $p \ne 2$). 

There is an extensive literature on conjugacy classes in exceptional groups, both finite and algebraic. First assume $p=2$. In \cite[Sections 12-18]{AS}, Aschbacher and Seitz provide detailed information on the classes of involutions in simple groups of Lie type, in terms of  representatives and centralizer structure. For exceptional groups, we will use the notation of Liebeck and Seitz \cite[Tables 22.2.1--22.2.7]{LS_book} for class representatives, which is consistent with the Bala-Carter labelling of unipotent classes (since we are interested in involutions, we need only consider the classes whose Bala-Carter label has only $A_1$ or $\tilde{A}_1$ components). For instance, $F_4(q)$ has four classes of involutions, labelled as follows:  
$$A_1, \; \tilde{A}_1, \; (\tilde{A}_1)_2,\; A_1\tilde{A}_1$$ 
(see \cite[Table 22.2.4]{LS_book}). Note that the labels $A_1$ and $\tilde{A}_1$ indicate long and short root elements, respectively. The twisted groups ${}^3D_4(q)$ and ${}^2B_2(q)$ are not included in the tables in \cite[Section 22]{LS_book}, but it is well known that ${}^3D_4(q)$ has two classes of involutions and $^2{}B_2(q)$ has a unique class (see \cite{Spal, Suz}, for example). For $p \ne 2$, we refer the reader to \cite[Sections 4.3-4.6]{GLS} for an in-depth treatment of involutions in groups of Lie type (both finite and algebraic). We will use the notation of \cite[Table 4.5.1]{GLS} for class representatives. 

\begin{rem}\label{r:invcon}
Let $t \in {\rm Inndiag}(T)$ be an involution. For $p=2$ we observe that $t$ lies in $T$ and $t^T = t^{{\rm Inndiag}(T)}$. Similarly, \cite[Theorem 4.2.2(j)]{GLS}  implies that $t^T = t^{{\rm Inndiag}(T)}$ when $p \ne 2$, but note that if $T = E_7(q)$ then there are involutions in ${\rm Inndiag}(T)$ that do not lie in $T$ (see Remark \ref{r:inv} below). In all characteristics, we find that there is a $1$-$1$ correspondence between conjugacy classes of involutions in $\bar{G}$ and those in $T$. 
\end{rem}

Most of the information we need on involutions in exceptional groups is summarised in the following proposition.

\begin{prop}\label{l:excepinv}
For each finite simple exceptional group of Lie type $T$, Table $\ref{tabinv}$ lists a complete set of representatives of the $T$-classes of involutions $t \in T$, and each row of the table gives the order of the centralizer $C_{{\rm Inndiag}(T)}(t)$. 
\end{prop}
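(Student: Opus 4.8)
The plan is to assemble Table \ref{tabinv} by treating the even and odd characteristic cases separately, since the involutions are unipotent when $p=2$ and semisimple when $p\neq 2$, and these two situations require entirely different source material. For each simple exceptional type $\bar{G} \in \{E_8, E_7, E_6, F_4, G_2\}$ (together with the twisted forms ${}^2E_6$, ${}^3D_4$, ${}^2F_4$, ${}^2G_2$, ${}^2B_2$ arising via the Steinberg endomorphism $\s$), I would first list the $\bar{G}$-classes of involutions and then use Remark \ref{r:invcon} to transfer these to a complete, non-redundant set of $T$-class representatives. Recall that Remark \ref{r:invcon} gives a $1$-$1$ correspondence between involution classes in $\bar{G}$ and in $T$, and guarantees $t^T = t^{{\rm Inndiag}(T)}$ in all the relevant cases; this is precisely what lets me read off the $T$-classes from the algebraic group data and then record centralizers inside ${\rm Inndiag}(T) = \bar{G}_\s$.

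For $p=2$, I would take the involution classes to be exactly those unipotent classes in \cite[Tables 22.2.1--22.2.7]{LS_book} whose Bala--Carter label involves only $A_1$ and $\tilde{A}_1$ components (for instance the four classes $A_1,\,\tilde{A}_1,\,(\tilde{A}_1)_2,\,A_1\tilde{A}_1$ for $F_4(q)$). The centralizer orders $|C_{\bar{G}_\s}(t)|$ can then be extracted from the centralizer structure recorded in \cite{AS} and \cite{LS_book}, and I would express each one as an explicit polynomial in $q$. For $p\neq 2$, I would instead use the representatives and centralizer data in \cite[Table 4.5.1]{GLS}, which already catalogues semisimple involutions in both the finite and algebraic settings; here each centralizer $C_{{\rm Inndiag}(T)}(t)$ is a known reductive subgroup of maximal rank, and its order is a standard product of the relevant group orders. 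The twisted groups ${}^3D_4(q)$, ${}^2B_2(q)$ (and ${}^2G_2(q)$) are not covered by \cite[Section 22]{LS_book}, so for these I would cite the classical analyses \cite{Spal, Suz} directly: ${}^3D_4(q)$ has two involution classes, ${}^2B_2(q)$ has a single class, and the corresponding centralizer orders are well documented.

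The only genuine subtlety, and the step I expect to require the most care, is the case $T = E_7(q)$ with $p$ odd. As flagged in Remark \ref{r:invcon} (and the forthcoming Remark \ref{r:inv}), there exist involutions in ${\rm Inndiag}(T)$ that do not lie in $T$ itself, so the naive identification of $\bar{G}$-classes with $T$-classes can overcount. Here I would need to determine carefully which of the $\bar{G}$-class representatives actually have representatives inside $T = (\bar{G}_\s)'$ rather than only in $\bar{G}_\s$, using the component-group and diagonal-automorphism analysis of \cite[Section 4.5]{GLS}, and record the centralizer orders in ${\rm Inndiag}(T)$ accordingly. Once this bookkeeping is settled for $E_7$, the remaining types present no comparable difficulty, and the proposition reduces to a systematic transcription of the cited conjugacy class data into the uniform polynomial-in-$q$ format of Table \ref{tabinv}.
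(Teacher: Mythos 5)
Your proposal is correct and follows essentially the same route as the paper, which gives no separate argument for this proposition but treats it as a transcription of the cited literature: \cite{AS} and \cite[Tables 22.2.1--22.2.7]{LS_book} for the unipotent involutions when $p=2$, \cite[Table 4.5.1]{GLS} for the semisimple involutions when $p$ is odd, \cite{Spal, Suz} for the twisted groups omitted from \cite{LS_book}, and the correspondence of Remark \ref{r:invcon}. You also correctly isolate the one genuine subtlety, namely $T=E_7(q)$ with $p$ odd, which the paper disposes of in exactly the way you describe in Remark \ref{r:inv}.
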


\begin{rem}\label{r:inv}
In the third column of Table \ref{tabinv} we present a representative of each $T$-class of involutions in $T$. The case $T = E_7(q)$ with $q$ odd requires special attention. Here ${\rm Inndiag}(T) = T.2$ and the ambient algebraic group $\bar{G} = E_7$ has three classes of involutions, with centralizers $A_1D_6$, $A_7.2$ and $T_1E_6.2$ (here $T_1$ denotes a $1$-dimensional torus). The latter two classes each give rise to two ${\rm Inndiag}(T)$-classes of involutions, one in $T$ and the other in ${\rm Inndiag}(T) \setminus T$, so in total there are five classes of involutions in ${\rm Inndiag}(T)$, but only three in $T$. More precisely, using the notation in Table \ref{tabinv}, the involutions in $T$ are as follows (see \cite[Table 4.5.1]{GLS}):
\[
\left\{\begin{array}{ll}
t_1, t_4, t_7 & \mbox{if $q \equiv 1 \imod{4}$} \\
t_1, t_4', t_7' & \mbox{if $q \equiv 3 \imod{4}$} 
\end{array}\right.
\]
\indent In the final column of the table, we give the order of the centralizer of $t$ in ${\rm Inndiag}(T)$. To avoid any possible confusion, note that this is always a monic polynomial in $q$, with the exception of the elements $t_4,t_4',t_7,t_7' \in E_7(q)$ with $q$ odd, where the order is twice a monic polynomial.
\end{rem}

We will also need some information on involutions in the low-rank classical groups. Let $L$ be a classical group over $\mathbb{F}_q$ with natural module $V$ of dimension $n$. First assume $p=2$. Unipotent involutions in $L$ are essentially determined up to conjugacy by their Jordan form on $V$, which is of the form $[J_2^s,J_1^{n-2s}]$ for some $s \geqs 1$ (if $s$ is even, there are two such classes when $L$ is a symplectic or orthogonal group, denoted by the labels $a_s$ and $c_s$ in \cite{AS}). We will frequently use the notation from \cite{AS} for class representatives. 

We will often be interested in the case where $x \in L$ is a long root element (that is, an element with Jordan form $[J_2^s,J_1^{n-2s}]$, where $s=2$ if $L = \Omega_{n}^{\e}(q)$ is an orthogonal group (an $a_2$ involution in the notation of \cite{AS}), and $s=1$ in the other cases); the conjugacy class size of such an element is as follows (for example, see the proof of \cite[Proposition 3.22]{Bur2}):
$$|x^L| = \left\{\begin{array}{ll}
(q^{n-1}-\e^{n-1})(q^n-\e^n)/(q-\e) & L = {\rm L}_{n}^{\e}(q) \\
q^n-1 & L = {\rm Sp}_{n}(q) \\
(q^{n/2-2}+\e)(q^{n-2}-1)(q^{n/2}-\e)/(q^2-1) & \mbox{$L = \Omega_{n}^{\e}(q)$, $n \geqs 8$ even}
\end{array}\right.$$
We will refer to long root elements as $A_1$-involutions (in the notation of \cite{AS}, these are $b_1$-involutions when $L = {\rm Sp}_{n}(q)$ and $a_2$-involutions when $L = \Omega_{n}^{\e}(q)$), which is consistent with our usage for exceptional groups.

For $p$ odd, the conjugacy class of an involution in $L$ is essentially determined by its eigenvalues (in a suitable splitting field) on the natural module $V$. We refer the reader to \cite[Table 4.5.1]{GLS} for information on class representatives and centralizers (the relevant details are conveniently summarised in \cite[Appendix B]{BG}). 

{\small 
\renewcommand{\arraystretch}{1.2}
\begin{table}
\begin{center}
\begin{tabular}{lllll}\hline
$T$ & $q$ & $t$ & $|C_{\text{Inndiag}(T)}(t)|$ \\ \hline
$E_8(q)$ & odd & $t_1$ & $|\text{SO}^+_{16}(q)|$  \\
& & $t_8$ & $|\text{SL}_2(q)||E_7(q)|$ \\
& even & $A_1$ & $q^{57}|E_7(q)|$ \\
& & $A_1^2$ & $q^{78}|\text{Sp}_{12}(q)|$ \\
& & $A_1^3$ & $q^{81}|\text{SL}_2(q)||F_4(q)|$ \\
& & $A_1^4$ & $q^{84}|\text{Sp}_8(q)|$ \\

$E_7(q)$ & odd & $t_1$ & $|\text{SL}_2(q)||{\rm SO}^+_{12}(q)|$   \\
& & $t_4$, $t_4'$ & $2|\text{SL}^{\e}_{8}(q)|^{\dagger}$  \\
& & $t_7$, $t_7'$ & $2(q-\e)|E^\e_{6}(q)|^{\dagger}$  \\
& even & $A_1$ & $q^{33}|\Omega_{12}^+(q)|$ \\
& & $A_1^2$ & $q^{42}|\text{SL}_2(q)||\text{Sp}_8(q)|$ \\
& & $(A_1^3)^{(1)}$ & $q^{27}|F_4(q)|$ \\
& & $(A_1^3)^{(2)}$ & $q^{45}|\text{SL}_2(q)||\text{Sp}_6(q)|$ \\
& & $A_1^4$ & $q^{42}|\text{Sp}_6(q)|$ \\

$E^\e_6(q)$ & odd & $t_1$ & $(q-\e)|\text{SO}^{\e}_{10}(q)|$ \\
& & $t_2$ & $|\text{SL}_2(q)||\text{SL}^\e_{6}(q)|$ \\
& even & $A_1$ & $q^{21}|\text{SL}^{\e}_6(q)|$ \\
& & $A_1^2$ & $q^{24}(q - \e)|\text{Sp}_6(q)|$ \\
& & $A_1^3$ & $q^{27}|\text{SL}_2(q)||\text{SL}^{\e}_3(q)|$ \\

$F_4(q)$ & odd & $t_1$ & $|\text{SL}_2(q)||\text{Sp}_{6}(q)|$ \\
& & $t_4$ & $|\text{SO}_9(q)|$ \\
& even & $A_1$ & $q^{15}|\text{Sp}_6(q)|$ \\
& & $\tilde{A}_1$ & $q^{15}|\text{Sp}_6(q)|$ \\
& & $(\tilde{A}_1)_{2}$ & $q^{20}|\text{Sp}_4(q)|$ \\
& & $A_1 \tilde{A}_1$ & $q^{18}|\text{SL}_2(q)|^2$ \\

$G_2(q)$, $q>2$ & odd & $t_1$ & $|\text{SL}_2(q)|^2$ \\
 & even & $A_1$ & $q^5 |\text{SL}_2(q)|$ \\
& & $\tilde{A}_1$ & $q^3 |\text{SL}_2(q)|$ \\

${}^3D_4(q)$ & odd & $t_2$ & $|\text{SL}_2(q)||\text{SL}_2(q^3)|$\\
& even & $A_1$ & $q^9 |\text{SL}_2(q^3)|$ \\
& & $A_1^3$ & $q^9 |\text{SL}_2(q)|$ \\

${}^2 F_4(q)$, $q>2$ & even & $(\tilde{A}_1)_{2}$ & $q^{10} |{}^2B_2(q)|$ \\
 & & $A_1 \tilde{A}_1$ & $q^9|\text{SL}_2(q)|$ \\

${}^2G_2(q)$, $q>3$ & odd & $t_1$ & $|\text{SL}_2(q)|$ \\

${}^2B_2(q)$ & even & $(\tilde{A}_1)_2$ & $q^2$ \\ \hline
\multicolumn{4}{l}{\mbox{{\small $^{\dagger}$ Here $q \equiv \e \imod{4}$; see Remark \ref{r:inv}}}} \\
& & & \\
\end{tabular}
\caption{Involution classes in finite simple exceptional groups of Lie type}
\label{tabinv}
\end{center}
\end{table}
\renewcommand{\arraystretch}{1.2}
}

\section{Parabolic subgroups}\label{s:parab}

Let $G$ be an almost simple primitive permutation group of degree $n$, with socle $T$ and point stabilizer $H$, where $T$ is a group of exceptional Lie type over $\mathbb{F}_{q}$ with $q=p^f$ for a prime $p$. In this section, we will use character-theoretic methods to study the involution fixity of $T$ in the special case where $H$ is a maximal parabolic subgroup of $G$ (this approach was first introduced in \cite{LLS2}). We adopt the standard notation $P_{i,j...}$ for the parabolic subgroup of $T$ obtained by removing the nodes $i,j, \ldots$ from the corresponding Dynkin diagram of $T$, in terms of the standard Bourbaki labelling in \cite{Bou}. Set  $H_0 = H \cap T$.

\begin{rem}\label{r:small}
In view of Proposition \ref{p:small}, throughout this section we will assume $T$ is not one of the groups in \eqref{e:small}.
\end{rem}

Our main result for parabolic actions is the following, which establishes a strong form of Theorem \ref{t:main2} (and thus Theorem \ref{t:main}) in this special case. In Table \ref{tabb:par}, $\tau$ denotes a graph or graph-field automorphism of $T$.

\begin{thm}\label{t:parab}
Let $G$ be an almost simple primitive permutation group of degree $n$ with socle $T$, an exceptional group of Lie type over $\mathbb{F}_{q}$, and point stabilizer $H$, a maximal parabolic subgroup of $G$. Then one of the following holds:
\begin{itemize}\addtolength{\itemsep}{0.2\baselineskip}
\item[{\rm (i)}] $T = {}^2B_2(q)$, $H_0 = q^{1+1}{:}(q-1)$ and ${\rm ifix}(T)=1$; 
\item[{\rm (ii)}] ${\rm ifix}(T)>n^{1/2}$ and 
\[
\liminf_{q\to \infty} \frac{\log {\rm ifix}(T)}{\log n } \geqs \frac{1}{2};
\]
\item[{\rm (iii)}] $(T,H_0)$ is one of the cases in Table $\ref{tabb:par}$ and we have ${\rm ifix}(T)>n^{\gamma}$ and
\[
\liminf_{q\to \infty} \frac{\log {\rm ifix}(T)}{\log n } = \gamma,
\]
for the given constant $\gamma$.   
\end{itemize}
In particular, if $(T,H_0)$ is in Table $\ref{tabb:par}$, then ${\rm ifix}(T) \leqs n^{4/9}$ unless 
\begin{itemize}\addtolength{\itemsep}{0.2\baselineskip}
\item[{\rm (a)}] $(T,H_0) = ({}^3D_4(q), P_2)$, $({}^2F_4(q),P_{2,3})$; or 
\item[{\rm (b)}] $T = G_2(q)$ and $q<7$.
\end{itemize}
\end{thm}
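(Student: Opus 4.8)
The plan is to treat the parabolic action case-by-case, computing $\mathrm{ifix}(T)$ via the character-theoretic machinery of \cite{LLS2}. The key formula is the standard permutation-character expression: if $H$ is a maximal parabolic subgroup and $t \in T$ is an involution, then $\mathrm{fix}(t) = |t^T \cap H_0|/|t^T| \cdot n$, which by Frobenius reciprocity equals the value at $t$ of the permutation character $\mathrm{Ind}_{H_0}^T(1)$. Since $H$ is parabolic, this induced character decomposes in a well-understood way, and in favourable situations one can read off $\mathrm{fix}(t)$ as a sum of unipotent (or Deligne--Lusztig) character degrees evaluated at $t$. Concretely, I would first split into the two cases $p = 2$ (where involutions are unipotent) and $p$ odd (where they are semisimple), and for each involution class $t$ recorded in Table~\ref{tabinv} compute $|t^T \cap H_0|$ directly from the Levi decomposition $H_0 = QL$ of the parabolic, using the known centralizer orders from the last column of Table~\ref{tabinv}. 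This produces, for each pair $(T, H_0)$, an explicit rational function of $q$ for $\mathrm{fix}(t)$, and $\mathrm{ifix}(T)$ is then the maximum over the finitely many classes $t$.

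Second, having obtained $\mathrm{ifix}(T)$ as a polynomial (or ratio of polynomials) in $q$ for each case, I would compare it against $n = |T : H_0|$, which is itself a known polynomial in $q$ (the index of a maximal parabolic is given by a product of cyclotomic-type factors read off from the order formula in \cite[Table 5.1.B]{KL}). The exponent $\gamma$ appearing in part (iii) is then the limiting ratio $\log \mathrm{ifix}(T) / \log n$, which for polynomial data reduces to the ratio of leading degrees in $q$: if $\mathrm{ifix}(T) \sim q^{a}$ and $n \sim q^{b}$ as $q \to \infty$, then $\liminf_{q \to \infty} \log \mathrm{ifix}(T)/\log n = a/b$. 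Establishing the strict inequality $\mathrm{ifix}(T) > n^{\gamma}$ (rather than just the asymptotic equality) requires checking that the lower-order terms in $q$ push the value above the pure power $n^\gamma$ for every admissible $q$; this is a routine but careful estimate once the leading behaviour is known. For the bulk of cases the ratio $a/b$ will exceed $1/2$, yielding conclusion (ii); the genuine exceptions where $\gamma < 1/2$ are exactly those collected in Table~\ref{tabb:par}.

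Third, the Suzuki group $^2B_2(q)$ must be handled separately to extract conclusion~(i): here $H_0 = q^{1+1}{:}(q-1)$ is the Borel subgroup, the action is the natural $2$-transitive action of degree $q^2+1$, and one checks directly that the unique involution class fixes exactly one point, so $\mathrm{ifix}(T) = 1$. Finally, for the explicit threshold comparison against $n^{4/9}$ in the concluding sentence, I would simply read off from the computed constants $\gamma$ which entries of Table~\ref{tabb:par} satisfy $\gamma > 4/9$ — these are the $^3D_4(q)$ with $H_0 = P_2$ and $^2F_4(q)$ with $H_0 = P_{2,3}$ cases giving part~(a) — while the small-rank groups $G_2(q)$ with $q < 7$ require a direct numerical check (invoking Proposition~\ref{p:small} for the degenerate $q=2$ case) to pin down part~(b).

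I expect the main obstacle to be the accurate computation of $|t^T \cap H_0|$ for each involution class, particularly in even characteristic where several distinct unipotent classes (the $a_s$ versus $c_s$ distinction, and the $A_1$ versus $\tilde{A}_1$ long/short root element dichotomy) can fuse inside the Levi factor $L$ of the parabolic, and where the intersection of a $T$-class with the unipotent radical $Q$ must be controlled. Determining this fusion correctly — that is, deciding which $T$-classes of involutions meet $H_0$ and with what multiplicity — is where the detailed structural work of \cite{LLS2} and the unipotent class data of \cite{Lawunip} and \cite{AS} are essential, and it is the step most prone to error.
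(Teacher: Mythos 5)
Your overall architecture matches the paper's: split by the parity of $p$, evaluate the permutation character $\chi=1^{T}_{H_0}$ at each involution class from Table \ref{tabinv}, extract $\gamma$ as a ratio of leading degrees in $q$, treat ${}^2B_2(q)$ separately (where indeed $\chi=1+\psi$ with $\psi$ the Steinberg character vanishing on non-trivial unipotent elements, so ${\rm ifix}(T)=1$), and read off the $n^{4/9}$ threshold at the end. The identification of the exceptional cases in (a) and (b) is also correct.

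However, there is a genuine gap in the central computational step. You propose to compute $|t^T\cap H_0|$ ``directly from the Levi decomposition $H_0=QL$, using the known centralizer orders from the last column of Table \ref{tabinv}.'' The centralizer orders only give $|t^T|$, the denominator of \eqref{e:fix}; they say nothing about the numerator $|t^T\cap H_0|$, which requires solving the fusion problem for involutions in $QL$ (including classes meeting the unipotent radical non-trivially, and the $a_s/c_s$ and long/short root distinctions you mention). You correctly flag this as the step most prone to error, but you supply no mechanism for carrying it out, and a direct attack on the fusion is not how the computation is made tractable. The paper avoids computing $|t^T\cap H_0|$ altogether: for $p=2$ it uses the decomposition $\chi=\sum_\phi n_\phi R_\phi$ of \eqref{e:dec}, whose restriction to unipotent elements is given by Green functions computable via L\"ubeck's implementation of Lusztig's algorithm (supplemented by \cite[Proposition 2.1]{LLS2} for long root elements and by the explicit unipotent character tables of Malle and Spaltenstein for ${}^2F_4(q)$ and ${}^3D_4(q)$); for $p$ odd it uses the closed formula of \cite[Corollary 3.2]{LLS2}, which expresses $\chi(t)$ for a semisimple involution as a sum over conjugacy classes of the Weyl group involving only the data $|W_P\cap C_iw^*|$, $|W_J\cap C_i|$ and torus orders. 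Without one of these tools (or an equivalent), the ``explicit rational function of $q$'' you need for each case cannot actually be produced, so the subsequent leading-degree comparison has nothing to operate on.
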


{\small
\renewcommand{\arraystretch}{1.2}
\begin{table}
\begin{center}
$$\begin{array}{llcl}\hline
T & H_0 & \gamma & \mbox{Conditions} \\ \hline  
G_2(q) & P_1 & 2/5 & \mbox{$q$ odd} \\
& P_2 & 2/5 & \\
& P_{1,2}  & 1/3 & \mbox{$p=3$, $q \geqs 9$, $\tau \in G$} \\
{}^3D_4(q) & P_{2} & 4/9 & \\
& P_{1,3,4} & 4/11 & \mbox{$q$ odd} \\
{}^2F_4(q) & P_{2,3} & 5/11 &  \\
{}^2G_2(q) & q^{1+1+1}{:}(q-1)  & 1/3 & \\ \hline
\end{array}$$
\caption{The parabolic actions with $1<{\rm ifix}(T) \leqs n^{1/2}$}
\label{tabb:par}
\end{center}
\end{table}
\renewcommand{\arraystretch}{1}}

Write $T = (\G_{\s})'$, where $\bar{G}$ is a simple algebraic group over $K = \bar{\mathbb{F}}_{q}$ and $\s$ is an appropriate Steinberg endomorphism of $\G$. Then $H = N_G(P_{\s})$, where $P$ is a $\s$-stable parabolic subgroup of $\G$, and we can consider the permutation character $\chi = 1^{\G_{\s}}_{P_{\s}}$, which gives $\chi(t) = {\rm fix}(t)$ for all $t \in T$. Let $W = W(\bar{G})$ be the Weyl group of $\G$. We partition the proof of Theorem \ref{t:parab} into two cases, according to the parity of $p$.

\subsection{Unipotent involutions}\label{ss:uni}

First assume $p=2$ and $T$ is untwisted, so 
$$T \in \{E_8(q),E_7(q),E_6(q),F_4(q),G_2(q)\}.$$
According to \cite[Lemma 2.4]{LLS2}, the permutation character $\chi$ can be decomposed as a sum  
\begin{equation}\label{e:dec}
\chi = \sum_{\phi \in \widehat{W}}n_{\phi}R_{\phi},
\end{equation}
where $\widehat{W}$ is the set of complex irreducible characters of $W$. Here the $R_{\phi}$ are almost characters of $\G_{\s}$ and the coefficients are given by $n_{\phi} = \la 1^W_{W_P},\phi \ra$, where $W_P$ is the corresponding parabolic subgroup of $W$. The restriction of the $R_{\phi}$ to unipotent elements yields the Green functions of $\G_{\s}$, which are polynomials in $q$ with non-negative coefficients. L\"{u}beck \cite{Lub} has implemented an algorithm of Lusztig \cite{Lus} to compute the relevant Green functions and this allows us to calculate ${\rm fix}(t)$ for every involution $t \in T$. 
We refer the reader to \cite[Section 2]{LLS2} for further details.

In the following proposition, we assume $T$ is untwisted and $H_0$ is a maximal subgroup of $T$. Note that the latter condition simply means that we exclude the cases
$$(T,H_0) \in \{(E_6(q),P_{1,6}),  (E_6(q), P_{3,5}),  (F_4(q), P_{1,4}),  (F_4(q), P_{2,3})\},$$
which can arise when $G$ contains a graph or graph-field automorphism of $T$.

\begin{prop}\label{p:parabuni}
Suppose $p=2$, $T$ is untwisted and $H$ is a maximal parabolic subgroup of $G$ such that $H_0$ is maximal in $T$. Then  
${\rm ifix}(T)>n^{\a}$ and
\begin{equation}\label{e:al}
\lim_{q\to \infty,\, \mbox{{\rm {\scriptsize $q$ even}}}} \frac{\log {\rm ifix}(T)}{\log n } = \a
\end{equation}
where $\a$ is given in Table $\ref{tab:par1}$.
In particular, ${\rm ifix}(T) \leqs n^{1/2}$ if and only if $T=G_2(q)$ and $H_0=P_2$, in which case ${\rm ifix}(T) \leqs n^{4/9}$ if $q \geqs 8$.
\end{prop}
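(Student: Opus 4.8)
The plan is to prove Proposition~\ref{p:parabuni} by an explicit, case-by-case computation of the permutation character $\chi = 1^{\bar{G}_\sigma}_{P_\sigma}$ evaluated at each class of unipotent involutions, exploiting the decomposition \eqref{e:dec} into almost characters. First I would fix $T$ untwisted of even characteristic and, for each maximal parabolic $H_0 = P_{S}$ (equivalently each choice of removed node set $S$), compute the coefficients $n_\phi = \langle 1^W_{W_P}, \phi\rangle$ by inducing the trivial character of the parabolic subgroup $W_P \leqslant W$ up to the full Weyl group $W$ and decomposing into irreducibles. This is a finite computation in the character table of $W(\bar{G})$, which is known in each case ($W(G_2)$, $W(F_4)$, and the Weyl groups of $E_6,E_7,E_8$). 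Having the $n_\phi$, I would then evaluate $\chi(t) = \sum_\phi n_\phi R_\phi(t)$ at each involution class $t$ listed in Table~\ref{tabinv} (the $A_1$, $\tilde A_1$, $(\tilde A_1)_2$, $A_1\tilde A_1$, etc.\ classes), using L\"ubeck's tabulated Green functions \cite{Lub} computed via Lusztig's algorithm \cite{Lus}. Each resulting $\mathrm{fix}(t)$ is a polynomial in $q$ with non-negative coefficients, and $\mathrm{ifix}(T)$ is the maximum of these over the involution classes.

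Next I would extract the asymptotics. Both $n = |T:H_0|$ and each $\mathrm{fix}(t)$ are (ratios of) polynomials in $q$, so writing $\deg$ for the degree in $q$ and taking logs gives
\[
\lim_{q\to\infty,\ q\ \text{even}} \frac{\log \mathrm{fix}(t)}{\log n} = \frac{\deg_q \mathrm{fix}(t)}{\deg_q n}.
\]
The constant $\alpha$ in Table~\ref{tab:par1} is then $\max_t \deg_q \mathrm{fix}(t) / \deg_q n$, realised by the involution class maximising the degree of the fixed-point polynomial; this simultaneously yields the limit in \eqref{e:al} and, since the leading coefficient is positive, the strict lower bound $\mathrm{ifix}(T) > n^\alpha$ for all $q$ in range. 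To prove the final sentence, I would read off from the table that $\alpha \leqslant 1/2$ occurs in exactly one case, namely $T = G_2(q)$ with $H_0 = P_2$: in all other parabolic cases the degree ratio strictly exceeds $1/2$. For the $G_2(q)$, $P_2$ case specifically, I would record the explicit polynomials $\mathrm{fix}(t)$ for the two involution classes $A_1,\tilde A_1$ and for $n = |G_2(q):P_2| = (q^6-1)/(q-1) = q^5+q^4+q^3+q^2+q+1$, identify $\alpha = 2/5$, and then verify the sharper inequality $\mathrm{ifix}(T) \leqslant n^{4/9}$ for $q \geqslant 8$ by a direct comparison of the explicit polynomial $\mathrm{ifix}(T)$ against $n^{4/9}$ (equivalently $\mathrm{ifix}(T)^9 \leqslant n^4$), which holds once $q$ is large enough and is checked directly for the finitely many small even $q$, i.e.\ $q = 8$.

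The routine but substantial part is the bookkeeping: for the large-rank groups $E_6,E_7,E_8$ there are many maximal parabolics and several unipotent involution classes each, so the bulk of the work is organising the Green-function evaluations and confirming in every case that the maximising degree ratio exceeds $1/2$. The genuine obstacle, and the only place where real care is needed, is isolating the single borderline case $G_2(q)$, $P_2$ and nailing down the transition at the bound $n^{4/9}$: here one must control not just the leading term but the lower-order terms of the polynomial $\mathrm{ifix}(T)$ to certify the inequality $\mathrm{ifix}(T) \leqslant n^{4/9}$ precisely for $q \geqslant 8$ (and confirm it fails, or is replaced by the weaker $n^{2/5}$ statement, for $q < 8$). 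Everything else reduces to comparing degrees of explicit polynomials, which is mechanical once the character data and Green functions are in hand.
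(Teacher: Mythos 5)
Your proposal follows essentially the same route as the paper's proof: decompose $\chi = 1^{\bar{G}_\s}_{P_\s}$ via \eqref{e:dec} with coefficients $n_\phi = \la 1^W_{W_P},\phi\ra$, evaluate at each unipotent involution class using L\"ubeck's Green functions (the paper also notes the alternative via \cite[Proposition 2.1]{LLS2} for long root elements), read off $\a$ from the degree ratio, and treat $T=G_2(q)$ with $H_0=P_2$ separately by comparing the explicit polynomials ${\rm fix}(t)=q^2+q+1$ or $(q+1)^2$ against $n=(q^4+q^2+1)(q+1)$, checking the small even $q$ directly. No substantive difference in method or any gap.
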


\begin{proof} 
First observe that the decomposition in \eqref{e:dec} is given in \cite[pp.413--415]{LLS2} (with respect to Carter's labelling of irreducible characters in \cite[Section 13.2]{Carter}). Let $t \in T$ be an $A_1$-involution (see Table \ref{tabinv}). As noted above, we can use \cite{Lub} to compute $\chi(t) = {\rm fix}(t)$ and verify the lower bound on ${\rm ifix}(T)$. Alternatively, since $t$ is a long root element, we can use \cite[Proposition 2.1]{LLS2}, which does not require \cite{Lub}.

For example, suppose $T = E_6(q)$ and $H_0=P_{2}$. Here $|P_{\s}| = q^{21}|{\rm GL}_{6}(q)|$, so
$$n = |\bar{G}_{\s}: P_{\s}| = (q^6+q^3+1)(q^6+1)(q^4+1)(q^3+1)(q^2+q+1)$$
and \cite[p.414]{LLS2} gives
$$\chi = R_{\phi_{1,0}}+R_{\phi_{6,1}}+ R_{\phi_{20,2}}+R_{\phi_{30,3}}+R_{\phi_{15,4}}.$$
Note that $T$ has three classes of involutions, labelled $A_1$, $A_1^2$ and $A_1^3$ (see Table \ref{tabinv}). 

For an $A_1$-involution $t \in T$, using \cite{Lub}, we compute
$$\chi(t) = q^{15}+2q^{14}
+3q^{13}+4q^{12}+5q^{11}+6q^{10}+5q^{9}+5q^8+5q^7+4q^6+3q^5+3q^4+2q^3+q^2+q+1$$
and thus ${\rm ifix}(T)>n^{\a}$ with $\a=5/7$. Alternatively, in the notation of \cite[Proposition 2.1]{LLS2}, we have $P_0 = P$, $lht(\a_0)=11$, $L'=A_5$ and we get 
\begin{align*}
\chi(t) = & \; q^{11}(q^{-11}+q^{-10}+q^{-9}+2q^{-8}+3q^{-7}+3q^{-6}+4q^{-5}+5q^{-4}+5q^{-3}+5q^{-2}+6q^{-1}) \\
& \; + q^{10}(q^5+2q^4+3q^3+4q^2+5q),
\end{align*}
which agrees with the expression above. By repeating the calculation for involutions in the classes labelled $A_1^2$ and $A_1^3$, we deduce that ${\rm ifix}(T) = {\rm fix}(t)$ when $t$ is an $A_1$-involution and thus \eqref{e:al} holds. (In fact, one can show that if $t$ is an $A_1$-involution, then ${\rm fix}(t) \geqs {\rm fix}(u)$ for all non-trivial unipotent elements $u \in T$; see \cite[p.413]{LLS2}.)

The case $T=G_2(q)$ with $H_0 = P_2$ requires special attention. Here $q \geqs 4$ (see Remark \ref{r:small}), $n = (q^4+q^2+1)(q+1)$ and $T$ has two classes of involutions, labelled $A_1$ and $\tilde{A}_1$ in Table \ref{tabinv}, corresponding to long and short root elements, respectively. For $t \in A_1$ we get ${\rm fix}(t) = q^2+q+1$, whereas ${\rm fix}(t) = q^2+2q+1$ for $t \in \tilde{A}_1$. We conclude that  $n^{2/5}<{\rm ifix}(T)<n^{1/2}$ and $\a=2/5$ in \eqref{e:al}. Note that if $q=4$ then 
$$\frac{\log {\rm ifix}(T)}{\log n} = \frac{\log 25}{\log 1365} > 0.445,$$
whereas ${\rm ifix}(T)<n^{4/9}$ for $q \geqs 8$. 

The remaining cases are similar and we omit the details. It is worth noting that in every case, we find that ${\rm fix}(t)$ is maximal when $t$ is an $A_1$-involution, unless $T = F_4(q)$ and $H_0 = P_1$ or $P_2$, for which short root elements (that is, $\tilde{A}_1$-involutions) have the largest number of fixed points.
\end{proof}

{\small
\renewcommand{\arraystretch}{1.5}
\begin{table}
\begin{center}
$$\begin{array}{c|cccccccc}
& P_1 & P_2 & P_3 & P_4 & P_5 & P_6 & P_7 & P_8 \\ \hline
E_8(q) & \frac{10}{13},\, \frac{43}{78} & \frac{35}{46},\, \frac{51}{92} & \frac{75}{98},\, \frac{27}{49} & \frac{81}{106},\,\frac{29}{53} & \frac{79}{104},\,\frac{57}{104} & \frac{75}{97},\,\frac{53}{97} & \frac{65}{83},\,\frac{47}{83} & \frac{15}{19},\,\frac{11}{19} \\
E_7(q) & \frac{25}{33},\,\frac{7}{11} & \frac{31}{42},\, \frac{11}{21} & \frac{35}{47},\,\frac{29}{47} & \frac{39}{53},\,\frac{31}{53} & \frac{37}{50},\, \frac{13}{25} & \frac{16}{21},\,\frac{4}{7} & \frac{7}{9},\,\frac{5}{9} & \\
E_6(q) & \frac{3}{4},\,\frac{5}{8} & \frac{5}{7},\,\frac{13}{21} & \frac{18}{25},\,\frac{3}{5} & \frac{20}{29},\,\frac{17}{29} & \frac{18}{25},\,\frac{3}{5} & \frac{3}{4},\,\frac{5}{8} & & \\
F_4(q) & \frac{11}{15},\, \frac{11}{15} & \frac{7}{10},\,\frac{7}{10} & \frac{7}{10},\,\frac{13}{20} & \frac{11}{15},\,\frac{2}{3} & & & & \\
G_2(q) & \frac{3}{5},\, \frac{2}{5} & \frac{2}{5},\,\frac{2}{5} & & & & & & 
\end{array}$$
\caption{The constants $\a,\b$ in Propositions \ref{p:parabuni} and \ref{p:parabsemi}}
\label{tab:par1}
\end{center}
\end{table}
\renewcommand{\arraystretch}{1}}

Next we assume $T = E_6(q)$ or $F_4(q)$, and $H_0$ is a non-maximal subgroup of $T$. 

\begin{prop}\label{p:parab2}
Suppose $p=2$ and $(T,H_0)$ is one of the cases in Table $\ref{tab:par2}$. Then ${\rm ifix}(T)>n^{\a}$ and \eqref{e:al} holds, where $\a$ is given in the third column of the table.
\end{prop}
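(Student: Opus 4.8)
The plan is to follow the character-theoretic approach used in the proof of Proposition \ref{p:parabuni}. Writing $T = (\G_{\s})'$ and $H_0 = P_{\s}$ for a $\s$-stable non-maximal parabolic subgroup $P$ of $\G$, we study the permutation character $\chi = 1^{\G_{\s}}_{P_{\s}}$, which satisfies $\chi(t) = {\rm fix}(t)$ for all $t \in T$. The starting point is the decomposition \eqref{e:dec}, namely $\chi = \sum_{\phi \in \widehat{W}} n_{\phi} R_{\phi}$ with $n_{\phi} = \la 1^W_{W_P}, \phi \ra$. The essential difference from Proposition \ref{p:parabuni} is that here $H_0 = P_{i,j}$ arises by deleting \emph{two} nodes from the Dynkin diagram, so $W_P$ is a corank-$2$ parabolic subgroup of $W$ and the relevant decompositions are not recorded in \cite[pp.\,413--415]{LLS2}. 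The first step, therefore, is to compute the coefficients $n_{\phi}$ directly, by inducing the trivial character of $W_P$ up to $W$ and decomposing it into irreducibles using the character table of the Weyl group.

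With the decomposition in hand, the second step is to evaluate $\chi$ on each class of involutions in $T$. Since $p=2$, every involution is unipotent, and the restriction of the almost characters $R_{\phi}$ to unipotent elements is governed by the Green functions of $\G_{\s}$; I would compute these with L\"{u}beck's implementation \cite{Lub} of Lusztig's algorithm \cite{Lus}, exactly as in Proposition \ref{p:parabuni}. The involution classes are read off from Table \ref{tabinv}: for $T = E_6(q)$ these are $A_1$, $A_1^2$ and $A_1^3$, while for $T = F_4(q)$ they are $A_1$, $\tilde{A}_1$, $(\tilde{A}_1)_2$ and $A_1 \tilde{A}_1$. Evaluating $\chi(t)$ on a representative of each class yields an explicit polynomial in $q$ with non-negative coefficients.

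The third step is to set ${\rm ifix}(T)$ equal to the largest of these values, taking the maximum over the finitely many involution classes, and then to compare with $n = |\G_{\s} : P_{\s}|$, whose value is a known polynomial in $q$. Extracting the leading terms of the winning $\chi(t)$ and of $n$ gives the lower bound ${\rm ifix}(T) > n^{\a}$ and verifies that the limit in \eqref{e:al} equals the constant $\a$ recorded in the third column of Table \ref{tab:par2}.

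The main obstacle is identifying, for each of the four pairs $(T,H_0)$, which involution class actually maximises ${\rm fix}(t)$. In the maximal-parabolic setting of Proposition \ref{p:parabuni} one can often shortcut this using the long-root-element formula of \cite[Proposition 2.1]{LLS2}, but for $F_4(q)$ we have already seen that short root elements (the $\tilde{A}_1$-involutions) can dominate, so no single class can be assumed optimal a priori. Consequently every involution class must be evaluated and compared, which, together with the need to compute the corank-$2$ decompositions $n_{\phi}$ from scratch, constitutes the bulk of the work; once the explicit polynomials $\chi(t)$ are available, extracting $\a$ and verifying the bounds is routine.
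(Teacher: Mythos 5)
Your proposal is correct and follows essentially the same route as the paper: the authors likewise compute the decomposition $1^W_{W_P}$ for the corank-$2$ parabolics (recorded in Table \ref{tab:decc}), evaluate $\chi$ on every involution class via the Green functions using \cite{Lub} (with \cite[Proposition 2.1]{LLS2} as a cross-check for long root elements), and verify that the $A_1$-class maximises ${\rm fix}(t)$ in each case before extracting $\a$. Your caution that no single class can be assumed optimal a priori is exactly the point the paper addresses by checking all classes.
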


\begin{proof}
We proceed as in the proof of the previous proposition. First we express $\chi$ as a sum of almost characters, as in \eqref{e:dec}; see Table \ref{tab:decc}. Next we use \cite{Lub} to compute $\chi(t)$ for an $A_1$-involution $t \in T$. For example, if $T=F_4(q)$ and $H_0 = P_{2,3}$ then 
\[
n = \frac{(q^{12}-1)(q^8-1)(q^4+q^2+1)}{(q-1)^2}
\]
and
\begin{align*}
\chi(t) = & \; q^{15}+3q^{14}+6q^{13}+10q^{12}+14q^{11}+17q^{10}+19q^9+19q^8+18q^7+15q^6 \\
& \; +12q^5+9q^4+6q^3+4q^2+2q+1,
\end{align*}
which gives ${\rm ifix}(T)>n^{15/22}$. 

Alternatively, as in the proof of the previous proposition, we can use \cite[Proposition 2.1]{LLS2} to compute $\chi(t)$ for an $A_1$-involution $t$. In the notation of \cite[Section 2]{LLS2}, we have $P_0 = P_1$, $lht(\a_0)=8$, $L'=A_1\tilde{A}_1$ and we get
$$\chi(t) = \frac{|(P_0)_{\s}|}{|P_{\s}|} \cdot (q^8(q^{-8}+q^{-7}+q^{-6}+q^{-5}+2q^{-4}+2q^{-3}+2q^{-2}+2q^{-1})+q^8)$$
with 
$$\frac{|(P_0)_{\s}|}{|P_{\s}|} = (q^5+q^4+q^3+q^2+q+1)(q^2+1),$$
which is in agreement with the above expression. By carrying out a similar calculation for elements in the other three involution classes, one checks that ${\rm fix}(t)$ is maximal when $t$ is an $A_1$-involution and the result follows. 

The other cases are very similar and we omit the details. 
\end{proof}

{\small
\renewcommand{\arraystretch}{1.2}
\begin{table}
\begin{center}
$$\begin{array}{lll} \hline
T & H_0 & 1^W_{W_P} \\ \hline
E_6(q) & P_{1,6} & \phi_{1,0}+2\phi_{6,1}+\phi_{15,5}+3\phi_{20,2}+\phi_{24,6}+\phi_{30,3}+2\phi_{64,4} \\
 & P_{3,5} & \phi_{1,0}+2\phi_{6,1}+\phi_{10,9}+3\phi_{15,4}+\phi_{15,5}+5\phi_{20,2}+2\phi_{24,6} +4\phi_{30,3}+\phi_{60,11}+6\phi_{60,5} \\
& & +3\phi_{60,8}+6\phi_{64,4}+3\phi_{80,7} +4\phi_{81,6}+\phi_{81,10}+2\phi_{90,8} \\
F_4(q) & P_{1,4} & \phi_{1,0}+\phi_{2,4}'+\phi_{2,4}''+\phi_{4,8}+2\phi_{4,1}+\phi_{6,6}' + 2\phi_{8,3}' + 2\phi_{8,3}''+\phi_{9,6}'+3\phi_{9,2}+\phi_{9,6}'' \\
& & +\phi_{12,4}+2\phi_{16,5} \\
 & P_{2,3} & \phi_{1,0}+\phi_{2,4}'+\phi_{2,4}''+\phi_{4,7}'+\phi_{4,7}''+\phi_{4,8} + 2\phi_{4,1}+ \phi_{6,6}''+2\phi_{6,6}'+3\phi_{8,3}' +\phi_{8,9}' \\
& & +3\phi_{8,3}''+\phi_{8,9}'' + 2\phi_{9,6}'+4\phi_{9,2}+2\phi_{9,6}''+\phi_{9,10}+3\phi_{12,4}+4\phi_{16,5} \\ \hline
\end{array}$$
\caption{The decomposition of $\chi$ in \eqref{e:dec}}
\label{tab:decc}
\end{center}
\end{table}
\renewcommand{\arraystretch}{1}}

{\small 
\renewcommand{\arraystretch}{1.2}
\begin{table}
\begin{center}
$$\begin{array}{llcc} \hline
T & H_0 & \a & \b \\ \hline
E_6(q) & P_{1,6} & 3/4 & 7/12\\
& P_{3,5} & 22/31 & 18/31 \\
F_4(q) & P_{1,4} & 7/10 & - \\
& P_{2,3} & 15/22 & - \\ 
G_2(q) & P_{1,2} & - & 1/3 \\ \hline
\end{array}$$
\caption{The constants $\a,\b$ in Propositions \ref{p:parab2} and \ref{p:parabsemi}}
\label{tab:par2}
\end{center}
\end{table}
\renewcommand{\arraystretch}{1}}

To complete the proof of Theorem \ref{t:parab} for unipotent involutions, we handle the twisted groups $T \in \{{}^2B_2(q), {}^2G_2(q), {}^2F_4(q), {}^2E_6(q)\}$. 

\begin{prop}\label{p:parab3}
Suppose $p=2$, $T$ is twisted and $H$ is a maximal parabolic subgroup of $G$. Then one of the following holds:
\begin{itemize}\addtolength{\itemsep}{0.2\baselineskip}
\item[{\rm (i)}] $T ={}^2B_2(q)$ and ${\rm ifix}(T) = 1$;
\item[{\rm (ii)}] $(T,H_0,\a)=({}^3D_4(q), P_{2}, 4/9)$ or $({}^2F_4(q),P_{2,3},5/11)$, where $n^{\a}<{\rm ifix}(T) \leqs n^{1/2}$ and $\a$ is the constant in \eqref{e:al};
\item[{\rm (iii)}] ${\rm ifix}(T)>n^{1/2}$ and \eqref{e:al} holds with $\a$ given in Table $\ref{tab:par3}$.
\end{itemize}
\end{prop}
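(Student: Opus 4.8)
The plan is to follow the strategy of Propositions \ref{p:parabuni} and \ref{p:parab2}. Since $p=2$ every involution of $T$ is unipotent, so I work with the permutation character $\chi = 1^{\G_\s}_{P_\s}$ and evaluate $\chi(t) = {\rm fix}(t)$ on the one or two classes of unipotent involutions recorded in Table \ref{tabinv}, reading off ${\rm ifix}(T)$ as the maximum over these classes. The group $T = {}^2B_2(q)$ gives case (i) immediately: its only maximal parabolic is the Borel subgroup $q^{1+1}{:}(q-1)$, the corresponding action is the natural $2$-transitive action of degree $n = q^2+1$, the point stabilizer is strongly embedded in $T$, and hence the unique class of involutions fixes exactly one point, so ${\rm ifix}(T)=1$. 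For each of the remaining groups ${}^3D_4(q)$, ${}^2F_4(q)$ and ${}^2E_6(q)$ it then remains to produce the decomposition \eqref{e:dec} and evaluate the relevant Green functions on the involution classes of Table \ref{tabinv}.

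First I would treat the ordinary (Steinberg) twisted groups $T = {}^3D_4(q)$ and $T = {}^2E_6(q)$, for which $\s$ is a Frobenius endomorphism composed with a graph automorphism and the Green functions remain polynomials in $q$. As before, the multiplicities $n_\phi$ in \eqref{e:dec} are inner products $\la 1^W_{W_P}, \phi\ra$ taken with respect to the action of $\s$ on $W$ (equivalently, computed in the coset $W\s$), and I would read off each $\chi$ from the data in \cite{Lub}. For $T = {}^2E_6(q)$ there are four maximal parabolics, namely $P_{1,6}$, $P_{3,5}$, $P_2$ and $P_4$, and three involution classes $A_1, A_1^2, A_1^3$; evaluating $\chi$ on each class (the $A_1$-value being computable directly from \cite[Proposition 2.1]{LLS2}, since it is a long root element) one checks that ${\rm fix}(t)$ is maximal on the $A_1$-class and that this value exceeds $n^{1/2}$ in every case, placing all four actions in case (iii). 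For $T = {}^3D_4(q)$ there are two maximal parabolics, $P_2$ and $P_{1,3,4}$, and two involution classes $A_1, A_1^3$; here $P_2$ yields the exceptional value $\a = 4/9$ (with $n^{4/9} < {\rm ifix}(T) \leqs n^{1/2}$), placing it in case (ii), while $P_{1,3,4}$ gives ${\rm ifix}(T) > n^{1/2}$ and so lies in case (iii).

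The main obstacle is the Ree group $T = {}^2F_4(q)$ with $q = 2^{2m+1} \geqs 8$ (note $q=2$ is excluded by Remark \ref{r:small}). Here $\s$ is an exceptional isogeny of $F_4$ whose square is the standard Frobenius endomorphism $x \mapsto x^q$, the relative Weyl group is dihedral of order $16$, and the decomposition \eqref{e:dec} has to be set up in the twisted framework of Lusztig \cite{Lus} over the coset $W(F_4)\s$, with $\s$ acting as the graph automorphism of $F_4$ that interchanges long and short roots. The subtlety is that the associated unipotent almost characters, and hence the Green functions, genuinely involve $\sqrt{2q}$ rather than being polynomials in $q$ alone, so some care is required to extract the correct values via \cite{Lub}. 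The two involution classes are $(\tilde{A}_1)_2$ and $A_1\tilde{A}_1$ (Table \ref{tabinv}), and I would evaluate $\chi$ on both classes for each of the two maximal parabolics $P_{1,4}$ and $P_{2,3}$. One finds that $P_{2,3}$ gives the exceptional value $\a = 5/11$ with $n^{5/11} < {\rm ifix}(T) \leqs n^{1/2}$ (case (ii)), while $P_{1,4}$ gives ${\rm ifix}(T) > n^{1/2}$ (case (iii)).

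Finally, in each case of (ii) and (iii) the asymptotic \eqref{e:al} is immediate: having pinned down the involution class on which ${\rm fix}(t)$ is maximal, the quotient $\log {\rm ifix}(T)/\log n$ tends to the ratio of the $q$-degree of the leading term of ${\rm fix}(t)$ to that of $n$, and this ratio is precisely the constant $\a$, namely the value recorded in Table \ref{tab:par3} in case (iii), or one of the values $4/9$, $5/11$ stated explicitly in case (ii). Comparison of $\a$ with $1/2$ is what separates case (ii) from case (iii). The per-parabolic computations are routine once the correct Green functions are in hand, so the crux of the argument is the correct twisted set-up of \eqref{e:dec} for the Ree group ${}^2F_4(q)$.
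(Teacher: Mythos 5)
Your proposal is correct in substance and reaches the same case division and the same constants, but for the hardest subcases it takes a different computational route from the paper. Where you propose to force the twisted groups through the almost-character decomposition \eqref{e:dec} and the Green functions of \cite{Lub} -- and you rightly flag the ${}^2F_4(q)$ set-up over the coset $W(F_4)\s$, with its $\sqrt{2q}$ dependence, as the crux -- the paper sidesteps Lusztig's twisted Green function machinery entirely: it decomposes $\chi$ as an explicit sum of unipotent characters (the decompositions are recorded in \cite[p.416]{LLS2}) and then reads off the values on the two unipotent classes directly from Malle's character table of ${}^2F_4(q)$ in \cite[Tabelle 1]{Mal0} for the Ree groups, and from Spaltenstein's tables in \cite{Spal} for ${}^3D_4(q)$; only ${}^2E_6(q)$ is handled via \cite{Lub} as in Propositions \ref{p:parabuni} and \ref{p:parab2}. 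This matters because it is not clear that \cite{Lub} supplies Green functions for the very twisted groups at all, so your promise that ``some care is required to extract the correct values via \cite{Lub}'' is the one genuinely load-bearing step you have not discharged; the existing explicit tables make it unnecessary. Your treatment of ${}^2B_2(q)$ via strong embedding of the Borel subgroup is a valid alternative to the paper's one-line argument that $\chi = 1 + \psi$ with $\psi$ the Steinberg character vanishing on non-identity unipotent elements. Everything else -- the identification of the maximal parabolics, the claim that the $A_1$-class maximizes ${\rm fix}(t)$ for ${}^2E_6(q)$, the sorting of $({}^3D_4(q),P_2)$ and $({}^2F_4(q),P_{2,3})$ into case (ii), and the derivation of the limits in \eqref{e:al} from leading degrees -- agrees with the paper.
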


\begin{proof}
For $T = {}^2E_6(q)$ we proceed as in the proofs of the previous two propositions, using \cite{Lub} and the decomposition of $\chi$ in \eqref{e:dec} (see \cite[p.125]{BLS}). We omit the details. If $T = {}^2B_2(q)$ then $\chi = 1+\psi$, where $\psi$ is the Steinberg character of $T$. Since $\psi(x)=0$ for all non-identity unipotent elements $x \in T$, we deduce that ${\rm ifix}(T)=1$. 

Next assume that $T = {}^2F_4(q)$ with $q \geqs 8$ (recall that we are excluding the case $T = {}^2F_4(2)'$, which was handled in Proposition \ref{p:small}), so $H_0 = P_{1,4}$ or $P_{2,3}$, and we have
\[
P_{1,4} = [q^{10}]{:}({}^2B_2(q) \times (q-1)),\;\; P_{2,3} = [q^{11}]{:}{\rm GL}_{2}(q).
\]
The decomposition of $\chi$ as a sum of unipotent characters is given in \cite[p.416]{LLS2}, so we can use \cite{Mal0} to compute ${\rm fix}(t)$ for any involution $t \in T$. For example, if $H_0 = P_{1,4}$ then 
\[
n = (q^6+1)(q^3+1)(q+1)
\]
and
\[
\chi = \chi_1+\chi_{2}+\chi_9+\chi_{10}+\chi_{11}
\]
in the notation of \cite[Tabelle 1]{Mal0}. There are two $T$-classes of involutions, with representatives labelled $u_1$ and $u_2$ in \cite{Mal0} (in terms of Table \ref{tabinv}, $u_1 = (\tilde{A}_1)_2$ and $u_2 = A_1\tilde{A}_1$). We calculate  
\begin{align*}
\chi(u_1) = & \; 1+q(q^3-q+1)+\frac{1}{4}q^2(q-\sqrt{2q}+1)(2q^3+\sqrt{2q^5}+\sqrt{2q^3}+q+1) \\
& \; +\frac{1}{4}q^2(q+\sqrt{2q}+1)(2q^3-\sqrt{2q^5}-\sqrt{2q^3}+q+1)+\frac{1}{2}q^2(q^2+1) \\
= & \; q^6+q^4+q^3+q+1
\end{align*}
and $\chi(u_2) = q^4+q^3+q+1$. Therefore 
\[
\frac{\log {\rm ifix}(T)}{\log n} \geqs \frac{\log 266761}{\log 1210323465} > 0.597
\]
and $\a=3/5$ in \eqref{e:al}. Similarly, if $H_0 = P_{2,3}$ then ${\rm ifix}(T) = q^5+q^4+q^3+q^2+1 > n^{5/11}$ and \eqref{e:al} holds with $\a=5/11$.

Finally, let us turn to the case $T = {}^3D_4(q)$, so $H_{0} = P_{1,3,4}$ or $P_2$, where
\[
P_{1,3,4} = [q^{11}]{:}((q^3-1) \times {\rm SL}_{2}(q)),\;\; P_2 = q^{1+8}{:}((q-1) \times {\rm SL}_{2}(q^3)).
\]
There are two classes of involutions in $T$, with representatives labelled $u_1$ and $u_2$ (these correspond to the classes labelled $A_1$ and $A_1^3$ in Table \ref{tabinv}). First assume $H_0 = P_{1,3,4}$, so 
\[
n = (q^8+q^4+1)(q^3+1)
\]
In the notation of \cite{Spal}, we have 
$\chi = 1+\rho_1+\rho_2+\e_2$
(see \cite[p.416]{LLS2}) and by inspecting \cite[Table 2]{Spal} we calculate
\[
\chi(u_1) = q^7+q^4+q^3+1,\;\; \chi(u_2) = q^4+q^3+1
\]
Therefore ${\rm ifix}(T)>n^{7/11}$ and $\a=7/11$ in \eqref{e:al}. Similarly, if $H_0=P_2$ then $\chi = 1+\rho_1+\rho_2+\e_1$, ${\rm ifix}(T)=q^4+q^3+q+1>n^{4/9}$ and we have $\a=4/9$ in \eqref{e:al}.
\end{proof}

{\small 
\renewcommand{\arraystretch}{1.2}
\begin{table}
\begin{center}
$$\begin{array}{llcc} \hline
T & H_0 & \a & \b \\ \hline
{}^2E_6(q) & P_{1,6} & 3/4 & 7/12\\
& P_2 & 13/21 & 13/21 \\
& P_{3,5} & 17/31 & 17/31 \\
& P_4 & 17/29 & 17/29 \\
{}^2F_4(q) & P_{1,4} & 3/5 & - \\
& P_{2,3} & 5/11 & - \\
{}^3D_4(q) & P_{1,3,4} & 7/11 & 4/11 \\
& P_2 & 4/9 & 4/9 \\ 
{}^2G_2(q) & P_{1,2} & - & 1/3 \\ \hline
\end{array}$$
\caption{The constants $\a,\b$ in Propositions \ref{p:parab3} and \ref{p:parabsemi}}
\label{tab:par3}
\end{center}
\end{table}
\renewcommand{\arraystretch}{1}}

\subsection{Semisimple involutions}\label{ss:semi}

Now assume $p$ is odd. Once again, we can use character-theoretic methods to compute ${\rm fix}(t)$ for every involution $t \in T$. The key tool to do this is \cite[Corollary 3.2]{LLS2}. The following result completes the proof of Theorem \ref{t:parab}.

\begin{prop}\label{p:parabsemi}
Suppose $p \ne 2$ and $H$ is a maximal parabolic subgroup of $G$. Then 
\begin{equation}\label{e:al2}
\liminf_{q\to \infty,\, \mbox{{\rm {\scriptsize $q$ odd}}}} \frac{\log {\rm ifix}(T)}{\log n } = \b
\end{equation}
with $\b$ as in Tables $\ref{tab:par1}$, $\ref{tab:par2}$ and $\ref{tab:par3}$, and either
\begin{itemize}\addtolength{\itemsep}{0.2\baselineskip}
\item[{\rm (i)}] ${\rm ifix}(T)>n^{1/2}$; or
\item[{\rm (ii)}] $(T,H_0)$ is one of the cases in Table $\ref{tabb:par}$ and ${\rm ifix}(T)>n^{\gamma}$ for the given constant $\gamma$.
\end{itemize}
Moreover, if $(T,H_0)$ is in Table $\ref{tabb:par}$, then ${\rm ifix}(T) \leqs n^{4/9}$ unless 
$(T,H_0) = ({}^3D_4(q), P_2)$, or $T = G_2(q)$ and $q \in \{3,5\}$.
\end{prop}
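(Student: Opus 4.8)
\textbf{Plan of proof for Proposition \ref{p:parabsemi}.}
The plan is to mirror the even-characteristic analysis of Subsection \ref{ss:uni}, replacing the Green-function computation with the semisimple-element formula of \cite[Corollary 3.2]{LLS2}. Since $H$ is a maximal parabolic subgroup and $p$ is odd, I consider the permutation character $\chi = 1^{\bar{G}_\s}_{P_\s}$ and must evaluate $\chi(t) = {\rm fix}(t)$ for each semisimple involution $t \in T$. By Proposition \ref{l:excepinv} and Table \ref{tabinv}, the involution classes and their centralizer orders are known in every case; for each $T$ the number of classes is small (at most two in the untwisted families $G_2$, $F_4$, and for $E_6^\e$, ${}^3D_4$, and at most two in $E_7$, $E_8$ after discarding classes whose Bala--Carter label lacks an $A_1$-type component). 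First I would fix, for each $(T,H_0)$ in the relevant range, the decomposition $\chi = \sum_\phi n_\phi R_\phi$ exactly as in \eqref{e:dec}, reading off the coefficients $n_\phi = \la 1^W_{W_P},\phi\ra$ from the same sources used in Propositions \ref{p:parabuni}--\ref{p:parab3}.

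The computational core is then to apply \cite[Corollary 3.2]{LLS2} to evaluate each $R_\phi(t)$ at a semisimple involution $t$; unlike the unipotent case this expression involves the structure of $C_{\bar{G}_\s}(t)$, whose order is recorded in the final column of Table \ref{tabinv}. For each involution class I would obtain $\chi(t)$ as an explicit polynomial in $q$, determine which class maximises ${\rm fix}(t)$, and read off its leading term to compute $\b$ via \eqref{e:al2}. I would tabulate the resulting constants $\b$ and check that they agree with the entries already listed in Tables \ref{tab:par1}, \ref{tab:par2} and \ref{tab:par3}; for the untwisted groups one expects, as in Proposition \ref{p:parabuni}, that the long-root ($t_1$-type) involution dominates in most cases, with the $F_4$ short-root exceptions handled separately. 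The twisted groups ${}^3D_4(q)$, ${}^2G_2(q)$ and ${}^2E_6(q)$ with $q$ odd require the corresponding twisted versions of the formula, drawing on \cite{Spal} for ${}^3D_4(q)$ and the character data cited in Proposition \ref{p:parab3}. For ${}^2G_2(q)$ and the small-rank cases I would work directly with known character tables. A genuine edge case is $E_7(q)$ with $q$ odd, where Remark \ref{r:inv} shows the representatives in $T$ depend on $q \bmod 4$ ($t_1,t_4,t_7$ versus $t_1,t_4',t_7'$), so the fixed-point count must be verified for both congruence classes; since the centralizer orders agree, the polynomial $\chi(t)$ and hence $\b$ should coincide, but this needs explicit checking.

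Having the polynomials $\chi(t)$ in hand, the dichotomy (i)/(ii) follows by comparing the degree of the dominant $\chi(t)$ against $\tfrac12 \log n / \log q$: in case (i) the leading exponent exceeds $\tfrac12$, giving ${\rm ifix}(T) > n^{1/2}$, while the residual cases are precisely those assembled in Table \ref{tabb:par}, where I would confirm both the strict bound ${\rm ifix}(T) > n^\gamma$ for each admissible odd $q$ and the limiting value $\gamma$ from \eqref{e:al2}. The final sentence, identifying when ${\rm ifix}(T) \leqs n^{4/9}$ fails, is then a direct numerical comparison: one checks that $\gamma > 4/9$ exactly for $(T,H_0)=({}^3D_4(q),P_2)$ (where $\gamma=4/9$ is attained in the limit but finite $q$ may exceed it) and for the small $G_2(q)$ cases $q\in\{3,5\}$, by substituting these values of $q$ into the explicit expressions for $n$ and ${\rm ifix}(T)$ and comparing logarithms directly, exactly as was done for $q=4$ in Proposition \ref{p:parabuni}.

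\textbf{Main obstacle.} I expect the principal difficulty to be the faithful application of \cite[Corollary 3.2]{LLS2} in the semisimple setting, since each $R_\phi(t)$ now depends on the embedding of the centralizer $C_{\bar{G}_\s}(t)$ and the associated Weyl-group data, rather than reducing to universally tabulated Green functions. Bookkeeping the two congruence cases for $E_7(q)$, and correctly handling the twisted groups where the character decomposition and the Frobenius twist interact, will require care; the small cases $q\in\{3,5\}$ for $G_2$ must also be checked by hand against the asymptotic bound, since the limit $\b$ need not control these finitely many values.
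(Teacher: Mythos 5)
Your proposal follows essentially the same route as the paper: evaluate the permutation character at each semisimple involution class, using \cite[Corollary 3.2]{LLS2} for the untwisted groups and ${}^2E_6(q)$, Spaltenstein's data for ${}^3D_4(q)$, the Steinberg character for ${}^2G_2(q)$, and direct numerical checks for the small values of $q$. Two corrections are worth recording. First, \cite[Corollary 3.2]{LLS2} is a closed formula for $\chi(t)$ itself at a semisimple element $t$, expressed via the parametrization of the class of $t$ by a pair $(J,[w])$ and a sum over the ($\s$-)conjugacy classes of the Weyl group; the decomposition $\chi=\sum_\phi n_\phi R_\phi$ from \eqref{e:dec} plays no role in odd characteristic, so you should not plan to evaluate individual almost characters $R_\phi$ at semisimple elements (that is the unipotent-case mechanism, via Green functions). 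Second, your expectation that the two congruence classes $q\equiv \pm 1\imod{4}$ for $E_7(q)$ yield the same polynomial and hence the same $\b$ is false: when $q\equiv 3\imod{4}$ one finds ${\rm fix}(t)=0$ for the classes $t_4'$ and $t_7'$, so the maximum is attained at a $t_1$-involution and gives a strictly smaller exponent (for $H_0=P_2$ one gets $11/21$, versus $25/42$ from the $t_7$-class when $q\equiv 1\imod{4}$); the liminf in \eqref{e:al2} is therefore attained only along the subsequence $q\equiv 3\imod{4}$. Neither point invalidates your plan, since you defer both to explicit computation, but the second is precisely where the tabulated value of $\b$ comes from.
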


\begin{proof}
First assume $T = {}^2G_2(q)$, so $H_0 = q^{1+1+1}{:}(q-1)$, $T$ has a unique class of involutions (with centralizer $2 \times {\rm L}_{2}(q)$) and $\chi = 1+\psi$, where $\psi$ is the Steinberg character. Therefore, for any involution $t \in T$ we have $\chi(t) = 1+|C_{T}(t)|_{p} = 1+q$ and we conclude that $n^{1/3}<{\rm ifix}(T)<n^{4/9}$ and $\b=1/3$ in \eqref{e:al2}.

Next assume $T = {}^3D_4(q)$. Here $T$ has $q^8(q^8+q^4+1)$ involutions, which form a single conjugacy class $t^T$. By arguing as in the proof of Proposition \ref{p:parab3}, using \cite{Spal} (see the second column in the table on p.689) with the given decomposition of $\chi$ into unipotent characters, we calculate that 
\[
{\rm ifix}(T) = \left\{\begin{array}{ll}
q^4+2q^3+q+2 & \mbox{if $H_0 = P_{1,3,4}$}  \\
q^4+q^3+2q+2 & \mbox{if $H_0 = P_{2}$}
\end{array}\right.
\]
It follows that $n^{4/9}<{\rm ifix}(T)<n^{1/2}$ and $\b=4/9$ if $H_0 = P_2$. Similarly, if $H_0 = P_{1,3,4}$ then $n^{4/11}<{\rm ifix}(T)<n^{4/9}$ and $\b=4/11$.

For the remainder, we may assume that either $T$ is untwisted or $T = {}^2E_6(q)$. Here we use the formula for $\chi(t)$ in \cite[Corollary 3.2]{LLS2}, which can be evaluated for any semisimple element $t \in T$ with the aid of  {\sc Magma}. This allows us to compute ${\rm ifix}(T)$ precisely. 

As before, let $W$ be the Weyl group of $\G$ and let $W_{P} \leqs W$ be the Weyl group of the corresponding parabolic subgroup $P$ of $\G$. Let $w_0$ be the longest word in $W$ and set $w^* = w_0$ if $T = {}^2E_6(q)$, otherwise $w^*=1$. Let $\Pi(\bar{G}) = \{\a_1, \ldots, \a_r\}$ be the simple roots of $\G$ and let $\a_0 \in \Phi(\bar{G})$ be the highest root. We recall that the semisimple classes in $\bar{G}_{\s}$ are parametrised by pairs $(J,[w])$, where $J$ is a proper subset of $\Pi (\bar{G})\cup \{\a_0\}$ (determined up to $W$-conjugacy), $W_J$ is the subgroup of $W$ generated by the reflections in the roots in $J$, and $[w] = W_Jw$ is a conjugacy class representative of $N_W(W_J)/W_J$. Let us also recall that the $\bar{G}_{\s}$-classes of $\s$-stable maximal tori of $\bar{G}$ are parametrised by the $\s$-conjugacy classes in $W$ (two elements $x,y \in W$ are $\s$-conjugate if there exists $z \in W$ such that $y = z^{-1}x\s(z)$). 
Let $C_1, \ldots, C_k$ be the conjugacy classes of $W$, so $C_iw^*$ is the $i$-th $\s$-conjugacy class and let $T_i$ be a representative of the corresponding $\bar{G}_{\s}$-class of maximal tori of $\bar{G}_{\s}$. In addition, let $r_i$ be the relative rank of $T_i$ (that is, $r_i$ is the multiplicity of $q-\e$ as a divisor of $|T_i|$, where $\e=-1$ if $T={}^2E_6(q)$, otherwise $\e=1$). 

To illustrate our approach, we will focus on the following three cases (the other cases are very similar and we omit the details):
\begin{itemize}\addtolength{\itemsep}{0.2\baselineskip}
\item[{\rm (a)}] $T = F_4(q)$ and $H_0 = P_1$.
\item[{\rm (b)}] $T = {}^2E_6(q)$ and $H_0 = P_{1,6}$.
\item[{\rm (c)}] $T = E_7(q)$ and $H_0 = P_2$.
\end{itemize}

First consider (a). Let $t \in T$ be a $t_4$-involution, so $|(C_{\bar{G}}(t)^{\circ})_{\s}| = |{\rm SO}_{9}(q)|$ (see 
Table \ref{tabinv}). 
In terms of the above notation, we have $J = \{\a_1, \a_2, \a_3, \a_0\}$ and $w=1$. By applying 
\cite[Corollary 3.2]{LLS2} we get
\begin{equation}\label{e:c32}
\chi(t) = \sum_{i=1}^{k}\frac{|W|}{|C_i|}\cdot \frac{|W_P \cap C_iw^*|}{|W_P|}\cdot \frac{|W_J \cap C_i|}{|W_J|}\cdot \frac{(-1)^{r_i}|(C_{\bar{G}}(t)^{\circ})_{\s}|_{p'}}{|T_i|} 
\end{equation}
with $w^*=1$. With the aid of {\sc Magma}, we calculate   
\[
\chi(t) = (q^4+q^2+1)(q^4+1)(q^2+1)(q+1)
\]
and we conclude that ${\rm ifix}(T)>n^{11/15}$. Moreover, one checks that ${\rm ifix}(t_4)>{\rm ifix}(t_1)$, so $\b=11/15$ in \eqref{e:al2}. Similarly, in (b) we take a $t_1$-involution $t \in T$, so $|(C_{\bar{G}}(t)^{\circ})_{\s}| = (q+1)|{\rm SO}_{10}^{-}(q)|$, $J = \{\a_2,\a_3,\a_4,\a_5,\a_0\}$ and $w=1$. Then \eqref{e:c32} holds (with $w^* = w_0$) and we get
$$\chi(t) = (q^7+q^4+q^3+q+2)(q^5+1)(q^2+1),$$
which gives the desired result (with $\b=7/12$). 

Finally, let us consider case (c). First assume $q \equiv 1 \imod{4}$ and let $t \in T$ be a $t_7$ involution, so $|(C_{\bar{G}}(t)^{\circ})_{\s}| = (q-1)|E_6(q)|$, $J = \{a_1, \ldots, \a_6\}$, $w=1$ and \cite[Corollary 3.2]{LLS2} gives
\begin{align*}
\chi(t)  = &  \; \sum_{i=1}^{k}\frac{|W|}{|C_i|}\cdot \frac{|W_P \cap C_i|}{|W_P|}\cdot \frac{|W_J \cap C_i|}{|W_J|}\cdot \frac{(-1)^{r_i+1}|(C_{\bar{G}}(t)^{\circ})_{\s}|_{p'}}{|T_i|} \\
= & \; 2(q^6+q^3+1)(q^6+1)(q^4+q^2+2)(q^4+1)(q^3+1)(q^2+q+1)
\end{align*}
This implies that ${\rm ifix}(T)>n^{25/42}$. Now assume $q \equiv 3 \imod{4}$, so the involutions in $T$ are of type $t_1, t_4'$ and $t_7'$ (see Table \ref{tabinv}). One checks that ${\rm fix}(t)=0$ when $t=t_4'$ or $t_7'$, so let $t$ be a $t_1$-involution. Then $J = \{\a_2, \a_3, \a_4,\a_5,\a_6,\a_7,\a_0\}$, $w=1$ and using \cite[Corollary 3.2]{LLS2} we calculate
\begin{align*}
\chi(t)  = (q^6+2q^4+q^3+2q^2+3)(q^4-q^3+q^2-q+1)(q^4+1)(q^3+1)(q^2+1)(q+1)^3
\end{align*}
and this yields ${\rm ifix}(T)>n^{11/21}$. In particular, $\b=11/21$ in \eqref{e:al2}.
\end{proof}

\section{Maximal rank subgroups}\label{s:mr}

As in the previous section, let $T = (\bar{G}_{\s})'$ be a simple exceptional group of Lie type over $\mathbb{F}_{q}$. Here we are interested in the case where $H = N_G(\bar{H}_{\s})$ and $\bar{H}$ is a $\s$-stable non-parabolic maximal rank subgroup of $\bar{G}$ (in particular, $\bar{H}^{\circ}$ contains a $\s$-stable maximal torus of $\bar{G}$). The possibilities for $H$ are determined in \cite{LSS} and the analysis falls naturally into two cases, according to whether or not $H$ is the normalizer of a maximal torus. Our main result is the following (note that throughout this section, we continue to exclude the groups in \eqref{e:small}). 

\begin{thm}\label{t:mr}
Let $G$ be an almost simple primitive permutation group of degree $n$ with socle $T$, an exceptional group of Lie type over $\mathbb{F}_{q}$, and point stabilizer $H$, a maximal rank non-parabolic subgroup of $G$. Set $H_0 = H \cap T$. Then one of the following holds:
\begin{itemize}\addtolength{\itemsep}{0.2\baselineskip}
\item[{\rm (i)}] ${\rm ifix}(T)>n^{4/9}$;
\item[{\rm (ii)}] $(T,H_0)$ is one of the cases in Table $\ref{tab:mr}$ and ${\rm ifix}(T)>n^{\a}$ for the given constant $\a$.
\end{itemize}
Moreover,   
\[
\liminf_{q \to \infty} \frac{\log {\rm ifix}(T)}{\log n} = \b
\]
and either $\b \geqs 1/2$, or $T = E_6^{\e}(q)$, ${\rm soc}(H_0) = {\rm L}_{3}^{\e}(q^3)$ and $\b=13/27$.
\end{thm}

{\small
\renewcommand{\arraystretch}{1.2}
\begin{table}
\begin{center}
$$\begin{array}{llcl}\hline
T & H_0 & \a & \mbox{Conditions} \\ \hline   
F_4(q) & (q^2+q+1)^2.(3 \times {\rm SL}_{2}(3)) & 0.427 & \mbox{$q=2$, $G={\rm Aut}(T)$} \\
{}^3D_4(q) & (q^4-q^2+1).4 & 0.436 & q = 2 \\
& (q^2+q+1)^2.{\rm SL}_{2}(3) & 0.401 & q = 2  \\
& & 0.442  & q=3 \\
& (q^2-q+1)^2.{\rm SL}_{2}(3) & 0.351 & q = 2 \\ 
& & 0.419 & q=3 \\
& & 0.439 & q=4 \\
{}^2G_2(q) & (q-\sqrt{3q}+1){:}6 & 0.442 & q = 27 \\
{}^2B_2(q) &  (q+\sqrt{2q}+1){:}4 & 0.438 & q = 8 \\
& (q-\sqrt{2q}+1){:}4 & 0.380 & q = 8 \\
& &  0.436 & q = 32  \\ \hline 
\end{array}$$
\caption{The maximal rank actions with $n^{\a}<{\rm ifix}(T) \leqs n^{4/9}$}
\label{tab:mr}
\end{center}
\end{table}
\renewcommand{\arraystretch}{1}}

Our basic strategy is to identify an involution $t \in H_0$ and then derive an upper bound $|C_{H_0}(t)| \leqs f(q)$ for some function $f$. Then by \eqref{e:fix} we have 
\begin{equation}\label{e:mrbd}
\text{ifix}(T) \geqs \frac{|T|}{f(q)|t^T|} \; \text{ and } \; \b \geqs \liminf_{q \to \infty} \frac{\log |T| - \log (f(q) |t^T|)}{\log n}
\end{equation}
and we need to identify the $T$-class of $t$ (recall that the possibilities are listed in Table \ref{tabinv}). For the purposes of obtaining lower bounds on $\text{ifix}(T)$ and $\b$, we can assume that $t$ belongs to the largest class of involutions in $T$ and we find that this crude approach is effective in some cases. However, in general we need to do more to establish the desired bounds. Here it can be helpful to consider the restriction $V{\downarrow}\bar{H}^{\circ}$ of a 
$\bar{G}$-module $V$, such as the Lie algebra $\mathcal{L}(\bar{G})$ or the minimal module for $\bar{G}$, noting that in these cases the composition factors of $V{\downarrow}\bar{H}^{\circ}$ are readily available in the literature (for example, comprehensive tables are presented in \cite{Tho}). If $p \ne 2$ then we can use the decomposition of $V{\downarrow}\bar{H}^{\circ}$ to compute the dimension of the $1$-eigenspace of $t$ on $V$, which is equal to $\dim C_{\bar{G}}(t)$ when $V =  \mathcal{L}(\bar{G})$ (see \cite[Section 1.14]{Carter}). This usually allows us to identify the $T$-class of $t$. Similarly, when $p=2$ we can work out the Jordan form of $t$ on $V$ and then use \cite{Lawunip} to determine the $T$-class of $t$ (alternatively, we can appeal to Lawther's results in \cite{Law09} on the fusion of unipotent classes in connected reductive subgroups of $\bar{G}$).  

\subsection{Normalizers of maximal tori}\label{ss:tori}

To begin with, let us assume that $H = N_G(\bar{S}_{\s})$ where $\bar{S}$ is a $\s$-stable maximal torus of $\bar{G}$ (the remaining maximal rank subgroups will be handled in Section \ref{ss:ntori}). Set $H_1 = N_{\bar{G}_{\sigma}}(\bar{S}_{\sigma})$. As explained in \cite[Section 1]{LSS}, we have
\begin{equation}\label{e:h00}
H_1 = N_{\bar{G}_{\sigma}}(\bar{S}_\sigma) = \bar{S}_{\sigma}.N
\end{equation}
and $H_0 = H_1 \cap T$, where $N$ is isomorphic to a subgroup of the Weyl group $W = N_{\bar{G}}(\bar{S})/\bar{S}$. By the main theorem of \cite{LSS} we have $H_1 = N_{\bar{G}_{\sigma}}(\bar{S})$ and the possibilities for $H_1$ are presented in \cite[Table 5.2]{LSS}. It will be convenient to set $S = \bar{S}_{\sigma} \cap T$.

\begin{prop}\label{p:mr1}
Let $G,T,H,H_0$ and $n$ be as in the statement of Theorem \ref{t:mr} and assume $H$ is the normalizer of a maximal torus of $G$. Then 
\[
\liminf_{q \to \infty} \frac{\log {\rm ifix}(T)}{\log n} \geqs \frac{1}{2}
\]
and either 
\begin{itemize}\addtolength{\itemsep}{0.2\baselineskip}
\item[{\rm (i)}] ${\rm ifix}(T)>n^{4/9}$; or
\item[{\rm (ii)}] $(T,H_0)$ is one of the cases in Table $\ref{tab:mr}$ and ${\rm ifix}(T)>n^{\a}$ for the given constant $\a$.
\end{itemize}
\end{prop}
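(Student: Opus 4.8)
The plan is to exploit the fact that a maximal torus normalizer is, up to a bounded factor, no larger than a maximal torus, so that $n$ is very close to $|T|$ and almost any well-chosen involution fixes many points. First I would use \eqref{e:h00} and \cite[Table 5.2]{LSS} to record that $H_1 = \bar{S}_{\s}.N$ with $N$ isomorphic to a subgroup of $W$, whence $|H_0| \leqs (q+1)^r|W|$, where $r$ is the rank of $\bar{G}$. Since $|T|$ is a polynomial in $q$ of degree $\dim \bar{G}$, this gives $\log n = (\dim \bar{G} - r)\log q + O(1)$ as $q \to \infty$, so by \eqref{e:fix} it suffices to produce, for each torus, an involution $t \in H_0$ with $\log {\rm fix}(t)$ close to $\frac{1}{2}(\dim\bar{G}-r)\log q$.

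The main estimate rests on ${\rm fix}(t) \geqs |C_T(t)|/|C_{H_0}(t)|$ together with the order bound $|C_{H_0}(t)| \leqs |C_{\bar{S}_{\s}}(t)|\,|W|$ coming from the projection $H_1 \to N$. Writing $\dim C_{\bar{G}}(t)$ for the centralizer dimension of the relevant class in Table \ref{tabinv} (so that $|C_T(t)|$ is a polynomial of this degree) and $d(t)$ for $\log_q|C_{\bar{S}_{\s}}(t)|$, the dimension of the $t$-fixed subtorus, we obtain ${\rm fix}(t) \geqs q^{\dim C_{\bar{G}}(t) - d(t)}$ up to a bounded factor. The whole problem thus reduces to choosing $t$ with $\dim C_{\bar{G}}(t) - d(t) \geqs \frac{1}{2}(\dim\bar{G} - r)$. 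There are two natural sources of involutions: for $p$ odd the $2$-torsion of $\bar{S}_{\s}$, which gives $d(t) = r$ but can realise a large-centralizer class (for example $t_8$ in $E_8$, $t_7$ in $E_7$, $t_1$ in $E_6^{\e}$, $t_4$ in $F_4$); and lifts of involutions $w \in N$, for which $d(t)$ is the dimension of the $w$-fixed subtorus and can be made small — in the extreme a lift of $-1 \in W$ (available for $E_8$, $E_7$, $F_4$, $G_2$) inverts $\bar{S}_{\s}$, giving $d(t) = 0$ and ${\rm fix}(t) \approx q^{\dim C_{\bar{G}}(t)} \geqs q^{(\dim\bar{G}-r)/2}$.

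For each $T$ I would then take the better of these options and identify the precise $T$-class of $t$: for $p$ odd by computing the dimension of the $1$-eigenspace of $t$ on $\mathcal{L}(\bar{G})$, and for $p=2$ (where $\bar{S}_{\s}$ has odd order, so every involution comes from $N$ and is unipotent) by computing the Jordan form of $t$ on a suitable module and appealing to \cite{Lawunip}. A direct check of the ratios shows that for $E_8$, $E_7$, $E_6^{\e}$ and $F_4$ a torus involution in the largest available class already yields $\dim C_{\bar{G}}(t) - r \geqs \frac{1}{2}(\dim\bar{G}-r)$, while for $G_2$, where the unique class has $\dim C_{\bar{G}}(t)=6$, one must instead use the inverting involution to reach the ratio $\frac{1}{2}$. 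The twisted groups ${}^3D_4(q)$, ${}^2B_2(q)$ and ${}^2G_2(q)$ are handled identically, using the $\s$-twisted versions of the torus and Weyl data. In all cases this gives $\liminf_{q\to\infty}\log{\rm ifix}(T)/\log n \geqs \frac{1}{2}$ and ${\rm ifix}(T) > n^{4/9}$ for all sufficiently large $q$.

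The hard part will be the borderline and small-$q$ analysis. Some torus types — most notably in $E_6$, where $-1 \notin W$ and the rational $2$-torsion may meet only the smaller class $t_2$ — admit no inverting involution and force the torus-only ratio down to $4/9$; here one must instead use a mixed involution projecting to a non-trivial $w \in N$ with $d(t) < r$ in order to push the ratio back above $\frac{1}{2}$, and verifying that such a choice always exists is the crux of making the argument uniform in $q$. Finally, for the finitely many small values $q \in \{2,3,4,8,27,32\}$ at which the leading-term estimate drops to ${\rm ifix}(T) \leqs n^{4/9}$, I would compute ${\rm ifix}(T)$ exactly from the relevant class sizes in Table \ref{tabinv} (using {\sc Magma} where convenient), thereby producing the cases and constants $\a$ recorded in Table \ref{tab:mr}.
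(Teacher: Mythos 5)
Your overall strategy --- exploit the fact that $n$ is within a bounded factor of $|T|$, reduce to exhibiting a single involution $t \in H_0$ with $\dim C_{\bar{G}}(t) - d(t) \geqs \frac{1}{2}(\dim\bar{G}-r)$, and fall back on a lift of $-1 \in W$ when the torus itself fails --- is essentially the paper's strategy, and your numerical criterion is exactly the right one. But there are genuine gaps. The claim that for $E_8$, $E_7$, $E_6^{\e}$ and $F_4$ a ``torus involution in the largest available class'' suffices is false for many of the tori that actually occur in \cite[Table 5.2]{LSS}: when $p=2$ every $\bar{S}_{\s}$ has odd order, and for $p$ odd so do tori such as $(q^2+q+1)^2$ in $F_4$ and $(q^2\pm q+1)^3$ in $E_6^{\e}$ --- precisely the tori producing the exceptions in Table \ref{tab:mr} --- and even when rational $2$-torsion is present it need not meet the class with the large centralizer (e.g.\ the $t_8$-class of $E_8$). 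Your fallback does rescue every type with $-1 \in W$, since an inverting involution has $|C_{H_0}(t)|$ bounded and hence ${\rm fix}(t) \geqs |C_T(t)|/|C_{H_0}(t)| \geqs c\,q^{(\dim\bar{G}-r)/2}$; this is what the paper does uniformly outside type $E_6$ (Corollary \ref{c:new}), phrased as ``the coset $S\hat{w}_0$ consists of $|S|$ involutions, all in the largest $T$-class''. So the route is salvageable, but not as you have organised it.

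Two further steps are asserted rather than proved. First, you take for granted that involutions of $N$ lift to involutions of $H_0$: the extension $\bar{S}_{\s}.N$ need not split over a given involution of $N$, and even for $w_0$ one needs \cite[Proposition 8.22]{RicSpr} to produce an involutive representative in $N_{\bar{G}}(\bar{S})$, Lang's theorem to make it rational, and --- for $T=E_7(q)$ with $p$ odd, where $\bar{G}_{\s}=T.2$ --- an isogeny argument to check that the lift lies in $T$ rather than in ${\rm Inndiag}(T)\setminus T$; the paper's Lemma \ref{l:wzero} and Corollary \ref{c:new} exist precisely to close this. Second, for $E_6^{\e}(q)$ you correctly isolate the crux (no inverting involution, and the torus alone may only reach the $t_2$-class, giving exponent $4/9$) but leave it unresolved. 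The paper settles it by noting that only the tori $(q-\e)^6.W$ and $(q^2+\e q+1)^3.3^{1+2}.{\rm SL}_2(3)$ occur, locating a $t_1$-involution (resp.\ a long root involution when $p=2$) in the first, and, for the second, computing explicitly with the root datum that each of the nine involutions of $N$ inverts a subtorus of order $(q^2+\e q+1)^2$, so that $i_2(H_1)\geqs 9(q^2+\e q+1)^2$, which is exactly enough. Without these pieces the argument does not close.
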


We will prove Proposition \ref{p:mr1} in Lemmas \ref{l:mr1l1}--\ref{l:mr1l3}. 
We start by establishing an elementary result on the longest word in $W$.

\begin{lem} \label{l:wzero}
Let $\bar{G}$ be a simple adjoint algebraic group of type $B_2, G_2, D_4, F_4, E_7$ or $E_8$ over an algebraically closed field $K$ of characteristic $p \geqs 0$. Let $\bar{S}$ be a maximal torus of $\bar{G}$ and let $w_0$ be the longest word in the Weyl group $W = N_{\bar{G}}(\bar{S}) / \bar{S}$.
\begin{itemize}\addtolength{\itemsep}{0.2\baselineskip}
\item[{\rm (i)}] $w_0$ is an involution acting by inversion on $\bar{S}$. 
\item[{\rm (ii)}] $w_0$ is the image of an involution $\hat{w}_0 \in N_{\bar{G}}(\bar{S})$ such that $\bar{S}\hat{w}_0 = \hat{w}_0^{\bar{S}}$.
\item[{\rm (iii)}] $\dim C_{\bar{G}}(\hat{w}_0) = \frac{1}{2}(\dim \bar{G} - r)$, where $r$ is the rank of $\bar{G}$.
\end{itemize}
\end{lem}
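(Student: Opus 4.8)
The plan is to reduce all three parts to the single structural fact that, for each of the six listed types, the longest element $w_0$ acts as $-1$ on the reflection representation. For part (i), I would first recall from Bourbaki \cite{Bou} that the longest element acts as $-\mathrm{id}$ on $X(\bar{S})\otimes\mathbb{R}$ precisely for root systems of type $A_1$, $B_\ell$, $C_\ell$, $D_\ell$ ($\ell$ even), $G_2$, $F_4$, $E_7$, $E_8$; each of $B_2,G_2,D_4,F_4,E_7,E_8$ is on this list (note $D_4=D_\ell$ with $\ell=4$ even). Since $W$ acts faithfully on $X(\bar{S})$ and $w_0\neq 1$, the relation $w_0^2=1$ shows $w_0$ is an involution. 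For the inversion statement I would argue dually: for every $\chi\in X(\bar{S})$ and $t\in\bar{S}$ we have $\chi(w_0(t))=(w_0^{-1}\chi)(t)=\chi(t)^{-1}=\chi(t^{-1})$, and since the characters separate the points of the torus $\bar{S}$ this forces $w_0(t)=t^{-1}$.

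For part (ii), fix any preimage $\hat{w}_0\in N_{\bar{G}}(\bar{S})$ of $w_0$. By inversion, conjugation by $\hat{w}_0$ sends $t\mapsto t^{-1}$, so $\hat{w}_0^2$ centralizes $\bar{S}$; as $C_{\bar{G}}(\bar{S})=\bar{S}$ we get $s:=\hat{w}_0^2\in\bar{S}$. Two short computations finish the argument. First, any other lift has the form $t\hat{w}_0$ and $(t\hat{w}_0)^2=t\,w_0(t)\,\hat{w}_0^2=t\,t^{-1}s=s$, so $s$ does not depend on the chosen lift. Second, for $v\in W$ with lift $\hat{v}$, and using that $w_0=-1$ is central in $W$, the element $\hat{v}\hat{w}_0\hat{v}^{-1}$ is again a lift of $vw_0v^{-1}=w_0$, whence $v(s)=\hat{v}\hat{w}_0^2\hat{v}^{-1}=(\hat{v}\hat{w}_0\hat{v}^{-1})^2=s$. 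Thus $s\in\bar{S}^W=Z(\bar{G})$, and $Z(\bar{G})=1$ since $\bar{G}$ is adjoint, so $\hat{w}_0$ is an involution. For the final assertion, inversion gives $t\hat{w}_0t^{-1}=t^2\hat{w}_0$, and since the squaring map is surjective on the torus $\bar{S}$ in every characteristic (the field is algebraically closed, hence perfect), I conclude $\hat{w}_0^{\bar{S}}=\bar{S}^2\hat{w}_0=\bar{S}\hat{w}_0$.

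For part (iii) I would work with $\mathcal{L}(\bar{G})=\mathcal{L}(\bar{S})\oplus\bigoplus_{\alpha\in\Phi(\bar{G})}\mathcal{L}(\bar{G})_\alpha$. The adjoint action of $\hat{w}_0$ is the differential of inversion, namely $-1$ on $\mathcal{L}(\bar{S})$, and it sends $\mathcal{L}(\bar{G})_\alpha$ to $\mathcal{L}(\bar{G})_{-\alpha}$. When $p\neq 2$ the involution $\hat{w}_0$ is semisimple, its centralizer is smooth, so $\dim C_{\bar{G}}(\hat{w}_0)=\dim\mathcal{L}(\bar{G})^{\hat{w}_0}$; here the $(-1)$-action contributes nothing from $\mathcal{L}(\bar{S})$, while on each of the $|\Phi^{+}(\bar{G})|$ pairs $\{\mathcal{L}(\bar{G})_\alpha,\mathcal{L}(\bar{G})_{-\alpha}\}$ the interchanging involution has a $1$-dimensional fixed space, giving $\dim C_{\bar{G}}(\hat{w}_0)=|\Phi^{+}(\bar{G})|=\tfrac12(\dim\bar{G}-r)$.

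The main obstacle is the case $p=2$, where $\hat{w}_0$ is unipotent and the fixed-point scheme is no longer smooth: now $\mathcal{L}(\bar{S})$ lies inside the $\mathrm{Ad}(\hat{w}_0)$-fixed space, so the eigenspace count yields only the upper bound $\dim C_{\bar{G}}(\hat{w}_0)\leqslant r+|\Phi^{+}(\bar{G})|$, inflated by the rank $r$. To pin down the exact value I would instead determine the $\bar{G}$-class of $\hat{w}_0$ directly, following the paper's general strategy for $p=2$: compute the Jordan form of $\hat{w}_0$ on $\mathcal{L}(\bar{G})$ (or on the minimal module) and match it against the unipotent-class data of Aschbacher--Seitz \cite{AS} and Liebeck--Seitz \cite{LS_book} (using \cite{Lawunip} for the fusion), then read off the centralizer dimension. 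In each of the six types there is a unique involution class whose centralizer has dimension $\tfrac12(\dim\bar{G}-r)$ --- for example $\tilde{A}_1$ in $G_2$, $A_1\tilde{A}_1$ in $F_4$, and $A_1^4$ in $E_8$ --- and $\hat{w}_0$ must lie in it, since $\hat{w}_0$ inverts $\bar{S}$ and hence $C_{\bar{S}}(\hat{w}_0)$ is finite. This characteristic-$2$ class identification is the only delicate step; everything else is a formal consequence of $w_0=-1$.
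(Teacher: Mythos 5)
Your part (i) and the conjugacy statement in (ii) are fine and essentially reproduce what the paper does (the paper simply cites \cite[p.192]{KL} for (i), and your computation $t\hat{w}_0t^{-1}=t^2\hat{w}_0$ together with surjectivity of squaring on $\bar{S}$ is exactly the paper's argument for $\bar{S}\hat{w}_0=\hat{w}_0^{\bar{S}}$). The first genuine gap is in your proof that the lift is an involution: the asserted equality $\bar{S}^W=Z(\bar{G})$ is false for adjoint groups in odd characteristic. For example, for $\bar{G}$ adjoint of type $B_2$ with $p\ne 2$ one has $\bar{S}^W\subseteq\bar{S}^{w_0}=\bar{S}[2]\cong Y/2Y$ with $Y=\mathbb{Z}^2$ the cocharacter lattice, on which $W$ acts by coordinate permutations and sign changes; the class of $(1,1)$ is a nontrivial $W$-fixed point, so $\bar{S}^W\cong\mathbb{Z}/2\ne 1=Z(\bar{G})$ (the same phenomenon already occurs in ${\rm PGL}_2$). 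Your argument correctly shows that $s=\hat{w}_0^2$ is a $W$-invariant element of $\bar{S}$ of order dividing $2$, independent of the chosen lift, but this does not force $s=1$ when $p\ne 2$. (When $p=2$ the torus has no $2$-torsion and your argument does close.) The paper avoids the issue by citing \cite[Proposition 8.22]{RicSpr} for the existence of an involution lift; alternatively one can use the standard Tits lift, whose square is $(2\rho^\vee)(-1)$, a central element and hence trivial in the adjoint group.

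The second gap is in part (iii) when $p=2$. You correctly observe that the eigenspace count on $\mathcal{L}(\bar{G})$ only gives the upper bound $\dim C_{\bar{G}}(\hat{w}_0)\leqs r+|\Phi^{+}(\bar{G})|$, and your proposed remedy (identify the unipotent class of $\hat{w}_0$ via its Jordan form on $\mathcal{L}(\bar{G})$ and the tables of \cite{Lawunip}) is a workable strategy, but you do not carry it out; instead you offer the shortcut that $\hat{w}_0$ must lie in the unique involution class with centralizer of dimension $\frac{1}{2}(\dim\bar{G}-r)$ ``since $C_{\bar{S}}(\hat{w}_0)$ is finite''. That inference is not valid: finiteness of $C_{\bar{S}}(\hat{w}_0)$ only says that $C_{\bar{G}}(\hat{w}_0)^{\circ}$ meets $\bar{S}$ trivially, which does not exclude the larger involution classes (for instance $(\tilde{A}_1)_2$ in $F_4$ has centralizer of dimension $30>24$, and nothing you have said rules out $\hat{w}_0$ lying there). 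So the $p=2$ case of (iii) is not actually proved as written. The paper handles (iii) in all characteristics at once by citing \cite[Lemma 3.6]{BGS}.
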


\begin{proof}
Part (i) is well-known (for example, see \cite[p.192]{KL}). By \cite[Proposition 8.22]{RicSpr}, there is an involution $\hat{w}_0 \in N_{\bar{G}}(\bar{S})$ such that $w_0 = \bar{S}\hat{w}_0$, so $\la \bar{S}, \hat{w}_0 \ra = \bar{S}{:}\la \hat{w}_0 \ra$ is a split extension and every element in the coset $\bar{S}\hat{w}_0$ is an involution. Since $\hat{w}_0$ inverts $\bar{S}$, it follows that $s\hat{w}_0$ and $\hat{w}_0$ are $\bar{S}$-conjugate if and only if $s = t^2$ for some $t \in \bar{S}$. But every element in $\bar{S}$ is a square since $K$ is algebraically closed, so (ii) follows. Finally, part (iii) follows from \cite[Lemma 3.6]{BGS}.
\end{proof}

\begin{cor}\label{c:new}
Let $\bar{G}$ be as in the statement of Lemma \ref{l:wzero} and suppose $T = (\bar{G}_{\s})'$ and 
\[
H_1 = N_{\bar{G}_{\sigma}}(\bar{S}_\sigma) = \bar{S}_{\sigma}.N
\]
for some $\sigma$-stable maximal torus $\bar{S}$ of $\bar{G}$. Recall that $S = \bar{S}_{\sigma} \cap T$. Then there exists an involution $t \in H_0$ such that 
\begin{itemize}\addtolength{\itemsep}{0.2\baselineskip}
\item[{\rm (i)}] $t$ inverts $S$, 
\item[{\rm (ii)}] every element in the coset $St$ is $T$-conjugate to $t$, and
\item[{\rm (iii)}] $t^{T}$ is the largest class of involutions in $T$. 
\end{itemize}
\end{cor}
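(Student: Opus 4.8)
The plan is to build the involution $t \in H_0$ directly from the element $\hat{w}_0 \in N_{\bar{G}}(\bar{S})$ produced by Lemma \ref{l:wzero}, and then to identify its $T$-class using the centralizer dimension formula in Lemma \ref{l:wzero}(iii). First I would observe that since $\bar{S}$ is $\s$-stable and $w_0$ acts by inversion on $\bar{S}$ (hence commutes with every automorphism of $\bar{S}$, in particular with the action of $\s$), the coset $\bar{S}\hat{w}_0$ is $\s$-stable. By Lang's theorem applied to the connected group $\bar{S}$, the $\s$-fixed points of this coset are nonempty, so we may choose an involution $t \in \bar{S}\hat{w}_0$ with $t \in \bar{G}_{\s}$; replacing $\hat{w}_0$ by $t$ we arrange $t \in N_{\bar{G}_\s}(\bar{S}_\s) = H_1$. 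Since $t$ maps to $w_0 \in N = H_1/\bar{S}_\s$, which lies in every subgroup of $W$ containing the longest word (and $w_0$ is central in $W$ for all the types listed), $t$ normalizes $S = \bar{S}_\s \cap T$ and inverts it, giving (i). A small check is needed to place $t$ inside $H_0 = H_1 \cap T$ rather than merely in $H_1$, which follows because $t$ is an involution and the quotient $H_1/H_0$ is small and controlled by the diagonal automorphisms.

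For part (ii), I would reprise the argument of Lemma \ref{l:wzero}(ii) at the finite level: every element of the coset $St$ is an involution because $t$ inverts $S$, and $st$ is $S$-conjugate to $t$ exactly when $s$ is a square in $S$. The obstruction is therefore the squaring map on the finite torus $S$; where it is surjective (which holds in the relevant cases, since $S$ has odd order or at least $S/S^2$ is trivial for the tori under consideration) we get that all of $St$ fuses to $t$ under $S$-conjugacy, hence under $T$-conjugacy, yielding (ii). Where the squaring map is not surjective one argues that the distinct $S$-classes in $St$ are nonetheless $N_T(S)$-conjugate, using the action of $N$ on $\bar{S}_\s$; this is where I expect to have to be slightly careful rather than invoke a one-line argument.

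For part (iii), the key input is Lemma \ref{l:wzero}(iii): $\dim C_{\bar{G}}(t) = \frac{1}{2}(\dim \bar{G} - r)$, where $t = \hat{w}_0$ as an element of the algebraic group $\bar{G}$. I would then compare this dimension against the centralizer dimensions of the finitely many involution classes of $\bar{G}$ (equivalently, the classes in $T$ recorded in Table \ref{tabinv}), noting that a smaller centralizer corresponds to a larger conjugacy class. The claim is that $\frac{1}{2}(\dim\bar{G}-r)$ is the \emph{minimal} centralizer dimension among involutions of $\bar{G}$, which I would verify type-by-type against the list of involution centralizers ($A_1D_6$, $A_7$, $T_1E_6$ for $E_7$; the $D_8$ and $A_1E_7$ centralizers for $E_8$; etc.). Since centralizer dimension in $\bar{G}$ controls the finite centralizer order $|C_{{\rm Inndiag}(T)}(t)|$ up to the usual $q$-power and bounded factors, the class with smallest $\dim C_{\bar{G}}$ is the largest $T$-class, giving (iii) via the $1$-$1$ correspondence between $\bar{G}$-classes and $T$-classes noted in Remark \ref{r:invcon}.

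The main obstacle I anticipate is twofold. The more routine difficulty is the fusion bookkeeping in (ii), since the finite torus $S$ need not have all elements square, so I must control the $S$- versus $N_T(S)$-conjugacy of the involutions in the coset $St$. The genuinely substantive point is verifying in (iii) that $\frac{1}{2}(\dim \bar{G} - r)$ really is the minimum centralizer dimension over all involution classes in each relevant type $B_2, G_2, D_4, F_4, E_7, E_8$ — in other words, that the "inverting" involution $w_0$ lands in the largest involution class. This is a finite check per type, but it is the crux: for a type where $w_0$ happened not to be in the largest class, the corollary would fail, so the restriction to the types in Lemma \ref{l:wzero} (those where $-1 \in W$) is exactly what makes the statement true.
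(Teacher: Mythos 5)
Your construction of $t$ (a $\sigma$-fixed involution in the $\sigma$-stable class $\hat{w}_0^{\bar{S}} = \bar{S}\hat{w}_0$, obtained via Lang's theorem) and your treatment of parts (i) and (iii) match the paper's proof. But there are two genuine gaps. The smaller one: your claim that $t \in H_0$ "follows because $t$ is an involution and $H_1/H_0$ is small" is not an argument. For $T = E_7(q)$ with $q$ odd, ${\rm Inndiag}(T)\setminus T$ does contain involutions (see Remark \ref{r:inv}), so an involution of $H_1$ need not lie in $T$. The paper resolves this by passing to the simply connected cover to show that $|\bar{S}_\sigma \cap T| = |\bar{S}_\sigma|/2$, i.e.\ $\bar{S}_\sigma \not\leqs T$, so the index-two discrepancy between $H_1$ and $H_0$ is already absorbed by the torus and $t$ must lie in $T$.

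The substantive gap is in part (ii). You attempt to fuse the coset $St$ into one class using $S$-conjugacy (squares in $S$) and, failing that, $N_T(S)$-conjugacy via the action of $N$. This route does not work: the squaring map on $S$ is far from surjective in general (e.g.\ $S = (q-\e)^2$ in $G_2(q)$ with $q$ odd has $|S/S^2|=4$), and the distinct $S$-classes in $St$ are typically \emph{not} fused inside $N_T(S)$ — already in the dihedral normalizer of a split torus of ${\rm PGL}_2(q)$ the reflections form two classes that only fuse in the full group. The fusion genuinely requires conjugating elements outside the torus normalizer. The paper's argument sidesteps all of this: since $St \subset \bar{S}\hat{w}_0 = \hat{w}_0^{\bar{S}}$ by Lemma \ref{l:wzero}(ii), every element of $St$ is already $\bar{G}$-conjugate to $t$, and then the fusion statement $x^{\bar{G}} \cap T = x^T$ for involutions (Remark \ref{r:invcon}) converts $\bar{G}$-conjugacy into $T$-conjugacy. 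That single idea — work in the algebraic group, where every torus element is a square, and then descend via Remark \ref{r:invcon} — is the missing ingredient in your proposal.
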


\begin{proof}
Let $w_0$ be the longest word in the Weyl group $W = N_{\bar{G}}(\bar{S})/\bar{S}$ and let 
$\hat{w}_0 \in N_{\bar{G}}(\bar{S})$ be the involution in Lemma \ref{l:wzero}(ii). Since $w_0$ is the only non-trivial element of $Z(W)$, it follows that $\sigma$ fixes $w_0$, so $\bar{S}{:}\langle \hat{w}_0 \rangle$ is $\sigma$-stable and $(\bar{S}{:}\langle \hat{w}_0 \rangle)_\sigma \leqs H_1$. Moreover, the conjugacy class $\hat{w}_0^{\bar{S}}$ is also stabilized by $\sigma$, so Lang's theorem implies that $H_1$ contains an involution $t \in \hat{w}_0^{\bar{S}}$. 

We claim that $t \in H_0$. Apart from the case where $\bar{G} = E_7$ and $p \ne 2$, we have $\bar{G}_{\s}=T$, so $H_0 = H_1$ and the claim is clear. Now assume $\bar{G} = E_7$ and $p \ne 2$. Seeking a contradiction, let us assume that $t \not\in H_0$, in which case 
$(\bar{S}_{\sigma}{:}\la t \ra) \cap T = \bar{S}_{\sigma}$. Let $\hat{G}$ be the simply connected algebraic group and by an abuse of notation we will write $\sigma$ for the corresponding Steinberg endomorphism of $\hat{G}$. Note that $\bar{G} = \hat{G}/Z$ and $\bar{S} = \hat{S}/Z$, where $Z = Z(\hat{G}) = Z(\hat{G}_{\s})$ (see \cite[Table 2.2]{GLS}). Now $Z \leqs \hat{S}_\sigma$ and so $\hat{S}_\sigma / Z = \bar{S}_\sigma \cap T$. But $\hat{S}$ and $\bar{S}$ are connected isogenous groups, so $|\hat{S}_\sigma| = |\bar{S}_\sigma|$ and thus $|\bar{S}_\sigma \cap T| = |\bar{S}_\sigma|/2$, which implies that $\bar{S}_\sigma \not\leqs T$. We conclude that $t \in H_0$ as claimed.

Now $t$ inverts $S$ (since it inverts the algebraic torus $\bar{S}$), so (i) follows. Part (iii) is an immediate corollary of Lemma \ref{l:wzero}(iii). Finally, observe that $St \subset \bar{S} \hat{w}_0$ so all elements in $St$ are $\bar{G}$-conjugate by Lemma \ref{l:wzero}(ii). Part (ii) now follows since $x^{\bar{G}} \cap T = x^T$ for all involutions $x \in T$ (see Remark \ref{r:invcon}).
\end{proof}

We are now ready to begin the proof of Proposition \ref{p:mr1}. Set
\[
\b = \liminf_{q \to \infty} \frac{\log {\rm ifix}(T)}{\log n}.
\]
Recall that if $X$ is a finite group, then $i_2(X)$ denotes the number of involutions in $X$.

\begin{lem} \label{l:mr1l1}
Proposition \ref{p:mr1} holds if $T \in \{{}^2B_2(q),{}^2G_2(q), G_2(q)\}$. 
\end{lem}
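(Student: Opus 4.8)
The plan is to exploit Corollary \ref{c:new}, which applies here because the ambient algebraic group $\bar{G}$ is of type $B_2$ or $G_2$, both having longest Weyl group element $w_0 = -1$. This furnishes an involution $t \in H_0$ lying in the largest $T$-class of involutions, inverting $S = \bar{S}_{\s} \cap T$, and such that every element of the coset $St$ is $T$-conjugate to $t$. Combined with \eqref{e:fix}, and using $St \subseteq t^T \cap H_0$, this gives
\[
{\rm ifix}(T) \geqs {\rm fix}(t) = \frac{|t^T \cap H_0|}{|t^T|}\cdot n \geqs \frac{|S|\,|C_T(t)|}{|H_0|}.
\]
The orders $|C_T(t)|$ are read off from Table \ref{tabinv} (recalling $T = {\rm Inndiag}(T)$ for each of these groups), so the computation reduces to knowing $H_0$ and $S$.

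First I would dispose of $T = G_2(q)$: none of its maximal torus normalizers is maximal in $T$, since the normalizers of the Coxeter tori of order $q^2\pm q+1$ lie inside ${\rm SL}_{3}(q){:}2$ and ${\rm SU}_{3}(q){:}2$, while the normalizers of the $(q\mp1)^2$-tori lie inside the $A_1\tilde{A}_1$-type subgroup. Hence the lemma is vacuous for $G_2(q)$, and I may assume $T = {}^2B_2(q)$ or ${}^2G_2(q)$, for which the maximal torus normalizers are classified explicitly (Suzuki \cite{Suz} and Kleidman). For ${}^2B_2(q)$ these are $D_{2(q-1)}$ and $(q\pm\sqrt{2q}+1){:}4$; for ${}^2G_2(q)$ they are $(2^2 \times D_{(q+1)/2}){:}3$ and $(q\pm\sqrt{3q}+1){:}6$ (we exclude $q=3$, handled in Proposition \ref{p:small}).

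For each subgroup I would substitute the explicit orders into the displayed bound. Whenever $S$ has odd order (namely $S = q-1$ or $q\pm\sqrt{2q}+1$ for ${}^2B_2(q)$, and $S = q\pm\sqrt{3q}+1$ for ${}^2G_2(q)$), one checks that $S$ contains no involution and that the complement $N \cong Z_2$, $Z_4$ or $Z_6$ is cyclic with a unique involution, acting by inversion; hence $i_2(H_0) = |S|$ and, as $T$ has a single class of involutions, ${\rm ifix}(T) = {\rm fix}(t) = |S|\,|C_T(t)|/|H_0|$ exactly. This yields ${\rm fix}(t) = q^2/2$ for $D_{2(q-1)}$ and $q^2/4$ for the two Coxeter tori of ${}^2B_2(q)$, and ${\rm fix}(t) = q(q^2-1)/6$ for the Coxeter tori of ${}^2G_2(q)$; for $(2^2 \times D_{(q+1)/2}){:}3$ the torus has even order, but the crude lower bound ${\rm fix}(t) \geqs (q+1)|C_T(t)|/|H_0| = q(q^2-1)/12$ already suffices. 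Comparing each quantity with $n = |T|/|H_0|$ — simplified using $q^2+1 = (q+\sqrt{2q}+1)(q-\sqrt{2q}+1)$ and $q^2-q+1 = (q+\sqrt{3q}+1)(q-\sqrt{3q}+1)$ — shows $\log {\rm fix}(t)/\log n \to 1/2$ in every case, giving $\liminf \geqs 1/2$, and that ${\rm ifix}(T)>n^{4/9}$ for all but finitely many $q$. Direct evaluation at the remaining small values isolates exactly $(q-\sqrt{2q}+1){:}4$ with $q\in\{8,32\}$, $(q+\sqrt{2q}+1){:}4$ with $q=8$, and $(q-\sqrt{3q}+1){:}6$ with $q=27$, which are precisely the rows of Table \ref{tab:mr} with the recorded constants $\a$.

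The main obstacle is bookkeeping rather than conceptual: for each fixed small $q$ one must pin down ${\rm ifix}(T)$ exactly and check the inequality against $n^{4/9}$ on the nose (the $q=8$ and $q=27$ cases sit genuinely close to the threshold), and one must treat the single even-order case $(2^2 \times D_{(q+1)/2}){:}3$ separately, where $i_2(H_0)\ne|S|$ and only the inequality ${\rm fix}(t)\geqs|S|\,|C_T(t)|/|H_0|$ is available. Since this bound still comfortably exceeds $n^{4/9}$, no further argument is required.
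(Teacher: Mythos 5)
Your treatment of ${}^2B_2(q)$ and ${}^2G_2(q)$ is essentially the paper's argument: invoke Corollary \ref{c:new} to get an involution $t$ with $|t^T\cap H_0|\geqs |S|$ in the unique (hence largest) class, then compute ${\rm fix}(t)$ from the explicit orders and isolate the small-$q$ exceptions, which you identify correctly. (Two minor quibbles there: with the paper's convention that $D_m$ has order $m$, the group $(2^2\times D_{(q+1)/2}){:}3$ has order $6(q+1)$, so your bound should read $q(q^2-1)/6$ rather than $q(q^2-1)/12$ --- harmless; and the paper actually pins down $i_2(H_0)=q+4$ exactly here via Kleidman, though your lower bound suffices.)

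The genuine gap is your dismissal of $T=G_2(q)$ as vacuous. By \cite[Table 5.2]{LSS} the torus normalizer $H_0=(q-\e)^2.D_{12}$ \emph{is} a maximal subgroup of $G_2(q)$ when $p=3$ (with $q\geqs 9$), and this is precisely the case the lemma must handle for $G_2(q)$. Your containment argument $(q-\e)^2.D_{12}\leqs {\rm L}_3^{\e}(q).2$ (or into the $A_1\tilde A_1$ subgroup) is what rules out maximality for $p\ne 3$, but it does not settle maximality in the almost simple group $G$ when $p=3$, where the exceptional graph automorphism of $G_2$ interchanges the two $A_2$-type (and the two $A_1$-type) overgroups while stabilizing the torus normalizer; the correct list of maximal torus normalizers is the one in \cite[Table 5.2]{LSS}, which includes this case. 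Concretely, your proof is missing the argument that $i_2(H_0)\geqs (q-\e)^2$ (again via Corollary \ref{c:new}, using that $G_2(q)$ with $q$ odd has a single involution class), which gives $\b\geqs 1/2$ and ${\rm ifix}(T)>n^{4/9}$ for $q>9$, together with a separate computation at the boundary value $q=9$ (the paper resorts to {\sc Magma} to get $i_2(H_0)=115$ or $163$ according to $\e=\pm$) to confirm ${\rm ifix}(T)>n^{4/9}$ there as well. Without this case the lemma is not proved.
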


\begin{proof}
Write $H_1 = N_{\bar{G}_{\s}}(\bar{S}_{\s}) = S.N$ as in \eqref{e:h00} and note that $H_0 = H_1$. Since $p=3$ when $T = G_2(q)$ (see \cite[Table 5.2]{LSS}), in each case $T$ has a unique class of involutions (see Table \ref{tabinv}). Therefore  
\begin{equation}\label{e:i2}
{\rm ifix}(T) = \frac{i_2(H_0)}{i_2(T)}\cdot n
\end{equation}
and it suffices to compute $i_2(H_0)$. In the next paragraph, we write $D_m$ for the dihedral group of order $m$.

First assume $T = {}^2G_2(q)$ and $H_0 = S.N$, where $|S|=q+1$ and $N = Z_6$. By \cite{K88}, we have $H_0 = (2^2 \times D_{(q+1)/2}){:}3$, so $i_2(H_0) = q+4$ and the result follows. Next assume $T = G_2(q)$ and $H_0 = (q-\e)^2.D_{12}$, where $p=3$ and $q \geqs 9$ by maximality. Here Corollary \ref{c:new} implies that $i_2(H_0) \geqs (q-\e)^2$ and we deduce that $\b \geqs 1/2$ and $\text{ifix}(T) > n^{4/9}$ for $q > 9$. If $q=9$, then using {\sc Magma} we calculate that $i_2(H_0) = 115$ when $\e=1$, and $i_2(H_0) = 163$ when $\e=-1$, whence $\text{ifix}(T) > n^{4/9}$ as required. 

In each of the remaining cases (see \cite[Table 5.2]{LSS}), we have $H_0 = S.N$ where $|S|$ is odd and $N$ has a unique involution. By applying Corollary \ref{c:new}, we deduce that $i_2(H_0)=|S|$ and the result follows.
\end{proof}

\begin{lem} \label{l:mr1l2}
Proposition \ref{p:mr1} holds if $T \in \{{}^3D_4(q), {}^2F_4(q), F_4(q), E_7(q), E_8(q)\}$.  
\end{lem}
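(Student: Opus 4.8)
The plan is to apply Corollary~\ref{c:new} uniformly to all five groups. In each case the ambient algebraic group $\bar{G}$, which has type $D_4$, $F_4$, $E_7$ or $E_8$, is one of the types covered by Lemma~\ref{l:wzero}, so Corollary~\ref{c:new} furnishes an involution $t \in H_0$ lying in the largest $T$-class of involutions and satisfying the key property that every element of the coset $St$ is $T$-conjugate to $t$. Hence $|t^T \cap H_0| \geqs |St| = |S|$, and since $H_0 = S.N$ with $N$ isomorphic to a subgroup of the Weyl group $W$ (see \eqref{e:h00}), the fixed point formula \eqref{e:fix} yields
\[
{\rm ifix}(T) \geqs {\rm fix}(t) \geqs \frac{|S|}{|t^T|}\cdot n = \frac{|C_T(t)|}{|N|} \geqs \frac{|C_T(t)|}{|W|},
\]
using $n = |T{:}H_0|$ and $|t^T| = |T{:}C_T(t)|$. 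Thus everything reduces to estimating the centralizer order $|C_T(t)|$.

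Next I would pin down the $T$-class of $t$ using Lemma~\ref{l:wzero}(iii), which gives $\dim C_{\bar{G}}(t) = \frac{1}{2}(\dim \bar{G} - r)$, where $r$ is the rank of $\bar{G}$. Comparing with the centralizer dimensions recorded in Table~\ref{tabinv} identifies $t$, when $p$ is odd, as the class of $t_1$ in $E_8(q)$, $t_4$ in $E_7(q)$, $t_1$ in $F_4(q)$ and the unique class $t_2$ in ${}^3D_4(q)$; and, when $p = 2$, as the largest unipotent class in each case, namely $A_1^4$, $A_1^4$, $A_1\tilde{A}_1$, $A_1^3$ and $A_1\tilde{A}_1$ for $E_8(q)$, $E_7(q)$, $F_4(q)$, ${}^3D_4(q)$ and ${}^2F_4(q)$ respectively. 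In every instance $|C_T(t)|$ has degree $\frac{1}{2}(\dim \bar{G} - r)$ in $q$ (with the usual halving of exponents for the Ree group ${}^2F_4(q)$), while $n = |T{:}H_0|$ has degree $\dim \bar{G} - r$, since $|H_0| = |S|\,|N|$ with $|S|$ of degree $r$ and $|N| \leqs |W|$ bounded. A short degree computation then shows that the right-hand side of the displayed inequality is at least $n^{1/2 - o(1)}$, giving
\[
\b = \liminf_{q\to\infty}\frac{\log {\rm ifix}(T)}{\log n} \geqs \frac{1}{2},
\]
and in particular ${\rm ifix}(T) > n^{4/9}$ for all but finitely many $q$.

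The main obstacle is the finitely many small values of $q$ for which the crude estimate $|t^T \cap H_0| \geqs |S|$ drops below $n^{4/9}$; these produce the exceptional entries of Table~\ref{tab:mr} (all with $T = {}^3D_4(q)$ or $F_4(q)$ and $q \leqs 4$). For these I would compute ${\rm ifix}(T)$ exactly. When $T$ has a single class of involutions, for instance ${}^3D_4(q)$ with $q$ odd, this reduces via \eqref{e:i2} to counting $i_2(H_0)$ directly from the explicit structure of $H_0 = S.N$ listed in \cite[Table~5.2]{LSS}; otherwise one must also determine the $T$-fusion of the involutions of $H_0$, which I would settle by computing their Jordan form (when $p=2$) or $1$-eigenspace dimension (when $p$ is odd) on the adjoint or minimal module, exactly as outlined at the start of Section~\ref{s:mr}, resorting to {\sc Magma}~\cite{Magma} for the genuinely small groups. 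The case demanding the most care is $E_7(q)$ with $q$ odd: here ${\rm Inndiag}(T) = T.2$, so one must retain the verification $t \in H_0$ supplied by Corollary~\ref{c:new} and work with the centralizer order $2|{\rm SL}_8^{\e}(q)|$, which, as flagged in Remark~\ref{r:inv}, is twice a monic polynomial rather than its naive leading term.
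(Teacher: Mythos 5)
Your overall strategy is the paper's: invoke Corollary \ref{c:new} to produce an involution $t \in H_0$ in the largest $T$-class with $|t^T \cap H_0| \geqs |S|$, feed this into \eqref{e:mrbd} to get $\b \geqs 1/2$ (your degree count via Lemma \ref{l:wzero}(iii) is correct, and naming the classes explicitly is consistent with Table \ref{tabinv}), and then treat the finitely many small $q$ where the crude bound falls below $n^{4/9}$.

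The gap is in your identification of those exceptional cases. You assert they are ``all with $T = {}^3D_4(q)$ or $F_4(q)$ and $q \leqs 4$'', i.e.\ exactly the entries of Table \ref{tab:mr}. That is false: the crude estimate $|t^T \cap H_0| \geqs |S|$ also fails to give ${\rm ifix}(T) > n^{4/9}$ when $T = E_8(2)$ with $H_0 = 3^8.W$ and when $T = E_7(2)$ with $H_0 = 3^7.W$ (for $E_8(2)$ the bound yields roughly $1.3 \times 10^{27}$ against $n^{4/9} \approx 3 \times 10^{27}$). These two cases do not appear in Table \ref{tab:mr} precisely because ${\rm ifix}(T) > n^{4/9}$ does hold there, but establishing that requires a different idea: one must exhibit an involution $y \in H_0$ lying in the \emph{small} $A_1$-class of $T$ rather than the largest class (e.g.\ for $E_8(2)$ there is such a $y$ with $|y^{H_0}| = 360$, found by constructing $H_0$ inside $E_8(4)$ with {\sc Magma}), so that $|y^T|$ is much smaller and ${\rm fix}(y) > n^{4/9}$. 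Your fallback of ``compute ${\rm ifix}(T)$ exactly'' via $i_2(H_0)$ works for the ${}^3D_4(q)$ and $F_4(q)$ torus normalizers because there $|S|$ is odd and $N$ has a unique involution, so all involutions of $H_0$ lie in the single coset $St$ and $i_2(H_0) = |S|$; but $3^8.W(E_8)$ and $3^7.W(E_7)$ contain involutions in several $T$-classes, and without determining that fusion (which your proposal would only do for cases you have already, incorrectly, excluded) the proof of $\b \geqs 1/2$ and ${\rm ifix}(T) > n^{4/9}$ for these two groups is incomplete.
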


\begin{proof}
This is similar to the previous lemma. Fix an involution $t \in H_0$ such that $|t^T \cap H| \geqs |S|$ and $t$ belongs to the largest class of involutions in $T$ (see Corollary \ref{c:new}). This gives lower bounds on $\text{ifix}(T)$ and $\b$ as in \eqref{e:mrbd}, and we deduce that $\b \geqs 1/2$ in all cases. Moreover, the same bounds imply that $\text{ifix}(T) > n^{4/9}$, with the exception of the cases in Table \ref{tab:exmr}.

{\small
\renewcommand{\arraystretch}{1.2}
\begin{table}
\begin{center}
$$\begin{array}{lll} \hline
T & H_0 & q \\ \hline
E_8(q) & (q+1)^8.W & 2 \\
E_7(q) & (q+1)^7.W & 2 \\
F_4(q) & (q^2+q+1)^2.(3 \times {\rm SL}_{2}(3)) & 2 \\
{}^3D_4(q) & (q^4-q^2+1).4 & 2 \\
& (q^2+q+1)^2.{\rm SL}_{2}(3) & 2,3 \\
& (q^2-q+1)^2.{\rm SL}_{2}(3) & 2,3,4 \\ \hline 
\end{array}$$
\caption{Some normalizers of maximal tori}
\label{tab:exmr}
\end{center}
\end{table}
\renewcommand{\arraystretch}{1}}

Suppose $T = {}^3 D_4(q)$ or $F_4(q)$, as in Table \ref{tab:exmr}, so $H_1 = H_0 = S.N$. Here $|S|$ is odd and $N$ has a unique involution, so $i_2(H_0) = |t^T \cap H| = |S|$ and we can compute $\text{ifix}(T)$ precisely. One can check that $\text{ifix}(T) \leqs n^{4/9}$ in each case, so these exceptions are recorded in Table \ref{tab:mr}. Finally, we claim that $\text{ifix}(T) > n^{4/9}$ in the two remaining cases in Table \ref{tab:exmr}. To see this, we use {\sc Magma} to construct $H_0 = S.W$ as a subgroup of $E_7(4)$ or $E_8(4)$, respectively, and we find a class of involutions in $H_0$ that is contained in the $A_1$-class of $T$. For instance, if $T = E_8(2)$ and $H_0 = 3^8.W$, then there is an involution $y \in H_0$ in the $A_1$-class of $T$ with $|y^{H_0}| = 360$, and we deduce that ${\rm fix}(y)>n^{4/9}$ as required.  
\end{proof}

Finally, we complete the proof of Proposition \ref{p:mr1} by dealing with the case $T = E_6^\e(q)$. Here $\bar{G}=E_6$ and every involution in $\G_{\s}$ is contained in $T$, so for the purposes of computing ${\rm ifix}(T)$ we may assume that $\bar{G}_{\s} \leqs G$ and $H_1 \leqs H$, where $H_1 = N_{\bar{G}_{\s}}(\bar{S}_{\s})$ as above. 

For the proof of Lemma \ref{l:mr1l3} below, we need a more detailed understanding of the construction of $H_1$ (see \cite[Sections 1--3]{LSS} and \cite[Section 25.1]{MT} for further details). Fix a $\s$-stable maximal torus $\bar{T}$ of $\bar{G}$ and set $W = N_{\G}(\bar{T})/\bar{T}$.  Let 
\begin{equation}\label{e:rootg}
U_{\a} = \{x_{\a}(c) \,:\, c \in K\}
\end{equation} 
be the root subgroup of $\bar{G}$ corresponding to $\a \in \Phi(\bar{G})$.
For $\e=+$ we may assume that $\sigma$ is the standard Frobenius morphism $\s_q$ (acting as  $x_{\a}(c) \mapsto x_{\a}(c^q)$ on root group elements), and for $\e=-$ we take $\s=\s_q\tau$, where $\tau$ is the standard graph automorphism of $\bar{G}$ induced from the order two symmetry $\rho$ of the Dynkin diagram (so $\s:x_{\a}(c) \mapsto x_{\rho(\a)}(c^q)$ for $\a \in \Pi(\bar{G})$).  
Without loss of generality, we may assume that 
$$\bar{T} = \la h_{\a_i}(c_i) \mid c_i \in K^*,\; i = 1, \ldots, 6\ra, $$
where
$$h_{\alpha}(c) = x_{\a}(c) x_{-\a}(-c^{-1}) x_{\a}(c-1) x_{-\a}(1) x_{\a}(-1)$$
as in \cite[Lemma 6.4.4]{Carter72}. As before, let $w_0$ be the longest word of $W$, so that $w_0 \tau$ acts as $-1$ on $\Phi(\bar{G})$ (see \cite[p.192]{KL}). Set $w^*=w_0$ if $\e=-$, otherwise $w^*=1$.

Recall that there is a $1$-$1$ correspondence between the $\bar{G}_{\s}$-classes of $\s$-stable maximal tori in $\bar{G}$ and the $\sigma$-conjugacy classes of $W$. More precisely, if we write $\bar{S} = \bar{T}^g$ for some $g \in \bar{G}$, then $w = \s(g)g^{-1} \in N_{\bar{G}}(\bar{T})$ and its image in $W$ (which we also write as $w$) represents the $\s$-class corresponding to $\bar{S}^{\bar{G}_{\s}}$. 
As in \cite[Section 1]{LSS}, it is convenient to replace $\s$ by $\s w$ in order to simplify the description of $H_1$, where $(\s w)^g = \s$ and $(\bar{G}_{\s w})^g = \bar{G}_{\s}$. Then 
$T = (\bar{G}_{\s w})'$ and 
\begin{equation}\label{e:h11}
H_1 = N_{\bar{G}}(\bar{T})_{\sigma w} = \bar{T}_{\sigma w}.N
\end{equation}
with $N = \{x \in W \,:\, xw = w\s(x) \}$. It will be convenient to set $S = \bar{T}_{\sigma w}$.

\begin{lem} \label{l:mr1l3}
Proposition \ref{p:mr1} holds if $T = E_6^\e(q)$.
\end{lem}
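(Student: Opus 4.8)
The plan is to adapt the argument of Lemma \ref{l:mr1l2} to the present case, the essential new difficulty being that $-1 \notin W = W(E_6)$, so that Corollary \ref{c:new} (which relied on the longest word acting by inversion) is no longer available. First I would enumerate from \cite[Table 5.2]{LSS} the $\bar{G}_{\s}$-classes of maximal tori $\bar{S}$ whose normalizer is maximal in $G$; there are only a handful, and for each we have $H_0 = H_1 = S.N$ in the notation of \eqref{e:h11}, where $S = \bar{T}_{\s w}$ and $N = \{x \in W : xw = w\s(x)\}$. Recall from the discussion preceding the lemma that every involution of $\bar{G}_{\s}$ lies in $T$, so it is harmless to compute inside $\bar{G}_{\s}$ and $H_1$.

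Since $-1 \notin W$, no element of $W$ inverts $\bar{T}$; the best available substitute is the longest word $w_0$, which acts as $-\tau$ on the reflection representation ($\tau$ the graph automorphism), and hence as $-1$ on a subspace of dimension $4$. This is the largest possible, since every involution of $W$ fixes a subspace of dimension at least $2$. I would therefore locate, for each relevant torus, an involution $v \in N$ that is $W$-conjugate to $w_0$ (for $\e = +$ one has $N = C_W(w)$, while for $\e = -$ one has $N = \{x : xw = w\tau(x)\}$), so that $v$ inverts a subtorus $S_1 \leqs S$ of rank $4$ with $|S_1|$ a monic polynomial in $q$ of degree $4$. Lifting $v$ to an involution $t \in N_{\bar{G}}(\bar{T})_{\s w}$ via \cite[Proposition 8.22]{RicSpr}, exactly as in Lemma \ref{l:wzero}(ii) and Corollary \ref{c:new}, the split-extension/squares argument shows that every element of the coset $S_1 t$ is $\bar{G}$-conjugate, hence $T$-conjugate (using Remark \ref{r:invcon}), to $t$; thus $|t^T \cap H_0| \geqs |S_1|$, of order $q^4$.

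To finish I would combine \eqref{e:fix} with \eqref{e:mrbd}, bounding $|t^T|$ from above by the size of the largest involution class of $T$. By Table \ref{tabinv} this class (of type $A_1A_5$ when $p$ is odd and $A_1^3$ when $p = 2$) has centralizer of order $\sim q^{38}$, so if $u$ denotes a representative then $|u^T| \sim q^{40}$; since $|H_0| \sim q^6$ and $n = |T:H_0| \sim q^{72}$, this gives
\[
{\rm fix}(t) = \frac{|t^T \cap H_0|}{|t^T|}\cdot n \geqs \frac{|S_1|}{|u^T|}\cdot n \sim q^{36} \sim n^{1/2},
\]
whence $\b \geqs 1/2$ and ${\rm ifix}(T) > n^{4/9}$ for all large $q$; note that this lower bound does not require pinning down the exact $T$-class of $t$. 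The remaining small values of $q$ (in particular $q = 2$, where the involutions are unipotent) I would dispatch by a direct {\sc Magma} computation, confirming that no case with ${\rm ifix}(T) \leqs n^{4/9}$ arises, consistent with the absence of any $E_6^{\e}(q)$ row in Table \ref{tab:mr}. The main obstacle is precisely the failure of full inversion: one must verify, for each torus in \cite[Table 5.2]{LSS}, that $N$ genuinely contains an involution inverting a rank-$4$ subtorus (rather than something smaller), and keep careful track of the cyclotomic factors in $|S_1|$ and $|H_0|$ so that the leading powers of $q$ yield the clean exponent $1/2$. A secondary technical point is the even-characteristic case: here $t$ is unipotent, and should a sharper class identification ever be needed, one would read off its Jordan form on the $27$-dimensional or adjoint module and apply \cite{Lawunip}.
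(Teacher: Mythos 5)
Your core device --- an involution $v \in N$ in the class of $w_0$ (type $4A_1$), inverting a subtorus $S_1 \leqs S$ of order $\sim q^4$, combined with the worst-case assumption that the resulting involutions lie in the largest class of $T$ --- is essentially how the paper treats the torus $S = (q^2+\e q+1)^3$ with $N = 3^{1+2}.{\rm SL}_2(3)$. There the argument is clean precisely because $|S|$ is odd: each $S.\langle m_i\rangle$ splits by Schur--Zassenhaus, a complement involution $t$ inverts $S_1 = \{s \in S : m_i(s)=s^{-1}\}$ of order $(q^2+\e q+1)^2$, and $S_1t = t^{S_1}$ because squaring is a bijection on $S_1$. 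Note, however, that the paper sums over all nine involutions of $N$ to get $i_2(H_1) \geqs 9(q^2+\e q+1)^2$, and this factor of $9$ is actually needed to beat $n^{4/9}$ when $q=2$; your single coset $S_1t$, of size $49$, is not enough for $E_6(2)$. Your appeal to \cite[Proposition 8.22]{RicSpr} is also not quite the right tool: it supplies an involution of $N_{\bar{G}}(\bar{T})$ over $w_0$, whereas you need an involution of the \emph{finite} group $H_0$ over a prescribed $v \in N$, and when $|S|$ is even neither its existence nor the single-class statement for $S_1t$ (squaring then has kernel of order $2^4$ on $S_1$) is automatic.

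The more serious gap is the other torus, $S = (q-\e)^6$ with $N = W$. Your estimate gives ${\rm fix}(t) \gtrsim q^{36}/|W|$ against $n^{4/9} \sim q^{32}/|W|^{4/9}$ with $|W| = 51840$, which fails for $q \leqs 4$; since $\e=+$ forces $q \geqs 5$, the problem cases are ${}^2E_6(q)$ with $q \in \{2,3,4\}$, and these cannot be dispatched by a direct {\sc Magma} computation on $\O$ (the degrees are around $10^{12}$ and larger), while a rough count shows that even using all $4A_1$-involutions of $W$ the worst-case-class bound still fails for ${}^2E_6(2)$. The paper's treatment of this torus is genuinely different and sharper: for $p$ odd it shows that $S$ \emph{itself} contains a $t_1$-involution, because $Z(D_5T_1) < \bar{T} < D_5T_1$ is a $\s w$-stable chain and the unique involution of $Z(D_5T_1)$ is fixed by $\s w$; for $p=2$ it exhibits the long root involution $x_{\a_2}(1)x_{-\a_2}(1)x_{\a_2}(1)$ inside $N_{\bar{G}}(\bar{T})_{\s w}$. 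In both cases $t$ lies in the class with the \emph{largest} centralizer, so even the trivial bound $|t^T \cap H_0| \geqs 1$ yields ${\rm ifix}(T)>n^{4/9}$ and $\b \geqs 5/9$ (resp. $\geqs 1/2$) uniformly in $q$. You need an argument of this kind, or explicit fusion computations inside the three small twisted groups, to close this case.
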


\begin{proof}
According to \cite[Table 5.2]{LSS}, we have $H_1 = S.N$ as in \eqref{e:h11}, where 
\begin{itemize}\addtolength{\itemsep}{0.2\baselineskip}
\item[{\rm (a)}] $S = (q-\e)^6$ and $N=W$, with $q \geqs 5$ if $\e=1$; or 
\item[{\rm (b)}] $S = (q^2 +\epsilon q +1)^3$ and $N=3^{1+2}.\text{SL}_2(3)$, with $(\e,q) \ne (-1,2)$.
\end{itemize}
We will use the notation for involutions presented in Table \ref{tabinv}. 

First consider (a). In the notation of \eqref{e:h11}, note that $w=w^*$. First assume $p$ is odd. We claim that $S$ contains a $t_1$-involution $t$, in which case the trivial bound $|C_{H_0}(t)| \leqs |H_0|$ yields $\text{ifix}(T) > n^{4/9}$ and $\b \geqs 5/9$. To prove the claim, let $\bar{M} = D_5T_1$ be a $\s w$-stable maximal subgroup of $\bar{G}$ containing $\bar{T}$, so $Z(D_5T_1) < \bar{T} < \bar{M}$ is a chain of $\s w$-stable subgroups. Now $Z(D_5T_1)$ contains a unique involution (a $t_1$-involution), which must be fixed by $\sigma w$, hence 
$S=\bar{T}_{\sigma w}$ contains a $t_1$-involution, as claimed.  

Now assume $p=2$, so $|S|$ is odd. We claim that $H_1$ contains an $A_1$-involution, which yields $\text{ifix}(T) > n^{4/9}$ and $\b \geqs 1/2$. To justify the claim, let 
\[
t = x_{\a_2}(1) x_{-\a_2}(1) x_{\a_2}(1) \in N_{\bar{G}}(\bar{T}),
\]
where $x_{\a}(1) \in U_{\a}$, as in \eqref{e:rootg}. Then $t$ is an involution that is fixed by 
$\s w$, so $t \in H_1$. Moreover, $t$ is contained in the $A_1$-type subgroup
$\la U_{\a_2}, U_{-\a_2} \ra$, which has a unique conjugacy class of involutions. Thus $t$ is $\bar{G}$-conjugate to $x_{\alpha_2}(1)$ and the claim follows. 

Now consider (b).
If $H_1$ contains a $t_2$-involution when $p \neq 2$, or an involution in one of the classes labelled $A_1$ or $A_1^2$ when $p=2$, then we immediately deduce that $\b \geqs 1/2$ and ${\rm ifix}(T)>n^{4/9}$. Therefore, we may as well assume that every involution in $H_1$ is contained in the largest $T$-class of involutions (which can be read off from Table \ref{tabinv}) and we need to determine an appropriate lower bound on $i_2(H_1)$.  

In terms of the above correspondence between the $\bar{G}_{\s}$-classes of $\s$-stable maximal tori in $\bar{G}$ and the $\sigma$-classes of $W$, the given maximal torus $S$ corresponds to the class of $w$ with Carter diagram $3A_2$ (see \cite[Table C]{LSS}). By considering the classes in $W$, we can write $w = xw^*$, where
\[
x = (1, 3, 43)(2, 72, 35)(4, 22, 62)(5, 6, 47) \ldots \in W  
\] 
as a permutation of $\Phi(\bar{G})$, where our ordering of the $72$ roots in $\Phi(\bar{G})$ is consistent with {\sc Magma} (note that we only give part of the permutation $x$, but this is enough to uniquely determine it). It follows that  
$$S = \langle h_{\alpha_1}(c_1^{-\e q})h_{\alpha_3}(c_1), h_{\alpha_2}(c_2^{-\e q}) h_{-\alpha_0}(c_2), h_{\alpha_5}(c_3^{-\e q})h_{\alpha_6}(c_3) \mid c_i^{q^2 + \e q + 1}=1 \rangle,$$
where $\alpha_0$ is the highest root in $\Phi(\bar{G})$. Now $N = 3^{1+2}.\text{SL}_2(3)$ has $9$ involutions, which form a single conjugacy class, say $\{m_1, \dots, m_9\}$. Since $|S|$ is odd, each subgroup $N_i = S.\langle m_i \rangle \leqs H_1$ is a split extension and we note that $i_2(H_1) \geqs \sum_i i_2(N_i)$ since 
$S\langle m_i \rangle \cap S\langle m_j \rangle = S$ for all $i \neq j$. We claim that $i_2(N_i) \geqs (q^2 + \e q + 1)^2$ for each $i$, so $i_2(H_1) \geqs 9 (q^2 + \e q + 1)^2$. This implies that $\b \geqs 1/2$ and ${\rm ifix}(T)>n^{4/9}$, so it remains to justify the lower bound on $i_2(N_i)$.

The number of involutions in $N_i$ is at least the number of elements of $S$ inverted by $m_i$. Thus for each $i$, it suffices to find a subtorus of $S$ of size $(q^2 + \e q + 1)^2$ inverted by $m_i$. With the aid of {\sc Magma}, it is easy to determine the action of $m_i$ on $S$. 
For example, one of the $m_i$, say $m_1$, acts as follows
\begin{align*}
h_{\alpha_1}(c) & \longleftrightarrow h_{\alpha_2}(c) \\
h_{\alpha_3}(c) & \longleftrightarrow h_{-\alpha_0}(c) \\
h_{\alpha_5}(c) & \longleftrightarrow h_{\alpha_5}(c^{-1}) = h_{\alpha_5}(c)^{-1} \\
h_{\alpha_6}(c) & \longleftrightarrow h_{\alpha_6}(c^{-1}) = h_{\alpha_6}(c)^{-1}
\end{align*}
which implies that the subtorus 
$$\langle h_{\alpha_1}(c_1^{-\e q})h_{\alpha_3}(c_1)h_{\alpha_2}(c_1^{\e q}) h_{-\alpha_0}(c_1^{-1}), h_{\alpha_5}(c_2^{-\e q})h_{\alpha_6}(c_2) \mid c_i^{q^2 + \e q + 1}=1 \rangle < S$$
is inverted by $m_1$. Similarly, each of the remaining $m_i$ inverts a torus of size $(q^2 + \e q + 1)^2$, which proves the claim. 
\end{proof}

\subsection{The remaining maximal rank subgroups}\label{ss:ntori}

To complete the proof of Theorem \ref{t:mr}, we may assume that $H = N_G(\bar{H}_{\s})$, where $\bar{H}$ is a $\s$-stable maximal rank subgroup and the connected component $\bar{H}^\circ$ is not a maximal torus of $\bar{G}$. The possibilities for $H$ are given in \cite[Table 5.1]{LSS}.

\begin{prop}\label{p:mr2}
Let $G, T, H, H_0$ and $n$ be as in the statement of Theorem \ref{t:mr} and assume $H$ is not the normalizer of a maximal torus of $G$. Then ${\rm ifix}(T)>n^{4/9}$. Moreover,  
\[
\liminf_{q \to \infty} \frac{\log {\rm ifix}(T)}{\log n} = \b
\]
and either $\b \geqs 1/2$, or $T = E_6^{\e}(q)$, ${\rm soc}(H_0) = {\rm L}_{3}^{\e}(q^3)$ and $\b=13/27$. 
\end{prop}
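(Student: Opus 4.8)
The plan is to run through the possibilities for $\bar{H}$ recorded in \cite[Table 5.1]{LSS}, which lists the $\sigma$-stable maximal rank subgroups of $\bar{G}$ whose connected component $\bar{H}^{\circ}$ is not a maximal torus (these are essentially the subsystem subgroups such as $A_1E_7$, $D_8$ and $A_8$ in $E_8$, together with their analogues in the other types and the relevant twisted variants). For each such $\bar{H}$ the structure of $H_0 = H \cap T$ is known, and the strategy is the one set out before Proposition \ref{p:mr1}: I would fix a well-chosen involution $t \in H_0$ lying in $\bar{H}^{\circ}$, estimate $|t^{H_0}|$ from below via an upper bound on $|C_{H_0}(t)|$, and then apply \eqref{e:fix} in the form
\[
{\rm ifix}(T) \geqs \frac{|t^{H_0}|}{|t^T|}\cdot n,
\]
together with \eqref{e:mrbd}. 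The only genuinely new ingredient needed for each case is the identification of the $T$-class of $t$, which pins down $|t^T|$ (the finitely many possibilities being those listed in Table \ref{tabinv}).

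To identify the $T$-class of $t$ I would work with the restriction $V{\downarrow}\bar{H}^{\circ}$, where $V$ is either $\mathcal{L}(\bar{G})$ or the minimal module for $\bar{G}$; the composition factors of such restrictions are tabulated in \cite{Tho}. When $p$ is odd, the decomposition of $V{\downarrow}\bar{H}^{\circ}$ lets me read off the dimension of the $1$-eigenspace of $t$, and for $V = \mathcal{L}(\bar{G})$ this equals $\dim C_{\bar{G}}(t)$ (see \cite[Section 1.14]{Carter}), which almost always distinguishes the $T$-class. When $p=2$ the involution $t$ is unipotent, so I would instead compute its Jordan form on $V$ and match it against Lawther's data in \cite{Lawunip} (or appeal to the fusion results of \cite{Law09}). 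In the great majority of cases the chosen $t$ lies in the largest $T$-class of involutions, and the resulting bound already gives $\beta \geqs 1/2$ and ${\rm ifix}(T) > n^{4/9}$ directly.

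The single exception is $T = E_6^{\e}(q)$ with $\bar{H}^{\circ} = A_2^3$, where $\sigma$ cyclically permutes the three $A_2$ factors so that $H_0$ has socle ${\rm L}_{3}^{\e}(q^3)$. Here I would take $t$ to be the involution of $H_0$ induced by a diagonal involution of $A_2^3$, whose $H_0$-class has size of order $q^{12}$, and use the module restriction to verify that every involution of $H_0$ lies in the $t_2$-class of $T$ (centralizer of type $A_1A_5$, so $|t^T|$ of order $q^{40}$). Since $\dim (\bar{G}/\bar{H}) = 54$, so that $n$ is of order $q^{54}$, this yields ${\rm ifix}(T)$ of order $q^{26}$ and hence $\beta = 26/54 = 13/27 < 1/2$. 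This is precisely the configuration in which the involutions available in $H_0$ are forced into a $T$-class that is large relative to their $H_0$-class, which is why $\beta$ drops below $1/2$; in all other cases the corresponding quotient is more favourable and $\beta \geqs 1/2$.

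The main obstacle is the class-identification step, and in particular the characteristic-two analysis, where matching computed Jordan forms against the Bala--Carter/Lawther labelling requires some care; the twisted group $E_6^{-}(q)$ and the precise tracking of the Frobenius twist on $A_2^3$ are the most delicate points, since an error in the structure of $H_0$ or in the exact $T$-class would corrupt the value $13/27$. Finally, because this proposition asserts ${\rm ifix}(T) > n^{4/9}$ with \emph{no} exceptions, I would confirm the bound for the finitely many small values of $q$ not covered by the asymptotic estimate; the subgroups arising here are large, so $|t^{H_0}|$ is comfortably big and the bound should hold robustly, but a handful of small cases would be checked directly, if necessary using {\sc Magma}.
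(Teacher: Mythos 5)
Your proposal follows essentially the same route as the paper: for each subgroup in \cite[Table 5.1]{LSS} one fixes a suitable involution $t \in H_0$, bounds $|C_{H_0}(t)|$, identifies the $T$-class of $t$ via the restriction of $\mathcal{L}(\bar{G})$ or the minimal module to $\bar{H}^{\circ}$ (eigenspace dimensions for odd $p$, Jordan forms against \cite{Lawunip, Law09} for $p=2$), and applies \eqref{e:mrbd}; your treatment of the exceptional case $T=E_6^{\e}(q)$, ${\rm soc}(H_0)={\rm L}_3^{\e}(q^3)$, including the orders $q^{12}$, $q^{40}$, $q^{54}$ giving $\b=13/27$, matches the paper's Lemma \ref{l:mrexception} exactly. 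The only point to be aware of when executing the plan is that in a few cases (e.g.\ type ${\rm L}_2(q)\times E_7(q)$ in $E_8(q)$) the two preimages of an involution in a quasisimple factor lie in different $T$-classes and one must select the preimage in the smaller class, but this is a detail subsumed by your class-identification step.
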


As before, we will identify an involution $t \in H_0$, compute an upper bound $|C_{H_0}(t)| \leqs f(q)$ and determine the $T$-class of $t$, which allows us to evaluate the bounds in \eqref{e:mrbd}. We present our results in Tables \ref{tab:mr20}--\ref{tab:mr22}, which essentially reduces the proof of Proposition \ref{p:mr2} to verifying the information in these tables. 

{\small 
\renewcommand{\arraystretch}{1.2}
\begin{table}
\begin{center}
$$\begin{array}{lllllc}\hline
T & \mbox{Type of $H_0$} & t & f(q)  & t^T & \gamma \\ \hline
{}^2G_2(q)' & {\rm L}_{2}(q) & t'_1 & 2(q+1) & t_1 & 1/2 \\
& & & & & \\
{}^3D_4(q) & {\rm L}_{2}(q) \times {\rm L}_{2}(q^3) & (t_1,t_1) & 2(q-1)(q^3-1) & t_2 & 1/2 \\
& & (A_1,1) & q|\text{SL}_2(q^3)| & A_1 & 1/2 \\
& {\rm L}_{3}^{\e}(q) & \gamma_1 & 2|{\rm SO}_3(q)| & t_2 & 1/2 \\
& & A_1 & 2q^3(q^3-\e) & A_1 & 2/3 \\
& & & & & \\
G_2(q) & {\rm L}_{2}(q)^2 & (t_1,t_1) & 2(q-1)^2 & t_1 & 1/2 \\
 & & (A_1,1) & q|{\rm SL}_2(q)| & A_1 & 1/2 \\
& {\rm L}_{3}^{\e}(q) & \mbox{$t_1$ and $\gamma_1$} & \mbox{$2|{\rm GL}^\e_2(q)|$ and $2|{\rm SO}_{3}(q)|$} & t_1 & 1/2 \\
& & A_1 & 2q^3(q-\e) & A_1 & 2/3 \\
& & & & & \\
{}^2F_4(q) & {\rm U}_{3}(q) & \gamma_1 & 2|{\rm SL}_2(q)| & A_1 \tilde{A}_1 & 1/2 \\
& {}^2B_2(q)^2 & ((\tilde{A}_1)_2,1) & q^2|{}^2B_2(q)| & (\tilde{A}_1)_2 & 1/2 \\
& {\rm Sp}_{4}(q) & c_2 & 2q^4 & A_1 \tilde{A}_1 & 1/2 \\
& & & & & \\
F_4(q) & {\rm L}_2(q) \times {\rm PSp}_6(q) & (1,t_1'')^\dagger & 2|\text{SL}_2(q)|^2|\text{Sp}_4(q)| & t_4 & 5/7 \\

& \Omega_9(q) & t_4 & |\text{SO}^+_8(q)| &  t_4 & 1/2 \\

& & A_1 & q^{11} |\text{SL}_2(q)||\text{Sp}_4(q)| &  A_1 & 3/4 \\

& \text{P}\Omega^+_8(q) & t_2 & 6|\text{SL}_2(q)|^4 &  t_1 & 1/2 \\

& & A_1 & 6q^9|\text{SL}_2(q)|^3 &  A_1 & 3/4 \\

& {}^3D_4(q) & t_2 & 3|\text{SL}_2(q)||\text{SL}_2(q^3)| & t_1 & 1/2 \\

& & A_1 & 3q^9|\text{SL}_2(q^3)| & A_1 & 3/4 \\

& \text{L}_3^\epsilon(q)^2  & (1,t_1) & 2|\text{SL}_2(q)||\text{GL}^\epsilon_3(q)| & t_4 & 2/3 \\

& & (A_1,1) & 2q^3|\text{GL}^\epsilon_3(q)| & A_1 & 2/3 \\

& \text{Sp}_4(q)^2 & (A_1,1) & 2q^3|\text{SL}_2(q)||\text{Sp}_4(q)| & A_1 & 5/8 \\

& \text{Sp}_4(q^2) & \varphi & 2|\text{Sp}_4(q)| & (\tilde{A}_1)_2 & 5/8 \\

& & & & & \\

E_6^{\e}(q) & \text{L}_2(q) \times \text{L}^\epsilon_6(q) & (1,t_2'')^\dagger & 2|\text{SL}_2(q)|^2|\text{GL}^\epsilon_4(q)| & t_1 & 3/5 \\

& & (1,A_1) & q^9|\text{SL}_2(q)||\text{GL}^\epsilon_4(q)|& A_1 & 7/10 \\

& \text{L}^\epsilon_3(q)^3 & (t_1,t_1,1) & 6(q-\epsilon)|\text{SL}_2(q)|^2|\text{GL}_3^\epsilon(q)| & t_1 & 5/9 \\

& & (A_1,1,1) & 6q^3(q-\epsilon)|\text{SL}_3^\epsilon(q)|^2 & A_1 & 2/3 \\

& \text{L}_3(q^2) \times \text{L}_3^{-\epsilon}(q)& (t_1,1) & 2|\text{GL}_2^{\e}(q^2)||\text{SL}_3^{-\epsilon}(q)| & t_1 & 5/9 \\

& & (1,A_1) & 2q^3(q+\epsilon)|\text{SL}_3(q^2)| & A_1 & 2/3 \\

& \text{L}_3^\epsilon(q^3) & t_1 & 3|\text{GL}_2^{\e}(q^3)| & t_2 & 13/27 \\

& & A_1 & 3q^9(q^3-\epsilon) & A_1^3 & 13/27 \\

& \text{P}\Omega^+_8(q) & t_2 & 6(q-\epsilon)^2|\text{SL}_2(q)|^4 & t_2 & 1/2 \\ 

& & A_1 & 6q^9(q-\epsilon)^2|\text{SL}_2(q)|^3  & A_1 & 3/4 \\ 

& {}^3D_4(q) & t_2 & 3(q^2 + \epsilon q +1)|\text{SL}_2(q)||\text{SL}_2(q^3)| & t_2 & 1/2 \\

& & A_1 & 3q^9(q^2 + \epsilon q +1)|\text{SL}_2(q^3)| & A_1 & 3/4 \\

& \text{P}\Omega^\epsilon_{10}(q) & t_2 & |\text{SL}_2(q)|^2|\text{GL}_4^\epsilon(q)| & t_2 & 1/2 \\

& & A_1 & q^{13}|\text{SL}_2(q)||\text{SL}^\epsilon_4(q)| & A_1 & 3/4 \\ \hline
\multicolumn{5}{l}{\mbox{{\small $^{\dagger}$ The notation $t_1''$, $t_2''$ is explained in Remark \ref{r:dagger}.}}} 
\end{array}$$
\caption{Maximal rank subgroups, $T \ne E_7(q), E_8(q)$}
\label{tab:mr20}
\end{center}
\end{table}
\renewcommand{\arraystretch}{1}
}

{\small
\renewcommand{\arraystretch}{1.2}
\begin{table}
\begin{center}
$$\begin{array}{llllc}\hline
\mbox{Type of $H_0$} & t & f(q) & t^T & \gamma \\ \hline

\text{L}_2(q) \times \text{P}\Omega^+_{12}(q) & (1,t_2) & 2|\text{SL}_2(q)|^3 |\text{SO}^+_8(q)| & t_1 & 1/2 \\

& (1,A_1) & q^{17}|\text{SL}_2(q)|^2 |\Omega^+_8(q)| & A_1 & 3/4 \\

\text{L}^\epsilon_8(q) & t_4 & 4(q-\epsilon)|\text{SL}^\epsilon_4(q)|^2 & t_1 & 19/35 \\

& A_1 & 2q^{13} |\text{GL}_6^{\e}(q)|  & A_1 & 5/7 \\

\text{L}_3^\epsilon(q) \times \text{L}_6^{\epsilon}(q) & (1,t_3) \text{ or } (1,t'_3) & 2(q+\epsilon)|\text{SL}^\epsilon_3(q)| |\text{SL}_3(q^2)| & t_7 \text{ or } t'_7 & 3/5 \\

& (1,A_1) & 2q^9 |\text{SL}^\epsilon_3(q)||\text{GL}^\epsilon_4(q)|^2  & A_1 & 11/15 \\

\text{L}_2(q)^3 \times \text{P}\Omega^+_8(q) & (1,1,1,t_2) & 24|\text{SL}_2(q)|^7  & t_1 & 1/2 \\ 

& (1,1,1,A_1) & 6q^9|\text{SL}_2(q)|^6 & A_1 & 3/4 \\ 

\text{L}_2(q^3) \times {}^3D_4(q) & (1,t_2) & 3|\text{SL}_2(q)|^4|\text{SL}_2(q^3)| & t_1  & 1/2 \\ 

& (1,A_1) & 3q^9|\text{SL}_2(q)|^3|\text{SL}_2(q^3)| & A_1  & 3/4 \\ 

\text{L}_2(q)^7 & (t_1,t_1,t_1,-) \text{ or } (t'_1,t'_1,t'_1,-) & 

1344(q+1)^3|\text{SL}_2(q)|^4 & t_7 \text{ or } t'_7 & 4/7 \\ 

& (A_1,- ) & 168q|\text{SL}_2(q)|^6 & A_1 & 5/7 \\

\text{L}_2(q^7) & t_1 \text{ or } t'_1 & 

7(q^7+1) & t_4 \text{ or } t'_4 & 1/2 \\ 

& A_1 & 7q^7 & A_1^4 & 1/2 \\

E_6^\epsilon(q) & t_2 & 2|\text{SL}_2(q)||\text{GL}^\epsilon_6(q)|  & t_1 & 5/9 \\ 

& A_1 & 2q^{21} |\text{GL}^\epsilon_6(q)| & A_1 & 7/9 \\ \hline
\end{array}$$
\caption{Maximal rank subgroups, $T =E_7(q)$}
\label{tab:mr21}
\end{center}
\end{table}
\renewcommand{\arraystretch}{1}
}

{\small
\renewcommand{\arraystretch}{1.2}
\begin{table}
\begin{center}
$$\begin{array}{llllc}\hline
\mbox{Type of $H_0$} & t & f(q)  & t^T & \gamma \\ \hline

\text{P}\Omega_{16}^+(q) & t_4 & 2|\text{SO}^+_8(q)|^2  & t_1 & 1/2 \\

& A_1 & q^{23} |\text{SL}_2(q)||\Omega^+_{12}(q)| & A_1 & 49/64 \\

\text{L}_2(q) \times E_7(q) & (1,t''_1)^{\dagger} & 2|\text{SL}_2(q)|^2|\text{SO}^+_{12}(q)| & t_8 & 4/7 \\ 

& (1,A_1) & q^{33}|\text{SL}_2(q)||\Omega^+_{12}(q)| & A_1 & 13/16 \\ 

\text{L}^\epsilon_9(q) & t_3 & 2|\text{GL}^\epsilon_3(q)||\text{SL}^\epsilon_6(q)|  & t_8 & 23/42 \\ 

& A_1 & 2q^{15}|\text{GL}^\epsilon_7(q)|  & A_1 & 3/4 \\ 

\text{L}_3^\epsilon(q) \times E_6^\epsilon (q) & (1,t_2) & 2|\text{SL}_2(q)||\text{SL}^\e_3(q)||\text{SL}^\e_6(q)| & t_8 & 5/9 \\ 

& (1,A_1) & 2q^{21}|\text{SL}^\e_3(q)||\text{SL}^\e_6(q)| & A_1 & 7/9 \\ 

\text{L}_5^\epsilon(q)^2 & (t_1,t_2) & 4|\text{SL}_2(q)||\text{GL}_3^\epsilon(q)||\text{GL}^\epsilon_4(q)| & t_8 & 27/50 \\ 

& (A_1,1) & 4q^7|\text{GL}_3^\epsilon(q)||\text{SL}_5^\epsilon(q)| & A_1 & 3/4 \\ 

\text{U}_5(q^2) & \gamma_1 & 4|\text{SO}_5(q^2)| & t_1 & 1/2 \\ 

& A_1 & 4q^{14}(q^2+1)|\text{SU}_3(q^2)| & A_1^2 & 31/50 \\

\text{P}\Omega^+_8(q)^2 & (t_2,t_2) & 48|\text{SL}_2(q)|^8  & t_1 & 1/2 \\ 

& (A_1,1) & 12q^{6}|\text{SL}_4(q)||\Omega^+_8(q)| & A_1 & 47/64 \\ 

\text{P}\Omega^+_8(q^2) & t_2 & 12|\text{SL}_2(q^2)|^4 & t_1 & 1/2 \\ 

& A_1 & 12q^{12} |\text{SL}_4(q^2)| & A_1^2 & 19/32 \\

^3D_4(q)^2 & (t_2,t_2) & 6|\text{SL}_2(q)|^2 |\text{SL}_2(q^3)|^2 & t_1 & 1/2 \\ 

& (A_1,1) & 6q^9|\text{SL}_2(q^3)||{}^3D_4(q)| & A_1 & 3/4 \\

^3D_4(q^2) & t_2 & 6|\text{SL}_2(q^2)| |\text{SL}_2(q^6)| & t_1 & 1/2 \\ 

& A_1 & 6q^{18}|\text{SL}_2(q^6)| & A_1^2 & 5/8 \\

\text{L}_3^\epsilon(q)^4 & (t_1,t_1,t_1,1) & 48|\text{GL}_2(q)|^3 |\text{SL}^\epsilon_3(q)|  & t_8 & 29/54 \\ 

& (A_1,1,1,1) & 48q^3(q-\epsilon)|\text{SL}^\epsilon_3(q)|^3  & A_1 & 3/4 \\ 

\text{U}_3(q^2)^2 & (\gamma_1,\gamma_1) & 8|\text{SO}_3(q^2)|^2 & t_1 & 1/2 \\ 

& (A_1,1) & 8q^6|\text{GU}_3(q^2)| & A_1^2 & 11/18 \\

\text{U}_3(q^4) & \gamma_1 & 8|\text{SO}_3(q^4)| & t_1 & 1/2 \\ 

& \gamma_1 & 8|\text{SL}_2(q^4)| & A_1^4 & 1/2 \\ 

\text{L}_2(q)^8 & (t_1,t_1,t_1,t_1,-) & 21504(q-1)^4|\text{SL}_2(q)|^4  & t_8 & 15/28 \\ 

& (A_1,-) & 1344q|\text{SL}_2(q)|^7 & A_1 & 3/4 \\ \hline
\multicolumn{5}{l}{\mbox{{\small $^{\dagger}$ The notation $t_1''$ is explained in Remark \ref{r:dagger}.}}} \\
\end{array}$$
\caption{Maximal rank subgroups, $T =E_8(q)$}
\label{tab:mr22}
\end{center}
\end{table}
\renewcommand{\arraystretch}{1}
}

We start by describing the structure of $\bar{H}_\sigma$, which is recorded in the second column of \cite[Table 5.1]{LSS} (see \cite[Sections 1 and 2]{LSS} for further details and some instructive examples, as well as \cite[Sections 1--3]{Carter78}). Write $\bar{H} = \bar{H}^\circ.F$, where $F$ is identified with a subgroup of the Weyl group $W = W(\bar{G})$. 
Then $\bar{H}^\circ = M R$ is a commuting product of $\s$-stable subgroups, where $M = (\bar{H}^\circ)'$ is semisimple and $R$ is a torus (possibly trivial). Then $M_\sigma$ is a subgroup of $\bar{H}_{\s}$ and we have  
\begin{equation}\label{e:struc}
M_\sigma = (\hat{X}_1 \circ \hat{X}_2 \circ \cdots \circ \hat{X}_m).D = Z.(X_1 \times X_2 \times \cdots \times X_m).D
\end{equation}
for some positive integer $m$, where each $X_i$ is a group of Lie type (simple, or isomorphic to ${\rm L}_{2}(2)$, ${\rm L}_{2}(3)$ or ${\rm U}_{3}(2)$), $\hat{X}_i$ is a cover of $X_i$, $Z$ is a subgroup of the centre of the central product of the $\hat{X}_i$, and $D$ is a group inducing diagonal automorphisms on the $X_i$ (possibly $Z=1$ and $D=1$). 
To simplify the notation, we will refer to 
\[
 X_1 \times X_2 \times \cdots \times X_m
 \] 
 as the \emph{type} of $H_0 = \bar{H}_\sigma \cap T$.  

\begin{ex}
Suppose $T = E_6(q)$ and set $d = (2,q-1)$ and $e = (3,q-1)$, so 
$\bar{G}_\sigma = T.e$. Consider a $\s$-stable maximal rank subgroup $\bar{H} = A_1 A_5$. Here $\bar{H}$ is connected and semisimple so $F = 1$ and $\bar{H} = M$. Moreover,
\[
N_{\bar{G}_{\s}}(\bar{H}_{\s}) = M_\sigma = d.(\text{L}_2(q) \times \text{L}_6(q)).de = H_0.e
\]
so in \eqref{e:struc} we have $Z = Z_d$, $D = Z_{d} \times Z_{e}$ and $H_0$ has type 
$\text{L}_2(q) \times \text{L}_6(q)$. Similarly, if $\bar{H} = D_5 T_1$ then $M = D_5$ and 
$M_\sigma = h.\text{P}\Omega^+_{10}(q)$ with $h = (4,q-1)$, so $Z=Z_h$, $D=1$ and $H_0$ has type $\text{P}\Omega^+_{10}(q)$.
\end{ex}   

\begin{ex}\label{ex1}
For a slightly more complicated example, suppose $T = E_7(q)$ with $p \neq 2$, so $\bar{G}_\sigma = T.2$. Let $\bar{H} = A_1^7.\text{L}_2(3)$, so $M = A_1^7$ and $F = \text{L}_2(3)$. According to \cite[Table 5.1]{LSS}, there are precisely two $\G_\sigma$-classes of $\s$-stable conjugates of $\bar{H}$ that give rise to maximal subgroups of $\G_\sigma$. One of these classes comprises subgroups of the form  
\[
\bar{H}_{\s} = \text{PGL}_2(q^7).7,
\]
in which case $M_\sigma = \text{PGL}_2(q^7)$ and $H_0$ has type $\text{L}_2(q^7)$. To determine the precise structure of $H_0 = \bar{H}_{\s} \cap T$, write $\bar{G} = \hat{G}/Z$, where $\hat{G}$ is the simply connected version of $\bar{G} = E_7$ and $Z = Z(\hat{G})$. Consider the subsystem subgroup $\hat{H} \leqs \hat{G}$ corresponding to $\bar{H}$. Then $Z \leqs \hat{H}^\circ$ and $\hat{H}^\circ / Z = \bar{H}^\circ$. Since $|(\hat{H}^\circ)_\sigma| = |(\bar{H}^\circ)_\sigma|$ it follows that $(\hat{H}^\circ)_\sigma / Z_\sigma = T \cap (\bar{H}^\circ)_\sigma$ has index two in $(\bar{H}^\circ)_\sigma$. Therefore
$(\bar{H}^\circ)_\sigma = \text{PGL}_2(q^7)$ and $T \cap (\bar{H}^\circ)_\sigma = \text{L}_2(q^7)$, so $H_0 = \text{L}_2(q^7).7$. 
\end{ex}

\begin{rem}
By inspecting \cite[Table 5.1]{LSS}, we see that there are just two instances where the type of $H_0$ does not uniquely determine $H_0$ up to conjugacy; namely, the subgroups of type ${\rm U}_{3}(q)$ and ${\rm U}_{5}(q^2)$ in ${}^2F_4(q)$ and $E_8(q)$, respectively. In both cases, there are two classes of subgroups of the same type, but the information given in Tables \ref{tab:mr20} and \ref{tab:mr22} is identical for subgroups in both classes, so this potential ambiguity is not an issue.   
\end{rem}

Next we describe the information presented in Tables \ref{tab:mr20}-\ref{tab:mr22}. First notice that the cases are listed in the same order as they appear in \cite[Table 5.1]{LSS} and we implicitly assume all the conditions on $G$ and $q$ given in this table. The fourth column in Table \ref{tab:mr20} (and the third in Tables \ref{tab:mr21} and \ref{tab:mr22}) provides an upper bound 
$|C_{H_0}(t)| \leqs f(q)$
and the next column records the $T$-class of $t$ (in terms of the notation of Table \ref{tabinv}), where $t \in H_0$ is the involution specified in the third column. The final column gives a lower bound $\gamma$ for $\beta$. Notice that in most cases there are two rows for each possibility for $H_0$; here $p \ne 2$ in the first row and $p = 2$ in the second.

Let us explain our notation for specifying an involution $t \in H_0$. Typically, we choose 
\[
t  = (x_1, \ldots, x_m) \in X,
\]
where
\begin{equation}\label{e:xx}
X = \hat{X}_1 \circ \hat{X}_2 \circ \cdots \circ \hat{X}_m \leqs M_\sigma
\end{equation}
is a central product of quasisimple groups of Lie type. We extend the notation from Section \ref{s:inv} to involution classes in $\hat{X}_i$ (recall that this notation is consistent with \cite{GLS} and \cite{LS_book} for $p \ne 2$ and $p=2$, respectively). 
Sometimes it will be convenient to write
\[
t = (x_1, \ldots, x_l, -)
\]
if $x_{i}=1$ for $i>l$. 

To obtain the upper bounds $|C_{H_0}(t)| \leqs f(q)$, we will often use the following easy lemma. Write $X = \hat{X}/Y$, where 
$\hat{X} = \hat{X}_1 \times \hat{X}_2 \times \cdots \times \hat{X}_m$ and $Y \leqs Z(\hat{X})$. 
Since $X \leqs H_0$ we have
\begin{equation}\label{e:cenn}
 |C_{H_0}(t)| \leqs |C_{X}(t)| |H_0 : X|.
 \end{equation}

\begin{lem}\label{l:centbound}
Let $t = (x_1, \ldots, x_m) \in X$ be an involution and write $t = Y\hat{t}$ with $\hat{t} = (\hat{x}_1, \ldots, \hat{x}_m) \in \hat{X}$. Then 
$$|C_{X}(t)| \leqs |Y|_{2'}^{-1}\prod_i |C_{\hat{X}_i}(\hat{x}_i)|.$$
\end{lem}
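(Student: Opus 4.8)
The plan is to pass to the covering group $\hat{X} = \hat{X}_1 \times \cdots \times \hat{X}_m$ and analyse the quotient map $\pi \colon \hat{X} \to X = \hat{X}/Y$, exploiting throughout that $Y \leqs Z(\hat{X})$ and that $\hat{t}^2 \in Y$ (since $t$ is an involution in $X$). First I would identify the full preimage $\tilde{C} = \pi^{-1}(C_X(t))$. Writing $[g,\hat{t}] = g^{-1}\hat{t}^{-1}g\hat{t}$, the condition $\pi(g) \in C_X(t)$ is equivalent to $[g,\hat{t}] \in Y$, so
$$\tilde{C} = \{g \in \hat{X} \,:\, [g,\hat{t}] \in Y\},$$
and since $Y \leqs \tilde{C}$ the correspondence theorem gives $C_X(t) = \tilde{C}/Y$ and hence $|C_X(t)| = |\tilde{C}|/|Y|$.

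The key device is the map $\phi \colon \tilde{C} \to Y$, $g \mapsto [g,\hat{t}]$. Because $Y$ is central in $\hat{X}$, the commutator identity $[gh,\hat{t}] = [g,\hat{t}]^h[h,\hat{t}]$ collapses to $[gh,\hat{t}] = [g,\hat{t}][h,\hat{t}]$, so $\phi$ is a homomorphism into the abelian group $Y$. Its kernel is exactly $C_{\hat{X}}(\hat{t})$, and as $\hat{X}$ is a direct product this factorises as $C_{\hat{X}}(\hat{t}) = \prod_i C_{\hat{X}_i}(\hat{x}_i)$. Therefore
$$|\tilde{C}| = |C_{\hat{X}}(\hat{t})|\cdot|\phi(\tilde{C})| = |\phi(\tilde{C})|\prod_i |C_{\hat{X}_i}(\hat{x}_i)|.$$

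The crux is to bound $|\phi(\tilde{C})|$, and I expect this to be the only genuinely delicate point. I would show that $\phi(\tilde{C})$ is an elementary abelian $2$-group. Writing $\tau$ for conjugation by $\hat{t}$, the hypothesis $\hat{t}^2 \in Y \leqs Z(\hat{X})$ forces $\tau^2 = \mathrm{id}$, and for $g \in \tilde{C}$ one has $\phi(g) = g^{-1}\tau(g)$, whence a short computation gives $\tau(\phi(g)) = \phi(g)^{-1}$. On the other hand $\phi(g) \in Y \leqs Z(\hat{X})$ is fixed by the inner automorphism $\tau$, so $\phi(g) = \phi(g)^{-1}$ and $\phi(g)^2 = 1$. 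Consequently $\phi(\tilde{C})$ is contained in the unique Sylow $2$-subgroup of the abelian group $Y$, giving $|\phi(\tilde{C})| \leqs |Y|_2 = |Y|/|Y|_{2'}$.

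Combining these facts yields
$$|C_X(t)| = \frac{|\tilde{C}|}{|Y|} = \frac{|\phi(\tilde{C})|}{|Y|}\prod_i |C_{\hat{X}_i}(\hat{x}_i)| \leqs |Y|_{2'}^{-1}\prod_i |C_{\hat{X}_i}(\hat{x}_i)|,$$
which is the required bound. The argument is purely group-theoretic and uses nothing about the Lie-type structure of the factors $X_i$, so I anticipate no complications in the twisted cases or when some $\hat{X}_i$ is one of the small exceptions ${\rm L}_2(2)$, ${\rm L}_2(3)$ or ${\rm U}_3(2)$ allowed in \eqref{e:struc}.
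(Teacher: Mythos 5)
Your proof is correct, but it reaches the bound by a genuinely different mechanism from the paper's. The paper works with the conjugacy class rather than the centralizer: after reducing without loss of generality to the case where $\hat{t}$ is a $2$-element (harmless, since replacing $\hat{t}$ by its $2$-part changes neither $\pi(\hat{t})$ nor $C_{\hat{X}}(\hat{t})$, the $2'$-part being central), it notes that $\hat{t}^{\hat{X}}$ consists of $2$-elements and lies in $\pi^{-1}(t^X)$, a disjoint union of $|t^X|$ cosets of $Y$, each containing at most $|Y|_2$ $2$-elements because $Y$ is central; hence $|\hat{t}^{\hat{X}}| \leqs |Y|_2\,|t^X|$, which is equivalent to the stated inequality. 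You instead bound the centralizer directly via the map $\phi\colon \tilde{C} \to Y$, $g \mapsto [g,\hat{t}]$, which is a homomorphism with kernel $C_{\hat{X}}(\hat{t})$ precisely because its values are central, and the computation $\tau(\phi(g)) = \phi(g)^{-1} = \phi(g)$ forces the image into the $2$-torsion of $Y$. Both arguments are short and exploit the same underlying phenomenon --- only the $2$-part of the centre can inflate the fibre --- but yours buys a little more: it needs no preliminary reduction on $\hat{t}$, and it gives the exact formula $|C_X(t)| = |\phi(\tilde{C})|\,|Y|^{-1}\prod_i |C_{\hat{X}_i}(\hat{x}_i)|$ with $\phi(\tilde{C})$ an elementary abelian $2$-subgroup of $Y$, of which the lemma's inequality is the worst case. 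The paper's version, in exchange, records the companion statement $|\hat{t}^{\hat{X}}| \leqs |Y|_2\,|t^X|$ about fusion of classes, which is occasionally the more convenient form when combined with \eqref{e:fix}. You are also right that nothing here depends on the Lie-type structure of the factors; the paper's proof is equally insensitive to it.
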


\begin{proof}
Without loss of generality, we may assume that $\hat{t}$ is a $2$-element.
Let $\pi:\hat{X}\to \hat{X}/Y$ be the quotient map and note that 
$$\hat{t}^{\hat{X}} \subseteq \pi^{-1}(t^X) = \bigsqcup_{i}s_iY$$
for some $s_i \in \hat{X}$. Each coset $s_iY$ contains at most $|Y|_2$ $2$-elements, so $|\hat{t}^{\hat{X}}| \leqs |Y|_2|t^X|$ and the result follows.
\end{proof}

\begin{rem}\label{r:mr1}
As indicated in the tables, we sometimes choose an involution
$t \in H_0$ which induces a graph automorphism $\gamma_1$ on one or more of the $\hat{X}_i$ factors. In the special case where $T = F_4(q)$ and $H_0 = \text{Sp}_4(q^2){:}2$ with $p=2$, we take $t$ to be an involutory field automorphism $\varphi$ of $\text{Sp}_4(q^2)$.
\end{rem}

\begin{rem}\label{r:mr2}
In the cases $(\bar{G},\bar{H}^{\circ}) = (F_4,A_2 \tilde{A}_2)$, $(E_7, A_1^7)$ and $(E_8,A_1^8)$, the ordering of the $X_i$ in \eqref{e:struc} is important. In the first case, the notation indicates that $X_1$ and $X_2$ are generated by long and short root subgroups of $T$, respectively. In the latter two cases, we assume that the final four $A_1$ factors of $\bar{H}^{\circ}$ are contained in a $D_4$ subsystem subgroup of $\bar{G}$ (this is consistent with \cite{Tho}, where the composition factors of $\mathcal{L}(\bar{G}){\downarrow}\bar{H}^{\circ}$ are given in Tables 59 and 60).  
\end{rem}

\begin{rem}\label{r:dagger}
Suppose $p \ne 2$, $T = F_4(q)$, $E_6^\e(q)$ or $E_8(q)$ and $H_0$ is of type 
$\text{L}_2(q) \times \text{PSp}_6(q)$, $\text{L}_2(q) \times \text{L}^\e_6(q)$ or $\text{L}_2(q) \times E_7(q)$, respectively. In these cases we need to explain the notation $(1,t''_i)$ appearing in Tables \ref{tab:mr20} and \ref{tab:mr22}. This denotes the image in $X = \hat{X}_1 \circ \hat{X}_2$ of an involution in the quasisimple group $\hat{X}_2$. Now $Z(\hat{X}_2)$ has order $2$ and $t'_i$, $t''_i$ are the preimages of a fixed involution of type $t_i$ in the simple group 
$\hat{X}_2 / Z(\hat{X}_2)$ (in the notation of \cite[Table 4.5.1]{GLS}). One can show that $t'_i$ and $t''_i$ are not $T$-conjugate and we take $t_i''$ to be in the smaller $T$-class.
For example, if $T = F_4(q)$ and $H_0 = (\text{SL}_2(q) \circ \text{Sp}_6(q)).2$, then $t_1',t_1'' \in {\rm Sp}_{6}(q)$ have respective Jordan forms $[-I_4,I_2]$ and $[-I_2,I_4]$. See the proof of Lemma \ref{l:pmr2odd} for further details in the other two cases of interest.
\end{rem}

We begin the proof of Proposition \ref{p:mr2} by handling the special case where $T = E_6^{\e}(q)$ and $H_0$ is of type ${\rm L}_{3}^{\e}(q^3)$. This is the only case with $\b<1/2$.

\begin{lem} \label{l:mrexception}
Suppose $T = E_6^{\e}(q)$ and $H_0$ is of type ${\rm L}_{3}^{\e}(q^3)$. Then ${\rm ifix}(T) > n^{4/9}$ and $\b=13/27$.
\end{lem}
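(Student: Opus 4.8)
The plan is to follow the general strategy of Section~\ref{s:mr}: exhibit a specific involution $t \in H_0$, bound $|C_{H_0}(t)|$ from above, identify the $T$-class of $t$, and then read off $\b$ from \eqref{e:mrbd}. The key structural input (from \cite[Table 5.1]{LSS}) is that $\bar{H}^{\circ} = A_2A_2A_2$ is the subsystem subgroup of $\G = E_6$ spanned by three mutually orthogonal $A_2$ factors, with $\s$ acting as a $3$-cycle on the factors composed with the standard $q$-power map (twisted by the graph automorphism when $\e=-$). Taking $\s$-fixed points identifies $M_{\s}$ in \eqref{e:struc} with a single copy of the group over the cubic extension field, so that $H_0$ has type ${\rm L}_3^{\e}(q^3)$, the central product $X$ is a cover of ${\rm L}_3^{\e}(q^3)$ (so $m=1$), and $|H_0:X|$ is the odd number $3$.

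First I would pin down the involutions of $H_0$. Since $|H_0:X|=3$ is odd, every involution of $H_0$ lies in $X$, and ${\rm L}_3^{\e}(q^3)$ has a unique class of involutions: when $p\neq2$ this is the class $t_1$ (eigenvalues $-1,-1,1$ on the natural module), and when $p=2$ it is the class of long root elements (transvections), labelled $A_1$. To bound $|C_{H_0}(t)|$ I would apply \eqref{e:cenn} and Lemma~\ref{l:centbound}: for $p\neq2$ the centralizer of $t$ in ${\rm SL}_3^{\e}(q^3)$ is ${\rm GL}_2^{\e}(q^3)$, giving $|C_{H_0}(t)|\leqs 3|{\rm GL}_2^{\e}(q^3)|$, while for $p=2$ a transvection in ${\rm SL}_3^{\e}(q^3)$ has centralizer of order $q^9(q^3-\e)$, giving $|C_{H_0}(t)|\leqs 3q^9(q^3-\e)$. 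These are precisely the functions $f(q)$ recorded in Table~\ref{tab:mr20}.

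The crucial step is to determine the $T$-class of $t$. Over $K$ the involution $t$ corresponds to a ``diagonal'' element of $A_2^3$ which projects \emph{nontrivially} onto each of the three $A_2$ factors; this is forced by the $\s$-twisting, since a nonidentity element of ${\rm L}_3^{\e}(q^3)$ is carried to a triple $(x^{(q)},x^{(q^2)},x)$ with all three entries nontrivial. When $p=2$ this is a product of three commuting root elements lying in pairwise orthogonal $A_2$ subsystems, and as $E_6$ is simply laced these are long root elements of $\G$; hence $t$ lies in the class $A_1^3$, which one may confirm by computing the Jordan form of $t$ on $\mathcal{L}(\G)$ from the restriction $\mathcal{L}(\G){\downarrow}\bar{H}^{\circ}$ in \cite{Tho} and applying \cite{Lawunip}. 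When $p\neq2$ I would compute $\dim C_{\G}(t)$ via the decomposition $\mathcal{L}(\G){\downarrow}A_2^3 = (8,1,1)+(1,8,1)+(1,1,8)+(3,3,3)+(\bar{3},\bar{3},\bar{3})$; a direct eigenvalue count for $t=(s,s,s)$ with $s$ of type $(-1,-1,1)$ yields $\dim C_{\G}(t)=4+4+4+13+13=38=\dim(A_5A_1)$, identifying $t$ with the class $t_2$ of Table~\ref{tabinv} and distinguishing it from $t_1$ (whose centralizer $D_5T_1$ has dimension $46$). The same conclusion follows more quickly from the $27$-dimensional module $V(\l_1)$, on which $t$ has a $15$-dimensional $1$-eigenspace, matching $t_2$.

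Finally I would assemble the estimate. Because $H_0$ has a unique class of involutions, all lying in the single $T$-class $t_2$ (resp.\ $A_1^3$), we have $t^T\cap H_0=t^{H_0}$ and hence ${\rm ifix}(T)={\rm fix}(t)$. Using Table~\ref{tabinv} gives $|C_{{\rm Inndiag}(T)}(t)|\sim q^{38}$, so $|t^T|\sim q^{40}$; combined with $|t^{H_0}|\sim q^{12}$ and $n=|T:H_0|\sim q^{54}$, the formula \eqref{e:fix} yields ${\rm fix}(t)\sim q^{26}$ and therefore
\[
\b=\lim_{q\to\infty}\frac{\log{\rm ifix}(T)}{\log n}=\frac{26}{54}=\frac{13}{27}.
\]
Since $13/27>4/9$, the explicit bound also gives ${\rm ifix}(T)>n^{4/9}$ for every $q$ in the relevant range. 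The main obstacle is the class-determination step: correctly tracking the $\s$-twisting to see that $t$ projects nontrivially onto all three $A_2$ factors, and then carrying out the eigenvalue (or Jordan-form) computation needed to separate $t_2$ from $t_1$ (and $A_1^3$ from $A_1$ and $A_1^2$); the remaining order estimates are routine.
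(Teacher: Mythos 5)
Your proposal is correct and follows essentially the same route as the paper: a unique class of involutions in $H_0$, the same centralizer bounds, realisation of $t$ in the diagonal $A_2$ of $\bar{H}^{\circ}=A_2^3$, and identification of the $T$-class by an eigenvalue/Jordan-form computation on a $\bar{G}$-module. The only cosmetic difference is that the paper works primarily with the $27$-dimensional module $V(\lambda_1)$ (obtaining $[-I_{12},I_{15}]$ for $p\ne 2$ and $[J_2^{12},J_1^3]$ for $p=2$), whereas you lead with $\mathcal{L}(\bar{G})$ and $\dim C_{\bar{G}}(t)=38$; both computations check out and you note the $V(\lambda_1)$ alternative yourself.
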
 

\begin{proof}
Here $H_0 = \text{L}_3^\e(q^3).\la \varphi \ra = \text{L}_3^\e(q^3).3$, where 
$\varphi$ is a field automorphism of order $3$. Let $t \in \text{L}_3^\e(q^3)$ be a representative of the unique class of involutions in $H_0$ (as indicated in Table \ref{tab:mr20}, $t$ is a $t_1$-involution if $p \ne 2$ and an $A_1$-involution when $p=2$). In particular,
\[
|C_{H_0}(t)|  = \left\{\begin{array}{ll}
3|\text{GL}_2^{\e}(q^3)| & p \ne 2 \\
3(q^3-\e)q^9 & p = 2
\end{array}\right.
\]
and the bound $\text{ifix}(T) > n^{4/9}$ follows. In fact, we claim that $t$ belongs to the largest class of involutions in $T$ (that is, the $T$-class labelled $t_2$ for $p \ne 2$, and $A_1^3$ for $p=2$), which implies that $\b=13/27$. To see this, first observe that we may assume
\[
t \in A = \{(x,x,x) \,:\, x \in A_2\} < A_2^3 = \bar{H}^{\circ}
\]
at the level of algebraic groups (this follows from the construction of $H_0$). Now $t$ is a representative of the unique class of involutions in $A \cong A_2$ and it is sufficient to  determine the $\bar{G}$-class of $t$. To do this, we consider the action of $\bar{H}^{\circ} = A_2^3$ on the $27$-dimensional module $V = V_{\bar{G}}(\lambda_1)$:
\[
V{\downarrow}A_2^3 = (V_1 \otimes V_2^* \otimes 0) \oplus (0 \otimes V_2 \otimes V_3^*) \oplus (V_1^* \otimes 0 \otimes V_3),
\] 
where $V_i$ denotes the natural module for the $i$-th factor in $\bar{H}^{\circ}$ and $0$ is the trivial module. For $p \ne 2$ we calculate that $t$ has Jordan form $[-I_4,I_{5}]$ on each of three summands, so $t$ has Jordan form $[-I_{12},I_{15}]$ on $V$ and by inspecting \cite[Proposition 1.2]{LS99}, we conclude that $t$ is a $t_2$-involution. Similarly, if $p=2$ then $t$ has Jordan form $[J_2^{12},J_1^3]$ on $V$ and \cite[Table 5]{Lawunip} implies that $t$ is in the $A_1^3$-class (the same conclusion also follows from \cite[Section 4.9]{Law09}). 
\end{proof}

We partition the analysis of the remaining subgroups into two cases, according to the parity of $p$. The next two lemmas settle the case $p=2$.    

\begin{lem} \label{l:pmr2even0}
Proposition \ref{p:mr2} holds if $p=2$ and $T \in \{{}^3D_4(q), {}^2F_4(q)\}$.  
\end{lem}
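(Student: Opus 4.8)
The plan is to verify the rows of Table \ref{tab:mr20} with $T \in \{{}^3D_4(q), {}^2F_4(q)\}$ and $p=2$, following the general strategy set out before Proposition \ref{p:mr2}: for each type of $H_0$ recorded in \cite[Table 5.1]{LSS}, I would fix the involution $t \in H_0$ listed in the third column, establish the centralizer bound $|C_{H_0}(t)| \leqs f(q)$, identify the $T$-class of $t$, and then read off the conclusion from \eqref{e:mrbd}. Since every value of $\gamma$ appearing in these rows satisfies $\gamma \geqs 1/2 > 4/9$, this simultaneously yields ${\rm ifix}(T) > n^{4/9}$ and $\b \geqs 1/2$, which is exactly what Proposition \ref{p:mr2} asserts (none of these cases is the exceptional $E_6^{\e}(q)$ configuration). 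Concretely, for ${}^3D_4(q)$ the relevant types are ${\rm L}_2(q) \times {\rm L}_2(q^3)$ (take $t = (A_1,1)$) and ${\rm L}_3^{\e}(q)$ (take $t = A_1$), while for ${}^2F_4(q)$, which forces $p=2$, the types are ${\rm U}_3(q)$ (take $t=\gamma_1$), ${}^2B_2(q)^2$ (take $t = ((\tilde{A}_1)_2,1)$) and ${\rm Sp}_4(q)$ (take $t = c_2$).

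The centralizer bounds are the routine part. Writing $M_\sigma$ as in \eqref{e:struc} and \eqref{e:xx}, I would apply Lemma \ref{l:centbound} together with \eqref{e:cenn} to reduce to the centralizer orders of the chosen involution inside the quasisimple factors, all of which are standard: for an $A_1$-involution in ${\rm SL}_2(q)$ or ${\rm SL}_3^{\e}(q)$ and a $c_2$-involution in ${\rm Sp}_4(q)$ these are elementary characteristic-$2$ computations, while for the unique involution class of ${}^2B_2(q)$ the centralizer order $q^2$ is recorded in Table \ref{tabinv}. The only slightly delicate bound is for $t = \gamma_1$ in the ${\rm U}_3(q)$ case: here $t$ induces a graph automorphism, and I would use that in characteristic $2$ its centralizer in ${\rm SU}_3(q)$ is of type ${\rm SO}_3(q) \cong {\rm SL}_2(q)$, giving $f(q) = 2|{\rm SL}_2(q)|$. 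In each case the bound $|C_{H_0}(t)| \leqs f(q)$ then follows after accounting for the factor $|H_0:X|$ (the graph/diagonal contribution), which is bounded by a small constant.

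The main obstacle is the identification of the $T$-class of $t$, since this determines $|t^T|$ and hence the quality of the bound in \eqref{e:mrbd}. Here I would exploit that ${}^3D_4(q)$ and ${}^2F_4(q)$ each have only two classes of involutions (see Table \ref{tabinv}), so it suffices to decide which of the two $t$ falls into. The plan is to compute the Jordan form of $t$ on a small faithful $\bar{G}$-module --- the $8$-dimensional module $V = V_{\bar{G}}(\l_1)$ when $\bar{G} = D_4$, and the $26$-dimensional module (or the adjoint module $\mathcal{L}(\bar{G})$ of dimension $52$) when $\bar{G} = F_4$ --- by restricting along $V{\downarrow}\bar{H}^{\circ}$, whose composition factors are tabulated in \cite{Tho}, and then matching the resulting block sizes against the data in \cite{Lawunip}; Lawther's fusion results in \cite{Law09} provide an independent check. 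For the \emph{internal} involutions --- those lying in a single quasisimple factor, namely the $A_1$, $c_2$ and $(\tilde{A}_1)_2$ cases --- this is comparatively straightforward, since the factor in question is generated by root subgroups of $\bar{G}$, so $t$ is a root element and its type (long or short, in the $F_4$ case) is forced by the embedding of that factor.

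The genuinely subtle point, which I expect to be where the real work lies, is the involution $t = \gamma_1$ in the ${\rm U}_3(q) \leqs {}^2F_4(q)$ case, together with the interaction of the chosen involutions with the twisted Steinberg endomorphism $\s$ (triality for ${}^3D_4(q)$ and the exceptional graph morphism for ${}^2F_4(q)$). Since $\gamma_1$ does not lie in $\bar{H}^{\circ} = A_2$ but rather normalises it, its Jordan form cannot be read directly from $V{\downarrow}\bar{H}^{\circ}$; instead I would realise $t$ explicitly as an element of $\bar{H} = A_2\tilde{A}_2$ normalising the relevant $A_2$, compute its action on the $26$-dimensional module, and thereby confirm that $t$ lies in the $A_1\tilde{A}_1$ class rather than $(\tilde{A}_1)_2$. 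Once every $T$-class is pinned down, substituting the orders $|t^T|$ from Table \ref{tabinv} and the bounds $f(q)$ into \eqref{e:mrbd} gives the constants $\gamma$ in the final column, and in particular $\b \geqs 1/2$ and ${\rm ifix}(T) > n^{4/9}$, completing the proof.
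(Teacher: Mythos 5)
Your proposal is sound and reaches the right conclusions, but it takes a noticeably heavier route than the paper at the one step that matters: identifying the $T$-class of the chosen involution. Since ${}^3D_4(q)$ and ${}^2F_4(q)$ each have exactly \emph{two} classes of involutions with centralizer orders of quite different shapes (see Table \ref{tabinv}), the paper settles almost every case by Lagrange's theorem alone: $|C_{H_0}(t)|$ must divide $|C_T(t)|$, and one of the two candidate centralizer orders fails the divisibility test (e.g.\ for $\gamma_1 \in {\rm SU}_3(q){:}2$ one has $|C_{H_0}(t)| = 2|{\rm SL}_2(q)|$, which does not divide $q^{10}|{}^2B_2(q)|$, forcing $t$ into the class $A_1\tilde{A}_1$). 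This disposes of the ${\rm L}_2(q)\times{\rm L}_2(q^3)$, ${\rm L}_3^{\e}(q)$, ${\rm U}_3(q)$ and ${}^2B_2(q)^2$ cases with no module computations at all; only ${\rm Sp}_4(q)$ needs more, and there the paper embeds the diagonal $c_2$-involution as a $c_4$-involution of $C_2^2 < C_4 < F_4$ and reads off the fusion from \cite[Tables 6 and 14]{Law09}. Your Jordan-form strategy (restricting $V(\l_1)$ along $\bar{H}^{\circ}$ and comparing with \cite{Lawunip}, with \cite{Law09} as a cross-check) is exactly the technique the paper deploys in the harder untwisted cases, and it does work here — the two classes of ${}^3D_4(q)$ are separated already on the $8$-dimensional module, and the two classes of ${}^2F_4(q)$ on the $26$-dimensional one — so what the paper's approach buys is economy, while yours buys uniformity with the rest of Section \ref{s:mr}.

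One inaccuracy to correct: you assert that the $c_2$-involution of ${\rm Sp}_4(q)$ and the involution of ${}^2B_2(q)$ are root elements of $\bar{G}$ whose type is ``forced by the embedding.'' They are not: a $c_2$-involution has Jordan form $[J_2^2]$ on the natural module and is a product of a long and a short root element, and the ${}^2B_2(q)$-involutions likewise land in the non-root classes $(\tilde{A}_1)_2$ and $A_1\tilde{A}_1$ of $F_4$ rather than in $A_1$ or $\tilde{A}_1$. This sentence would not survive as written, but it does not damage the proof, since the computation you actually propose (Jordan forms plus \cite{Law09}, or the Lagrange argument) identifies these classes correctly without it.
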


\begin{proof}
First assume $T = {}^2F_4(q)$ and recall that $q > 2$ (see Proposition \ref{p:small} for the case $q=2$). The maximal subgroups of $G$ have been determined by Malle \cite{Mal}. 

Suppose $H_0 = \text{SU}_3(q){:}2$ or $\text{PGU}_3(q){:}2$. In both cases, choose an involutory graph automorphism $t \in H_0$, so $|C_{H_0}(t)| = 2|\text{SL}_2(q)|$. According to Table \ref{tabinv}, we have $|C_{T}(t)| = q^{10}|{}^2B_2(q)|$ or $q^{9}|{\rm SL}_{2}(q)|$. But Lagrange's theorem implies that $|C_{H_0}(t)|$ divides $|C_T(t)|$, so $|C_{T}(t)| = q^{9}|{\rm SL}_{2}(q)|$ is the only option and thus $t$ is in the $T$-class $A_1 \tilde{A}_1$.
The case $H_0 = {}^2B_2(q)^2.2$ is similar. Here we take $t$ to be an involution in one of the ${}^2B_2(q)$ factors, so $|C_{H_0}(t)| = q^2 |{}^2B_2(q)|$ and using Lagrange's theorem we see that $t$ is in the $(\tilde{A}_1)_2$-class of $T$. Now assume $H_0 = \text{Sp}_4(q){:}2$. Let $t \in \text{Sp}_4(q)$ be a $c_2$-involution in the notation of \cite{AS}. By considering the construction of $H_0$, we may assume
\[
t \in \{(x,x) \,:\, x \in C_2 \} < C_2^2 < C_4 < F_4 = \bar{G}
\] 
and $t$ is a $c_4$-involution in $C_4$. This is the class labelled $2A_1^{(2)}$ in \cite[Table 6]{Law09}, so \cite[Table 14]{Law09} implies that $t$ is an $A_1\tilde{A}_1$-involution, as recorded in Table \ref{tab:mr20}.

Now assume $T = {}^3D_4(q)$ and note that the maximal subgroups of $G$ are given in \cite{Kl3}. For  $H_0 = \text{L}_2(q) \times \text{L}_2(q^3)$ we choose $t = (A_1,1) \in H_0$, which is a long root element in the first factor, so $|C_{H_0}(t)| = q|\text{SL}_2(q^3)|$ and using Lagrange's theorem we see that $t$ is in the $A_1$-class of $T$. Finally, let us assume $H_0$ is of type 
$\text{L}^\e_3(q)$, so $\bar{H} = A_2 T_2.2$ and 
$X = {\rm SL}_{3}^{\e}(q)$ (see \eqref{e:xx}). Let $t \in X$ be an involution. Then 
$|C_{X}(t)| = q^3(q-\e)$ and by applying the bound in \eqref{e:cenn} we deduce that
$$|C_{H_0}(t)| \leqs q^3(q-\e) \cdot 2(q^2+\e q+1) = 2q^3(q^3-\e)$$
as recorded in Table \ref{tab:mr20}. Finally, Lagrange's theorem implies that $t$ is an $A_1$-involution.
\end{proof}

\begin{lem} \label{l:pmr2even}
Proposition \ref{p:mr2} holds if $p=2$. 
\end{lem}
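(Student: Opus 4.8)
The plan is to prove the lemma by verifying, case by case, the data recorded in the $p=2$ rows of Tables \ref{tab:mr20}--\ref{tab:mr22} (the second row of each pair), since once the involution $t \in H_0$, the centralizer bound $|C_{H_0}(t)| \leqs f(q)$ and the $T$-class of $t$ are confirmed, the lower bound $\gamma$ on $\b$ and the inequality ${\rm ifix}(T)>n^{4/9}$ follow immediately from \eqref{e:mrbd}. By Lemmas \ref{l:pmr2even0} and \ref{l:mrexception} we may assume $T \in \{G_2(q), F_4(q), E_6^{\e}(q), E_7(q), E_8(q)\}$ and that $H_0$ is not of type ${\rm L}_{3}^{\e}(q^3)$; I would then run through the maximal rank subgroups of \cite[Table 5.1]{LSS} in turn, discarding those that do not arise when $p=2$.

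For each $H_0$ I take $t$ to be the involution specified in the relevant table entry, which in almost every case is a long root ($A_1$) element lying in a single quasisimple factor $\hat{X}_i$ of $M_\sigma$ (see \eqref{e:struc} and \eqref{e:xx}), written $t = (A_1,1,\ldots)$ or $(A_1,-)$. The centralizer bound $|C_{H_0}(t)| \leqs f(q)$ then follows by combining Lemma \ref{l:centbound}, which bounds $|C_X(t)|$ by the product $\prod_i |C_{\hat{X}_i}(\hat{x}_i)|$ (up to the odd part of $|Y|$), with the index bound \eqref{e:cenn}; the orders $|C_{\hat{X}_i}(\hat{x}_i)|$ for long root elements of the classical factors are recorded in Section \ref{s:inv}.

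The substantive step is to pin down the $T$-class of $t$. Whenever $t$ is a long root element of a factor $X_i$ whose root subgroups are long root subgroups of $\bar{G}$, it lies in a long root $A_1$ subgroup of $\bar{G}$ and hence in the $A_1$-class of $T$; in these cases $|t^T|$ is read off from Table \ref{tabinv} and the verification is routine. The harder cases are those in which the factor is defined over a proper extension field, namely $H_0$ of type ${\rm L}_{2}(q^7)$ in $E_7(q)$ and of type ${\rm U}_5(q^2)$, ${\rm P}\Omega_8^+(q^2)$, ${}^3D_4(q^2)$, ${\rm U}_3(q^2)^2$ or ${\rm U}_3(q^4)$ in $E_8(q)$, where the chosen involution fuses not to the $A_1$-class but to the $A_1^2$ or $A_1^4$ class. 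Here I would determine the class exactly as in Lemma \ref{l:mrexception}, by computing the Jordan form of $t$ on a suitable $\bar{G}$-module $V$ (the minimal module or $\mathcal{L}(\bar{G})$), using the decomposition of $V{\downarrow}\bar{H}^{\circ}$ from the tables in \cite{Tho} and then reading off the unipotent class from \cite[Table 5]{Lawunip} or the fusion data in \cite{Law09}.

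I expect this last step to be the main obstacle: the class identification for the extension-field factors, together with the occasional need to distinguish long from short root contributions (as already encountered for ${\rm Sp}_4(q^2)$ in $F_4(q)$ via a field automorphism in Lemma \ref{l:pmr2even0}), requires care with the module restrictions and with Lawther's labelling, and is the only part that is not a mechanical computation. Once the $T$-class is confirmed in every case, substituting $f(q)$, $|t^T|$ and $n = |T:H_0|$ into \eqref{e:mrbd} yields $\b \geqs \gamma \geqs 1/2$ for all the remaining $p=2$ subgroups, which establishes Proposition \ref{p:mr2} in this case.
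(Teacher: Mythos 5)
Your proposal is correct and follows essentially the same route as the paper: long root elements in a single quasisimple factor for the bulk of the cases, centralizer bounds via Lemma \ref{l:centbound} and \eqref{e:cenn}, and Jordan forms on a restricted $\bar{G}$-module combined with \cite{Lawunip} or \cite{Law09} to identify the $T$-class in the extension-field cases such as ${\rm L}_2(q^7)<E_7(q)$. Two small points of divergence: the field-automorphism case ${\rm Sp}_4(q^2){:}2<F_4(q)$ belongs to this lemma rather than to Lemma \ref{l:pmr2even0} (the paper settles it via the embedding $C_2^2.2<C_4<F_4$ and \cite[Tables 6 and 14]{Law09}), and for ${\rm U}_3(q^4).8<E_8(q)$ the chosen involution is a graph automorphism $\gamma_1$ rather than a long root element of the factor, which the paper places in the $A_1^4$-class by the shorter observation that it inverts a maximal torus of $\bar{G}$ (Lemma \ref{l:wzero}(iii)) instead of a Jordan-form computation.
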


\begin{proof}
We may assume that $T \in \{G_2(q), F_4(q), E^\e_6(q), E_7(q), E_8(q)\}$. We will postpone the analysis of the following two cases to the end of the proof:
\begin{itemize}\addtolength{\itemsep}{0.2\baselineskip}
\item[{\rm (a)}] $T = F_4(q)$ and $H_0 = \text{Sp}_4(q^2){:}2$; 
\item[{\rm (b)}] $T = E_8(q)$ and $H_0 = \text{U}_3(q^4).8$.
\end{itemize}

In many cases, if $t \in H_0$ is a long root element in one of the $\hat{X}_i$ factors then $t$ is an $A_1$-involution in $T$ and the result quickly follows. For example, suppose $T = E_7(q)$ and $H_0 = \text{L}_2(q) \times \Omega_{12}^+(q)$. Let $x \in \Omega_{12}^+(q)$ be a long root element and set $t = (1,x) \in H_0$. Then $t$  is an $A_1$-involution in $T$ and
$$|C_{H_0}(t)| = |\text{SL}_2(q)||C_{\Omega_{12}^+(q)}(x)| = q^{17}|{\rm SL}_{2}(q)|^2|\Omega_8^{+}(q)|.$$
This yields $\text{ifix}(T) > n^{4/9}$ and $\b \geqs 3/4$.

In the remaining cases (excluding (a) and (b) above), we choose $t$ to be a long root element in one of the quasisimple factors as above and we use the embedding of $H_0$ in $\bar{G}$ to determine the $T$-class of $t$ and verify the required bounds (the difference here is that $t$ is no longer an $A_1$-involution in $T$). For example, suppose $T = E_7(q)$ and $H_0 = \text{L}_2(q^7).\text{L}_3(2)$, so $X = \text{L}_2(q^7)$ and by considering the construction of $H_0$, we may assume that 
\begin{equation}\label{e:diagg}
t \in A = \{(x,x,x,x,x,x,x) \,:\, x \in A_1 \} < A_1^7 = \bar{H}^\circ.
\end{equation}
From the restriction of $V = V_{\bar{G}}(\lambda_7)$ to $A_1^7$ we deduce that 
\[
V{\downarrow}A = (V(1) \otimes V(1) \otimes V(1))^7.
\]
Now $A \cong A_1$ has a unique class of involutions and we calculate that $t$ has Jordan form $[J_2^{28}]$ on $V$. By inspecting \cite[Table 7]{Lawunip}, we conclude that $t$ is in the $A_1^4$-class of $T$.

Finally, let us turn to the cases (a) and (b) above. First consider (a). Let $t \in H_0$ be an involutory field automorphism of ${\rm Sp}_{4}(q^2)$, so $|C_{H_0}(t)| = 2|{\rm Sp}_{4}(q)|$. We claim that $t$ is in the $(\tilde{A}_1)_2$-class of $T$. To see this, we may assume that  
\[
t \in C_2^2.2 < C_4 < F_4
\]
with $t$ interchanging the two $C_2$ factors (this follows from the construction of $H_0$). Then $t$ is an $a_4$-involution in $C_4$, which is labelled $2A_1$ in \cite[Table 6]{Law09}, and by inspecting \cite[Table 14]{Law09} we see that $t$ is in the $T$-class $(\tilde{A}_1)_2$. The result follows. 

Now consider (b). Let $t \in H_0$ be an involutory graph automorphism of ${\rm U}_{3}(q^4)$, so we have $|C_{H_0}(t)| =  8|{\rm SL}_{2}(q^4)|$. At the level of algebraic groups, $t$ acts as a graph automorphism on each $A_2$ factor of $\bar{H}^{\circ} = A_2^4$, so $t$ inverts a maximal torus of $\bar{G} = E_8$ and thus $t$ is in the $A_1^4$-class by Lemma \ref{l:wzero}(iii).
\end{proof}

\begin{lem} \label{l:pmr2odd}
Proposition \ref{p:mr2} holds if $p \ne 2$. 
\end{lem}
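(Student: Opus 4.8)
The plan is to verify, case by case, the data recorded in the odd characteristic rows of Tables \ref{tab:mr20}--\ref{tab:mr22} (for each type of $H_0$, this is the first of the two rows). By Lemma \ref{l:mrexception} I may assume that $(T,H_0)$ is not the case $T = E_6^{\e}(q)$ with $H_0$ of type ${\rm L}_3^{\e}(q^3)$, which is the only possibility with $\b<1/2$. For each remaining case I would take the involution $t = (x_1, \ldots, x_m) \in X$ recorded in the third column of the relevant table, where $X = \hat{X}_1 \circ \cdots \circ \hat{X}_m$ as in \eqref{e:xx}; establish the upper bound $|C_{H_0}(t)| \leqs f(q)$; and determine the $T$-class of $t$. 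The lower bounds on ${\rm ifix}(T)$ and $\b$ then follow from \eqref{e:mrbd}, and in every case they should give ${\rm ifix}(T)>n^{4/9}$ and $\b \geqs 1/2$, which (with Lemma \ref{l:mrexception}) completes Proposition \ref{p:mr2}.

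The centralizer bound should be routine. Since $X \leqs H_0$, the estimate \eqref{e:cenn} reduces the problem to $|C_X(t)|$ and the index $|H_0:X|$. When $t \in X$ is inner I would apply Lemma \ref{l:centbound} to bound $|C_X(t)|$ in terms of the orders $|C_{\hat{X}_i}(\hat{x}_i)|$, which are read off from \cite[Table 4.5.1]{GLS} (see also \cite[Appendix B]{BG}); when instead $t$ induces a graph automorphism $\gamma_1$ on a factor (Remark \ref{r:mr1}), the corresponding fixed-point subgroup is an orthogonal group and $|C_X(t)|$ is computed accordingly. The index $|H_0:X|$ is controlled by the diagonal and graph contributions appearing in the structure \eqref{e:struc}, and can be absorbed into the constant factor of $f(q)$.

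The hard part will be identifying the $T$-class of $t$. As $p \ne 2$ the involution $t$ is semisimple, so by Remark \ref{r:invcon} its $T$-class is determined by its $\bar{G}$-class, and I would pin the latter down by computing the dimension of the $1$-eigenspace of $t$ on a suitable $\bar{G}$-module $V$. Taking $V = \mathcal{L}(\bar{G})$, this dimension equals $\dim C_{\bar{G}}(t)$ by \cite[Section 1.14]{Carter}; for the groups of smaller rank it is usually more convenient to use the minimal module (for example $V = V_{\bar{G}}(\l_1)$, $V_{\bar{G}}(\l_4)$ or $V_{\bar{G}}(\l_7)$ for $E_6$, $F_4$ or $E_7$). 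Using the composition factors of $V{\downarrow}\bar{H}^{\circ}$ tabulated in \cite{Tho}, I would compute the eigenvalues of $t$ on each factor, hence the multiplicity of the eigenvalue $1$, and then compare with the centralizer orders in Table \ref{tabinv} (or, for classical factors, with the eigenvalue data in \cite[Proposition 1.2]{LS99}) to single out $t^T$.

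The most delicate point arises in the three cases flagged in Remark \ref{r:dagger}: $T = F_4(q)$, $E_6^{\e}(q)$ and $E_8(q)$ with $H_0$ of type ${\rm L}_2(q) \times {\rm PSp}_6(q)$, ${\rm L}_2(q) \times {\rm L}_6^{\e}(q)$ and ${\rm L}_2(q) \times E_7(q)$. Here the relevant involution $(1,t_i'')$ is the image in $X = \hat{X}_1 \circ \hat{X}_2$ of one of the two preimages $t_i'$, $t_i''$ of a fixed $t_i$-involution of $\hat{X}_2/Z(\hat{X}_2)$, and I must check these preimages are not $T$-conjugate and pick the correct one. For $T = E_6^{\e}(q)$ the two preimages of the $t_2$-involution of ${\rm L}_6^{\e}(q)$ have eigenvalues $[-I_2,I_4]$ and $[-I_4,I_2]$ on the natural $6$-dimensional module; restricting $V = V_{\bar{G}}(\l_1)$ to $\bar{H}^{\circ} = A_1A_5$ and computing the $1$-eigenspace should give respective Jordan forms $[-I_{12},I_{15}]$ and $[-I_{16},I_{11}]$ on $V$, the first of which is a $t_2$-involution by Lemma \ref{l:mrexception}, so the two lifts fall into the distinct classes $t_2$ and $t_1$. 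I would then take $t_2''$ to be the lift in the smaller class $t_1$, as recorded in Table \ref{tab:mr20}. The case $T = E_8(q)$ is entirely analogous, the two lifts of the $t_1$-involution of $E_7(q)$ being separated by their action on the $56$-dimensional module $V_{\bar{G}}(\l_7)$ attached to the $E_7$ factor of $\bar{H}^{\circ} = A_1E_7$, with $t_1''$ landing in the class $t_8$ of Table \ref{tab:mr22}.
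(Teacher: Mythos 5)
Your proposal follows essentially the same route as the paper: take the tabulated involution $t$, bound $|C_{H_0}(t)|$ via \eqref{e:cenn} and Lemma \ref{l:centbound}, and identify the $T$-class of $t$ from eigenvalue multiplicities on $\mathcal{L}(\bar{G})$ or a minimal module using the restrictions in \cite{Tho}, with the two central lifts in the cases of Remark \ref{r:dagger} separated exactly as in the paper (your use of the $27$-dimensional module for $E_6^{\e}(q)$ in place of $\mathcal{L}(\bar{G})$ is an immaterial variant, and your Jordan forms $[-I_{12},I_{15}]$ and $[-I_{16},I_{11}]$ there are correct). The only details your sketch glosses over are the few small-$q$ cases (e.g.\ $G_2(3)$ with $H_0$ of type ${\rm L}_{3}^{\e}(3)$) where the single-class bound is insufficient and one must count all involutions in $H_0$, and the check in the ${\rm L}_{2}(q)^8 < E_8(q)$ case that the chosen diagonal element is genuinely an involution, which requires the factor ordering of Remark \ref{r:mr2} and \cite[Lemma 2.14]{CLSS}.
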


\begin{proof}
As before, it is sufficient to justify the relevant information presented in Tables \ref{tab:mr20}-\ref{tab:mr22}. In particular, we need to explain the choice of involution $t \in H_0$, establish the upper bound on $|C_{H_0}(t)|$ and determine the $T$-class of $t$. 

First suppose $T = G_2(q)$, so $H_0$ is of type $\text{L}_2(q)^2$ or $\text{L}_3^\e(q)$ (see \cite[Table 5.1]{LSS}). Note that $T$ has $q^4(q^4+q^2+1)$ involutions, which form a single conjugacy class, so \eqref{e:i2} holds. In the first case,  
\[
H_0 = ({\rm SL}_{2}(q) \circ {\rm SL}_{2}(q)).2
\]
and we take $t \in H_0$ to be the image of $(A,A) \in {\rm GL}_{2}(q)^2$, where 
$A = \left(\begin{smallmatrix} -1 & 0 \\ \phantom{-}0 & 1 \end{smallmatrix}\right) \in {\rm GL}_{2}(q)$ is a $t_1$-type involution (note that $t \in {\rm SL}_{2}(q) \circ {\rm SL}_{2}(q)$ if and only if $q \equiv 1 \imod{4}$). In Table \ref{tab:mr20}, we denote this choice of $t$ by writing $t = (t_1,t_1)$. Note that $|C_{\text{SL}_2(q)}(A)| = q-1$ and thus $|C_{H_0}(t)| \leqs 2(q-1)^2$ by Lemma \ref{l:centbound}. It is now easy to check that the desired bounds hold. 

Finally, if $H_0 = \text{SL}_3^\e(q){:}2$ and $t \in H_0$ is an involutory graph automorphism, then $|C_{H_0}(t)| =2|{\rm SO}_{3}(q)|$ and the result follows if $q \geqs 5$. For $q=3$ we need to compute $i_2(H_0)$ in order to get ${\rm ifix}(T)>n^{4/9}$, so we write ``$t_1$ and $\gamma_1$" in Table \ref{tab:mr20}, which are representatives for the two classes of involutions in $H_0$. Note that $i_2(H_0) = 351$ if $\e=+$, otherwise $i_2(H_0) = 315$.

To complete the proof, we will focus on the following cases to illustrate the main ideas (the other cases are similar and we omit the details):
\begin{itemize}\addtolength{\itemsep}{0.2\baselineskip}
\item[{\rm (a)}] $T = F_4(q)$ and $H_0$ is of type $\text{P}\Omega^+_8(q)$ or $\text{L}^\e_3(q)^2$. 
\item[{\rm (b)}] $T = E^\e_6(q)$ and $H_0$ is of type $\text{L}_2(q) \times \text{L}^\e_6(q)$.
\item[{\rm (c)}] $T = E_7(q)$ and $H_0$ is of type $\text{L}_2(q^7)$.
\item[{\rm (d)}] $T = E_8(q)$ and $H_0$ is of type $\text{P}\Omega_{16}^+(q)$, $\text{L}_{2}(q)\times E_7(q)$ or $\text{L}_2(q)^8$.
\end{itemize}

First assume $T = F_4(q)$ and $H_0$ is of type $\text{P}\Omega^+_8(q)$, so
$$H_0 = \text{Spin}^+_8(q).{\rm Sym}_3 = 2^2.\text{P}\Omega^+_8(q).{\rm Sym}_3.$$
Let  $t \in \text{Spin}^+_8(q)$ be a $t_2$-involution in the notation of \cite[Table 4.5.2]{GLS}. This element has centralizer $\text{SL}_2(q)^4$ in $\text{Spin}^+_8(q)$, so $|C_{H_0}(t)| \leqs 6|{\rm SL}_{2}(q)|^4$ and we claim that $t$ is a $t_1$-involution in $T$. To see this, we consider the action of $\bar{H}^\circ = D_4$ on the Lie algebra $V = \mathcal{L}(\bar{G})$: 
\[
V{\downarrow}D_4 = V(\lambda_1) \oplus V(\lambda_2) \oplus V(\lambda_3) \oplus V(\lambda_4).
\] 
We calculate that $t$ has Jordan form $[-I_{16},I_{12}]$ on $V(\lambda_2)$, and $[-I_{4},I_{4}]$ on each of the other summands, so $t$ has Jordan form $[-I_{28},I_{24}]$ on $V$. Therefore, $\dim C_{\bar{G}}(t) = 24$, so $C_{\bar{G}}(t) = A_1C_3$ and the claim follows.  

Now suppose $H_0$ is of type $\text{L}^\e_3(q)^2$, so $\bar{H} = (A_2 \tilde{A}_2).2$ and $H_0 = M_\sigma.2$ with
\[
M_\sigma = e.\text{L}^\e_3(q)^2.e = ({\rm SL}_{3}^{\e}(q) \circ {\rm SL}_{3}^{\e}(q)).e
\]
and $e = (3,q-\e)$. Here $\bar{H}^{\circ} = A_2 \tilde{A}_2$ and we may assume that the first ${\rm SL}_{3}^{\e}(q)$ factor contains long root elements of $T$ and the second short root elements. Let $A \in {\rm SL}_{3}^{\e}(q)$ be an involution (labelled $t_1$ in \cite[Table 4.5.2]{GLS}) and let $t \in {\rm SL}_{3}^{\e}(q) \circ {\rm SL}_{3}^{\e}(q)$ be  the image of $(1,A) \in {\rm SL}_{3}^{\e}(q) \times {\rm SL}_{3}^{\e}(q)$, which is denoted $t = (1,t_1)$ in Table \ref{tab:mr20}. By applying Lemma \ref{l:centbound} (with $|Y|_{2'}=e$) we get
\[
|C_{H_0}(t)| \leqs 2(q-\e)|\text{SL}_2(q)||\text{SL}_3^\e(q)|
\]
and it remains to show that $t$ is a $t_4$-involution of $T$. Once again, to do this we consider the action of $\bar{H}^{\circ}$ on $V = \mathcal{L}(\bar{G})$:
\[
V{\downarrow}A_2\tilde{A}_2 = \mathcal{L}(A_2\tilde{A}_2) \oplus (V(\l_1) \otimes V(2\l_2)) \oplus (V(\l_2) \otimes V(2\l_1)).
\] 
We calculate that $t$ has Jordan form $[-I_{4},I_{12}]$ on $\mathcal{L}(A_2\tilde{A}_2)$, 
and $[-I_6,I_{12}]$ on the two other summands, whence $t$ has Jordan form $[-I_{16},I_{36}]$ on $V$. Therefore, $C_{\bar{G}}(t)=B_4$ and this justifies the claim. 

Next assume $T = E^\e_6(q)$ and $H_0$ is of type $\text{L}_2(q) \times \text{L}^\e_6(q)$. Here $\bar{H} = A_1 A_5$ and $H_0 = (\text{SL}_2(q) \circ \text{SL}^\e_6(q)).2$.
As explained in Remark \ref{r:dagger}, an involution in the $t_2$-class of $\text{L}^\e_6(q)$ has preimages $t_2', t_2'' \in \text{SL}^\e_6(q)$, with respective Jordan forms $[-I_2,I_4]$ and $[-I_4,I_2]$. Consider the images of $(1,t_2')$ and $(1,t_2'')$ in ${\rm SL}_{2}(q) \circ \text{SL}^\e_6(q)$. We claim that $(1,t_2'')$ is a $t_1$-involution in $T$, while $(1,t_2')$ is of type $t_2$. To see this, let $V =  \mathcal{L}(\bar{G})$ and observe that
\[
V{\downarrow}A_1A_5 = \mathcal{L}(A_1A_5) \oplus (V(1) \otimes V(\lambda_3)).
\] 
Both $(1,t_2')$ and $(1,t_2'')$ have Jordan form $[-I_{16},I_{22}]$ on $\mathcal{L}(A_1A_5)$ (indeed, both elements have centralizer $A_1^2A_3T_1$ in $A_1A_5$, which is $22$-dimensional), whereas the Jordan forms on $V(1) \otimes V(\lambda_3)$ are $[-I_{24},I_{16}]$ and $[-I_{16},I_{24}]$, respectively. This justifies the claim, and we choose $t = (1,t_2'')$ as in Table \ref{tab:mr20}. Finally, we note that $|C_{H_0}(t)| \leqs 2|\text{SL}_2(q)|^2|\text{GL}_{4}^\e(q)|$ since $|C_{\text{SL}^\e_6(q)}(t_2')| = |\text{SL}_2(q)||\text{GL}_{4}^\e(q)|$.

Now assume $T = E_7(q)$ and $H_0$ is of type $\text{L}_2(q^7)$. Here $\bar{H} = A_1^7.{\rm L}_{3}(2)$, $\bar{H}_{\s} = \text{PGL}_2(q^7).7$ and $H_0 = \text{L}_2(q^7).7$ (see Example \ref{ex1}). Now $H_0$ has a unique class of involutions $t^{H_0}$, labelled by $t_1$ if $q \equiv 1 \imod{4}$ and $t_1'$ if $q \equiv 3 \imod{4}$ (see \cite[Table 4.5.1]{GLS}); this explains why we write ``$t_1$ or $t_1'$" in Table \ref{tab:mr21}. Note that $|C_{H_0}(t)| \leqs 7(q^7+1)$. To identify the $T$-class of $t$, we may assume that $t$ is contained in a diagonal $A_1$-type subgroup $A <\bar{G}$ as in the proof of Lemma \ref{l:pmr2even} (see \eqref{e:diagg}), which has a unique class of involutions. If we set $V = \mathcal{L}(\bar{G})$, then using the restriction of $V$ to $A_1^7$, we calculate that
\[ V {\downarrow} A = (V(2) \otimes V(2))^7 \oplus V(2)^{21} \oplus 0^7\]
and thus $t$ has Jordan form $[-I_{70},I_{63}]$. Therefore $\dim C_{\bar{G}}(t) = 63$, so $C_{\bar{G}}(t) = A_7$ and we deduce that $t$ is a $t_4$-involution if $q \equiv 1 \imod{4}$, otherwise $t$ is in the $t'_4$-class.

To complete the proof, let us assume $T = E_8(q)$ and $H_0$ is one of the subgroups in (d) above. First assume $H_0$ is of type $\text{P}\Omega_{16}^+(q)$. Here $\bar{H}$ is a half-spin group of type $D_8$ and we have
\[
H_0 = \text{HSpin}^+_{16}(q).2 = 2.{\rm P\Omega}_{16}^{+}(q).2.
\]
Let $t \in \text{HSpin}^+_{16}(q)$ be the image of an involution in the $t_4$-class of $\text{Spin}^+_{16}(q) = 2.\text{HSpin}^+_{16}(q)$ (in the notation of \cite[Table 4.5.2]{GLS}), so $|C_{H_0}(t)| \leqs 2|{\rm SO}_{8}^{+}(q)|^2$. Then $C_{\bar{H}}(t)^{\circ} = D_4D_4$ and thus $t$ is a $t_1$-involution in $T$.

Next assume $H_0$ is of type $\text{L}_{2}(q) \times E_7(q)$, so 
\[
H_0 = 2.(\text{L}_{2}(q) \times E_7(q)).2 = ({\rm SL}_{2}(q) \circ \hat{E}_7(q)).2
\]
where $\hat{E}_7(q) = 2.E_7(q)$ is the simply connected group. An involution in the $t_1$-class of $E_7(q)$ has preimages $t_1'$ and $t_1''$ in $\hat{E}_7(q)$, with trace $-8$ and $8$ on $V_{E_7}(\lambda_1)$, respectively (see \cite[Proposition 1.2]{LS99}). Consider the images of $(1,t_1')$ and $(1,t_1'')$ in ${\rm SL}_{2}(q) \circ \hat{E}_7(q)$. We claim that $(1,t_1')$ is a $t_1$-involution in $T$, while $(1,t_1'')$ is of type $t_8$. To see this, let $V =  \mathcal{L}(\bar{G})$ and observe that
\[
V{\downarrow}A_1E_7 = \mathcal{L}(A_1E_7) \oplus (V(1) \otimes V(\lambda_7)).
\] 
Both $(1,t_1')$ and $(1,t_1'')$ have Jordan form $[-I_{64},I_{72}]$ on $\mathcal{L}(A_1E_7)$ (indeed, both elements have centralizer $A_1^2D_6$ in $A_1E_7$, which is $72$-dimensional), whereas the Jordan forms on $V(1) \otimes V(\lambda_7)$ are $[-I_{64},I_{48}]$ and $[-I_{48},I_{64}]$, respectively. This justifies the claim, and we choose $t = (1,t_1'')$ as in Table \ref{tab:mr22}. Finally, we note that $|C_{H_0}(t)| \leqs 2|\text{SL}_2(q)|^2|\text{SO}_{12}^+(q)|$ since $|C_{\hat{E}_7(q)}(t_1'')| = |\text{SL}_2(q)||\text{SO}_{12}^+(q)|$.

Finally, let us assume that $H_0$ is of type $\text{L}_2(q)^8$, so 
\[
H_0 = M_\sigma.\text{AGL}_3(2) = 2^4.\text{L}_2(q)^8.2^4.\text{AGL}_3(2).
\]
As in \cite[Lemma 2.14]{CLSS}, write $\bar{H}^\circ = A_1^8 = J_1\cdots J_8$ and $Z(J_i) = \la e_i \ra$. Recall that we may assume $J_5J_6J_7J_8$ is contained in a $D_4$ subsystem subgroup of $\bar{G}$ (see Remark \ref{r:mr2}), which implies that $J_1J_2J_3J_4$ is also contained in such a subsystem subgroup
(for instance, we can take the $A_1$ subgroups $\la X_{\a},X_{-\a}\ra$ with $\a \in \{\alpha_2, \alpha_3, \alpha_5, -\a_2-\a_3-2\a_4-\a_5\}$). 
As before, let $A = \left(\begin{smallmatrix} -1 & 0 \\ \phantom{-}0 & 1 \end{smallmatrix}\right) \in {\rm GL}_{2}(q)$, a $t_1$-type involution, and let $t$ be the image in $H_0$ of $(A,A,A,A,1,1,1,1) \in {\rm GL}_{2}(q)^8$. Our choice of ordering for the $J_i$ implies that $e_1 e_2 e_3 e_4 = 1$ by \cite[Lemma 2.14(ii)]{CLSS} and hence $t$ is indeed an involution. Moreover, we calculate that
$$|C_{H_0}(t)| \leqs 2^4|\text{AGL}_3(2)|(q-1)^4|\text{SL}_2(q)|^4 = 21504(q-1)^4|\text{SL}_2(q)|^4.$$ 
By considering the restriction of $\mathcal{L}(\bar{G})$ to $J_1J_2J_3J_4=A_1^4$ (via the restriction to $A_1^8$), or by inspecting Step 2 in the proof of \cite[Lemma 2.17]{CLSS}, we deduce that $t$ is a $t_8$-involution in $T$. The result follows.
\end{proof}

\vs
This completes the proof of Theorem \ref{t:mr}.

\section{Proof of Theorem \ref{t:main2}, Part I}\label{s:algebraic}

By our earlier work in Sections \ref{s:parab} and \ref{s:mr}, we have established Theorem \ref{t:main2} in the case where $H_0$ contains a maximal torus of $T$ and we now move on to consider the remaining possibilities for $H$. It will be convenient to postpone the analysis of the twisted groups ${}^2B_2(q)$, ${}^2G_2(q)$, ${}^2F_4(q)$ and ${}^3D_4(q)$ to Section \ref{s:small}, so in the next four sections we will assume 
\begin{equation}\label{e:T}
T \in \{E_8(q), E_7(q), E_6^{\e}(q), F_4(q), G_2(q)\},
\end{equation}
where $q=p^f$ with $p$ a prime. 
The following fundamental result (see \cite[Theorem 2]{LS90}) partitions the remaining subgroups into various types (this allows us to refer to \emph{type {\rm (I)} actions} and \emph{type {\rm (II)} subgroups} of $G$, and so on).

\begin{thm}\label{t:types}
Let $G$ be an almost simple group with socle $T = (\bar{G}_{\s})'$ as in \eqref{e:T}. Let $H$ be a maximal subgroup of $G$ with $G = HT$ and assume $H$ does not contain a maximal torus of $T$. Set $H_0 = H \cap T$. Then one of the following holds:
\begin{itemize}\addtolength{\itemsep}{0.2\baselineskip}
\item[{\rm (I)}] $H = N_G(\bar{H}_{\s})$, where $\bar{H}$ is a maximal closed $\sigma$-stable positive dimensional subgroup of $G$ (not parabolic or of maximal rank);
\item[{\rm (II)}] $H$ is of the same type as $G$ (possibly twisted) over a subfield of $\mathbb{F}_{q}$;
\item[{\rm (III)}] $H$ is an exotic local subgroup (determined in \cite{CLSS});
\item[{\rm (IV)}] $H$ is almost simple, and not of type (I) or (II);
\item[{\rm (V)}] $T=E_8(q)$, $p \geqs 7$ and $H_0 = ({\rm Alt}_5 \times {\rm Alt}_6).2^2$.
\end{itemize}
\end{thm}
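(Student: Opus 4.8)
The plan is to derive the statement directly from the main structure theorem of Liebeck and Seitz \cite[Theorem 2]{LS90}, which partitions the maximal subgroups of an almost simple exceptional group of Lie type into finitely many naturally defined collections. First I would recall the content of that theorem: any maximal subgroup $H$ of $G$ with $G = HT$ lies in one of the following families---a parabolic subgroup; the normaliser $N_G(\bar{H}_{\s})$ of a $\s$-stable reductive subgroup $\bar{H}$ of maximal rank; a subgroup $N_G(\bar{H}_{\s})$ with $\bar{H}$ a maximal closed $\s$-stable positive dimensional subgroup of $\bar{G}$ that is neither parabolic nor of maximal rank; a subfield subgroup or its twisted analogue; an exotic local subgroup as classified in \cite{CLSS}; an almost simple subgroup not arising from a positive dimensional overgroup; or the single exceptional configuration $({\rm Alt}_5 \times {\rm Alt}_6).2^2$ inside $E_8(q)$ with $p \geqslant 7$. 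Since we are assuming $H$ is maximal with $G = HT$, we are squarely in the setting of the cited theorem and no preliminary reduction is required.

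Next I would observe that the hypothesis that $H$ does not contain a maximal torus of $T$ eliminates exactly the first two families. Indeed, every parabolic subgroup contains a Borel subgroup, and hence a maximal torus, so it is excluded; and a maximal rank subgroup $N_G(\bar{H}_{\s})$ has $\bar{H}^{\circ}$ containing a $\s$-stable maximal torus of $\bar{G}$, so $H_0$ contains a maximal torus of $T$ and is likewise excluded. Conversely, the subgroups in the remaining families do not contain a maximal torus. Discarding the parabolic and maximal rank families from the Liebeck--Seitz list leaves precisely the cases (I)--(V) of the statement: family (I) records the positive dimensional subgroups that are neither parabolic nor of maximal rank; (II) the subfield subgroups; (III) the exotic local subgroups; (IV) the almost simple subgroups; and (V) the sporadic $E_8(q)$ configuration.

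The only genuine point requiring care---and the step I would treat most carefully---is the mutual disjointness of the labelled collections, matching the partition in \cite{LS90} to the present notation. In particular, an almost simple maximal subgroup that happens to arise as $N_G(\bar{H}_{\s})$ for some positive dimensional $\bar{H}$ must be assigned to (I) rather than (IV); the qualifier \emph{``not of type {\rm (I)} or {\rm (II)}''} in (IV) enforces exactly this, so that (IV) consists of the almost simple subgroups with no positive dimensional $\s$-stable overgroup in $\bar{G}$ and no subfield structure. Beyond this bookkeeping the argument is essentially a translation of \cite[Theorem 2]{LS90} into our framework, and the main obstacle is not mathematical depth but the verification that the maximal torus hypothesis removes precisely the parabolic and maximal rank families while leaving the five listed collections intact.
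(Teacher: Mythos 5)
Your proposal is correct and follows essentially the same route as the paper, which offers no independent proof of this statement but simply cites \cite[Theorem 2]{LS90} and observes that the hypothesis on maximal tori excludes the parabolic and maximal rank families. Your additional remarks on why those two families are precisely the ones containing maximal tori, and on the disjointness convention built into the qualifier in (IV), are accurate but amount to the same bookkeeping the paper leaves implicit.
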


It is easy to check that ${\rm ifix}(T)>n^{4/9}$ in (V), so  
it remains to handle the subgroups of type (I) - (IV). In this section, we will focus on the cases arising in part (I) of Theorem \ref{t:types} (the subgroups in (II), (III) and (IV) will be handled in Sections \ref{s:subfield}, \ref{s:exotic} and \ref{s:as}, respectively). 

Following \cite[Theorem 8]{LS03}, we partition the type (I) subgroups into three cases:
\begin{itemize}\addtolength{\itemsep}{0.2\baselineskip}
\item[{\rm (a)}] $T = E_7(q)$, $p \geqs 3$ and $\bar{H}_{\s} = (2^2 \times {\rm P\Omega}_{8}^{+}(q).2^2).{\rm Sym}_3$ or ${}^3D_4(q).3$;
\item[{\rm (b)}] $T = E_8(q)$, $p \geqs 7$ and $H_0 = {\rm PGL}_{2}(q) \times {\rm Sym}_{5}$; \hfill\refstepcounter{equation}(\theequation)\label{e:list}
\item[{\rm (c)}] $(T, {\rm soc}(H_0))$ is one of the cases listed in Table \ref{tabmaxfin} (see \cite[Table 3]{LS03}).
\end{itemize}

\begin{rem}\label{r:cor}
We take the opportunity to clarify a couple of potential ambiguities in \cite[Table 3]{LS03} that arise when $T = E_6^{\e}(q)$. Set $\bar{G} = E_6$. 
\begin{itemize}\addtolength{\itemsep}{0.2\baselineskip}
\item[{\rm (i)}] First we observe that the maximal subgroups $A_2.2$ and $G_2$ of $\bar{G}$ are not $\sigma$-stable when $\sigma$ induces a graph automorphism of $\bar{G}$. Indeed, $\sigma$-stability would imply that such a subgroup is centralized by a graph automorphism of $\bar{G}$ and is therefore contained in either $F_4$ or $C_4$, contradicting maximality. This explains the condition $\e=+$ in Table \ref{tabmaxfin} for $\text{soc}(H_0) = \text{L}_3^\pm(q)$ and $G_2(q)$. 
\item[{\rm (ii)}] We also note that $\bar{H} = A_2 G_2$ is a maximal subgroup of $\bar{G}$, with  the property that a graph automorphism of $\bar{G}$ induces a graph automorphism on the $A_2$ factor. In particular, $N_{\bar{G}}(\bar{H}) = \bar{H}$, which means that the value $t=2$ given in the fourth column of \cite[Table 10.1]{LS04} should be $t=1$. Indeed, the restriction of $V_{\bar{G}}(\lambda_1)$ to $\bar{H}$ given in the third column of \cite[Table 10.2]{LS04} shows that no element in $\bar{G}$ induces a graph automorphism on the $A_2$ factor. In terms of Table \ref{tabmaxfin}, this means that $\text{soc}(H_0) = \text{L}_3^\e(q) \times G_2(q)$ for the corresponding subgroup of $T$. 
\end{itemize}
\end{rem}

{\small
\renewcommand{\arraystretch}{1.2}
\begin{table}
\begin{center}
$$\begin{array}{ll}\hline
T & {\rm soc}(H_0) \\ \hline
E_8(q) & {\rm L}_{2}(q) \, (\mbox{$3$ classes, $p \geqs 23,29,31$}), \Omega_5(q) \, (p \geqs 5), {\rm L}_{2}(q) \times {\rm L}_{3}^{\e}(q) \, (p \geqs 5), \\
& G_2(q) \times F_4(q),  {\rm L}_{2}(q) \times G_2(q)^2 \, (p \geqs 3, q \geqs 5),  {\rm L}_{2}(q) \times G_2(q^2)\, (p \geqs 3, q \geqs 5) \\

E_7(q) &  {\rm L}_{2}(q) \, (\mbox{$2$ classes, $p \geqs 17,19$}),  {\rm L}_{3}^{\e}(q) \, (p \geqs 5), {\rm L}_{2}(q)^2 \, (p \geqs 5), \\
& {\rm L}_{2}(q) \times G_2(q)\, (p \geqs 3, q \geqs 5),  {\rm L}_{2}(q) \times F_4(q)\, (q \geqs 4),  G_2(q) \times {\rm PSp}_{6}(q) \\

E_6^{\e}(q) & {\rm L}_{3}^{\pm}(q) \, (\e=+, p \geqs 5),  G_2(q) \, (\e=+, p \ne 7), {\rm PSp}_{8}(q) \, (p \geqs 3), F_4(q),\,  \\
& {\rm L}_{3}^{\e}(q) \times G_2(q)\, ((q,\e) \ne (2,-)) \\

F_4(q) &  {\rm L}_{2}(q) \, (p \geqs 13),  G_2(q) \, (p=7), {\rm L}_{2}(q) \times G_2(q)\, (p \geqs 3, q \geqs 5) \\

G_2(q) &  {\rm L}_{2}(q) \, (p \geqs 7) \\ \hline
\end{array}$$
\caption{Some possibilities for ${\rm soc}(H_0)$ in Theorem \ref{t:types}(I)}
\label{tabmaxfin}
\end{center}
\end{table}
\renewcommand{\arraystretch}{1}}

\begin{thm}\label{t:nonmr}
Let $G$ be an almost simple primitive permutation group of degree $n$, with socle $T$ and point stabilizer $H$. Assume $T$ is one of the groups in \eqref{e:T} and $H$ is of type {\rm (I)} in Theorem \ref{t:types}. Then ${\rm ifix}(T)>n^{4/9}$. Moreover,   
\begin{equation}\label{e:logg}
\liminf_{q\to \infty} \frac{\log {\rm ifix}(T)}{\log n} = \b
\end{equation}
and either $\b \geqs 1/2$, or $(T,H_0,\b)$ is one of the cases in Table $\ref{tabb:nonmr}$.
\end{thm}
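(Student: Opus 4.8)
The plan is to follow the strategy already developed in Section \ref{s:mr}: for each pair $(T,\bar{H})$ arising in part (I) of Theorem \ref{t:types}, I would identify a convenient involution $t \in H_0$, establish an upper bound $|C_{H_0}(t)| \leqs f(q)$, determine the $T$-class of $t$, and then read off lower bounds on ${\rm ifix}(T)$ and $\b$ from \eqref{e:mrbd}. First I would recall the explicit list of type (I) subgroups in \eqref{e:list}. The two sporadic families in parts (a) and (b) are handled directly, since the structure of $\bar{H}_{\s}$ is given and the relevant involution classes are transparent (for the smallest groups one can fall back on {\sc Magma}). The substance of the argument is therefore part (c), namely the pairs $(T,{\rm soc}(H_0))$ listed in Table \ref{tabmaxfin}, and as in Section \ref{s:mr} I would organise the verification into a table recording, for each $H_0$, the chosen involution $t$, the bound $f(q)$, the resulting $T$-class, and the constant $\gamma \leqs \b$; the bulk of the proof is then the justification of these entries.

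For the choice of involution and the centralizer bound, I would distinguish two situations. When $\bar{H}^{\circ}$ has several simple factors (for example ${\rm L}_2(q) \times G_2(q)$, $G_2(q) \times F_4(q)$, or ${\rm L}_2(q) \times {\rm L}_3^{\e}(q)$), I would typically take $t$ to be the image of a long root involution in a single factor, or a product of such involutions, and bound $|C_{H_0}(t)|$ via the centralizer in that factor together with the index of the central product of quasisimple factors, exactly as in Lemma \ref{l:centbound} and \eqref{e:cenn}; the subtleties with preimages in quasisimple covers are dealt with as in Remark \ref{r:dagger}. When $\bar{H}$ is itself simple of small rank ($A_1 = {\rm L}_2(q)$, $B_2 = \Omega_5(q)$, $G_2$, $C_4 = {\rm PSp}_8(q)$, $F_4$), there are only a few involution classes, and I would run through each, using the precise structure of $H_0 = N_G(\bar{H}_{\s}) \cap T$ recorded in \cite{LS03, LS04} to bound the corresponding centralizer.

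The crux is the identification of the $T$-class of $t$. For $p \neq 2$ the involution is semisimple, and I would compute $\dim C_{\bar{G}}(t)$ as the dimension of the $1$-eigenspace of $t$ on $\mathcal{L}(\bar{G})$, using the composition factors of $\mathcal{L}(\bar{G}){\downarrow}\bar{H}^{\circ}$ tabulated in \cite{Tho} (and, where helpful, the trace of $t$ on a minimal module as in \cite{LS99}); matching $\dim C_{\bar{G}}(t)$ against the involution centralizers in Table \ref{tabinv} then pins down the class. For $p = 2$ the involution is unipotent, and I would instead read off its Jordan form on a well-chosen $\bar{G}$-module from the same restriction data and identify the class using Lawther's tables \cite{Lawunip, Law09}. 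Once the class is known, \eqref{e:mrbd} yields ${\rm ifix}(T) > n^{4/9}$, and the asymptotic constant is
\[
\b = \frac{\dim C_{\bar{G}}(t) - \dim C_{\bar{H}}(t)}{\dim \bar{G} - \dim \bar{H}},
\]
maximised over the available involutions. I expect the main obstacle to be precisely this class identification for the small-rank simple subgroups $A_1$, $B_2$ and $G_2$: there the embedding, and hence the eigenspace dimension or Jordan form, depends delicately on the exact restriction, there can be several $\bar{G}$-conjugacy classes of $A_1$ subgroups yielding different values of $\b$ (as signalled by the ``$2$ classes'' and ``$3$ classes'' annotations), and small-characteristic reducibility phenomena must be monitored.

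These small-rank simple subgroups are exactly the source of the exceptional cases with $\b < 1/2$ recorded in Table \ref{tabb:nonmr}; for each of them I would confirm the value of $\b$ from the displayed formula and check directly that ${\rm ifix}(T) > n^{4/9}$ holds for every admissible $q$, treating the finitely many small values of $q$ by explicit computation when the asymptotic estimate alone is not decisive. For all remaining pairs the same computation gives $\b \geqs 1/2$. Finally, to obtain \eqref{e:logg} as an equality rather than merely a lower bound, I would supply the complementary upper bound by verifying that no other involution class of $T$ meeting $H_0$ produces a larger fixed-point exponent, which completes the proof.
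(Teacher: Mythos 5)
Your proposal matches the paper's proof in both structure and substance: the special cases (a) and (b) of \eqref{e:list} are dispatched first, and the cases of Table \ref{tabmaxfin} are then organised into a table recording the chosen involution, the centralizer bound $f(q)$, the $T$-class (identified via the $1$-eigenspace of $t$ on $\mathcal{L}(\bar{G})$ for $p$ odd, and via Jordan forms together with Lawther's tables for $p=2$), and the resulting exponent, exactly as in Section \ref{s:mr}. The only point the paper makes slightly more explicit is that for the exceptional cases of Table \ref{tabb:nonmr} one checks that \emph{every} involution of $H_0$ lies in the largest $T$-class, which yields the equality in \eqref{e:logg}; this is the upper-bound verification you flag at the end.
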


\begin{rem}\label{r:thm3}
By combining the asymptotic statement in Theorem \ref{t:nonmr} with the analogous statements in Theorems \ref{t:parab} and \ref{t:mr}, this completes the proof of Theorem \ref{t:main3}.
\end{rem}

{\small
\renewcommand{\arraystretch}{1.2}
\begin{table}
\begin{center}
$$\begin{array}{llcl}\hline
T & {\rm soc}(H_0) & \b & \mbox{Conditions} \\ \hline   
E_8(q) & \Omega_5(q) & 58/119 & p \geqs 5 \\
& {\rm L}_{2}(q) & 17/35 &  \mbox{$3$ classes; $p \geqs 23, 29, 31$} \\
E_7(q) & {\rm L}_{2}(q) & 31/65 & \mbox{$2$ classes; $p \geqs 17, 19$} \\
F_4(q) & G_2(q) & 9/19 & p = 7 \\
& {\rm L}_{2}(q) & 23/49 & p \geqs 13 \\  
G_2(q) & {\rm L}_2(q) & 5/11 & p \geqs 7 \\ \hline
\end{array}$$
\caption{Type (I) actions with $\b < 1/2$ in \eqref{e:logg}}
\label{tabb:nonmr}
\end{center}
\end{table}
\renewcommand{\arraystretch}{1}}

We start by handling the special cases in (a) and (b) above (see \eqref{e:list}). 

\begin{lem}\label{l:pp1}
Theorem \ref{t:nonmr} holds if $T = E_7(q)$, $p \geqs 3$ and $\bar{H}_{\s} = (2^2 \times {\rm P\Omega}_{8}^{+}(q).2^2).{\rm Sym}_3$ or ${}^3D_4(q).3$.
\end{lem}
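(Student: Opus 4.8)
The plan is to treat both cases uniformly, since in each the connected component is $\bar{H}^{\circ} = D_4$, embedded in $\bar{G} = E_7$ as a maximal non-maximal-rank subgroup (as in \cite{LS03}). Because $p \geqs 3$, every involution of $T$ is semisimple, so I would follow the standard strategy outlined before Theorem \ref{t:mr}: locate a convenient involution $t \in H_0$, bound $|C_{H_0}(t)|$ from above, determine the $\bar{G}$-class (equivalently the $T$-class) of $t$, and then read off the required inequalities from \eqref{e:mrbd}. In the first case $H_0$ has socle ${\rm P\Omega}_8^+(q)$, and I would take $t$ to be a $t_2$-involution of the cover ${\rm Spin}_8^+(q)$, whose centralizer there is ${\rm SL}_2(q)^4$; accounting for the central $2^2$ and the outer $2^2.{\rm Sym}_3$ gives $|C_{H_0}(t)| \leqs f(q)$ with $f(q)$ of order $q^{12}$. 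In the second case Table \ref{tabinv} shows that for $q$ odd ${}^3D_4(q)$ has a unique class of involutions, with centralizer ${\rm SL}_2(q){\rm SL}_2(q^3)$, so $t$ is forced and $|C_{H_0}(t)| \leqs 3|{\rm SL}_2(q)||{\rm SL}_2(q^3)|$, again of order $q^{12}$.

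The crux is the identification of the $T$-class of $t$, and the main obstacle is that the crude estimate is insufficient here. Indeed, in both cases $n = |T:H_0|$ has order $q^{105}$, and if we only assume $t$ lies in the largest class $t_4$ (with $|t_4^T|$ of order $q^{70}$), then \eqref{e:mrbd} yields $|T|/(f(q)|t^T|)$ of order $q^{51}$, giving merely $\liminf \geqs 51/105 < 1/2$. To obtain the sharper bound I would compute the Jordan form of $t$ on $V = \mathcal{L}(\bar{G})$ — equivalently the multiplicity of the $(-1)$-eigenvalue — using the decomposition of $V{\downarrow}D_4$ recorded in \cite{Tho} (see also \cite{LS04}). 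Since $V = \mathcal{L}(\bar{G})$, the dimension of the $(+1)$-eigenspace of $t$ equals $\dim C_{\bar{G}}(t)$; the computation is expected to give $\dim C_{\bar{G}}(t) = 69$, so that $C_{\bar{G}}(t) = A_1D_6$ and $t$ lies in the $t_1$-class of $T$ (in the notation of Table \ref{tabinv}). This mirrors the $F_4$ computation recorded in Table \ref{tab:mr20}, where the analogous ${\rm Spin}_8^+$-involution is of type $t_1$.

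Finally, with $t$ of type $t_1$ we have $|C_{{\rm Inndiag}(T)}(t_1)| = |{\rm SL}_2(q)||{\rm SO}_{12}^+(q)|$ of order $q^{69}$, so $|t^T|$ has order $q^{64}$. Substituting $f(q) \sim q^{12}$, $|t^T| \sim q^{64}$ and $n \sim q^{105}$ into \eqref{e:mrbd} gives ${\rm ifix}(T) \geqs |T|/(f(q)|t^T|)$ of order $q^{57}$, whence $\liminf_{q\to\infty} \log{\rm ifix}(T)/\log n \geqs 57/105 > 1/2$. This places both cases in the alternative $\b \geqs 1/2$ of Theorem \ref{t:nonmr}, consistent with their absence from Table \ref{tabb:nonmr}. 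Since $57/105$ comfortably exceeds $4/9$, the inequality ${\rm ifix}(T) > n^{4/9}$ follows for all large $q$ from the asymptotics, and the finitely many remaining values of $q$ are verified directly from the explicit order formulae for $|T|$, $f(q)$ and $|t^T|$.
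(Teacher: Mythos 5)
Your proposal is correct and follows essentially the same route as the paper: in both cases one works with $\bar{H}^{\circ} = D_4$, takes the $\bar{t}_2$-involution (with centralizer $A_1^4$), and identifies its $\bar{G}$-class by computing eigenvalues on $\mathcal{L}(E_7){\downarrow}D_4 = \mathcal{L}(D_4) \oplus V(2\lambda_1) \oplus V(2\lambda_3) \oplus V(2\lambda_4)$, obtaining Jordan form $[-I_{64},I_{69}]$, hence $\dim C_{\bar{G}}(t) = 69$, $C_{\bar{G}}(t) = A_1D_6$ and the $t_1$-class. The one thing you leave undone is the decisive eigenvalue computation itself, which you assert as "expected" rather than verify (the paper records $[-I_{16},I_{12}]$ on $\mathcal{L}(D_4)$ and $[-I_{16},I_{19}]$ on each $35$-dimensional summand), but your centralizer bounds, class identification and the resulting exponent $57/105 = 19/35 > 1/2$ all agree with the paper's proof.
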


\begin{proof}
In both cases, $\bar{H} = (2^2 \times D_4).{\rm Sym}_3$. Set $V = \mathcal{L}(\bar{G})$ and note that 
\[
V{\downarrow}D_4 = \mathcal{L}(D_4) \oplus V(2\lambda_1) \oplus V(2\lambda_3) \oplus V(2\lambda_4).
\]
Let $t \in \bar{H}^{\circ} = D_4$ be a $\bar{t}_2$-involution (in the notation of \cite[Table 4.3.1]{GLS}). Now $t$ has Jordan form $[-I_{16},I_{12}]$ on $\mathcal{L}(D_4)$, and it acts as $[-I_{16},I_{19}]$ on each of the other three summands. Therefore, $t$ has Jordan form $[-I_{64},I_{69}]$ on $V$ and thus $t$ is a $\bar{t}_1$-involution in $\bar{G}$. 

First assume $\bar{H}_{\s} = (2^2 \times {\rm P\Omega}_{8}^{+}(q).2^2).{\rm Sym}_3$. Let $t$ be a $t_2$-involution (in the notation of \cite[Table 4.5.1]{GLS}) in the ${\rm P\Omega}_{8}^{+}(q)$ subgroup of $H_0$. Then $|C_{H_0}(t)| \leqs 96|\text{L}_2(q)|^4$ and the above calculation shows that $t$ is in the $t_1$-class of $T$. This yields $\text{ifix}(T) > n^{4/9}$ and $\b \geqs 1/2$. 

Finally, suppose $\bar{H}_{\s} = {}^3D_4(q).3$. Now $\bar{H}_{\s}$ contains a unique class of involutions, say $t^{\bar{H}_{\s}}$, which is labelled $t_2$ in Table \ref{tabinv}. Then  
$|C_{H_0}(t)| = 3|\text{SL}_2(q)||\text{SL}_2(q^3)|$ and once again the above argument shows that $t$ is a $t_1$-involution of $T$. The desired result follows. 
\end{proof}

\begin{lem}\label{l:pp2}
Theorem \ref{t:nonmr} holds if $T = E_8(q)$, $p \geqs 7$ and $H_0 = {\rm PGL}_{2}(q) \times {\rm Sym}_{5}$.
\end{lem}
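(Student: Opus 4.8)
The plan is to follow the strategy of Sections \ref{s:mr}--\ref{s:algebraic}: pick a convenient involution $t \in H_0$, bound $|C_{H_0}(t)|$ from above, pin down the $T$-class of $t$, and then feed this into \eqref{e:fix}. As this case is absent from Table \ref{tabb:nonmr}, the target is $\b \geqs 1/2$ together with ${\rm ifix}(T)>n^{4/9}$. Here $\bar{H}^{\circ} = A_1$ is the adjoint group, with $(\bar{H}^{\circ})_{\s} = {\rm PGL}_{2}(q)$, and $H_0 = {\rm PGL}_{2}(q) \times {\rm Sym}_{5}$. I would take $t = (s,1) \in H_0$ with $s$ an involution in the ${\rm PGL}_{2}(q)$ factor. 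Since $H_0$ is a direct product, $C_{H_0}(t) = C_{{\rm PGL}_{2}(q)}(s) \times {\rm Sym}_{5}$, and as involution centralizers in ${\rm PGL}_{2}(q)$ are dihedral of order $2(q\pm 1)$, this gives the clean bound $|C_{H_0}(t)| \leqs 2(q+1)\cdot 120$.

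The main step --- and the only real obstacle --- is to determine the $T$-class of $t$. Recall from Table \ref{tabinv} that $E_8(q)$ has exactly two classes of involutions, $t_1$ and $t_8$, with $C_{\bar{G}}(t_1)=D_8$ and $C_{\bar{G}}(t_8)=A_1E_7$, of dimensions $120$ and $136$. Since $p \geqs 7$ is odd and $t$ is semisimple, $\dim C_{\bar{G}}(t)$ equals the dimension of the $1$-eigenspace of $t$ on $V = \mathcal{L}(\bar{G})$ (see \cite[Section 1.14]{Carter}), so it suffices to compute this eigenspace. The adjoint group $A_1 = {\rm PGL}_{2}$ has a single class of involutions, so the action of $t$ on $V$ is governed entirely by the restriction $V{\downarrow}\bar{H}^{\circ}$, which I would read off from \cite{LS04} (equivalently \cite{Tho}); every composition factor is a module $V(m)$ with $m$ even, on which the involution has $(\pm 1)$-eigenspaces of dimensions $\frac{1}{2}(m+1\pm(-1)^{m/2})$. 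Summing these contributions over the factors of $V{\downarrow}\bar{H}^{\circ}$ yields $\dim C_{\bar{G}}(t)$, and the expectation is that this equals $136$, placing $t$ in the $t_8$-class. (The alternative value $120$ would force $t$ into the $t_1$-class and give only a bound of shape $n^{119/245}<n^{1/2}$ from this involution; should that occur, one would instead combine the estimate with an involution in the ${\rm Sym}_5$ factor, or a commuting product, to recover $\b \geqs 1/2$.)

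With $t$ in the $t_8$-class we have $|C_T(t)| = |{\rm SL}_{2}(q)||E_7(q)|$, and since $x^{\bar{G}} \cap T = x^T$ for involutions (Remark \ref{r:invcon}), \eqref{e:fix} gives
\[
{\rm ifix}(T) \geqs {\rm fix}(t) \geqs \frac{|t^{H_0}|}{|t^T|}\cdot n = \frac{\tfrac{1}{2}q(q\mp 1)\cdot |{\rm SL}_{2}(q)||E_7(q)|}{q(q^2-1)\cdot 120},
\]
a polynomial in $q$ of degree $135$, whereas $n = |T:H_0|$ has degree $245$. As $135/245 = 27/49 > 1/2 > 4/9$, this yields $\liminf_{q\to\infty}\frac{\log{\rm ifix}(T)}{\log n} \geqs 27/49 > 1/2$, so $\b \geqs 1/2$; a routine check that the displayed lower bound exceeds $n^{4/9}$ for every $q$ with $p \geqs 7$ then completes the proof.
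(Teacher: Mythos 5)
There is a genuine gap, and it sits exactly at the step you defer: the identification of the $T$-class of $t=(s,1)$ with $s$ an involution in the ${\rm PGL}_{2}(q)$ factor. If you carry out the eigenvalue computation you propose, you get $120$, not $136$. Indeed, by \cite[Lemma 1.5]{LS90} and \cite{Tho}, the subgroup $X=\bar{H}^{\circ}=A_1$ lies in $D_8$ with $V_{D_8}(\lambda_1){\downarrow}X = V(6)\oplus V(4)\oplus V(2)\oplus 0$; your formula for the eigenspaces of the involution on $V(m)$ then gives Jordan form $[-I_8,I_8]$ on the natural $16$-dimensional module, so $C_{D_8}(s)^{\circ}=D_4D_4$ and (exactly as in the ${\rm P}\Omega_{16}^{+}(q)$ row of Table \ref{tab:mr22}) $s$ is a $t_1$-involution of $T$, with $\dim C_{\bar{G}}(s)=56+64=120$ (equivalently, $s$ has trace $-8$ on $\mathcal{L}(\bar{G})$). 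Consequently your main line of argument yields only a bound of degree $119$ against $\deg n = 245$, i.e.\ an exponent $119/245<1/2$; since this case does not appear in Table \ref{tabb:nonmr}, the theorem demands $\b\geqs 1/2$, so the computation you expected to close the proof does not.

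Your fallback — use an involution in the ${\rm Sym}_5$ factor — is the right move, but it is not a routine afterthought: one must prove that ${\rm Sym}_5$ actually contains a $t_8$-involution (if its involutions were of type $t_1$ the resulting exponent would be $117/245$, which is even worse). This is the entire content of the paper's proof: one shows $X < A_1^2D_6 < A_1E_7$, so that $Z(A_1E_7)\leqs C_{\bar{G}}(X)={\rm Sym}_5$, and hence the ${\rm Sym}_5$ factor contains the central (hence $t_8$-type) involution of $A_1E_7$; the class size $|t^T|$ then has degree $112$ and one gets exponent $133/245=19/35>1/2$. Without that embedding argument (or some substitute identifying a $t_8$-involution in $H_0$), the proof is incomplete.
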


\begin{proof}
Here $\bar{H} = X \times \text{Sym}_5$ and $X$ is an adjoint group of type $A_1$. We claim that $X$ is contained in a maximal rank subgroup $M = A_1E_7$ of $\bar{G}$. Given the claim, it follows that $Z(M) \leqs C_{\bar{G}}(X) = \text{Sym}_5$ and thus the ${\rm Sym}_{5}$ factor of $H_0$ contains a $t_8$-involution of $T$. This immediately implies that $\text{ifix}(T) > n^{4/9}$ and $\b \geqs 1/2$. 

The claim can be deduced by carefully inspecting \cite[Tables 13 and 13A]{Tho}. To do this, 
first note that $X$ is constructed in \cite[Lemma 1.5]{LS90} as a diagonal subgroup of $A_1 A_1 < A_4 A_4$, with each $A_1$ acting as $V(4)$ on $V_{A_4}(\lambda_1)$. In the notation of \cite{Tho}, we have $X = E_8(\#17^{\{\underline{0}\}})$, which is a subgroup of $D_8$ with
$$V_{D_8}(\lambda_1){\downarrow} X = V(6) \oplus V(4) \oplus V(2)\oplus 0.$$ 
It follows that $X$ is contained in a subgroup $A_1^2 D_6 < D_8$ (this can also be seen by inspecting \cite[Table 13A]{Tho}) and we conclude by noting that $A_1^2D_6 < A_1E_7$. This justifies the claim and the result follows. 
\end{proof}

To complete the proof of Theorem \ref{t:nonmr}, we may assume that $(T,{\rm soc}(H_0))$ is one of the cases listed in Table \ref{tabmaxfin}. Our approach is very similar to the proof of Proposition \ref{p:mr2}. As recorded in Table \ref{tab:nonmr}, we identify an involution $t \in H_0$, we compute an upper bound $|C_{H_0}(t)| \leqs f(q)$ and we determine the $T$-class of $t$, which allows us to use the bounds in \eqref{e:mrbd}. In this way, we deduce that ${\rm ifix}(T)>n^{4/9}$ and we establish the bound $\b \geqs \gamma$, where $\gamma$ is given in the final column of Table \ref{tab:nonmr}. In the table, we list the socle of $H_0$ -- in some cases, this does not uniquely determine $H_0$ (up to conjugacy), but this potential ambiguity has no effect on the analysis of these cases. 

\begin{rem} \label{r:nmrorder}
Consider the case in Table \ref{tabmaxfin} with $T = E_7(q)$ and ${\rm soc}(H_0) = {\rm L}_{2}(q)^2$, so $\bar{G} = E_7$, $p \geqs 5$ and $\bar{H} = J_1J_2$ is a direct product of two adjoint groups of type $A_1$. Write $\bar{G} = \hat{G} / Z$ and $\bar{H} = \hat{H}/Z$, where $\hat{G}$ is the simply connected group and $Z = Z(\hat{G})$. To be consistent with \cite{Tho}, we will order the factors of $\bar{H}$ so that $\hat{H} = \hat{J}_1\hat{J}_2$ and $\hat{J}_1$ is simply connected (note that the composition factors of $\mathcal{L}(\bar{G}) {\downarrow} \bar{H}$ are given in \cite[Table 59]{Tho}). It follows that 
$$H_0 = (\hat{J}_1\hat{J}_2)_\sigma / Z = (\text{SL}_2(q) \times \text{PGL}_2(q)) / Z = \text{L}_2(q) \times \text{PGL}_2(q)$$ 
and this explains why we can choose an involution of the form $(1,t_1)$ in $H_0$. 
\end{rem}

\begin{rem} \label{r:nmr}
Let us explain the entries $\gamma_1$ and $(1,\gamma_1)$ appearing in the third column of Table \ref{tab:nonmr}. In both cases, $\gamma_1$ denotes an involutory graph automorphism on the $\text{L}_3^{\delta}(q)$ factor of $\text{soc}(H_0)$. To justify the existence of these involutions in $H_0$, let $(\bar{G},\bar{H}) = (E_6, A_2.2)$ and $(E_8,A_1A_2.2)$ be the corresponding algebraic groups, where $\bar{H}$ is $\s$-stable.
From the structure and maximality of $\bar{H}$, it follows that there exists an involution in $\bar{H} \setminus \bar{H}^\circ$ inducing a graph automorphism on the $A_2$ factor of $\bar{H}^\circ$. Moreover, there is a unique conjugacy class of such involutions in $\bar{H} \setminus \bar{H}^\circ$ (see \cite[Table 4.3.1]{GLS}), so this class is $\sigma$-stable and thus Lang's theorem implies that $\bar{H}_\sigma$ (and thus $H_0$) contains an involution inducing a graph automorphism on the $\text{L}_3^{\delta}(q)$ factor.  
\end{rem}

{\small 
\renewcommand{\arraystretch}{1.2}
\begin{table}
\begin{center}
$$\begin{array}{lllllc}\hline
T & {\rm soc}(H_0) & t & f(q)  & t^T & \gamma \\ \hline
G_2(q) & {\rm L}_{2}(q) & \mbox{$t_1$ and $t'_1$}  & \mbox{$2(q-1)$ and $2(q+1)$}  & t_1 & 5/11 \\
& & & & & \\
F_4(q) & {\rm L}_{2}(q) \times G_2(q) & (t_1,1) & 2(q-1)|G_2(q)| & t_4 & 3/5 \\
& G_2(q) & t_1 & |\text{SL}_2(q)|^2 & t_1 & 9/19 \\
& {\rm L}_{2}(q) & t_1 & 2(q-1) & t_1 & 23/49 \\
& & & & \\
E_6^{\e}(q) & F_4(q) & t_1 & |\text{SL}_2(q)||\text{Sp}_6(q)| & t_2 & 7/13 \\
& & A_1 & q^{15}|\text{Sp}_6(q)| & A_1 &  10/13 \\
& \text{PSp}_8(q) & t_2 & 2|\text{Sp}_4(q)|^2 & t_1 & 13/21 \\
& \text{L}_3^\epsilon(q) \times G_2(q) & (t_1,1)  & |\text{GL}^\epsilon_2(q)||G_2(q)| & t_1 & 1/2 \\
& & (1,A_1) & q^5|\text{SL}_2(q)||\text{SL}_3^\epsilon(q)| & A_1 & 5/7 \\
& \text{L}^{\pm}_3(q), \, \e = + & \gamma_1  & 2|\text{SO}_3(q)| & t_2 &  1/2 \\
& G_2(q), \, \e = + & t_1  & |\text{SL}_2(q)|^2 & t_1 & 1/2 \\
& & \tilde{A}_1  & q^3|\text{SL}_2(q)| & A_1^3 & 1/2 \\
& & & & & \\
E_7(q) & G_2(q) \times \text{PSp}_6(q) & (t_1,t_1) & |\text{SL}_2(q)|^3|\text{Sp}_4(q)| & t_1 & 25/49 \\
& & (A_1,1)  & q^5|\text{SL}_2(q)||\text{Sp}_6(q)| & A_1 & 40/49 \\ 
& \text{L}_2(q) \times F_4(q) & (1,t_1) & |\text{SL}_2(q)|^2|\text{Sp}_6(q)| & t_1 & 7/13 \\
& & (1,A_1) & q^{15}|\text{SL}_2(q)||\text{Sp}_6(q)| & A_1 & 10/13 \\
& \text{L}_2(q) \times G_2(q)  & (1,t_1) & |\text{SL}_2(q)|^3 & t_1 & 15/29 \\
& \text{L}_2(q)^2   & (1,t_1) & 2|\text{GL}_2(q)| & t_1 & 65/127 \\
& \text{L}_3^\epsilon(q)  & t_1 & 2|\text{GL}^\e_2(q)| & t_1 & 13/25 \\
& \text{L}_2(q)  & \mbox{$t_1$ or $t'_1$} & 2(q+1) & \mbox{$t_4$ or $t'_4$} & 31/65 \\
& & & & & \\
E_8(q) & G_2(q) \times F_4(q) & (t_1,t_4) & |\text{SL}_2(q)|^2|\text{SO}_9(q)| & t_8 & 47/91 \\
& & (1,A_1) & q^{15}|G_2(q)||\text{Sp}_6(q)| & A_1 & 10/13 \\
& \text{L}_2(q) \times G_2(q)^2  & (1,t_1,t_1) & 2|\text{SL}_2(q)|^5 & t_8 & 121/217 \\   
& \text{L}_2(q) \times G_2(q^2)  & (1,t_1) & 2|\text{SL}_2(q)||\text{SL}_2(q^2)|^2 & t_8 & 121/217 \\
& \text{L}_2(q) \times \text{L}_3^\epsilon(q)  & (1,\gamma_1) & 2|\text{SL}_2(q)||\text{SO}_3(q)| & t_8 & 130/237 \\ 
& \Omega_5(q)  &  t_1 & 2|\text{GL}_2(q)| & t_1 & 58/119 \\
& \text{L}_2(q)  & t_1 & 2(q-1) & t_1 & 17/35 \\ \hline
\end{array}$$
\caption{The algebraic subgroups in Theorem \ref{t:nonmr}}
\label{tab:nonmr}
\end{center}
\end{table}
\renewcommand{\arraystretch}{1}}

\begin{proof}[Proof of Theorem \ref{t:nonmr}]
We need to justify the information presented in Table \ref{tab:nonmr}. The arguments are similar (and easier) than those in the previous section for maximal rank subgroups, so we only provide a brief sketch. Let us write 
\[
{\rm soc}(H_0) = X_1 \times \cdots \times X_m,
\]
where each $X_i$ is a simple group of Lie type (note that $m \leqs 3$). The possibilities for  $\bar{H}$ are given in \cite[Table 1]{LS04} and we note that 
$N_{\bar{G}_{\s}}(\bar{H}_{\s}) = \bar{H}_{\s}$, so either $H_0 = \bar{H}_\sigma$, or $T = E_6^{\e}(q)$ or $E_7(q)$ and $|\bar{H}_{\s}:H_0| \leqs (3,q-\e)$ or $(2,q-1)$, respectively. In fact, it is not too difficult to determine the precise structure of $H_0$, but the above information will be sufficient for our purposes.

First assume $p=2$. Here the only possibilities for $T$ and ${\rm soc}(H_0)$ are as follows:
\[
\begin{array}{ll}
T = E_8(q): & G_2(q) \times F_4(q) \\
T = E_7(q): & {\rm L}_{2}(q) \times F_4(q)\, (q \geqs 4),\;  G_2(q) \times {\rm Sp}_{6}(q) \\
T = E_6^{\e}(q): & G_2(q) \, (\e=+),\;  F_4(q),\;  {\rm L}_{3}^{\e}(q) \times G_2(q) \, ((q,\e) \ne (2,-))
\end{array}
\]
If $T = E_6(q)$ and ${\rm soc}(H_0) = G_2(q)$ then let $t \in H_0$ be an involution in the class $\tilde{A}_1$, so $|C_{H_0}(t)| = q^3|\text{SL}_2(q)|$ (see Table \ref{tabinv}). By inspecting \cite[Table 31]{Law09}, we see that $t$ is in the $T$-class $A_1^3$ and it is easy to check that the desired bounds hold. In each of the remaining cases, we can choose an involution $t \in X_i$ that is in the $A_1$-class of $T$ (we choose $i$ so that $|t^{X_i}|$ is maximal) and the result quickly follows.  

For the remainder, let us assume $p$ is odd. First consider the cases in Table \ref{tabb:nonmr}, where we claim that $\beta < 1/2$. Here it is straightforward to find a representative of each class of involutions in $H_0$ (or in $\bar{H}_\sigma$ if $T = E_7(q)$), using \cite[Table 4.5.1]{GLS}, and we can determine the corresponding $T$-class by studying the restriction of $\mathcal{L}(\bar{G})$ to $\bar{H}$ (see \cite[Table 10.1]{LS04}). It turns out that every involution in $H_0$ is contained in the largest $T$-class and the result follows. 

For example, suppose $T = F_4(q)$ and $\text{soc}(H_0) = \text{L}_2(q)$. Here $H_0 = \text{PGL}_2(q)$ has two classes of involutions, say $x_1$ and $x_2$, with labels $t_1$ and $t'_1$, respectively. Now $\bar{H} = A_1$ has a unique class of involutions, say $y^{\bar{H}}$, so $x_1$ and $x_2$ are 
$\bar{H}$-conjugate and thus $\bar{G}$-conjugate. In view of Remark \ref{r:invcon}, all the involutions in $H_0$ are $T$-conjugate. To identify the $T$-class, we calculate the dimension of the $1$-eigenspace of $y$ on $V = \mathcal{L}(\bar{G})$. By \cite[Table 10.1]{LS04}, the restriction $V{\downarrow}\bar{H}$ has the same composition factors as the $\bar{H}$-module 
$$W = W(22) \oplus W(14) \oplus W(10) \oplus W(2),$$
where $W(m)$ is the Weyl module for $\bar{H}$ of highest weight $m \lambda_1$. Since $V{\downarrow}\la y \ra$ is completely reducible, we just need to calculate the Jordan form of $y$ on $W$, which is easily seen to be $[-I_{28},I_{24}]$ (note that $y$ has Jordan form 
$[-I_{m+1},I_{m}]$ on $W(2m)$ with $m$ odd). We conclude that the involutions in $T$ are in the $t_1$-class of $T$. The result follows.

Similar arguments apply in each of the remaining cases. We highlight the following three cases:
\begin{itemize}\addtolength{\itemsep}{0.2\baselineskip}
\item[{\rm (a)}] $T = G_2(q)$ and ${\rm soc}(H_0) = {\rm L}_{2}(q)$;
\item[{\rm (b)}] $T = E_6^{\e}(q)$ and ${\rm soc}(H_0) = \text{L}_3^{\e}(q)$;
\item[{\rm (c)}] $T=E_8(q)$ and ${\rm soc}(H_0) = \text{L}_2(q) \times \text{L}_3^{\e}(q)$.
\end{itemize}

Consider (a). This is the only case where we need to compute $|t^T \cap H_0|$ in order to verify the bound $\text{ifix}(T) > n^{4/9}$. Here $H_0 = \text{PGL}_2(q)$ and $t^T \cap H_0 = (t_1)^{H_0} \cup (t_1')^{H_0}$, so $|t^T \cap H_0| = i_2(H_0) = q^2$ (this explains why we write ``$t_1$ and $t_1'$" in the first row of Table \ref{tab:nonmr}). Note that $T$ has a unique class of involutions.

Now consider case (b). Here $p \geqs 5$ and $H_0$ contains an involutory graph automorphism $t = \gamma_1$ of $\text{L}_3^{\e}(q)$, as explained in Remark \ref{r:nmr}. We claim that $t$ is in the $T$-class $t_2$. To see this, let $V=\mathcal{L}(\bar{G})$ and observe that  
\[
V{\downarrow}A_2 = \mathcal{L}(A_2) \oplus V(4\l_1+\l_2) \oplus V(\l_1+4\l_2). 
\] 
Since $t$ interchanges the final two summands, we deduce that $t$ has Jordan form $[-I_{40},I_{38}]$ on $V$ and the claim follows. Similarly, in (c) we choose $t = (1,\gamma_1) \in H_0$, where $\gamma_1$ induces a graph automorphism on the $\text{L}_3^{\e}(q)$ factor of ${\rm soc}(H_0)$ (again, see Remark \ref{r:nmr} for the existence of $t$). To determine the $T$-class of $t$, we appeal to \cite[9.8]{Tho}, which shows that $C_{\bar{G}}(t) = A_1E_7$ and thus $t$ is a $t_8$-involution. The result follows.
\end{proof}

\section{Proof of Theorem \ref{t:main2}, Part II}\label{s:subfield}

In this section we turn to the subgroups arising in part (II) of Theorem \ref{t:types}; our main result is Theorem \ref{t:subfield} below. We continue to assume that $T$ is one of the groups in \eqref{e:T}. The possibilities for $H$ are as follows: 
\begin{itemize}\addtolength{\itemsep}{0.2\baselineskip}
\item[{\rm (a)}] $H$ is a subfield subgroup of $G$ (defined over a subfield $\mathbb{F}_{q_0}$ of $\mathbb{F}_{q}$ of prime index);
\item[{\rm (b)}] $T = E_6(q)$, $H_0 = {}^2E_6(q_0)$ and $q=q_0^2$;
\item[{\rm (c)}] $T = F_4(q)$, $H_0 = {}^2F_4(q)$ and $q=2^{2m+1}$ (with $m \geqs 0$);\item[{\rm (d)}] $T = G_2(q)$, $H_0 = {}^2G_2(q)$ and $q=3^{2m+1}$ (with $m \geqs 0$).
\end{itemize}

\begin{thm}\label{t:subfield}
Let $G$ be an almost simple primitive permutation group of degree $n$, with socle $T$ and point stabilizer $H$. Assume $T$ is one of the groups in \eqref{e:T} and $H$ is of type {\rm (II)} in Theorem \ref{t:types}. Then either ${\rm ifix}(T)>n^{1/2}$, or 
$n^{\a}< {\rm ifix}(T)<n^{4/9}$ and 
$(T,H_0,\a)$ is one of the cases in Table $\ref{tabb:sub}$.
\end{thm}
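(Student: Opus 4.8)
The plan is to treat the four possibilities (a)--(d) for $H_0$ uniformly by exploiting the fact that in every type (II) case $H_0 = \bar{G}_{\sigma_0}$ (up to the usual derived-subgroup and central subtleties) for a Steinberg endomorphism $\sigma_0$ of the \emph{same} algebraic group $\bar{G}$ as $T = (\bar{G}_\sigma)'$: for a genuine subfield subgroup we have $\sigma_0^k = \sigma$ with $k$ prime, while in cases (b)--(d) the group $H_0$ is the fixed-point subgroup of a twisted endomorphism related to $\sigma$. Since $H_0$ and $T$ sit inside the common group $\bar{G}_\sigma$ and every involution of $H_0$ lies in a $\bar{G}$-class of involutions meeting $T$, Remark \ref{r:invcon} tells me that the $\bar{G}$-class of such an involution simultaneously pins down its $H_0$-class and its $T$-class. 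The first step is therefore to record, for each $(T,H_0)$, the identification ``an involution of type $X$ in $H_0$ is an involution of type $X$ in $T$'', reading the class labels of Table \ref{tabinv} for both groups.

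Given this, the bound follows mechanically from \eqref{e:fix} and \eqref{e:mrbd}. I would fix $t \in H_0$ in the involution class whose algebraic centralizer $C_{\bar{G}}(t)$ has largest dimension, and use $t^T \cap H_0 \supseteq t^{H_0}$ to get
\[
{\rm ifix}(T) \geqs {\rm fix}(t) \geqs \frac{|t^{H_0}|}{|t^T|}\cdot n = \frac{|C_T(t)|}{|C_{H_0}(t)|}.
\]
Both centralizer orders are the entries of Table \ref{tabinv}: $|C_T(t)|$ is a polynomial of degree $d = \dim C_{\bar{G}}(t)$ in $q$, and $|C_{H_0}(t)|$ is the corresponding polynomial in $q_0$ (for a subfield subgroup) or its twisted analogue (in cases (b)--(d)). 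Writing $N = \dim \bar{G}$ and comparing leading terms against $n = |T:H_0| \sim q_0^{N(k-1)}$ gives
\[
\liminf_{q \to \infty}\frac{\log {\rm ifix}(T)}{\log n} = \frac{d}{N}
\]
for the subfield cases, with the evident modification when the Steinberg endomorphism folds $C_{\bar{G}}(t)$ and lowers its $q$-degree.

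The remaining step is to decide which cases fall below the thresholds. For $T \in \{E_8(q),E_7(q),E_6^{\e}(q),F_4(q)\}$ I would read off from Table \ref{tabinv} that the largest involution centralizer already has dimension exceeding $\tfrac12\dim\bar{G}$ --- for instance $C_{\bar{G}}(t_8)=A_1E_7$ in $E_8$, $C_{\bar{G}}(t_7)=T_1E_6$ in $E_7$, $C_{\bar{G}}(t_1)=T_1D_5$ in $E_6$, $C_{\bar{G}}(t_4)=B_4$ in $F_4$, and the analogous (even larger) unipotent centralizers when $p=2$ --- so $d/N > 1/2$ and ${\rm ifix}(T) > n^{1/2}$; the same computation disposes of the twisted subfield cases (b),(c) and of $G_2(q)$ with $q$ even, where $C_{\bar{G}}(A_1)$ has dimension $8>7$. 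The only survivors are $T=G_2(q)$ with $q$ odd: here $\bar{G}=G_2$ has a unique involution class with $C_{\bar{G}}(t_1)=A_1\tilde{A}_1$ of dimension $6<7$, so the subfield subgroup $G_2(q_0)$ yields $d/N=3/7$, and the Ree subgroup ${}^2G_2(q)$ of case (d) yields the folded ratio recorded as $\a$ in Table \ref{tabb:sub}. For these two families I would finish by verifying the explicit uniform lower bound ${\rm ifix}(T)>n^{\a}$ for every admissible $q$, together with the strict upper bound ${\rm ifix}(T)<n^{4/9}$, directly from the closed-form expressions for $|C_T(t)|$, $|C_{H_0}(t)|$ and $n$.

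The main obstacle is the class-fusion claim of the first paragraph in the twisted cases (b)--(d). For a genuine subfield subgroup this is immediate, since $\sigma_0$ and $\sigma$ are powers of a common Frobenius on $\bar{G}$ and the $\bar{G}$-class labels transfer verbatim; but when $H_0$ is a Ree, Suzuki or ${}^2E_6$ subgroup the relevant endomorphism acts nontrivially on $C_{\bar{G}}(t)$ --- for ${}^2G_2(q)<G_2(q)$ the special isogeny interchanges the long and short $A_1$ factors of $A_1\tilde{A}_1$ --- which both halves the $q$-degree of $|C_{H_0}(t)|$ and must be checked to ensure $t$ still lands in the predicted $T$-class. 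Tracking this folding carefully is precisely what separates the constant $\a$ for ${}^2G_2(q)<G_2(q)$ from the cleaner ratio $3/7$ of the honest subfield subgroup $G_2(q_0)<G_2(q)$.
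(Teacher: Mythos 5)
Your proposal follows essentially the same route as the paper: identify the fusion of involution classes from $H_0$ into $T$ via the common ambient algebraic group (Remark \ref{r:invcon}), pick the involution whose centralizer is largest, bound ${\rm fix}(t)$ below by $|C_T(t)|/|C_{H_0}(t)|$, and compare degrees; the paper does exactly this, working through $E_8(q_0)<E_8(q)$ in detail and treating (b)--(d) the same way. Two small caveats. First, the comparison $d/N>1/2$ of leading degrees only gives ${\rm ifix}(T)>n^{1/2}$ asymptotically, whereas the theorem asserts it for every admissible $q$; the paper closes this by observing that $|C_T(t)|/|T|^{1/2}$ is an increasing function of $q$, so that the required inequality $|C_T(t)|/|T|^{1/2} > |C_{H_0}(t)|/|H_0|^{1/2}$ holds as soon as $q\geqs q_0^2$ --- you should add this (or an equivalent explicit check) for the large-rank cases, not only for the two surviving $G_2$ families. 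Second, your diagnosis of the case ${}^2G_2(q)<G_2(q)$ is off: the special isogeny halves the $q$-degree of $|H_0|$ (from $14$ to $7$) as well as that of $|C_{H_0}(t)|$ (from $6$ to $3$), so the asymptotic ratio is again $(6-3)/(14-7)=3/7$, identical to the honest subfield case; what forces the table entry $\a=2/5$ is the single small value $q=3$, where ${\rm ifix}(T)=24$ and $n=2808$ give $\log 24/\log 2808\approx 0.4003$. Your closing step of verifying the uniform bound for every admissible $q$ from the closed-form expressions would still land on the correct constant, so neither point derails the argument.
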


{\small
\renewcommand{\arraystretch}{1.2}
\begin{table}
\begin{center}
$$\begin{array}{llcl}\hline
T & H_0 & \a & \mbox{Conditions} \\ \hline   
G_2(q) & G_2(q_0) & 3/7 & \mbox{$q$ odd} \\ 
& {}^2G_2(q) & 2/5 & q=3^{2m+1}, \, m \geqs 0 \\
\hline \end{array}$$
\caption{Type (II) actions with ${\rm ifix}(T) \leqs n^{1/2}$}
\label{tabb:sub}
\end{center}
\end{table}
\renewcommand{\arraystretch}{1}}

\begin{proof}
First assume $H$ is a subfield subgroup of $G$. A similar argument applies in each case, so for brevity we illustrate the general approach with an example. Suppose $T = E_8(q)$ and $H_0 = E_8(q_0)$, where $q=q_0^k$ with $k$ prime. First assume $p \ne 2$ and let $t \in H_0$ be an involution, so $t$ is of type $t_1$ or $t_8$ as an element of $H_0$ (see Table \ref{tabinv}), and we note that the $T$-class of $t$ has the same label (that is, $t^T \cap H_0 = t^{H_0}$). In particular, if $t$ is a $t_8$-involution, then the expression for ${\rm fix}(t)$ in \eqref{e:fix} implies that ${\rm fix}(t)>n^{1/2}$ if and only if
\[
\frac{|{\rm SL}_{2}(q)||E_7(q)|}{|E_8(q)|^{1/2}} > \frac{|{\rm SL}_{2}(q_0)||E_7(q_0)|}{|E_8(q_0)|^{1/2}}.
\]
One checks that the expression on the left hand side is increasing as a function of $q$ and the inequality is easily verified since $q \geqs q_0^2$. Similarly, if $p=2$ and $t \in H_0$ is an $A_1$-involution, then $|C_T(t)| = q^{57}|E_7(q)|$, $|C_{H_0}(t)| = q_0^{57}|E_7(q_0)|$ and once again it is straightforward to check that ${\rm fix}(t)>n^{1/2}$. 

To complete the proof, it remains to consider the cases labelled (b), (c) and (d) above. The argument in (b) is almost identical to the subfield case $E_6(q^{1/2})<E_6(q)$ and we omit the details. In (c), if we take an involution $t \in H_0$ in the class $(\tilde{A}_1)_2$, then the $T$-class of $t$ has the same label (see \cite[Table II]{Shin}) and the result quickly follows. Finally, in (d) we note that both $T$ and $H_0$ have a unique class of involutions and it is simple to check that 
${\rm ifix}(T) = q(q^2-1)$ and $n = q^3(q^3-1)(q+1)$. The result follows.
\end{proof}

\begin{rem}
It is straightforward to improve the bound ${\rm ifix}(T)>n^{1/2}$ in Theorem \ref{t:subfield}, if needed. For example, suppose $T = E_8(q)$, $H_0 = E_8(q_0)$ and $q = q_0^k$ is even. If we take an $A_1$-involution $t \in H_0$ as above, then  
\[
{\rm fix}(t) = \frac{q^{57}|E_7(q)|}{q_0^{57}|E_7(q_0)|}, \;\; n  = \frac{|E_8(q)|}{|E_8(q_0)|}
\]
and one checks that ${\rm ifix}(T)>n^{95/124}$. Similarly, if $q$ is odd then ${\rm ifix}(T)>n^{17/31}$.
\end{rem}

\section{Proof of Theorem \ref{t:main2}, Part III}\label{s:exotic}

Next we handle the exotic local subgroups arising in part (III) of Theorem \ref{t:types}. The relevant subgroups are listed in Table \ref{tab:exotic} (see \cite[Table 1]{CLSS}). Note that the case $T = E_7(q)$ with $H_0 = (2^2 \times {\rm P\O}_{8}^{+}(q).2).{\rm Sym}_{3}$ has already been handled in Section \ref{s:algebraic}, so it is omitted in Table \ref{tab:exotic}.

{\small
\renewcommand{\arraystretch}{1.2}
\begin{table}
\begin{center}
$$\begin{array}{llll}\hline
& T & H_0 & \mbox{Conditions} \\ \hline
{\rm (a)} & G_2(q) & 2^3.{\rm SL}_{3}(2) & q=p \geqs 3 \\
{\rm (b)} & F_4(q) & 3^3.{\rm SL}_{3}(3) & q=p \geqs 5 \\
{\rm (c)} &E_6^{\e}(q) & 3^{3+3}.{\rm SL}_{3}(3) & q=p \geqs 5,\; q \equiv \e \imod{3} \\
{\rm (d)} &E_8(q) & 2^{5+10}.{\rm SL}_{5}(2) & q=p \geqs 3 \\
{\rm (e)} &E_8(q) & 5^3.{\rm SL}_{3}(5) & p\ne 2,5,\, q = \left\{\begin{array}{ll} p & p \equiv \pm 1 \imod{5} \\
p^2 & p \equiv \pm 2 \imod{5}
\end{array}\right. \\
{\rm (f)} &{}^2E_6(2) & {\rm U}_{3}(2) \times G_2(2) & \\
{\rm (g)} &E_7(3) & {\rm L}_{2}(3) \times F_4(3) & \\ \hline
\end{array}$$
\caption{Exotic local maximal subgroups}
\label{tab:exotic}
\end{center}
\end{table}
\renewcommand{\arraystretch}{1}}

\begin{thm}\label{t:exotic}
Let $G$ be an almost simple primitive permutation group of degree $n$, with socle $T$ and point stabilizer $H$. Assume $T$ is one of the groups in \eqref{e:T} and $H$ is of type {\rm (III)} in Theorem \ref{t:types}. Then ${\rm ifix}(T)>n^{\a}$ and either $\a \geqs 1/2$, or 
$(T,H_0,\a)$ is one of the cases in Table $\ref{tabb:loc}$. In particular, ${\rm ifix}(T)>n^{4/9}$ unless $T = G_2(q)$, $H_0 = 2^3.{\rm SL}_{3}(2)$ and $q=p \geqs 11$.
\end{thm}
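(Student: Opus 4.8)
The plan is to treat all seven cases of Table \ref{tab:exotic} with the fixed-point formula \eqref{e:fix}, exactly as in Section \ref{s:mr}: pick an involution $t \in H_0$, bound $|C_{H_0}(t)|$ from above, identify the $T$-class of $t$, and read off a lower bound on ${\rm ifix}(T)$ from \eqref{e:mrbd}. The decisive simplification here is that in cases (a)--(e) the point stabilizer $H_0$ has order \emph{independent of $q$} (it is a fixed finite group such as $2^3.{\rm SL}_{3}(2)$ or $5^3.{\rm SL}_{3}(5)$, since $q = p$ or $q = p^2$), while in (f) and (g) the field itself is fixed. In the first situation $n = |T:H_0| \sim q^{\dim \bar G}/|H_0|$, and choosing any involution $t \in H_0$ gives
\[
{\rm ifix}(T) \geqs {\rm fix}(t) \geqs \frac{n}{|t^T|} = \frac{|C_T(t)|}{|H_0|},
\]
so that $\log {\rm ifix}(T)/\log n \to \dim C_{\bar G}(t)/\dim \bar G$ as $q \to \infty$. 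Thus everything reduces to locating an involution of $H_0$ lying in as large a centralizer as possible, and then checking the constant. Since $p$ is odd throughout (a)--(e), these involutions are semisimple and their centralizer orders can be read from Table \ref{tabinv}.

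The two subgroups with a large normal $2$-group, (a) $2^3.{\rm SL}_{3}(2) < G_2(q)$ and (d) $2^{5+10}.{\rm SL}_{5}(2) < E_8(q)$, are easiest to handle via $i_2(H_0)$. For (a), $G_2(q)$ ($q$ odd) has a single class of involutions with centralizer ${\rm SL}_{2}(q)^2$, so \eqref{e:i2} gives ${\rm ifix}(T) = i_2(H_0)\cdot|{\rm SL}_{2}(q)|^2/|H_0|$ with $i_2(H_0)$ a fixed integer (here $|H_0| = 1344$); as $|{\rm SL}_{2}(q)|^2 \sim q^6$ and $n \sim q^{14}$, this produces the exponent $\a = 3/7$, and the explicit inequality then has to be verified term by term in $q=p$, pinning down the crossover: ${\rm ifix}(T) > n^{4/9}$ for $p \in \{3,5,7\}$ and $n^{3/7} < {\rm ifix}(T) < n^{4/9}$ for $p \geqs 11$. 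In (d) the normal $2$-group alone supplies enough involutions in a large $T$-class to force $\a \geqs 1/2$ comfortably.

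For the remaining $p'$-local subgroups (b), (c), (e) and for (f), (g), I would choose the ``best'' involution, meaning one lying in the largest-dimensional centralizer realized inside $H_0$, and determine its $T$-class by the module-restriction technique of Section \ref{s:mr} (computing the $1$-eigenspace dimension of $t$ on $\mathcal{L}(\bar G)$ or a minimal module, using the embedding of $H_0$ recorded in \cite{CLSS}), or, when the centralizer orders in Table \ref{tabinv} are distinguished, simply by Lagrange's theorem. Cases (f) ${}^2E_6(2)$ and (g) $E_7(3)$ are over a fixed field, so they reduce to a direct {\sc Magma} computation once a concrete involution is chosen. In each case the aim is to exhibit an involution with $\dim C_{\bar G}(t) \geqs \tfrac12\dim\bar G$, giving $\a \geqs 1/2$, and to record in Table \ref{tabb:loc} any case where only a smaller centralizer is available.

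The main obstacle is precisely this $T$-class identification, and it bites hardest for $E_8(q)$ (case (e)) and to a lesser extent $E_6^{\e}(q)$ and $F_4(q)$. The reason is arithmetic: in $E_8$ the two involution classes have centralizers of dimension $120$ ($t_1$, of type $D_8$) and $136$ ($t_8$, of type $A_1E_7$), straddling the threshold $\tfrac12\dim E_8 = 124$; likewise the two $F_4$ classes have centralizer dimensions $24$ and $36$ about the threshold $26$, and the two $E_6$ classes dimensions $38$ and $46$ about $39$. Hence whether one obtains $\a \geqs 1/2$ or only a slightly smaller exponent depends on showing that $H_0$ meets the \emph{larger}-centralizer class; this cannot be finessed by the conservative ``assume the largest class'' shortcut, which here selects the smaller centralizer and pushes the exponent below $1/2$. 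Everywhere else the trivial bound $|C_{H_0}(t)| \leqs |H_0|$ together with Table \ref{tabinv} already settles the matter.
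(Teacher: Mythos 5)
Your overall framework is the paper's: for each case pick an involution $t \in H_0$, identify its $T$-class, and read off the exponent $\dim C_{\bar{G}}(t)/\dim \bar{G}$ using the fact that $|H_0|$ is bounded independently of $q$. Your treatment of (a) via $i_2(H_0)=91$ and \eqref{e:i2}, and your diagnosis that everything hinges on deciding which $T$-class the (unique) class of involutions of $H_0$ falls into -- with the relevant centralizer dimensions in $E_8$, $F_4$ and $E_6$ straddling the threshold $\tfrac{1}{2}\dim \bar{G}$ -- is exactly right. The paper resolves (b), (c), (e) essentially as you propose: Borovik's theorem gives irreducibility of $H_0$ on $\mathcal{L}(\bar{G})$, so the trace of $t$ on that module (read from the character table) gives $\dim C_{\bar{G}}(t)$ and hence the class ($t_1$ for $F_4$, $t_2$ for $E_6^{\e}$, $t_1$ for $E_8$), each landing just below the $1/2$ threshold and hence in Table \ref{tabb:loc}.

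The gap is case (d). You assert that ``the normal $2$-group alone supplies enough involutions in a large $T$-class to force $\a \geqs 1/2$ comfortably'', but this is precisely the finesse your own final paragraph warns against: what is needed is an involution with the \emph{larger} centralizer $A_1E_7$ (dimension $136 > 124$), i.e.\ a $t_8$-involution, and the normalized subgroup $E = 2^5$ does not obviously contain one. If its involutions are all of type $t_1$ (centralizer $D_8$, dimension $120$), you only obtain $\a = 15/31 < 1/2$ and case (d) would wrongly have to be added to Table \ref{tabb:loc}; note also that since $|H_0|$ is constant, the \emph{number} of involutions is irrelevant to the exponent -- only the class matters. The paper resolves this by inspecting the construction of $H_0 = N_T(E)$ in the proof of \cite[Lemma 2.17]{CLSS}: there is an involution $t \in H_0 \setminus E$ (namely $e_1$, generating the centre of a fundamental ${\rm SL}_{2}$ of $\bar{G}$) lying in the $t_8$-class, and this is what delivers ${\rm ifix}(T)>n^{1/2}$. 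A smaller point: for (f) and (g) a ``direct {\sc Magma} computation'' is not realistic for ${}^2E_6(2)$ or $E_7(3)$ in these actions; the paper instead takes a long root ($A_1$-) involution in the $G_2(2)$ factor for (f), and for (g) identifies a $t_1$-involution of the $F_4(3)$ factor as a $t_1$-involution of $T$ via the restriction $\mathcal{L}(\bar{G}){\downarrow}A_1F_4$ -- again the fusion, not the arithmetic, is the content.
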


{\small
\renewcommand{\arraystretch}{1.2}
\begin{table}
\begin{center}
$$\begin{array}{llcl}\hline
T & H_0 & \a & \mbox{Conditions} \\ \hline  
G_2(q) & 2^3.{\rm SL}_{3}(2) & 3/7 & q=p \geqs 3 \\ 
F_4(q) & 3^3.{\rm SL}_{3}(3) & 6/13 & q=p \geqs 5 \\
E_6^{\e}(q) & 3^{3+3}.{\rm SL}_{3}(3) & 19/39 & q=p \geqs 5,\, q \equiv \e \imod{3} \\
E_8(q) & 5^3.{\rm SL}_{3}(5) & 15/31 & p \ne 2,5,\, q = \left\{\begin{array}{ll} p & p \equiv \pm 1 \imod{5} \\
p^2 & p \equiv \pm 2 \imod{5} 
\end{array}\right. \\ 
\hline \end{array}$$
\caption{Type (III) actions with ${\rm ifix}(T) \leqs n^{1/2}$}
\label{tabb:loc}
\end{center}
\end{table}
\renewcommand{\arraystretch}{1}}

\begin{proof}
First consider case (a) in Table \ref{tab:exotic}. Here $H_0 = {\rm AGL}_{3}(2)$ and \eqref{e:i2} holds. Since $i_2(H_0) = 91$ it follows that $n^{3/7}<{\rm ifix}(T)<n^{1/2}$ and one checks that ${\rm ifix}(T)>n^{4/9}$ if $q \in \{3,5,7\}$.

Next let us turn to (b). Here $H_0 = {\rm ASL}_{3}(3)$ has a unique class of involutions, say $t^{H_0}$, with $|t^{H_0}| = 1053$. By a theorem of Borovik \cite{Bor} (also see \cite[Theorem 1]{LSJGT}), $H_0$ acts irreducibly on the Lie algebra $V = \mathcal{L}(\bar{G})$. By inspecting the character table of $H_0$ (with the aid of {\sc Magma}, for example), we deduce that $t$ has trace $-4$ on $V$, so $t$ has Jordan form $[-I_{28},I_{24}]$ and thus $t$ is a $t_1$-involution in $T$. It follows that $n^{6/13}<{\rm ifix}(T)<n^{1/2}$.

In (c), first note that $H_0 = 3^{3+3}.{\rm SL}_{3}(3)$ is a maximal parabolic subgroup of $\O_7(3)$ (see the final paragraph in the proof of \cite[Lemma 2.11]{CLSS}). One checks that $H_0$ has a unique class of involutions $t^{H_0}$ (of size $9477$) and by inspecting the character table of $H_0$ (constructed via {\sc Magma}), using the fact that $H_0$ acts irreducibly on $\mathcal{L}(\bar{G})$, we deduce that $t$ is a $t_2$-involution. Therefore, $n^{19/39}<{\rm ifix}(T) < n^{1/2}$. Case (e) is very similar. Here $H_0 = {\rm ASL}_{3}(5)$ has a unique class of involutions $t^{H_0}$ of size $19375$. Again, $H_0$ acts irreducibly on $\mathcal{L}(\bar{G})$ and from the character table of $H_0$ we deduce that $t$ is a $t_1$-involution. This gives $n^{15/31}<{\rm ifix}(T) < n^{1/2}$.

Now let us turn to (d). Here $H_0 = N_T(E)$, where $E$ is elementary abelian of order $2^5$. By inspecting the proof of \cite[Lemma 2.17]{CLSS} we see that there is an involution $t \in H_0 \setminus E$ which is contained in the $t_8$-class of $T$ (in the notation of \cite{CLSS}, we can take $t = e_1$, a generator of the centre of a fundamental ${\rm SL}_{2}$ subgroup of $\bar{G} = E_8$). The bound ${\rm ifix}(T)>n^{1/2}$ quickly follows.

Finally, let us consider cases (f) and (g) in Table \ref{tab:exotic}. In (f), set $t = (1,A_1) \in H_0$ (that is, we take $t$ to be an $A_1$-involution in the $G_2(2)$ factor) and note that 
$|t^{H_0}| = 63$. Then $t$ is an $A_1$-involution in $T$ and we quickly deduce that ${\rm ifix}(T)>n^{1/2}$. Now let us turn to case (g). Let $t \in H_0$ be a $t_1$-involution in the $F_4(3)$ factor (that is, $t= (1,t_1) \in H_0$). We claim that $t$ is a $t_1$-involution in $T$. To see this, let $V = \mathcal{L}(\G)$ be the Lie algebra of $\G$ and let $\bar{H} = A_1F_4$ be a  $\s$-stable subgroup of $\bar{G}$ such that $H_0 = N_T(\bar{H}_{\s})$. Then 
\[
V{\downarrow}A_1F_4 = \mathcal{L}(A_1F_4) \oplus (W(2) \otimes W(\lambda_4)),
\]
where $W(\mu)$ is the Weyl module of highest weight $\mu$. Now $\dim C_{\bar{H}}(t) = 27$, so $t$ has Jordan form $[-I_{28},I_{27}]$ on $\mathcal{L}(A_1F_4)$. Similarly, by applying \cite[Proposition 1.2]{LS99}, we deduce that $t$ acts as $[-I_{12},I_{14}]$ on $W(\lambda_4)$, so $[-I_{36},I_{42}]$ is the Jordan form on $W(2) \otimes W(\l_4)$. Therefore, $t$ has Jordan form $[-I_{64},I_{69}]$ on $V$, so $\dim C_{\bar{G}}(t) = 69$ and the claim follows. One now checks that ${\rm ifix}(T)>n^{1/2}$, as required.
\end{proof}

\section{Proof of Theorem \ref{t:main2}, Part IV}\label{s:as}

In this section we study the subgroups in part (IV) of Theorem \ref{t:types}, which allows us to complete the proof of Theorem \ref{t:main2} when $T$ is one of the groups in \eqref{e:T}. Here $H$ is almost simple and there are two cases to consider, according to the nature of the socle $S$ of $H$. Recall that $T$ is defined over $\mathbb{F}_{q}$, where $q=p^f$ and $p$ is a prime. Let ${\rm rk}(T)$ be the (untwisted) Lie rank of $T$ (and similarly ${\rm rk}(S)$ if $S$ is a group of Lie type). Let ${\rm Lie}(p)$ be the set of finite simple groups of Lie type defined over fields of characteristic $p$. The following result is part of \cite[Theorem 8]{LS03} (the value of $u(E_8(q))$ in part (ii)(c) is taken from \cite{Laww}).

\begin{thm}\label{t:simples}
Let $G$ be an almost simple group with socle $T$, where $T$ is one of the groups in \eqref{e:T}. Let $H$ be a maximal almost simple subgroup of $G$ as in part (IV) of Theorem \ref{t:types}, with socle $S$.  Then one of the following holds:
\begin{itemize}\addtolength{\itemsep}{0.2\baselineskip}
\item[{\rm (i)}] $S \not\in {\rm Lie}(p)$ and the possibilities for $S$ are described in \cite{LS99};
\item[{\rm (ii)}] $S  = H(q_0) \in {\rm Lie}(p)$, ${\rm rk}(S) \leqs \frac{1}{2}{\rm rk}(T)$ and one of the following holds: 
\begin{itemize}\addtolength{\itemsep}{0.2\baselineskip}
\item[{\rm (a)}] $q_0 \leqs 9$;
\item[{\rm (b)}] $S = {\rm L}_{3}^{\e}(16)$;
\item[{\rm (c)}] $S = {\rm L}_{2}(q_0)$, ${}^2B_2(q_0)$ or ${}^2G_2(q_0)$, where $q_0 \leqs (2,q-1)u(T)$ and $u(T)$ is defined as follows: 
{\small
\renewcommand{\arraystretch}{1.2}
\[
\begin{array}{llllll} \hline
T & G_2(q) & F_4(q) & E_6^{\e}(q) & E_7(q) & E_8(q) \\ \hline
u(T) & 12 & 68 & 124 & 388 & 1312 \\ \hline
\end{array}
\]
\renewcommand{\arraystretch}{1}}
\end{itemize}
\end{itemize}
\end{thm}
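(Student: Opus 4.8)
The plan is to take the reduction furnished by Theorem \ref{t:types} as the starting point, so that we may assume $H$ is almost simple of type (IV) with socle $S$. The crucial reformulation is that $S$ cannot be contained in any proper $\sigma$-stable positive-dimensional closed subgroup of $\bar{G}$: if it were, then by maximality $H$ would be parabolic, of maximal rank, or of type (I) or (II), all of which are excluded for type (IV) subgroups. In other words $S$ is \emph{Lie primitive} in $\bar{G}$, and the whole problem reduces to classifying the Lie primitive finite almost simple subgroups of an exceptional algebraic group. I would then split the argument according to whether or not $S \in {\rm Lie}(p)$, since the two cases call for entirely different tools.

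First consider $S \notin {\rm Lie}(p)$, so $S$ is alternating, sporadic, or of Lie type in a characteristic different from $p$. The key point is that $S \leqs \bar{G}_{\s}$ acts faithfully (projectively) both on the adjoint module $\mathcal{L}(\bar{G})$ and on the minimal module $V_{\bar{G}}(\lambda)$, and the dimensions of these modules are bounded (at most $248$). Combining the Landazuri--Seitz--Zalesskii lower bounds on the dimensions of faithful cross-characteristic representations of groups of Lie type with the known minimal representation dimensions of alternating and sporadic groups cuts the candidate socles down to a finite list. For each candidate one checks, via element-order and character-theoretic data, which embeddings actually occur and remain Lie primitive; this is exactly the analysis of \cite{LS99}, which I would cite for case (i).

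Next suppose $S = S(q_0) \in {\rm Lie}(p)$. Here the main structural input is the theory of finite subgroups of exceptional algebraic groups in the defining characteristic. If ${\rm rk}(S)$ is large relative to ${\rm rk}(T)$, or if $q_0$ is large, then $S$ is generated by root-type elements whose centralizers and fixed spaces in $\bar{G}$ are positive-dimensional, which forces $S$ into a proper positive-dimensional closed subgroup and contradicts Lie primitivity. Making this quantitative yields both the rank restriction ${\rm rk}(S) \leqs \tfrac{1}{2}{\rm rk}(T)$ and an explicit bound on $q_0$, leaving only finitely many pairs $(S,q_0)$ to inspect directly and producing cases (ii)(a) and (ii)(b).

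The hard part will be the rank-one case $S = {\rm L}_2(q_0)$, ${}^2B_2(q_0)$ or ${}^2G_2(q_0)$ in part (ii)(c). These subgroups can remain Lie primitive for surprisingly large $q_0$, so the clean root-element argument no longer applies and one must bound $q_0$ by a much finer study of the possible embeddings $S \into \bar{G}$: controlling the composition factors of $\mathcal{L}(\bar{G}){\downarrow}S$ and deducing constraints on element orders and trace data. This is what produces the explicit constants $u(T)$, and obtaining the sharp value $u(E_8(q)) = 1312$ in particular is a substantial computation; I would appeal to \cite{Laww} for it rather than attempting to reprove the bound here.
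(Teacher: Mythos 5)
You should first be aware that the paper does not prove this statement at all: it is quoted directly from \cite[Theorem 8]{LS03}, with the constant $u(E_8(q))=1312$ updated from \cite{Laww}. So the real comparison is between your sketch and the arguments of \cite{LS90}, \cite{LS98}, \cite{LS99} and \cite{LS03}, which is indeed the body of work your outline is shadowing.

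As a proof, however, your sketch has a genuine gap at its first step. You assert that the socle $S$ of a type (IV) subgroup cannot lie in any proper $\sigma$-stable positive-dimensional closed subgroup of $\bar{G}$, "since by maximality $H$ would then be parabolic, of maximal rank, or of type (I) or (II)". This does not follow from maximality alone: a containment $S \leqs \bar{M}$ gives no control over $H$ unless $\bar{M}$ can be chosen both $\sigma$-stable \emph{and} normalized by $H$; only then does $H \leqs N_G(\bar{M}_{\s}) < G$ force $H = N_G(\bar{M}_{\s})$ and place $H$ in an earlier class. Producing such an $\bar{M}$ from the mere existence of \emph{some} positive-dimensional overgroup of $S$ is the main content of \cite[Theorems 1, 4 and 6]{LS98}, and it is precisely the machinery the paper itself has to deploy in the proof of Lemma \ref{l:f4ex}. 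Relatedly, your proposed mechanism for bounding $q_0$ in the defining-characteristic case (generation by "root-type elements" with positive-dimensional centralizers) is not how the argument actually runs: the bound comes from choosing a semisimple element of $S$ of order exceeding $u(T)$ and applying \cite[Proposition 2]{LS98}, which is where the constants $u(T)$ originate, while the rank bound ${\rm rk}(S) \leqs \frac{1}{2}{\rm rk}(T)$ and the residual possibilities in (ii)(a),(b) require separate arguments that your sketch does not supply. As written, then, the proposal is an accurate road map of the literature rather than a proof; the efficient course is the one the paper takes, namely to cite \cite[Theorem 8]{LS03} together with \cite{Laww}.
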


Our main result is the following.

\begin{thm}\label{t:mainas}
Let $G$ be an almost simple primitive permutation group of degree $n$, with socle $T$ and point stabilizer $H$. Assume $T$ is one of the groups in \eqref{e:T} and $H$ is of type {\rm (IV)} in Theorem \ref{t:types}. Then ${\rm ifix}(T)>n^{\a}$ and either $\a \geqs 4/9$, or 
$\a=3/7$ and $(T,H_0)$ is one of the cases in Table $\ref{tabb:as}$. 
\end{thm}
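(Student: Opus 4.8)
The plan is to run the general lower-bound machinery used throughout the paper, adapted to the fact that here we have little control over the embedding of $S = {\rm soc}(H)$ in $T$. First I would fix an involution $t \in S$ lying in the largest class of involutions of $S$, so that $|t^S|$ is as large as possible. Since $S \leqs H_0$, every $S$-conjugate of $t$ is a $T$-conjugate contained in $H_0$, so $|t^T \cap H_0| \geqs |t^S|$. Because the $T$-class of $t$ cannot in general be determined, I would bound $|t^T|$ from above by the size of the \emph{largest} class of involutions in $T$, which is read off from Table \ref{tabinv}. Together with \eqref{e:fix} this gives
\[
{\rm ifix}(T) \geqs {\rm fix}(t) = \frac{|t^T \cap H_0|}{|t^T|}\cdot n \geqs \frac{|t^S|}{|t^T|}\cdot n,
\]
so the problem reduces to estimating $|t^S|$, the maximal involution class size in $T$, and the degree $n = |T:H_0|$. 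The unifying feature of part (IV) is that in \emph{all} cases $|S|$, and hence $|H_0|$, is bounded independently of $q$: in case (i) this is the finite list of \cite{LS99}, and in case (ii) the bounds $q_0 \leqs 9$, $S = {\rm L}_3^{\e}(16)$, or $q_0 \leqs (2,q-1)u(T)$ of Theorem \ref{t:simples}(ii) all force $|S|$ to be bounded. Consequently $n \sim |T|$ and the controlling quantity is the asymptotic ratio $\dim C_{\bar{G}}(t)/\dim \bar{G}$ for $t$ in the largest involution class of $T$.

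For case (i), where $S \notin {\rm Lie}(p)$, I would loop over the finite list of possible $S$ supplied by \cite{LS99}, computing the largest involution class size $|t^S|$ directly from the Atlas or via {\sc Magma} for the sporadic and small cross-characteristic groups, and then verify the displayed bound. For $T = E_8(q), E_7(q), E_6^{\e}(q), F_4(q)$ the largest involution centralizer is comparatively large: one checks $\dim C_{\bar{G}}(t)/\dim \bar{G}$ equals $120/248$, $63/133$, $38/78$ and $24/52$ respectively, each exceeding $4/9$, so the bound comfortably gives ${\rm ifix}(T) > n^{4/9}$. The genuine exceptions occur only for $T = G_2(q)$, where the largest involution class (the $\tilde{A}_1$ class when $p=2$, the unique class $t_1$ when $p$ is odd) has $\dim C_{\bar{G}}(t) = 6$ and hence ratio $6/14 = 3/7 < 4/9$. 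A short constant check then upgrades the asymptotic ratio to a strict bound: for instance with $S = {\rm L}_2(8)$ one has $|t^S| = 63$, $|t^T| = |G_2(q)|/|{\rm SL}_2(q)|^2 \sim q^8$ and $n \sim q^{14}/504$, giving ${\rm fix}(t) \geqs q^6/8 > n^{3/7}$. These $(T,H_0)$ are precisely the pairs recorded in Table \ref{tabb:as}.

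Case (ii), with $S = H(q_0) \in {\rm Lie}(p)$ and ${\rm rk}(S) \leqs \tfrac12 {\rm rk}(T)$, is handled by the same scheme, now using the standard description of involution classes in finite groups of Lie type to write down $|t^S|$: unipotent involutions via Jordan form when $p=2$, and semisimple involutions via eigenspace decompositions when $p$ is odd (as summarised in Section \ref{s:inv}). Since $q_0$ is bounded in each of (a), (b), (c), the quantity $|t^S|$ is again bounded, so exactly as above the estimate reduces to the ratio $\dim C_{\bar{G}}(t)/\dim \bar{G}$; for $T$ of type $E$ or $F_4$ this is $\geqs 4/9$ (usually comfortably $\geqs 1/2$), while the rank-$1$ groups $S = {\rm L}_2(q_0), {}^2B_2(q_0), {}^2G_2(q_0)$ occurring inside $G_2(q)$ again yield the borderline ratio $3/7$, to be tabulated and checked numerically. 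Throughout, Proposition \ref{p:small} lets us discard the finitely many groups in \eqref{e:small}.

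The main obstacle is exactly the lack of information on the fusion of $S$-classes into $T$, which prevents computing $|t^T|$ exactly; this is resolved by the deliberately crude replacement of $|t^T|$ by the maximal involution class size, at the acceptable cost of obtaining only a lower bound on ${\rm ifix}(T)$. The remaining effort is a finite but substantial case-by-case verification: correctly enumerating every admissible $S$ from \cite{LS99} and Theorem \ref{t:simples}, computing the maximal involution class size in each, confirming the strict inequality ${\rm fix}(t) > n^{4/9}$ via a leading-coefficient estimate, and isolating the $G_2(q)$ cases where the ratio drops to $3/7$, which populate Table \ref{tabb:as}.
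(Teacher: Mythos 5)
Your overall strategy is exactly the paper's: choose $t \in S$ with $|t^S|$ maximal, assume $t$ lies in the largest involution class of $T$ to bound $|t^T|$ from above, and use $n \geqs |T|/|{\rm Aut}(S)|$ together with \eqref{e:fix} (this is the inequality \eqref{e:useful} in Section \ref{s:as}). Your asymptotic analysis via $\dim C_{\bar{G}}(t)/\dim \bar{G}$ correctly identifies $G_2(q)$ (ratio $6/14 = 3/7$) as the only source of exceptions, and your numerical check for ${\rm L}_2(8) < G_2(q)$ is sound. However, there are concrete gaps in the execution.

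First, the claim that the crude bound ``comfortably'' gives ${\rm ifix}(T) > n^{4/9}$ for $T$ of type $E$ or $F_4$ is false for small $q$: the term $\log c/\log d$ in \eqref{e:useful} is not negligible when $q=2$ or $3$, and the paper finds that the bound fails for, e.g., $T = F_4(2)$ with $S \in \{{\rm Alt}_6, {\rm Alt}_8, {\rm Alt}_9, {\rm L}_2(7), {\rm L}_2(27), {\rm L}_3^{\e}(3), {\rm L}_4(3)\}$, and for $S = {\rm L}_3^{\e}(q_0) < F_4(q)$ with $(q,q_0) \in \{(3,3),(4,4),(4,8),(8,8),(3,9),(4,16),(8,16)\}$. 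Closing these cases needs tools absent from your plan: the complete maximal subgroup classifications of $F_4(2)$, $E_6(2)$, ${}^2E_6(2)$ and $E_7(2)$ to show most candidates do not occur; a non-maximality argument via the subgroup-structure machinery of \cite{LS98} (Lemma \ref{l:f4ex}) to eliminate $(q,q_0) = (3,9),(4,16),(8,16)$; and, in the surviving case ${\rm L}_4(3){:}2 < F_4(2)$, an explicit Jordan-form computation of $t$ on the $26$-dimensional minimal module to pin down the $T$-class of $t$ and thereby improve the upper bound on $|t^T|$ beyond the largest-class estimate. Second, you suggest that generic rank-one socles ${\rm L}_2(q_0)$, ${}^2B_2(q_0)$, ${}^2G_2(q_0)$ inside $G_2(q)$ contribute further borderline entries; in fact for $T = G_2(q)$ there are no type (IV) subgroups with $S \in {\rm Lie}(p)$ at all (every candidate, e.g.\ ${\rm PGL}_2(q)$ with $p \geqs 7$, is of type (I) or (II) and is treated elsewhere), so all entries of Table \ref{tabb:as} arise from the non-generic case $S \notin {\rm Lie}(p)$ via the classifications of \cite{Coop, K88}.
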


{\small
\renewcommand{\arraystretch}{1.2}
\begin{table}
\begin{center}
\[\begin{array}{lll}\hline
T & H_0 & \mbox{Conditions} \\ \hline  
G_2(q) & {\rm L}_{2}(13)  & q \geqs 17, \, p \ne 13, \, \mathbb{F}_{q} = \mathbb{F}_{p}[\sqrt{13}] \\ 
& {\rm L}_{2}(8)  & q \geqs 23, \, p \geqs 5, \, \mathbb{F}_{q} = \mathbb{F}_{p}[\omega],\, \omega^3-3\omega+1=0 \\ 
& {\rm U}_{3}(3){:}2  & q=p \geqs 11 \\ \hline 
\end{array}\]
\caption{Type (IV) actions with ${\rm ifix}(T) \leqs n^{4/9}$}
\label{tabb:as}
\end{center}
\end{table}
\renewcommand{\arraystretch}{1}}

\subsection{The case $S \not\in {\rm Lie}(p)$}

Let us start by considering the case where $S$ is not a simple group of Lie type over a field of characteristic $p$ (in the literature, $H$ is said to be \emph{non-generic}). The possibilities for $T$ and $S$ (up to isomorphism) are determined in \cite{LS99}, with the relevant cases recorded in a sequence of tables (see \cite[Section 10]{LS99}). 

\begin{lem}\label{l:g2as}
Theorem \ref{t:mainas} holds if $T = G_2(q)$ and $S \not\in {\rm Lie}(p)$.
\end{lem}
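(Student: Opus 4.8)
The candidates for $S$ are listed in the relevant tables of \cite[Section 10]{LS99}. Since $S \not\in {\rm Lie}(p)$ and $T = G_2(q)$ has rank $2$, every such $S$ has bounded order; the possibilities occurring for infinitely many $q$ are ${\rm L}_{2}(8)$, ${\rm L}_{2}(13)$ and ${\rm U}_{3}(3)$ (subject to the field conditions recorded in Table \ref{tabb:as}), together with finitely many genuinely sporadic cases such as $S = {\rm J}_{1}$ (when $q=11$) and $S = {\rm J}_{2}$ (when $q=4$). In each case the plan is to fix an involution $t \in H_0$ and evaluate \eqref{e:fix}. When $p$ is odd, $T$ has a unique class of involutions with $|C_T(t)| = |{\rm SL}_{2}(q)|^2$ (see Table \ref{tabinv}), so \emph{every} involution of $H_0$ lies in $t^T$ and \eqref{e:i2} gives the exact value
\[
{\rm ifix}(T) = \frac{i_2(H_0)}{i_2(T)}\cdot n = \frac{i_2(H_0)\,|{\rm SL}_{2}(q)|^2}{|H_0|}.
\]
When $p=2$ there are two classes, labelled $A_1$ and $\tilde{A}_1$; as the fusion of $H_0$-classes is not immediately available, I would take $t$ in the larger class $\tilde{A}_1$ (centralizer $q^3|{\rm SL}_{2}(q)|$), giving the conservative bound ${\rm fix}(t) \geqs |C_T(t)|/|C_{H_0}(t)|$.

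For the three infinite families the estimate is uniform. Here $i_2(H_0)$ and $|H_0|$ are constants, whereas $|{\rm SL}_{2}(q)|^2 \sim q^6$ and $n = |G_2(q)|/|H_0| \sim q^{14}$, so
\[
\liminf_{q \to \infty}\frac{\log {\rm ifix}(T)}{\log n} = \frac{6}{14} = \frac{3}{7},
\]
and the value $\a = 3/7$ recorded in Table \ref{tabb:as} is attained. Using the explicit values of $i_2(H_0)$ (from the {\sc Atlas}, or computed in {\sc Magma}), I would then confirm the strict inequality ${\rm ifix}(T) > n^{3/7}$ for all $q$ meeting the stated hypotheses ($q \geqs 23$ for ${\rm L}_{2}(8)$, $q \geqs 17$ for ${\rm L}_{2}(13)$, $q = p \geqs 11$ for ${\rm U}_{3}(3){:}2$), and check that for the finitely many smaller admissible $q$ the same calculation in fact yields ${\rm ifix}(T) > n^{4/9}$, which is why those values fall outside the table.

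The sporadic socles, together with the even-characteristic case $S = {\rm J}_2 < G_2(4)$, arise for a single value of $q$ each, so I would compute ${\rm fix}(t)$ directly. For instance, for $S = {\rm J}_{1} < G_2(11)$ one has $i_2(H_0) = 1463$ and $|C_T(t)| = |{\rm SL}_{2}(11)|^2$, and a short computation gives ${\rm ifix}(T) > n^{4/9}$; the remaining sporadic cases are handled identically. The point requiring real care — and the main obstacle — is precisely the borderline numerics: for small $q$ the ratio $\log {\rm fix}(t)/\log n$ sits close to both $3/7$ and $4/9$, so each small case must be decided by an exact computation rather than the asymptotic estimate in order to determine on which side of $n^{4/9}$ it lies, and hence whether it belongs in Table \ref{tabb:as}. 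Alongside this I would verify that the congruence and field conditions controlling the maximal embeddings of ${\rm L}_{2}(8)$, ${\rm L}_{2}(13)$ and ${\rm U}_{3}(3){:}2$ in $G_2(q)$ are exactly as stated.
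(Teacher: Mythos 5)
Your proposal is correct and follows essentially the same route as the paper: reduce to the known finite list of non-generic almost simple maximal subgroups of $G_2(q)$, exploit the unique class of involutions in $T$ for odd $q$ via \eqref{e:i2} and a computation of $i_2(H_0)$, and settle the borderline/small cases by exact computation. The only cosmetic difference is that the paper cites the complete classifications of Cooperstein and Kleidman directly and disposes of all $q \leqslant 4$ (in particular the sole even-characteristic case $\mathrm{J}_2 < G_2(4)$) by a single {\sc Magma} check, so your separate $p=2$ fusion argument is not needed.
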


\begin{proof}
The maximal subgroups of $G$ are determined up to conjugacy in \cite{Coop} ($q$ even) and \cite{K88} ($q$ odd) and the relevant cases are listed in Table \ref{t:g2t} (as usual, $H_0 = H \cap T$). The result for $q \leqs 4$ can be checked directly, using {\sc Magma}, so let us assume $q \geqs 5$. Then \eqref{e:i2} holds (since $q$ is odd) and the result follows by computing $i_2(H_0)$. 
\end{proof}

{\small
\renewcommand{\arraystretch}{1.2}
\begin{table}
\begin{center}
\[\begin{array}{ll} \hline
H_0 & \mbox{Conditions} \\ \hline
{\rm L}_{2}(13) & p \ne 13, \, \mathbb{F}_{q} = \mathbb{F}_{p}[\sqrt{13}] \\
{\rm L}_{2}(8) & p \geqs 5, \,  \mathbb{F}_{q} = \mathbb{F}_{p}[\omega],\, \omega^3-3\omega+1 = 0 \\
{\rm U}_{3}(3){:}2 & q= p \geqs 5 \\ 
{\rm J}_{1} & q=11 \\
{\rm J}_{2} & q=4 \\ \hline
\end{array}\]
\caption{Maximal almost simple subgroups, $T=G_2(q)$, $S \not\in {\rm Lie}(p)$}
\label{t:g2t}
\end{center}
\end{table}
\renewcommand{\arraystretch}{1}}

For the remaining groups, we adopt the following approach. First we inspect \cite{LS99} to determine the (finite) list of possibilities for $S$. Given such a subgroup $S$, set 
\begin{align}
\begin{split}\label{e:abcd0}
a & = \max\{|t^S| \,:\, \mbox{$t \in S$ is an involution}\} \\
b & = \max\{|t^T| \,:\, \mbox{$t \in T$ is an involution}\} \\
c & = |{\rm Aut}(S)| \\
d & = |T|
\end{split}
\end{align}
If we choose an involution $t \in S$ with $|t^S|=a$, then $|t^T| \leqs b$ and thus \eqref{e:fix} implies that ${\rm ifix}(T) \geqs {\rm fix}(t) \geqs na/b$. Therefore,
\begin{equation}\label{e:useful}
\frac{\log {\rm ifix}(T)}{\log n} \geqs 1 - \frac{\log b - \log a}{\log n}  \geqs 1 - \frac{\log b - \log a}{\log d - \log c}
\end{equation}
since $n \geqs d/c$. It is easy to compute this lower bound and this quickly reduces the problem to a handful of cases that require closer attention.

\begin{prop}\label{p:nlie}
Theorem \ref{t:mainas} holds if $S \not\in {\rm Lie}(p)$.
\end{prop}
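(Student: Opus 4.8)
The plan is to prove Proposition \ref{p:nlie} by reducing to the finitely many possibilities for $S$ recorded in the relevant tables of \cite{LS99}, and then applying the crude lower bound \eqref{e:useful} to dispose of all but a handful of cases. First I would invoke Lemma \ref{l:g2as} to dispatch the case $T = G_2(q)$, so that for the remainder we may assume $T \in \{E_8(q), E_7(q), E_6^{\e}(q), F_4(q)\}$. For each such $T$, the finitely many non-generic socles $S$ are listed explicitly in \cite[Section 10]{LS99}; for each candidate $S$ I would read off the four quantities $a,b,c,d$ defined in \eqref{e:abcd0}. Here $b$ (the size of the largest involution class in $T$) is immediate from the centralizer orders in Table \ref{tabinv} (taking $t$ in the class with smallest centralizer), $d = |T|$ is standard, $c = |{\rm Aut}(S)|$ is known for each finite simple $S$, and $a$ (the largest involution class size in $S$) can be read from the Atlas or the character-theoretic data for the relevant quasisimple groups. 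Substituting into \eqref{e:useful} then gives a numerical lower bound on $\log {\rm ifix}(T)/\log n$ in each case.

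The bulk of the argument is the verification that this lower bound exceeds $4/9$ (equivalently that ${\rm ifix}(T) > n^{4/9}$) for all but the three exceptional triples listed in Table \ref{tabb:as}, all of which have $T = G_2(q)$ and hence are already covered by Lemma \ref{l:g2as}. Since the bound \eqref{e:useful} depends only on the fixed numerical data $a,b,c,d$ and not on $q$ (the degree $n$ only enters through $n \geqs d/c$, which has already been used), each case reduces to a finite arithmetic check. I would present these checks compactly, case by case, grouped by the ambient group $T$, noting that the margin is comfortable in the vast majority of cases because the non-generic subgroups $S$ are small relative to $T$, so $\log c$ is tiny compared with $\log d$ while $a$ and $b$ are of comparable logarithmic size. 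For the borderline cases where \eqref{e:useful} alone is insufficient, I would sharpen the estimate by identifying the precise $T$-class of a well-chosen involution $t \in S$ — using the restriction of a suitable $\bar{G}$-module $V$ (such as $\mathcal{L}(\bar{G})$ or the minimal module) to compute the Jordan form or $1$-eigenspace dimension of $t$, exactly as in the proofs of Theorems \ref{t:mr} and \ref{t:nonmr} — rather than pessimistically assuming $t$ lies in the largest class of $T$.

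The main obstacle will be the borderline cases where the generic bound \eqref{e:useful} falls just short of $4/9$, since there the crude estimate $|t^T| \leqs b$ throws away too much, and one must instead pin down the actual fusion of the chosen involution in $T$. This requires genuine module-theoretic input: knowing how $S$ (or rather the algebraic subgroup $\bar{H}$ containing it, when one exists) sits inside $\bar{G}$, and then reading off the action of $t$ on a convenient $\bar{G}$-module from the tables in \cite{LS99} or \cite{Tho}. In cases where $S$ is not contained in any positive-dimensional $\sigma$-stable subgroup, we have very little control over the embedding, so the safest strategy is to verify directly — appealing to the explicit character tables of $S$ and the trace data in \cite[Proposition 1.2]{LS99} — that even the worst-case fusion yields a fixed-point count above $n^{4/9}$. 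I would also keep in mind the potential need to handle genuinely small $q$ (for instance $q \leqs 4$ or sporadic socles such as ${\rm J}_1, {\rm J}_2$) by direct computation in {\sc Magma}, as was done for $G_2(q)$ in Lemma \ref{l:g2as}.
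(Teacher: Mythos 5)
Your proposal is correct and follows essentially the same route as the paper: dispatch $T=G_2(q)$ via Lemma \ref{l:g2as}, run through the finite list of non-generic socles from \cite{LS99} using the crude bound \eqref{e:useful}, and then resolve the few surviving borderline cases (which occur only for small $q$, e.g.\ $T=F_4(2)$ with $S={\rm L}_4(3)$) by pinning down the exact $T$-class of a chosen involution via its Jordan form on a suitable $\bar{G}$-module, supplemented by the known maximal subgroup lists for $F_4(2)$, $E_6^{\e}(2)$ and $E_7(2)$. The only slip is your remark that \eqref{e:useful} is independent of $q$ — the quantities $b$ and $d$ do grow with $q$, so one must observe (as the paper does) that the bound improves as $q$ increases and then check the smallest admissible $q$, which is exactly where the exceptional cases arise.
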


\begin{proof}
By the previous lemma, we may assume $T \ne G_2(q)$. To illustrate the approach outlined above, suppose $T = F_4(q)$ and note that  
\begin{equation}\label{e:f4c}
b = \left\{\begin{array}{ll}
q^{14}(q^4+q^2+1)(q^4+1)(q^6+1) & \mbox{$q$ odd} \\
q^{4}(q^4+q^2+1)(q^8-1)(q^{12}-1) & \mbox{$q$ even}
\end{array}\right.
\end{equation}
(see Table \ref{tabinv}). From \cite{LS99}, we deduce that
$$S \in \{{\rm Alt}_{\ell}, {\rm L}_{2}(r), {\rm L}_{3}^{\e}(3), {\rm L}_{4}(3), {}^3D_4(2), {\rm M}_{11}, {\rm J}_1, {\rm J}_2 \}$$
with $\ell \in \{5, \ldots, 10\}$ and $r \in \{7,8,13,17,25,27\}$. We now check that the lower bound in \eqref{e:useful} yields ${\rm ifix}(T) > n^{4/9}$, unless $q=2$ and $S$ is one of the following:
\[
{\rm Alt}_{6}, \, {\rm Alt}_{8}, \, {\rm Alt}_{9}, \, {\rm L}_{2}(7), \, {\rm L}_{2}(27), \, {\rm L}_{3}^{\e}(3), \, {\rm L}_{4}(3).
\]
In \cite{NW}, Norton and Wilson determine the maximal subgroups of $F_4(2)$ and its automorphism group, and we see that $H_0 = {\rm L}_{4}(3){:}2$ is the only option. There are two classes of involutions in $S$ and we choose $t \in S$ in the smaller class, so $|t^S| = 2106$. Now $S$ has a $26$-dimensional irreducible module $V$ over $\mathbb{F}_{2}$, which we can identify with the minimal module for $T$ (as noted in \cite[Corollary 2]{LSJGT}, $S$ acts irreducibly on $V$). With the aid of {\sc Magma}, we calculate that $t$ has  Jordan form $[J_2^{10}, J_1^6]$ on $V$ and by inspecting \cite[Table 3]{Lawunip} we deduce that $t$ belongs to the $T$-class labelled $\tilde{A}_1$ or $(\tilde{A}_1)_2$ in Table \ref{tabinv}. In particular, $|t^T| \leqs (q^4+1)(q^6-1)(q^{12}-1)$ and this implies that ${\rm ifix}(T) > n^{3/5}$.

Very similar reasoning applies when $T \in \{E_6^{\e}(q), E_7(q), E_8(q)\}$ and we omit the details. (Note that for $T=E_6(2)$, ${}^2E_6(2)$ and $E_7(2)$, the maximal subgroups of $G$ are given in \cite{KW}, \cite{ATLAS} and \cite{BBR}, respectively.)
\end{proof}

\subsection{The case $S \in {\rm Lie}(p)$}

To complete the proof of Theorem \ref{t:mainas}, we may assume that $S$ is a simple group of Lie type over a field $\mathbb{F}_{q_0}$ of characteristic $p$ ($H$ is said to be \emph{generic}). We will use the restrictions on both $q_0$ and ${\rm rk}(S)$ given in part (ii) of Theorem \ref{t:simples}. 

As in the previous section, for $T = G_2(q)$ we can inspect \cite{Coop, K88} to determine the possibilities for $S$. In fact, one checks there are no maximal subgroups that satisfy the conditions in Theorem \ref{t:simples}(ii) which are not of type (I) and (II) in Theorem \ref{t:types}. (Note that the maximal subgroup $H_0 = {\rm PGL}_{2}(q) < G_2(q)$ in \cite{K88} (with $p \geqs 7$) is of the form $N_G(\bar{H}_{\sigma})$ as in Theorem \ref{t:types}(I), with $\bar{H} = A_1$, so it has been handled in Section \ref{s:algebraic}.) 

The following lemma will be useful in the proof of Theorem \ref{t:mainas} for $T = F_4(q)$ (we thank David Craven for suggesting this approach).

\begin{lem}\label{l:f4ex}
Let $G$ be an almost simple group with socle $T=F_4(q)$ and let $H$ be a maximal almost simple subgroup of $G$ with socle $S = {\rm L}_{3}^{\e}(q_0)$. Then 
$$(q,q_0) \not\in \{(3,9), (4,16), (8,16)\}.$$
\end{lem}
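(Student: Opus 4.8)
The plan is to rule out each of the three pairs by a dimension argument on the minimal module, exploiting the fact that a proper subfield cannot realise the full field of definition of $S$.

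Let $\bar{G}=F_4$ and let $V=V_{\bar{G}}(\l_4)$ be the minimal module, of dimension $26$, or $25$ when $p=3$; this module is $\bar{G}$-irreducible and defined over $\mathbb{F}_{q}$, so $G$ acts faithfully on a $\mathbb{F}_{q}$-space $V_{\mathbb{F}_q}$ of this dimension. Since $S={\rm L}_3^{\e}(q_0)\leqs \bar{G}_{\s}$ is pointwise fixed by $\s$, every $\mathbb{F}_{q}S$-submodule of $V_{\mathbb{F}_q}$ has a $\s$-stable $\bar{G}$-stabiliser. First I would show that $S$ must act irreducibly on $V_{\mathbb{F}_q}$: if not, $S$ stabilises a proper nonzero $\mathbb{F}_{q}$-subspace $W$, and then $S$ is contained in the proper $\s$-stable positive-dimensional subgroup ${\rm Stab}_{\bar{G}}(W)$; since $H$ is maximal with socle $S$, this forces $H=N_{G}(\bar{M}_{\s})$ for some positive-dimensional $\bar{M}$, contradicting the hypothesis that $H$ is of type (IV).

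The second step is to determine the possible dimensions of an irreducible $\mathbb{F}_{q}S$-module. By Steinberg's tensor product theorem the irreducible $\overline{\mathbb{F}}_pS$-modules have the form $\bigotimes_i L(\mu_i)^{[p^i]}$ with each $\mu_i$ restricted, and over a finite field such a module is realisable over $\mathbb{F}_{q}$ precisely when its isomorphism class is fixed by the $q$-power Frobenius twist. Writing $q=p^a$ and $q_0=p^b$, this forces the twisting data $(\mu_0,\dots,\mu_{b-1})$ to be invariant under the cyclic shift by $a$, hence to have period dividing $\gcd(a,b)$; consequently the $\mathbb{F}_{q}$-dimension of a nontrivial constituent is a $\big(b/\gcd(a,b)\big)$-th tensor power of a product of restricted-module dimensions, so is at least $3^{\,b/\gcd(a,b)}$. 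A short calculation then gives the admissible nontrivial dimensions: for $(q,q_0)=(3,9)$ they are $9,36,49,\dots$; for $(4,16)$ they are $9,64,81,\dots$; and for $(8,16)$, where $\gcd(a,b)=\gcd(3,4)=1$, every nontrivial dimension is at least $3^{4}=81$.

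The conclusion is immediate for $(8,16)$: since $81>26=\dim V$, every composition factor of $V_{\mathbb{F}_8}{\downarrow}S$ is trivial, so the image of $S$ lies in a maximal unipotent subgroup of ${\rm GL}(V)$, which is a $2$-group — absurd, as $S$ is simple and acts faithfully. For $(3,9)$ and $(4,16)$ one checks that neither $25$ nor $26$ (nor $52$, should one instead use the adjoint module) can be written as a sum of the admissible nontrivial dimensions without using the trivial module, so $V_{\mathbb{F}_q}{\downarrow}S$ is necessarily reducible, contradicting the first step. The main obstacle I anticipate is making the first step fully rigorous in the borderline case where $S$ has a $9$-dimensional irreducible submodule $W$: here the crude orbit-dimension estimate does not by itself show that ${\rm Stab}_{\bar{G}}(W)$ is positive-dimensional. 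To close this gap I would exploit that such a $9$-dimensional constituent is tensor-decomposable, of the form $L(\mu)\otimes L(\mu)^{[q]}$, which places $S$ inside a subgroup of $\bar{G}$ of type $A_2\tilde{A}_2$ (or $A_2$), and then invoke the feasibility analysis underlying \cite{LS03} (equivalently, Craven's determination of the composition factors of ${\rm L}_3^{\e}$-subgroups of $F_4$) to confirm that this overgroup is proper, positive-dimensional and $\s$-stable, again contradicting type (IV).
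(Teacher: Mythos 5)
Your route is genuinely different from the paper's -- the paper picks a semisimple element of $S$ of order exceeding $u(F_4(q))=68$ and feeds it into the machinery of \cite{LS98} (Proposition 2 together with Theorems 4 and 6) to force $S$ into a proper positive-dimensional $\s$-stable subgroup, so that $H$ would be of type (I), contradicting Table \ref{tabmaxfin} -- but your dimension count contains a genuine error. An irreducible $\mathbb{F}_qS$-module need not be absolutely irreducible: over $\bar{\mathbb{F}}_p$ it decomposes as the direct sum of a full orbit of tensor-decomposable irreducibles under the $q$-power Frobenius twist, and only the \emph{multiset} of $\bar{\mathbb{F}}_p$-composition factors of $V{\downarrow}S$ must be twist-stable, not each factor individually. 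Hence the lower bound $3^{b/\gcd(a,b)}$ for a nontrivial constituent is false. Concretely, for $(q,q_0)=(8,16)$ the four twists $L(\l_1)^{[2^i]}$, $0\leqs i\leqs 3$, form a single Frobenius orbit and yield an irreducible $\mathbb{F}_8S$-module of dimension $12$, so $26=12+12+1+1$ is a decomposition your argument does not exclude and the conclusion ``every composition factor is trivial'' collapses; likewise for $(3,9)$ the module $L(\l_1)\oplus L(\l_1)^{[3]}$ is irreducible over $\mathbb{F}_3$ of dimension $6$, so the admissible dimensions include $6,18,\dots$ and not only $9,36,49,\dots$, and $25$ then admits decompositions (e.g.\ $9+9+6+1$) that your parity-of-the-trivial-factor argument must, but does not, convert into a contradiction.

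There is also the gap you yourself flag in the first step, and it is wider than you suggest: for an $S$-invariant $k$-subspace $W$ with $3\leqs k\leqs 23$ the $\bar{G}$-orbit of $W$ in the Grassmannian can have dimension up to $52=\dim F_4$, so ${\rm Stab}_{\bar{G}}(W)$ need not be positive dimensional, and the implication ``$S$ reducible on $V$ $\Rightarrow$ $H$ of type (I)'' is exactly the nontrivial content of the \cite{LS98} machinery that the paper invokes directly (one must pass through the subgroup $\bar{Y}$ of \cite[Theorem 4]{LS98}, not a naive subspace stabilizer). Your proposed patch via an $A_2\tilde{A}_2$ overgroup and an unspecified feasibility analysis is not worked out. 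Since the paper's argument requires only that ${\rm L}_3^{\e}(q_0)$ contains a semisimple element of order greater than $68$ for $q_0\in\{9,16\}$ -- which is immediate -- that route is both shorter and avoids the representation-theoretic bookkeeping that undermines your version.
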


\begin{proof}
Write $T = \bar{G}_{\s}$, where $\s$ is a Steinberg endomorphism of $\bar{G}=F_4$ and let $V$ be the Lie algebra of $\bar{G}$. Seeking a contradiction, let us assume $(q,q_0)$ is one of the three possibilities above. We proceed by arguing as in the proof of \cite[Corollary 3]{LS98}.

Fix a semisimple element $x \in S$ of order $n>u(T) = 68$ (for example, if $q=3$ we can take $n = 91$ if $\e=+$, and $n = 80$ if $\e=-$). By \cite[Proposition 2]{LS98}, there exists a positive dimensional closed subgroup $\bar{L}$ of $\bar{G}$ containing $x$ such that
\begin{itemize}\addtolength{\itemsep}{0.2\baselineskip}
\item[(a)] Some non-trivial power of $x$, say $x^k$, lies in $\bar{L}^{\circ}$; and
\item[(b)] Every $x$-invariant subspace of $V$ is also $\bar{L}$-invariant.
\end{itemize}
Set $\bar{J} = \la S, \bar{L} \ra^{\circ}$. Then $1 \ne x^k \in S \cap \bar{J}$, so $S \cap \bar{J}$ is a non-trivial normal subgroup of $S$ and thus $S<\bar{J}$ since $S$ is simple.

Let $\mathcal{M}$ be the collection of all $S$-invariant subspaces of all $K\bar{G}$-composition factors of $V$ and set
\[
\bar{Y} = \bigcap_{U \in \mathcal{M}}\bar{G}_U,
\]
where $\bar{G}_U$ denotes the setwise stabilizer of $U$ in $\bar{G}$. By 
\cite[Theorem 4]{LS98}, $\bar{Y}$ is a proper closed subgroup of $\bar{G}$ (it is also $\s$-stable by \cite[Proposition 1.12(iii)]{LS98}). Clearly, each $U \in \mathcal{M}$ is $x$-invariant, and thus $\bar{L}$-invariant by (b) above. In particular, each $U \in \mathcal{M}$ is $\bar{J}$-invariant and thus 
\[
S < \bar{J} \leqs \bar{Y} < \bar{G}.
\]

Finally, by arguing as in the proof of \cite[Theorem 6]{LS98}, we deduce that $H \leqs N_G(\bar{H}_{\s})<G$ for some maximal closed connected reductive $\s$-stable subgroup $\bar{H}$ of $\bar{G}$ with $\bar{Y}^{\circ} \leqs \bar{H}$ (see the remark following the statement of Theorem 6 in \cite{LS98}). Therefore, $H = N_G(\bar{H}_{\s})$ by the maximality of $H$. In terms of Theorem \ref{t:types}, this means that $H$ is a subgroup of type (I), but these subgroups are known and by inspection there are no examples with socle $S$ (see Table \ref{tabmaxfin}). This final contradiction completes the proof of the lemma.
\end{proof}

\begin{lem}\label{l:f4as}
Theorem \ref{t:mainas} holds if $T = F_4(q)$ and $S \in {\rm Lie}(p)$.
\end{lem}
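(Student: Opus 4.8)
The plan is to complete the analysis of generic almost simple subgroups when $T = F_4(q)$, so I need to determine, for each admissible socle $S = H(q_0) \in {\rm Lie}(p)$, a lower bound on ${\rm ifix}(T)$. The driving principle is the same as in the non-generic case (Proposition \ref{p:nlie}): pick an involution $t \in S$ maximising $|t^S|$, bound $|t^T|$ from above by the largest involution class size $b$ recorded in \eqref{e:f4c}, and apply the estimate \eqref{e:useful} with $a,b,c,d$ as in \eqref{e:abcd0}. First I would invoke Theorem \ref{t:simples}(ii) to cut the list down: since ${\rm rk}(S) \leqs \frac{1}{2}{\rm rk}(F_4) = 2$, the socle $S$ is a rank-$1$ or rank-$2$ group of Lie type in characteristic $p$, with $q_0$ constrained by parts (a)--(c). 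Thus $S \in \{{\rm L}_2(q_0), {}^2B_2(q_0), {}^2G_2(q_0), {\rm L}_3^{\e}(q_0), {\rm U}_3(q_0), {\rm PSp}_4(q_0), G_2(q_0), \ldots\}$ with $q_0 \leqs 9$ (or the sporadic exceptions $q_0 = 16$, and the ${\rm L}_2$-type bound $q_0 \leqs (2,q-1)\cdot 68$).

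Next I would run the crude bound \eqref{e:useful} across these finitely many shapes. For most $(S,q_0)$ the inequality already delivers ${\rm ifix}(T) > n^{4/9}$, because $a$ grows like a fixed power of $q_0$ while $d/c$ grows like $|F_4(q)|$ over a constant, so $n$ is enormous relative to the fixed number $\log b - \log a$. This isolates a short list of small residual cases where the crude bound is insufficient — these will be the ones with $q_0$ close to its maximum or with $q$ very small (e.g. $q=2,3$). For the ${\rm L}_3^{\e}(q_0)$ socle specifically, I would deploy Lemma \ref{l:f4ex} to rule out the awkward pairs $(q,q_0) \in \{(3,9),(4,16),(8,16)\}$ outright, so those never need to be examined by hand. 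The remaining stubborn cases I would treat exactly as in Proposition \ref{p:nlie}: realise $S$ inside the $26$-dimensional minimal module (or the Lie algebra) for $\bar{G}=F_4$, compute the Jordan form of a chosen involution $t$ on that module with {\sc Magma}, and read off the precise $T$-class of $t$ from \cite[Table 3]{Lawunip} (for $p=2$) or from eigenvalue data via \cite[Table 4.5.1]{GLS} (for $p$ odd). Knowing $|t^T|$ exactly then upgrades the bound well past $n^{4/9}$.

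I expect the main obstacle to be precisely the handful of borderline small-$q$ configurations where \eqref{e:useful} fails and one must pin down the genuine $T$-fusion of the involution, since the crude upper bound $|t^T| \leqs b$ is then too wasteful. The difficulty is that the embedding $S \hookrightarrow T$ may not be controlled enough to predict the Jordan type abstractly, so one is forced into an explicit module computation; identifying which $\tilde{A}_1$-type or $A_1$-type class arises (and distinguishing the two short-root classes $\tilde{A}_1$ and $(\tilde{A}_1)_2$ when $p=2$) requires care. For the rank-$1$ socles ${\rm L}_2(q_0)$, ${}^2B_2(q_0)$, ${}^2G_2(q_0)$ with $q_0$ pushed up to the bound $(2,q-1)\cdot 68$, I would note that the involution fixity is dominated by the class size ratio, and that even the largest such $q_0$ keeps $a$ polynomially bounded while $n$ remains large, so \eqref{e:useful} still succeeds once $q$ is not tiny; the finitely many genuinely small $(q,q_0)$ pairs can be verified by direct computation in {\sc Magma} using the permutation representations from \cite{WebAt}, exactly as in Proposition \ref{p:small}. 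Assembling these verifications yields ${\rm ifix}(T) > n^{4/9}$ in every generic case with $T = F_4(q)$, with no entries contributed to Table \ref{tabb:as}, completing the lemma.
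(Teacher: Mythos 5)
Your overall strategy matches the paper's: restrict $S$ via Theorem \ref{t:simples}(ii), run the crude estimate \eqref{e:useful} (equivalently \eqref{e:abcd}) over the finitely many shapes, invoke Lemma \ref{l:f4ex} to kill the pairs $(q,q_0)\in\{(3,9),(4,16),(8,16)\}$ for ${\rm L}_{3}^{\e}(q_0)$, and conclude that no $F_4(q)$ cases enter Table \ref{tabb:as}. The reduction to a short residual list (essentially $(q,q_0)=(3,3),(4,4),(4,8),(8,8)$ for ${\rm L}_3^{\e}(q_0)$, plus a few small pairs for ${\rm PSp}_4(q_0)$ and the $q=2$ cases) is exactly right.

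However, your plan for finishing off those residual cases has a genuine gap. You propose to "realise $S$ inside the $26$-dimensional minimal module" and compute Jordan forms, or to verify the small pairs "by direct computation in {\sc Magma} using the permutation representations from \cite{WebAt}, exactly as in Proposition \ref{p:small}". Neither is available here: the subgroups in Table \ref{t:f4t} are only \emph{potential} maximal subgroups (no classification or explicit embedding is known for, say, a putative ${\rm U}_{3}(8) < F_4(8)$), so there is no concrete module action to feed to {\sc Magma}; and the permutation degrees $n = |F_4(q)|/|H_0|$ for $q\geqs 3$ are astronomically large, so the Proposition \ref{p:small}-style computation is infeasible even in principle. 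The paper instead closes these cases purely arithmetically, without knowing the embedding: it splits according to whether $H_0 = S$ or $H_0 = {\rm Aut}(S)$ (improving $a$ or $c$ accordingly, e.g.\ $a = 1008$ for ${\rm Aut}({\rm L}_3(4))$), uses Lagrange's theorem on $|C_S(t)|$ dividing $|C_T(t)|$ to pin the $T$-class of $t$ to $A_1$ or $\tilde{A}_1$ and thereby replace $b$ by the smaller value $(q^4+1)(q^{12}-1)$ (this is how $(q,q_0)=(4,8)$, $\e=+$ is handled, and how $(4,8)$, $\e=-$ is excluded outright), and appeals to the Norton--Wilson classification \cite{NW} of maximal subgroups of $F_4(2)$ for the $q=2$ configurations. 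You would need to substitute some argument of this kind for your computational step; as written, that step cannot be carried out.
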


\begin{proof}
According to Theorem \ref{t:simples}(ii), the possibilities for $S$ are recorded in Table \ref{t:f4t} (where $q_0$ is a power of $p$). To handle these cases, we start by considering the lower bound in \eqref{e:useful}. If we define $a,b,c,d$ as in \eqref{e:abcd0}, then ${\rm ifix}(T) > n^{4/9}$ if 
\begin{equation}\label{e:abcd}
\frac{d^{5/9}}{b} > \frac{c^{5/9}}{a}
\end{equation}
where $b$ is given in \eqref{e:f4c} and 
\[
d = q^{24}(q^2-1)(q^6-1)(q^8-1)(q^{12}-1).
\]

For example, suppose $S = {\rm L}_{2}(q_0)$ where $4 \leqs q_0 \leqs 136$ is odd. Note that $q^5 \geqs q_0$. In terms of \eqref{e:abcd0} we have 
\[
a \geqs \frac{1}{2}q_0(q_0-1), \;\; c \leqs 4q_0(q_0^2-1)
\]
and we deduce that the bound in \eqref{e:abcd} holds unless $(q,q_0) = (3,9)$ (it is helpful to observe that the expression on the left in \eqref{e:abcd} is increasing as a function of $q$).
Here $(a,c) = (45,1440)$ and \eqref{e:abcd} holds. Similarly, if $q_0$ is even then 
we can quickly reduce to the case $(q,q_0) = (2,4)$. However, we can rule this case out by inspecting the complete list of maximal subgroups in \cite{NW}.

{\small
\renewcommand{\arraystretch}{1.2}
\begin{table}
\begin{center}
\[\begin{array}{ll} \hline
S & \mbox{Conditions} \\ \hline
{\rm L}_{2}(q_0) & 4 \leqs q_0 \leqs 68(2,q-1) \\
{}^2B_2(q_0) & q_0 \in \{8,32\} \\
{}^2G_2(q_0) & q_0 = 27 \\
{\rm L}_{3}^{\e}(q_0)  & q_0 \leqs 16 \\
G_2(q_0), {\rm PSp}_{4}(q_0), \Omega_5(q_0) & q_0 \leqs 9 \\ \hline
\end{array}\]
\caption{Possible maximal almost simple subgroups, $T=F_4(q)$, $S \in {\rm Lie}(p)$}
\label{t:f4t}
\end{center}
\end{table}
\renewcommand{\arraystretch}{1}}

The cases $S \in \{{}^2B_2(q_0), {}^2G_2(q_0), G_2(q_0), \Omega_5(q_0)\}$ are all straightforward and we omit the details. Next assume $S = {\rm PSp}_{4}(q_0)$, so $q_0 \leqs 9$ and $q^3 \leqs q_0$. If $q=2$ then \cite{NW} implies that $H_0 = {\rm Sp}_{4}(4){:}2$, but this is one of the maximal rank subgroups we handled in Section \ref{s:mr}, so we may assume $q \geqs 3$. Now
\[
a \geqs \frac{1}{2}q_0^3(q_0-1)(q_0^2+1),\;\; c \leqs 6q_0^4(q_0^2-1)(q_0^4-1)
\]
and thus \eqref{e:abcd} holds unless $(q,q_0) = (4,2), (3,3), (4,4)$ or $(3,9)$. In fact, in each of these cases we can verify \eqref{e:abcd} by taking precise values for $a$ and $c$ (for example, if $q_0 = 3$ then $a=270$ and $c= 51840$).

To complete the proof of the lemma, we may assume $S = {\rm L}_{3}^{\e}(q_0)$ and either $q_0 \leqs 9$ or $q_0 = 16$ (see Theorem \ref{t:simples}(ii)), so $q^4 \geqs q_0$. Note that ${\rm U}_{3}(2)$ is not simple, so $q_0 \geqs 3$ if $\e=-$. In view of \cite{NW}, we also observe that $q \geqs 3$. Now
\[
a  = \left\{\begin{array}{ll}
q_0^2(q_0^2+\e q_0+1) & \mbox{$q_0$ odd} \\
(q_0+\e)(q_0^3-\e) & \mbox{$q_0$ even,}
\end{array}\right.\;\; c = 2\log_pq_0 \cdot q_0^3(q_0^2-1)(q_0^3-\e)
\]
and one checks that \eqref{e:abcd} holds unless $(q,q_0)$ is one of the following:
\begin{equation}\label{e:a2cases}
( 3, 3 ), (4,4), ( 4, 8 ), ( 8, 8 ), ( 3, 9 ), ( 4, 16 ), (8, 16 ),
\end{equation}
where $\e=-$ if $(q,q_0) = (3,3)$ or $(8,8)$. Note that we can immediately eliminate the cases $(q,q_0) = (3,9)$, $(4,16)$ and $(8,16)$ by applying Lemma \ref{l:f4ex}.

First assume $\e=+$. If $(q,q_0) = (4,4)$ and $H_0 < {\rm Aut}(S)$ then we can take $c = 2q_0^3(q_0^2-1)(q_0^3-1)$ and we deduce that \eqref{e:abcd} holds. On the other hand, if $H_0 = {\rm Aut}(S)$ then there is an involution $t \in H_0$ with $|t^{H_0}| = 1008$, so we can take $a=1008$ and once again we see that \eqref{e:abcd} holds. The case 
$(q,q_0)=(4,8)$ is similar. Indeed, if $t \in S$ is an involution then $|C_S(t)|$ is divisible by $7$, so Lagrange's theorem implies that $t$ is an involution of type $A_1$ or $\tilde{A}_1$ (as an element of $T$). Therefore, we can take $b = (q^4+1)(q^{12}-1)$ and one easily checks that \eqref{e:abcd} holds. 

Now assume $\e=-$. Note that $(q,q_0) \ne (4,8)$ by Lagrange's theorem. Suppose $(q,q_0) = (3,3)$. If $H_0 = {\rm Aut}(H_0)$ then there is an involution $t \in H_0$ with $|t^{H_0}| = 252$, so we can take $a=252$ and one checks that \eqref{e:abcd} holds. Similarly, if $H_0 = S$ then we can take $c=|S|$ and once again \eqref{e:abcd} holds. The cases $(q,q_0) = (4,4)$ and $(8,8)$ are entirely similar (if $H_0 \ne S$, we can take $a=1040$ and $32832$, respectively).
\end{proof}

\begin{prop}
Theorem \ref{t:mainas} holds if $S \in {\rm Lie}(p)$.
\end{prop}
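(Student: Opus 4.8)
The plan is to follow the strategy of Lemma~\ref{l:f4as}, since the cases $T = G_2(q)$ and $T = F_4(q)$ are already settled (the former has no generic type~(IV) examples which are not of type (I) or (II), as noted above, and the latter is Lemma~\ref{l:f4as}). Thus I may assume $T \in \{E_6^{\e}(q), E_7(q), E_8(q)\}$, and by Theorem~\ref{t:simples}(ii) the socle $S = H(q_0)$ satisfies ${\rm rk}(S) \leqs \frac{1}{2}{\rm rk}(T)$, with $q_0 \leqs 9$, or $S = {\rm L}_3^{\e}(16)$, or $S \in \{{\rm L}_2(q_0), {}^2B_2(q_0), {}^2G_2(q_0)\}$ with $q_0 \leqs (2,q-1)u(T)$. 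The aim is to show ${\rm ifix}(T) > n^{4/9}$ in every case, so that no new exceptions arise beyond the $G_2(q)$ entries of Table~\ref{tabb:as}. For each of the three groups $T$ I would read off $b = \max\{|t^T| : t \in T \text{ an involution}\}$ from Table~\ref{tabinv} (for these $T$ the largest class is $t_1$ or an $A_1^j$, giving $b$ as an explicit polynomial in $q$ of known degree), and set $d = |T|$.

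For a fixed $S$ I would take $t \in S$ in the largest $S$-class, so that $a = |t^S|$ is an explicit polynomial in $q_0$, bound $c = |{\rm Aut}(S)|$, and invoke \eqref{e:useful}: it suffices to verify \eqref{e:abcd}, namely $a\,d^{5/9} > b\,c^{5/9}$, to conclude ${\rm ifix}(T) > n^{4/9}$. Since $\log d$ grows like $\dim \bar{G}\cdot\log q$ while $\log c$ is controlled by $\log q_0$, the left-hand side of \eqref{e:abcd} dominates whenever $q$ is not too small relative to $q_0$; as the left-hand expression is increasing in $q$, each family of $S$ reduces to a short, finite list of pairs $(q,q_0)$. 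The rank-$1$ families ${\rm L}_2(q_0), {}^2B_2(q_0), {}^2G_2(q_0)$ require the most care, since here $q_0$ may be as large as $(2,q-1)u(T)$ while $q$ is small; but even in the extreme case ($q=2$ or $3$, $q_0$ maximal) the degree of $d$ in $q$ so far exceeds that of $c$ in $q_0$ that \eqref{e:abcd} still holds once the precise values of $a$ and $c$ are substituted.

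For the finitely many residual pairs $(q,q_0)$ I would proceed as in the proofs of Lemma~\ref{l:f4as} and Proposition~\ref{p:nlie}: first sharpen the bound on $b$ by pinning down the actual $T$-class of the chosen involution $t$. This is done by computing the Jordan form (or $1$-eigenspace dimension) of $t$ on a small $\bar{G}$-module --- the minimal module or $\mathcal{L}(\bar{G})$ --- using the known restriction to $\bar{H}^{\circ}$, and reading off the class from \cite[Proposition 1.2]{LS99} when $p \ne 2$ and from \cite{Lawunip}, \cite{Law09} when $p = 2$; replacing $b$ by the genuinely smaller value $|t^T|$ then recovers ${\rm ifix}(T) > n^{4/9}$. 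Where this is still insufficient, or where an embedding must be excluded outright, I would appeal to the explicit maximal subgroups of $E_6(2)$, ${}^2E_6(2)$ and $E_7(2)$ in \cite{KW}, \cite{ATLAS}, \cite{BBR}, and, for pairs forced by Lagrange's theorem or by an irreducible module, to a Lemma~\ref{l:f4ex}-type argument: if $x \in S$ has order exceeding $u(T)$, then by \cite[Proposition 2, Theorems 4 and 6]{LS98} one forces $H \leqs N_G(\bar{H}_{\s})$ for a positive-dimensional reductive subgroup $\bar{H}$, contradicting $H$ being of type (IV) rather than (I).

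The main obstacle is precisely this last step. The crude inequality \eqref{e:abcd} is comfortably satisfied for all large $q$, so the entire difficulty is concentrated in the short list of small exceptional pairs, where the argument depends on a case-by-case identification of the exact $T$-class of an involution --- hence on accurate module-restriction and unipotent-class data --- and, in a few instances, on ad hoc exclusion of the subgroup via the Larsen--Seitz machinery of \cite{LS98} or a direct appeal to a known maximal-subgroup list. I expect these small cases, particularly $S = {\rm L}_3^{\e}(q_0)$ and ${\rm PSp}_4(q_0)$ inside $E_6^{\e}(q)$, and the rank-$1$ subgroups inside $E_7(q)$ and $E_8(q)$ over $\mathbb{F}_2, \mathbb{F}_3$ and $\mathbb{F}_4$, to account for the bulk of the remaining verification.
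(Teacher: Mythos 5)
Your proposal follows essentially the same route as the paper: reduce to $T \in \{E_6^{\e}(q), E_7(q), E_8(q)\}$, verify the bound \eqref{e:abcd} via \eqref{e:useful} to leave a short list of small pairs $(q,q_0)$, note that several candidates (e.g. ${\rm L}_3^{\e}(q^3) < E_6^{\e}(q)$ and the subgroups of $E_6^{\e}(2)$, $E_7(2)$ read off from \cite{KW}, \cite{ATLAS}, \cite{BBR}) are already covered by earlier sections, and settle the remainder by identifying $T$-classes of involutions. The only divergence is that you anticipate needing a Lemma~\ref{l:f4ex}-type non-maximality argument in a few residual cases, whereas the paper explicitly observes that no such exclusion is required here; this makes your plan slightly more conservative but not incorrect.
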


\begin{proof}
We may assume that $T  = E_6^{\e}(q)$, $E_7(q)$ or $E_8(q)$. We proceed as in the proof of Lemma \ref{l:f4as}, with the aim of verifying the bound in \eqref{e:abcd}. As before, the possibilities for $S$ are restricted by the conditions in part (ii) of Theorem \ref{t:simples}. Also note that some of the cases that arise will have already been handled in Sections \ref{s:mr} and \ref{s:algebraic}. For example, see Section \ref{s:mr} for the case where $T = E_6^{\e}(q)$ and $S = {\rm L}_{3}^{\e}(q^3)$. In particular, by inspecting the complete list of maximal subgroups in 
\cite{BBR, ATLAS, KW}, we observe that the relevant almost simple maximal subgroups in the cases where $T = E_6^{\e}(2)$ or $E_7(2)$ have already been handled in Sections \ref{s:mr} and \ref{s:algebraic}.

Although there are a few more possibilities for $S$ to consider, the arguments are straightforward and no special complications arise (in particular, there is no need to establish the non-maximality of certain subgroups, as in Lemma \ref{l:f4ex}). We ask the reader to check the details. 
\end{proof}

\vs

This completes the proof of Theorem \ref{t:mainas}.

\section{Completion of proof of Theorem \ref{t:main2}}\label{s:small}

In this section we complete the proof of Theorem \ref{t:main2} by handling the groups with 
\begin{equation}\label{e:T2}
T \in \{{}^2B_2(q),{}^2G_2(q), {}^2F_4(q), {}^3D_4(q)\}.
\end{equation}
In each case, the maximal subgroups of $G$ have been determined (see Suzuki \cite{Suz} for ${}^2B_2(q)$, Malle \cite{Mal} for ${}^2F_4(q)$ and Kleidman \cite{K88, Kl3} for ${}^2G_2(q)$ and ${}^3D_4(q)$). In view of Proposition \ref{p:small}, we may assume that $T \ne {}^2G_2(3)', {}^2F_4(2)'$. Moreover, since the relevant maximal rank subgroups (including parabolic subgroups) were handled in Sections \ref{s:parab} and \ref{s:mr}, the remaining possibilities for $H_0 = H \cap T$ are as follows:
\[
\begin{array}{ll}
T = {}^2B_2(q): & {}^2B_2(q_0) \mbox{ with $q_0 \geqs 8$} \\
T = {}^2G_2(q): & {}^2G_2(q_0) \\
T = {}^2F_4(q): & {}^2F_4(q_0) \\
T = {}^3D_4(q): & {}^3D_4(q_0),\, G_2(q),\; {\rm PGL}_{3}^{\e}(q)
\end{array}
\]
where $q=q_0^k$ for a prime $k$.

\begin{prop}\label{p:subb}
Let $G$ be an almost simple primitive permutation group of degree $n$ with socle $T$ and point stabilizer $H$. Suppose $T$ is one of the groups in \eqref{e:T2} and $H$ is a subfield subgroup.
Then ${\rm ifix}(T) > n^{\a}$ and 
\[
\liminf_{q\to \infty} \frac{\log {\rm ifix}(T)}{\log n} = \b,
\]
where $\a,\b$ are given in Table $\ref{tab:alb}$. In particular,  ${\rm ifix}(T) > n^{4/9}$ if and only if $T = {}^2F_4(q)$ or ${}^3D_4(q)$ (with $q$ even).
\end{prop}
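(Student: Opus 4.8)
The plan is to follow the template of Theorem~\ref{t:subfield}, using that when $H$ is a subfield subgroup the stabilizer $H_0$ is a group of the \emph{same type} as $T$, but over a subfield $\mathbb{F}_{q_0}$ with $q = q_0^k$ ($k$ prime). For each of the four families in \eqref{e:T2} one has ${\rm Inndiag}(T) = T$, so Table~\ref{tabinv} records $|C_T(t)|$ directly, and the same table read over $\mathbb{F}_{q_0}$ gives $|C_{H_0}(t)|$; that is, the centralizer orders in $T$ and in $H_0$ are a common polynomial $P$ evaluated at $q$ and at $q_0$. The first step is to establish the clean-fusion identity
\[
t^T \cap H_0 = t^{H_0}
\]
for every involution $t \in H_0$. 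This follows at once from Remark~\ref{r:invcon} applied to both $T$ and $H_0$: there is a $1$--$1$ correspondence between the $\bar{G}$-classes of involutions and the classes in each of $T$ and $H_0$, so any element of $H_0$ that is $T$-conjugate to $t$ lies in $t^{\bar{G}}$, hence in the unique $H_0$-class $t^{H_0}$, while conversely $t^{H_0} \subseteq t^{\bar{G}} \cap T = t^T$. (None of the four groups is $E_7(q)$, so the exceptional caveat in Remark~\ref{r:invcon} is irrelevant.) Moreover every $\bar{G}$-class of involutions is stable under the Steinberg endomorphism defining $H_0$, and so meets $H_0$ by Lang's theorem; thus all involution classes of $T$ are represented in $H_0$.

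Given the clean fusion, \eqref{e:fix} collapses to the exact formula
\[
{\rm fix}(t) = \frac{|t^{H_0}|}{|t^T|}\cdot\frac{|T|}{|H_0|} = \frac{|C_T(t)|}{|C_{H_0}(t)|} = \frac{P(q)}{P(q_0)},
\]
so ${\rm ifix}(T)$ is the maximum of $P_t(q)/P_t(q_0)$ over the finitely many involution classes. Writing $D$ for the degree of $|T|$ in $q$ (so $D = 5,7,26,28$ for ${}^2B_2,{}^2G_2,{}^2F_4,{}^3D_4$) and $d_t = \deg P_t = \dim C_{\bar{G}}(t)$, we have $n = |T|(q)/|T|(q_0)$, whence $\log {\rm fix}(t)/\log n \to d_t/D$ as $q \to \infty$ along $q = q_0^k$. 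Maximizing over classes gives $\b = d_{\max}/D$, with $d_{\max}$ read from Table~\ref{tabinv}: the dominant class is $(\tilde{A}_1)_2$ for ${}^2F_4(q)$ (where $|C_T(t)| = q^{10}|{}^2B_2(q)|$, so $d_{\max} = 15$ and $\b = 15/26$), it is $A_1$ for ${}^3D_4(q)$ with $q$ even ($|C_T(t)| = q^9|{\rm SL}_{2}(q^3)|$, $d_{\max} = 18$, $\b = 9/14$), it is $t_2$ for ${}^3D_4(q)$ with $q$ odd ($d_{\max} = 12$, $\b = 3/7$), and the unique classes for ${}^2B_2(q)$ and ${}^2G_2(q)$ give $\b = 2/5$ and $3/7$.

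To upgrade these limits to the uniform statements of the proposition I would reduce each inequality to a monotonicity claim, exactly as in Theorem~\ref{t:subfield}. For a threshold $\gamma$, the assertion ${\rm fix}(t) > n^{\gamma}$ is equivalent to $G_{t,\gamma}(q) > G_{t,\gamma}(q_0)$, where $G_{t,\gamma}(x) = P_t(x)\,|T|(x)^{-\gamma} \sim x^{d_t - \gamma D}$; since $q > q_0$, it suffices to show $G_{t,\gamma}$ is increasing on the admissible range when $\gamma < d_t/D$ and decreasing when $\gamma > d_t/D$. This simultaneously yields the lower bound ${\rm ifix}(T) > n^{\a}$ with the tabulated $\a$ (the infimum being approached at the smallest admissible $q$ in each family) and the ``in particular'' dichotomy: ${\rm ifix}(T) > n^{4/9}$ holds for all admissible $q$ precisely when $d_{\max}/D > 4/9$, i.e.\ for ${}^2F_4(q)$ ($15/26 > 4/9$) and for ${}^3D_4(q)$ with $q$ even ($9/14 > 4/9$), whereas for ${}^2B_2(q)$, ${}^2G_2(q)$ and ${}^3D_4(q)$ with $q$ odd one has $d_{\max}/D \in \{2/5,3/7\}$, all below $4/9$.

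The main obstacle is not conceptual but lies in verifying these monotonicity assertions uniformly down to the smallest admissible parameters, recalling the constraints forced by simplicity and by $q,q_0$ being powers of a fixed prime (so $q_0 \geqs 8$ for ${}^2B_2$ and ${}^2F_4$, $q_0 \geqs 27$ for ${}^2G_2$, and $q_0 \geqs 2$ or $3$ for ${}^3D_4$, with $k$ odd where $q$ must be an odd power of the prime). Since $G_{t,\gamma}'(x)/G_{t,\gamma}(x) = P_t'/P_t - \gamma\,|T|'/|T| \to (d_t - \gamma D)/x$, the sign of the derivative is transparent for large $x$, and the only genuine work is confirming that no sign change occurs at the small end; this is a short finite check in each family, after which $\a$ is obtained by evaluating $P_t(q)/P_t(q_0)$ at the minimal admissible $(q_0,k)$.
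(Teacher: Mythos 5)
Your approach is essentially the paper's: clean fusion $t^T\cap H_0=t^{H_0}$ reduces ${\rm fix}(t)$ to the centralizer-order ratio $|C_T(t)|/|C_{H_0}(t)|$ (equivalently $i_2(H_0)/i_2(T)\cdot n$ in the single-class cases), the degree count from Table \ref{tabinv} gives $\b$, and a monotonicity check at the small end gives $\a$ and the $n^{4/9}$ dichotomy; your degree computations ($D=5,7,26,28$ and $d_{\max}=2,3,15$ and $12$ or $18$) all agree with the paper's constants. Two boundary points need correcting, one of which would actually produce a wrong constant. First, for ${}^2G_2(q)$ the constraint is $q\geqs 27$, not $q_0\geqs 27$: the subfield subgroup ${}^2G_2(3)\cong{\rm P\Gamma L}_2(8)$ of ${}^2G_2(27)$ is admissible, and it is precisely the case $(q_0,q)=(3,27)$ that attains the minimum $\log{\rm ifix}(T)/\log n\approx 0.4269$, giving the tabulated $\a=0.426$; running your "short finite check" only over $q_0\geqs 27$ would both omit this case from the verification and yield the incorrect value $\a\approx 0.428$ (which fails for $(3,27)$). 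Likewise $q_0=2$ is admissible for ${}^2F_4$, though there it does not affect $\a=15/26$. Second, your $\b=9/14$ for ${}^3D_4(q)$ with $q$ even is the limit along even $q$ only; since the $\liminf$ in the statement is over all $q$ and the odd subsequence gives the smaller value $12/28$, the tabulated $\b$ is $3/7$ — you need the final step of minimizing over the two parities.
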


{\small 
\renewcommand{\arraystretch}{1.2}
\begin{table}
\begin{center}
\[
\begin{array}{llcc} \hline
T & H_0 & \a & \b \\ \hline
{}^2B_2(q) & {}^2B_2(q_0) & 0.397 & 2/5  \\
{}^2G_2(q) & {}^2G_2(q_0) & 0.426 & 3/7   \\
{}^2F_4(q) & {}^2F_4(q_0) & 15/26 & 15/26   \\
{}^3D_4(q) & {}^3D_4(q_0) & \left\{\begin{array}{ll} 3/7 & \mbox{$q$ odd} \\
0.637 & \mbox{$q$ even}
\end{array}\right. & 3/7 \\ \hline
\end{array}
\]
\caption{The constants $\a,\b$ in Proposition \ref{p:subb}}
\label{tab:alb}
\end{center}
\end{table}
\renewcommand{\arraystretch}{1}}

\begin{proof}
If $T  = {}^2B_2(q)$ or ${}^2G_2(q)$ then both $T$ and $H_0$ have a unique class of involutions, so \eqref{e:i2} holds and the result quickly follows since
\[
i_2({}^2B_2(q)) = (q^2+1)(q-1),\;\; i_2({}^2G_2(q)) = q^2(q^2-q+1).
\]
Similarly, if $T = {}^2F_4(q)$ and $t \in H_0$ is an $(\tilde{A}_1)_2$-involution, then 
\[
{\rm ifix}(T) = {\rm ifix}(t) = \frac{|t^{H_0}|}{|t^T|}\cdot n = \frac{(q_0^3+1)(q_0^2-1)(q_0^6+1)}{(q^3+1)(q^2-1)(q^6+1)}\cdot n
\]
and it is easy to verify the desired bounds. The case $T = {}^3D_4(q)$ is entirely similar and we omit the details (note that $T$ and $H_0$ have a unique class of involutions when $q$ is odd).
\end{proof}

Finally, we handle the two remaining cases that arise when $T = {}^3D_4(q)$.

\begin{prop}\label{p:fin}
Let $G$ be an almost simple primitive permutation group of degree $n$, with socle $T = {}^3D_4(q)$ and point stabilizer $H$. Set $H_0 = H \cap T$ and assume that $H_0  = G_2(q)$ or ${\rm PGL}_{3}^{\e}(q)$. Then ${\rm ifix}(T) > n^{\a}$ and 
\[
\liminf_{q\to \infty} \frac{\log {\rm ifix}(T)}{\log n} = \b,
\]
where $\a,\b$ are given in Table $\ref{tab:fin}$. In particular, ${\rm ifix}(T)>n^{4/9}$ if and only if $q$ is even.  
\end{prop}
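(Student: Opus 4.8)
The plan is to treat the two socles $G_2(q)$ and ${\rm PGL}_{3}^{\e}(q)$ in parallel, realising each $H_0$ as the fixed points of a $\s$-stable closed subgroup $\bar{H} \leqs \bar{G} = D_4$ of type $G_2$ or $A_2$ respectively (the relevant subgroups come from Kleidman's determination of the maximal subgroups of ${}^3D_4(q)$, and since ${\rm Inndiag}(T) = T$ here we may work directly inside $\bar{G}_{\s}$). As in the proofs of Proposition \ref{p:mr2} and Theorem \ref{t:nonmr}, the strategy is to fix a suitable involution $t \in H_0$, bound $|C_{H_0}(t)|$ from above, identify the $T$-class of $t$ using Table \ref{tabinv}, and then feed these data into \eqref{e:fix} and \eqref{e:mrbd} to obtain ${\rm ifix}(T) \geqs |t^{H_0}|/|t^T| \cdot n$ together with the value of $\b$. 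Throughout I split the analysis according to the parity of $p$, since the involution classes of $T$ recorded in Table \ref{tabinv} change accordingly.

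First suppose $p$ is odd. Here $T$ has a single class of involutions, with $i_2(T) = q^8(q^8+q^4+1)$, so every involution of $H_0$ is automatically $T$-conjugate to the representative $t_2$ and no fusion computation is needed. Consequently \eqref{e:i2} applies, and the problem reduces to counting $i_2(H_0)$. When ${\rm soc}(H_0) = G_2(q)$ the group $H_0 = G_2(q)$ again has a unique class of involutions, with centraliser ${\rm SL}_{2}(q) \circ {\rm SL}_{2}(q)$, so $i_2(G_2(q)) = q^4(q^4+q^2+1)$; when $H_0 = {\rm PGL}_{3}^{\e}(q)$ the diagonal involutions have centraliser of order $|{\rm GL}_{2}^{\e}(q)|$, giving $i_2(H_0) = q^2(q^2+\e q+1)$. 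Substituting $n = |T|/|H_0|$ into \eqref{e:i2} and simplifying yields the leading behaviour ${\rm ifix}(T) \sim n^{3/7}$ for $G_2(q)$ and ${\rm ifix}(T) \sim n^{2/5}$ for ${\rm PGL}_{3}^{\e}(q)$, so that ${\rm ifix}(T) > n^{\a}$ with $\a,\b$ as in Table \ref{tab:fin}, and in particular ${\rm ifix}(T) \leqs n^{4/9}$ for all odd $q$.

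Now suppose $p = 2$, so $T$ has the two classes $A_1$ and $A_1^3$ of Table \ref{tabinv}, with $|C_T(t)| = q^9|{\rm SL}_{2}(q^3)|$ and $q^9|{\rm SL}_{2}(q)|$ respectively. For each involution class of $H_0$ (the long and short root classes of $G_2(q)$, or the transvection class of ${\rm L}_{3}^{\e}(q) \leqs {\rm PGL}_{3}^{\e}(q)$) I read off $|C_{H_0}(t)|$ from Table \ref{tabinv} and then identify the $T$-class of $t$ by computing its Jordan form on a small $\bar{G}$-module, taking the class that maximises ${\rm fix}(t)$. The decisive restrictions are $V_{\bar{G}}(\l_1){\downarrow}A_2 = \mathcal{L}(A_2)$ (the adjoint embedding $A_2 = {\rm PGL}_3 < {\rm SO}_8$) and $V_{\bar{G}}(\l_1){\downarrow}G_2 = V_{G_2}(\l_1) \oplus 0$, supplemented by the decomposition of $\mathcal{L}(\bar{G})$ when the $8$-dimensional module is not decisive; the resulting Jordan forms, together with Lawther's fusion results \cite{Lawunip, Law09}, then determine whether $t$ fuses to $A_1$ or $A_1^3$. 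One finds that the long root involution of $G_2(q)$ lands in the $A_1$-class of $T$, which gives ${\rm ifix}(T) > n^{4/9}$, and a similar computation disposes of ${\rm PGL}_{3}^{\e}(q)$, completing the stated equivalence.

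The main obstacle is the even-characteristic class identification: unlike the maximal-rank cases, a root involution of $H_0$ need not be an $A_1$-involution of $T$, and the difference between fusing to $A_1$ and to $A_1^3$ is exactly the difference between an exponent well above $4/9$ and one that would fall below it, so the Jordan form computation on $V_{\bar{G}}(\l_1)$ (and on $\mathcal{L}(\bar{G})$ where needed) must be carried out precisely and checked against \cite{Law09}. A secondary technical point is confirming that the closed-form ratios produced in each case are monotone in $q$, so that the uniform bound ${\rm ifix}(T) > n^{\a}$ and the asymptotic value $\b$ both follow from the same expressions; this is routine once the class data are in hand.
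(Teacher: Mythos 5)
Your handling of the odd-characteristic case and of $H_0 = G_2(q)$ matches the paper: for $p$ odd both $T$ and $H_0$ have a unique involution class, so \eqref{e:i2} applies and the exponents $3/7$, $2/5$ drop out of $i_2(H_0)/i_2(T)$; for $q$ even a long root element of $G_2(q)$ is a long root element of $T$ (the paper quotes the proof of \cite[Lemma 6.3]{LLS2} for this rather than recomputing Jordan forms), and its Jordan form $[J_2^2,J_1^4]$ on $V_{\bar{G}}(\l_1)$ does distinguish the $A_1$-class from the $A_1^3$-class, whose Jordan form is $[J_2^4]$. So far the two arguments are essentially the same.

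The gap is the case $H_0 = {\rm PGL}_{3}^{\e}(q)$ with $q$ even, which you dismiss with ``a similar computation disposes of ${\rm PGL}_{3}^{\e}(q)$''. This is precisely the step that decides the ``if and only if $q$ is even'' clause, and the computation you set up does not deliver it. You correctly identify the embedding: the maximal $\bar{H}=A_2$ is adjoint and $V_{\bar{G}}(\l_1){\downarrow}\bar{H} = V(\l_1+\l_2) \cong \mathcal{L}(A_2)$ for $p \ne 3$. But in characteristic $2$ a transvection $u = x_{\a_1}(1)$ acts on $\mathcal{L}(A_2)$ with ${\rm Ad}(u)-1$ of rank $4$: it annihilates $e_{\a_1}$, $e_{\a_1+\a_2}$, $e_{-\a_2}$ and $h_{\a_1}$ (note ${\rm Ad}(u)h_{\a_1} = h_{\a_1}-2e_{\a_1} = h_{\a_1}$), while $e_{\a_2}$, $e_{-\a_1-\a_2}$, $h_{\a_2}$, $e_{-\a_1}$ map onto a $4$-dimensional complement. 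Hence the Jordan form is $[J_2^4]$, not $[J_2^2,J_1^4]$, and your own criterion places these involutions in the $A_1^3$-class, giving ${\rm fix}(t) = q^7(q+\e) \approx n^{2/5} < n^{4/9}$ --- the opposite of what the statement requires. (For $q=2$ the same conclusion follows from the paper's Corollary \ref{c:new}: ${\rm PGU}_{3}(2)$ coincides with the torus normalizer $(q^2-q+1)^2.{\rm SL}_{2}(3)$, whose involutions invert a maximal torus and so lie in the largest class by Lemma \ref{l:wzero}(iii).) The paper itself only asserts, without computation, that these involutions lie in the $A_1$-class. You must therefore either find the flaw in this Jordan form calculation, justify a different identification of $V_{\bar{G}}(\l_1){\downarrow}\bar{H}$, or supply an independent proof of the fusion claim; as written, the decisive step of your argument is not merely missing but appears to point the other way.
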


{\small 
\renewcommand{\arraystretch}{1.2}
\begin{table}
\begin{center}
\[
\begin{array}{llc} \hline
H_0 & \multicolumn{1}{c}{\a} & \b  \\ \hline
G_2(q) & \left\{\begin{array}{ll} 
3/7 & \mbox{$q$ odd} \\
5/7 & \mbox{$q$ even} 
\end{array}\right. & 3/7 \\
{\rm PGL}_{3}(q) & \left\{\begin{array}{ll}  
2/5 & \mbox{$q$ odd}  \\
7/10 &  \mbox{$q$ even} 
\end{array}\right. & 2/5 \\
{\rm PGU}_{3}(q) & \left\{\begin{array}{ll} 
0.387 & \mbox{$q$ odd} \\
0.672 & \mbox{$q$ even} 
 \end{array}\right. & 2/5 \\ \hline
\end{array}
\]
\caption{The constants $\a,\b$ in Proposition \ref{p:fin}}
\label{tab:fin}
\end{center}
\end{table}
\renewcommand{\arraystretch}{1}}

\begin{proof}
First assume $H_0 = G_2(q)$. If $q$ is odd then both $T$ and $H_0$ have a unique class of involutions, hence \eqref{e:i2} holds and the result follows. If $q$ is even and $t \in H_0$ is an $A_1$-involution, then $t$ is in the $A_1$-class of $T$ (see the proof of \cite[Lemma 6.3]{LLS2}, for example) and we deduce that  
\[
{\rm ifix}(T) = \frac{|t^{H_0}|}{|t^T|}\cdot n = \frac{q^6-1}{(q^2-1)(q^8+q^4+1)}\cdot n,
\]
which gives the desired result. A very similar argument applies when $H_0 = {\rm PGL}_{3}^{\e}(q)$. Indeed, we note that $H_0$ has a unique class of involutions and 
\[
i_2(H_0) = \left\{\begin{array}{ll}
(q+\e)(q^3-\e) & \mbox{ if $q$ even} \\
q^2(q^2+\e q+1) & \mbox{ if $q$ odd.}
\end{array}\right.
\]
Moreover, if $q$ is even then the involutions in $H_0$ are contained in the $A_1$-class of $T$. The result quickly follows.
\end{proof}

\vs

This completes the proof of Theorem \ref{t:main2}.

\section{Algebraic groups}\label{s:alg}

In this final section we prove Theorem \ref{t:main4}. Let $\bar{G}$ be a simple exceptional algebraic group over an algebraically closed field $K$ of characteristic $p \geqs 0$ and let $\bar{H}$ be a maximal positive dimensional closed subgroup of $\bar{G}$.
Recall that there is a natural action of $\bar{G}$ on the coset variety $\O = \bar{G}/\bar{H}$ and for each $t \in \bar{G}$, the set of fixed points $C_{\O}(t)$ is a subvariety. Set
\[
{\rm ifix}(\bar{G}) = \max\{\dim C_{\O}(t) \,:\, \mbox{$t \in \bar{G}$ is an involution}\}.
\]
The possibilities for $\bar{H}$ are determined up to conjugacy by Liebeck and Seitz in \cite[Theorem 1]{LS04} and the subgroups that arise are as follows:
\begin{itemize}\addtolength{\itemsep}{0.2\baselineskip}
\item[(a)] Parabolic subgroups (one class for each node of the Dynkin diagram of $\bar{G}$):
\item[(b)] Reductive subgroups of maximal rank (see \cite[Table 10.3]{LS04});
\item[(c)] Reductive subgroups of smaller rank (see \cite[Table 10.1]{LS04}).
\end{itemize}  
Note that these are precisely the infinite analogues of the subgroups arising in the statement of Theorem \ref{t:main3}.

\begin{proof}[Proof of Theorem \ref{t:main4}]
Set $\O = \bar{G}/\bar{H}$ and let $t \in \bar{H}$ be an involution. Then 
\[
\dim C_{\O}(t) = \dim \O - \dim t^{\bar{G}} + \dim (t^{\bar{G}}\cap \bar{H})
\]
by \cite[Proposition 1.14]{LLS}. In particular, the dimension of $C_{\O}(t)$ (and also $\dim \O = \dim \bar{G} - \dim \bar{H}$) is independent of the choice of algebraically closed field of fixed characteristic $p$. In addition, it is well known that these dimensions in characteristic zero are equal to the dimensions over any algebraically closed field of prime characteristic $r$, for all large primes $r$. Therefore, we may assume $p>0$ and $K = \bar{\mathbb{F}}_p$.

Let $\s$ be a Steinberg endomorphism of $\bar{G}$ so that $\bar{G}(q):=\bar{G}_{\s}$ is defined over $\mathbb{F}_q$ for some $p$-power $q$. We may assume that $\bar{H}$ is also $\s$-stable, so we can consider $\bar{H}(q):=\bar{H}_{\s}$ and the action of $\bar{G}(q)$ on the set of cosets 
$\Omega(q):=\bar{G}(q)/\bar{H}(q)$. Let $t \in \bar{H}(q)$ be an involution and let $e$ be the number of irreducible components of $C_{\Omega}(t)$ of maximal dimension. By \cite[Lemma 7.1]{LSh05}, which follows from the Lang-Weil estimates in \cite{LW}, we deduce that 
\[
|C_{\Omega(q)}(t)| = (e+o(1))q^{\dim C_{\Omega}(t)},\;\; |\Omega(q)| = (1+o(1))q^{\dim \Omega}
\]
for infinitely many values of $q$. Therefore, 
\[
\frac{\log |C_{\Omega(q)}(t)|}{\log |\Omega(q)|}  = \frac{\dim C_{\O}(t)}{\dim \O} + o(1)
\]
and the result now follows from Theorem \ref{t:main3}.
\end{proof}

\end{document}